\titleformat{\section}{\Large\bfseries}{\thesection.}{4pt}{}
\titleformat{\subsection}{\large\bfseries}{\thesection.\arabic{subsection}.}{4pt}{}
\titleformat{\subsubsection}{\bfseries}{\thesection.\arabic{subsection}.\arabic{subsubsection}.}{4pt}{}
\titleformat*{\paragraph}{\bfseries}
\titleformat*{\subparagraph}{\bfseries}
\newtheorem{theorem}{Theorem}[section]
\newtheorem{lemma}[theorem]{Lemma}
\newtheorem{proposition}[theorem]{Proposition}
\theoremstyle{definition}
\newtheorem{definition}[theorem]{Definition}
\newtheorem{remark}[theorem]{Remark}
\newcommand{\Rd}{\mathbb{R}^d}
\newcommand{\Rb}{\mathbb{R}}
\newcommand{\indic}{\mathbbm{1}}
\newcommand{\Cc}{\mathcal{C}}
\newcommand{\Dc}{\mathcal{D}}
\newcommand{\Ec}{\mathcal{E}}
\newcommand{\Fc}{\mathcal{F}}
\newcommand{\Hc}{\mathcal{H}}
\newcommand{\Lc}{\mathcal{L}}
\newcommand{\Oc}{\mathcal{O}}
\newcommand{\Nc}{\mathcal{N}}
\newcommand{\N}{\mathbb N}
\newcommand{\Z}{\mathbb Z}
\newcommand{\py}{\partial_y}
\newcommand{\ps}{\partial_s}
\newcommand{\pt}{\partial_t}
\newcommand{\p}{\partial}
\newcommand{\Ls}{\mathscr{L}}
\newcommand{\Hs}{\mathscr{H}}
\newcommand{\As}{\mathscr{A}}
\newcommand{\Es}{\mathscr{E}}
\numberwithin{equation}{section}
\title[1-corotational harmonic map heat flow in supercritical dimensions] 
      {Stable type II blowup for the 7 dimensional 1-corotational energy supercritical harmonic map heat flow}
\author[T. Ghoul]{}
\subjclass{Primary: 35K50, 35B40; Secondary: 35K55, 35K57.}
 \keywords{Harmonic map heat flow, Blowup solution, Blowup profile, Stability}
 \email[T. Ghoul]{teg6@nyu.edu}
\thanks{\today}
\begin{document}
\maketitle


\centerline{\scshape Tej-eddine Ghoul$^a$}
\medskip
{\footnotesize
 \centerline{$^a$Department of Mathematics, New York University in Abu Dhabi,}
   \centerline{Saadiyat Island, P.O. Box 129188, Abu Dhabi, United Arab Emirates.}
}

\bigskip

\begin{abstract} We consider the energy-supercritical harmonic map heat flow from $\Rb^d$ into $\mathbb{S}^d$, under an additional assumption of 1-corotational symmetry. We are interested by the 7 dimensional case which is the borderline between the Type I blowup regime and Type II.
We construct for this problem a stable finite time blowup solution under the condition of corotational symmetry that blows up via concentration of the universal profile
$$u(r,t) \sim Q\left(\frac{r}{\lambda(t)}\right),$$
where $Q$ is the stationary solution of the equation and the speed is given by the rate
$$\lambda(t) \sim \frac{\sqrt{(T-t)}}{|\log(T-t)|},$$
which corresponds to the speed predicted in \cite{BIEnon2015}.
\end{abstract}

\section{Introduction.} We consider the harmonic map heat flow which is defined as the negative gradient flow of the Dirichlet energy of maps between manifolds. Indeed, if $\Phi$ is a map from $\Rb^d\times[0,T)$ to a compact Riemannian  manifold $\mathcal{M}\subset \Rb^n$, with second fundamental form $\Upsilon$,  then $\Phi$ solves
\begin{equation}\label{p0}
\left\{\begin{array}{l}
\partial_t \Phi - \Delta \Phi = \Upsilon(\Phi)(\nabla \Phi,\nabla\Phi),\\
\Phi(t = 0) = \Phi_0.
\end{array}
 \right.
\end{equation}

We assume that the target manifold is the $d$-sphere $\mathbb{S}^d\subset\Rb^{d+1}$.
Hence, \eqref{p0} becomes

\begin{equation}\label{p1}
\left\{\begin{array}{l}
\partial_t \Phi - \Delta \Phi = |\nabla \Phi|^2\Phi,\\
\Phi(t = 0) = \Phi_0.
\end{array}
 \right.
\end{equation}

 We will study the problem under an additional assumption of 1-corotational symmetry, with the following  corotational ansatz
\begin{equation}
\Phi(x,t) = \binom{\cos(u(|x|,t))}{\frac{x}{|x|}\sin(u(|x|,t))},
\end{equation}
where  $u = u(r,t)$is a radial function. Under this ansatz, the problem \eqref{p1} reduces to the one dimensional semilinear heat equation
 \begin{equation}\label{Pb}
\left\{\begin{array}{ll}
\partial_t u &= \partial^2_r u + \frac{(d-1)}{r}\partial_r u - \frac{(d-1)}{2r^2}\sin(2u),\\
u(t=0) &= u_0,
\end{array}\right.
\end{equation}
where $u(t): r \in \Rb_+ \to u(r,t) \in [0, \pi]$. The set of solutions to \eqref{Pb} is invariant by the scaling symmetry
$$u_\lambda(r,t) = u\left(\frac{r}{\lambda}, \frac{t}{\lambda^2} \right), \quad \forall\lambda > 0.$$

The energy associated to \eqref{Pb} is given by 
\begin{equation}\label{def:Enut}
\Ec[u](t) = \int_0^{+\infty} \left(|\partial_r u|^2 + \frac{d-1}{r^2}\sin^2(u)\right)r^{d-1}dr,
\end{equation}
which satisfies
$$\Ec[u_\lambda] = \lambda^{d-2}\Ec[u].$$
The criticality of the problem is  reflected by the fact that the energy  \eqref{def:Enut} is left invariant by the scaling property when $d = 2$, hence, the case $d \geq 3$ corresponds to the energy supercritical case. 
The problem \eqref{Pb} is locally wellposed for data which are close in $L^\infty$ to a uniformly continuous map (see Koch and Lamm \cite{KLajm12}) or in $BMO$ by Wang \cite{Warma11}.
Actually, Eells and Sampson \cite{ESajm64} introduced the harmonic map heat flow as a process to deform any smooth map $\Phi_0$ into a harmonic map via \eqref{p1}. They also proved that the solution exists globally if the sectional curvature of the target manifold is negative. There exist other assumptions for the global existence such as the image of the initial data $u_0$ is contained in a ball of radius $\frac{\pi}{2\sqrt{\kappa}}$, where $\kappa$ is an upper bound on the sectional curvature of the target manifold $\mathcal{M}$ (see Jost \cite{Jmm81} and Lin-Wang \cite{LWwsp08}).
Without these assumptions, the solution $u(r,t)$ may develop singularities in some finite time (see for examples, Coron and Ghidaglia \cite{CGcrsa89}, Chen and Ding \cite{CDinvm90} for $d \geq 3$, Chang, Ding and Yei \cite{CDYjdg92} for $d = 2$). In this case, we say that $u(r,t)$ blows up in a finite time $T < +\infty$ in the sense that 
$$\lim_{t \to T}\|\nabla u(t)\|_{L^\infty} = + \infty.$$
Here we call $T$ the blowup time of $u(x,t)$. The blowup has been divided by Struwe \cite{Sams96} into two types:
\begin{align*}
\qquad & \text{$u$ blows up with type I if:} \quad \limsup_{t \to T} (T-t)^\frac{1}{2}\|\nabla u(t)\|_{L^\infty} < +\infty,\\
\qquad & \text{$u$ blows up with type II if:} \quad \limsup_{t \to T} (T-t)^\frac{1}{2}\|\nabla u(t)\|_{L^\infty} = +\infty.
\end{align*}

In the energy critical case, i.e. $d = 2$, Van de Berg, Hulshof and King \cite{VHKsiam03}, through a formal analysis based on the matched asymptotic technique of Herrero and Vel\'azquez \cite{HVcras94}, predicted that there are type II blowup solutions to \eqref{Pb} of the form 
\begin{equation*}
u(r,t) \sim Q\left(\frac{r}{\lambda(t)}\right),
\end{equation*} 
where 
\begin{equation}\label{eq:Qrd2}
Q(r) = 2\tan^{-1}(r)
\end{equation}
is the unique (up to scaling) solution of \eqref{eq:Qr}, and the blowup speed governed by the quantized rates is
\begin{equation*}
\lambda(t) \sim \frac{(T-t)^\ell}{|\log (T-t)|^\frac{2\ell}{2\ell - 1}} \qquad \text{for}\;\; \ell \in \mathbb{N}^*.
\end{equation*}
This result was later confirmed by Rapha\"el and Schweyer \cite{RSapde2014}. Note that the case $\ell = 1$ was treated in \cite{RScpam13} and corresponds to a stable blowup.  
In particular, the authors in \cite{RSapde2014}, \cite{RScpam13} adapted the strategy developed by Merle, Rapha\"el and Rodnianski \cite{RRmihes12}, \cite{MRRmasp11} for the study of wave and Schr\"odinger maps to construct for equation \eqref{Pb} type II blowup solutions.\\
Note that the case $\ell=1$ which corresponds to the stable case for $d=2$ has been proved without any symmetry and for general domains in the recent result of Davilla, Del Pino and Wei in \cite{DDW}. \\
In \cite{Sjdg88}, Struwe shows that the type I singularities are asymptotically self-similar, that is their profile is given by a smooth shrinking function
$$u(r,t) = \phi\left(\frac{r}{\sqrt{T-t}}\right), \quad \forall t \in [0,T),$$
where $\phi$ solves the equation 
\begin{equation}\label{eq:phis}
\phi'' + \left(\frac{6}{y} + \frac{y}{2}\right)\phi' - \frac{3}{y^2}\sin(2\phi) = 0.
\end{equation}
Thus, the study of Type I blowup reduces to the study of nonconstant solutions of equation \eqref{eq:phis}.
When $3 \leq d \leq 6$, by using a shooting method, Fan \cite{Fscsm99} proved that there exists an infinite sequence of globally regular solutions $\phi_n$ of \eqref{eq:phis} which are called "shrinkers (corresponding to the existence of Type I blowup solutions of \eqref{Pb}), where the integer index $n$ denotes the number of intersections of the function $\phi_n$ with $\frac{\pi}{2}$. More detailed quantitative properties of such solutions were studied by Biernat and Bizo{\'n} \cite{BBnon11}, where the authors conjectured that $\phi_1$ is linearly stable and provide numerical evidences supporting that $\phi_1$ corresponds to a generic profile of Type I blow-up. Very recently, Biernat, Donninger and Skhorkhuber \cite{BDSar16} proved the existence of a stable self-similar blow up solution for $d=3$.
Since \eqref{Pb} is not time reversible there exist another family of self similar solutions that are called "expanders" which were introduced by Germain and Rupflin in \cite{GRihp08}.
Recently, Germain , Ghoul and Miura proved the nonlinear stability of the expanders in \cite{GGMar16}, and use the expanders to construct nonunique solutions verifying the local energy inequality. Up to our knowledge, the question on the existence of Type II blowup solutions  for \eqref{Pb} remains open for $3 \leq d \leq 6$.\\

When $d \geq 7$, Bizo{\'n} and Wasserman \cite{BWimrn15} proved that equation \eqref{Pb} has no self-similar shrinking solutions. According to Struwe \cite{Sjdg88}, this result implies that in dimensions $d \geq 7$, all singularities for equation \eqref{Pb} must be of type II (see also Biernat \cite{BIEnon2015} for a recent analysis of such singularities). Recently, Biernat and Seki \cite{BSarxiv2016}, via the matched asymptotic method developed by Herero and Vel\'azquez \cite{HVcras94}, construct for equation \eqref{Pb} a countable family of Type II blowup solutions, each characterized by a different blowup rate
\begin{equation}\label{eq:blrate}
\lambda(t) \sim (T-t)^\frac{\ell}{\gamma} \quad \text{as} \quad t \to T,
\end{equation} 
where $\ell \in \mathbb{N}^*$ such that $2\ell > \gamma$ and $\gamma = \gamma(d)$ is given by
\begin{equation}\label{def:gamome}
\gamma(d) = \frac{1}{2}(d - 2 - \tilde{\gamma}) \in (1,2] \quad \text{for}\;\; d \geq 7,
\end{equation}
where $\tilde{\gamma}=\sqrt{d^2-8d+8}$.
The blowup rate \eqref{eq:blrate} is in fact driven by the asymptotic behavior of a stationary solution of \eqref{Pb}, say $Q$, which is the unique  (up to scaling) solution of the equation
\begin{equation}\label{eq:Qr}
Q'' + \frac{6}{r}Q' - \frac{3}{r^2}\sin(2Q) = 0, \quad Q(0) = 0, \; Q'(0) = 1,
\end{equation} 
and admits the behavior for $r$ large,
\begin{equation}\label{exp:Qr}
Q(r) = \dfrac{\pi}{2} - \dfrac{a_0}{ r^{\gamma}} + \Oc\left(\dfrac 1{r^{2 + \gamma}}\right) \quad \text{for some}\;\; a_0 = a_0(d) > 0,
\end{equation}
(see Appendix in \cite{BIEnon2015} for a proof of the existence of $Q$). 
Note that the authors of \cite{BSarxiv2016} only claim an existence result of type II blowup solution with the rate \eqref{eq:quanblrate} which relates to a spectrum problem and say nothing about the dynamical description of the solution. 
Recently, Ghoul, Ibrahim and Nguyen  provided a new proof of \cite{BSarxiv2016}, where they constructed a family of $\mathcal{C}^{\infty}$ solutions which blow up in finite time  via concentration of the universal profile
$$u(r,t) \sim Q\left(\frac{r}{\lambda(t)}\right),$$
where $Q$ is the stationary solution of the equation and the speed is given by the quantized rates
$$\lambda(t) \sim c_u(T-t)^\frac{\ell}{\gamma}, \quad \ell \in \mathbb{N}^*, \;\; 2\ell > \gamma = \gamma(d) \in (1,2].$$

In, addition they proved that those family of solutions are stable on a manifold of codimension $\ell-1$.
Note that the condition $\frac{\ell}{\gamma}>\frac{1}{2}$ is to ensure that the blow up is of type II, and the case $\frac{\ell}{\gamma}=\frac{1}{2}$ only happens in dimension $d = 7$ where we should expect type I blow up. In this case, Biernat in \cite{BIEnon2015} said that if one forgets about the underlying geometric setup and allows $d$ to be positive real values, then for $d$ slightly less than 7 numerical evidence indicates a presence of a generic type I blow-up. On the other hand, if $d$ is slightly bigger than $7$ from \eqref{eq:blrate} the blow up should be of type II. Naively, one could think that when $d=7$ the blow up should be of type I.  
However, Biernat by using the method of matching asymptotics\cite{HVcras94} formally derived the blowup rate
\begin{equation}\label{eq:rated7}
\lambda(t) \sim \frac{(T-t)^\frac 12}{|\log (T-t)|} \quad \text{as} \quad t \to T,
\end{equation}
which is of type II.
He also provided numerical evidences supporting that this case $\ell = 1$ in \eqref{eq:blrate} or \eqref{eq:rated7} corresponds to a stable blowup solution.\\

The goal of the present work is to prove rigorously, the existence of a stable finite time blowup solution for $d=7$ and $\ell=1$. This is the main result in this paper: 
\begin{theorem}[Existence of type II blowup solutions to \eqref{Pb} with prescibed behavior] \label{Theo:1} Let $d=7$.Then there exist, $L>0$ large enough and an open set of initial data of the form 
$$u_0 = Q + q_0, \quad q_0 \in \Hs_{2L+2}, $$
such that the corresponding solution to equation \eqref{Pb} satisfies
\begin{equation}\label{eq:uQq}
u(r,t) = Q\left(\frac{r}{\lambda(t)}\right) + q\left(\frac{r}{\lambda(t)}, t\right)
\end{equation}
where 
\begin{equation}\label{eq:quanblrate}
\lambda(t) = \frac{C\sqrt{(T-t)}}{|\log(T-t)|} (1 + O_{t \to T}(|\log(T-t)|^{-1})), \quad C > 0,
\end{equation}
and 
\begin{equation}\label{eq:asypqs}
\lim_{t \to T}\|q\|_{\Hs_{2L+2}} = 0,
\end{equation}
where $\Hs_{2L+2}$ is defined in \eqref{locnorm}.
\end{theorem}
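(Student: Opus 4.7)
The plan is to follow the modulation-plus-energy strategy developed for the two-dimensional critical harmonic map heat flow by Rapha\"el and Schweyer \cite{RScpam13}, and adapted to the energy-supercritical regime $d\ge 8$ by Ghoul-Ibrahim-Nguyen. First, introduce self-similar variables $y=r/\lambda(t)$ and $s=\int_0^t \lambda(\tau)^{-2}d\tau$, and decompose the solution as $u(r,t)=v(y,s)=Q(y)+q(y,s)$. Substituting into \eqref{Pb} gives
$$\partial_s q + \Lc q \;=\; \frac{\lambda_s}{\lambda}\Lambda v + N(q), \qquad \Lambda := y\partial_y,$$
where $\Lc$ denotes the linearization of the profile equation \eqref{eq:Qr} around $Q$. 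Scaling invariance yields $\Lc(\Lambda Q)=0$, and a direct ODE analysis combined with the asymptotics \eqref{exp:Qr} with $\gamma(7)=2$ gives a second homogeneous solution $\Gamma$ whose growth at infinity is exactly borderline. This borderline produces a logarithmic divergence when attempting to invert $\Lc$ on $\Lambda Q$, which is the analytic mechanism behind the $|\log(T-t)|$ factor in \eqref{eq:quanblrate}.

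Second, construct a refined approximate profile
$$Q_{b(s)}(y) \;=\; Q(y) + b\,T_1(y)\,\chi_{B(b)}(y) + b^2\,T_2(y)\,\chi_{B(b)}(y),$$
where the $T_{j}$ are obtained iteratively by solving $\Lc T_{j+1} = -\Lambda T_j$ with $T_0 = \Lambda Q$, and $\chi_{B(b)}$ is a smooth radial cut-off at a large scale $B(b)\sim b^{-1/2}$ chosen to truncate the non-integrable tail of $T_1$. Inserting $Q_b$ into \eqref{Pb} and imposing two orthogonality conditions that fix $(\lambda,b)$ leads, after projection, to the reduced ODE system
$$\frac{\lambda_s}{\lambda} + b = 0, \qquad b_s + \frac{c\,b^2}{|\log b|} \;=\; 0, \qquad c>0,$$
whose explicit integration via $dt = \lambda^2 ds$ delivers $\lambda(t) = C\sqrt{T-t}/|\log(T-t)|(1+o(1))$, matching \eqref{eq:quanblrate}.

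Third, control the remainder $q$ in the adapted high-order Sobolev space $\Hs_{2L+2}$ (an iterated weighted energy built from powers of $\Lc$). Using the two orthogonality conditions and a Hardy-type coercivity for $\Lc^{L+1}$ on the orthogonal complement of $\Lambda Q$, establish a monotonicity estimate of the form
$$\frac{d}{ds}\Es_{2L+2}(q) \;\le\; -\frac{c}{|\log b|}\Es_{2L+2}(q) + \text{admissible errors},$$
where the errors come from the profile truncation, the modulation equations, and the nonlinearity $N(q)$, all handled by standard Sobolev and interpolation inequalities at the $\Hs_{2L+2}$ level. A bootstrap argument on initial data with $q_0$ small in $\Hs_{2L+2}$ and $(\lambda_0,b_0)$ suitably chosen then closes: because $\Lc$ at $d=7$ has no negative direction in the corotational radial class beyond the scaling resonance (which is absorbed by modulation), no further codimension condition is needed and the construction yields an open set of blowing-up data, giving stability.

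The main obstacle is the borderline resonance $2\ell=\gamma$ specific to $d=7$. In the $d\ge 8$ construction $T_1$ decays fast enough to be a genuine Schwartz correction; here $\Lc^{-1}(\Lambda Q)$ is only logarithmically non-integrable, so one must amputate its tail at a carefully tuned scale $B(b)$ and show that the truncation error feeds the modulation equation with exactly the factor $1/|\log b|$, neither more nor less. Propagating this delicate logarithmic balance through the high-order energy estimate, without losing it to derivative losses from the truncation or to the nonlinearity, is the technical heart of the proof and forces $L$ to be chosen large.
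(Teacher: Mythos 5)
Your overall strategy---self-similar variables, modulated ground-state ansatz, orthogonality conditions, high-order weighted energy built on iterates of the linearized operator, bootstrap---is indeed the scheme the paper follows, and you correctly identify that no topological argument is needed because the blowup is stable, that $d=7$ is the borderline case $2\ell=\gamma$, and that the tail of $T_1$ must be handled at the parabolic scale $y\sim b_1^{-1/2}$. However, you misidentify the analytic mechanism that produces the logarithmic factor in $\lambda(t)$, and this propagates into a wrong modulation ODE.

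You claim the log comes from ``a logarithmic divergence when attempting to invert $\Lc$ on $\Lambda Q$'', analogous to the critical $d=2$ case, and you posit the reduced ODE $b_s + c\,b^2/|\log b|=0$. In $d=7$ the inversion is \emph{not} logarithmically divergent: from $\Lambda Q\sim 2a_0/y^2$ one computes (see \eqref{T1}) that $T_1=-\Ls^{-1}\Lambda Q\sim -C_0+C_1/y+\Oc(|\log y|/y^2)$, which is bounded at infinity. The trouble is that $\Lambda T_1\sim -C_1/y$ fails to be integrable in the weighted $L^2$ sense, and because $2k-\gamma=0$ at $k=1$, the naive modulation law $(b_1)_s+(2-\gamma)b_1^2=b_2$ has \emph{no} driving term for $b_1$ at leading order. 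The paper cures this by introducing the auxiliary profile $\Sigma_{b_1}$ (Lemma \ref{lemm:slowgrowtails}), designed so that $\Sigma_{b_1}\equiv C_{b_1}T_1$ for $y\leq B_0$ and $\Sigma_{b_1}\sim C_1/y$ for $y\geq 2B$. Matching the constant $-C_{b_1}C_0$ against the tail $-C_1/y$ at the parabolic scale $y\sim B_0\sim b_1^{-1/2}$ forces $C_{b_1}\sim C\sqrt{b_1}$ (see \eqref{eq:Cb1}), \emph{not} $1/|\log b_1|$. The resulting system is
$$(b_k)_s+(2k-2+C_{b_1})b_1 b_k - b_{k+1}=0,$$
whose leading mode obeys $(b_1)_s\sim -Cb_1^{5/2}$, giving $b_1(s)\sim s^{-2/3}$ and hence $\lambda(s)\sim e^{-3s^{1/3}}$. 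The $|\log(T-t)|$ factor only appears \emph{after} translating this back to physical time via $T-t=\int_s^\infty\lambda^2\,d\sigma$; it is not a logarithmic eigenvalue correction at the level of the ODE. Your proposed ODE $b_s=-cb^2/|\log b|$ integrates to $b\sim \log s/s$, which feeds back to a different (super-polynomial in $s$) decay of $\lambda$ and would not reproduce \eqref{eq:quanblrate}.

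A second, structural, gap: you build the profile from only two terms $b\,T_1\chi_B$ and $b^2\,T_2\chi_B$, with two modulation parameters $(\lambda,b)$ and ``two orthogonality conditions''. The coercivity of the high-order energy $\Es_{2L+2}=\int|\Ls^{L+1}q|^2$ under the orthogonality conditions (Lemma \ref{lemm:coeLk}) requires $L+1$ conditions of the form $\langle q,\Ls^k\Phi_M\rangle=0$, $0\leq k\leq L$, which in turn need $L+1$ free parameters $(\lambda,b_1,\dots,b_L)$ and the full hierarchy $T_1,\dots,T_L$ together with the corrections $S_2,\dots,S_{L+2}$. With a two-parameter ansatz you cannot impose the orthogonality needed for $\Es_{2L+2}$ to control the weighted tail of $q$, and the bootstrap on $\|q\|_{\Hs_{2L+2}}$ would not close. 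You do say at the end that $L$ must be chosen large; the missing point is that this forces a correspondingly large family of modulation parameters, not just one $b$.Your overall strategy---self-similar variables, modulated ground-state ansatz, orthogonality conditions, high-order weighted energy built on iterates of the linearized operator, bootstrap without a topological argument---is indeed the scheme the paper follows, and you correctly flag that $d=7$ is the borderline $2\ell=\gamma$ case requiring truncation of the tail of $T_1$ at the parabolic scale $y\sim b^{-1/2}$. However, you misidentify the analytic mechanism producing the logarithm in \eqref{eq:quanblrate}, and that error propagates into a wrong modulation ODE.

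You claim the log comes from ``a logarithmic divergence when attempting to invert $\Lc$ on $\Lambda Q$'', by analogy with $d=2$, and posit the reduced law $b_s + c\,b^2/|\log b|=0$. In $d=7$ the inversion is \emph{not} logarithmically divergent: from $\Lambda Q\sim 2a_0/y^2$ one finds (cf.\ \eqref{T1}) $T_1=-\Ls^{-1}\Lambda Q\sim -C_0+C_1/y+\Oc(|\log y|/y^2)$, which is bounded at infinity. The obstruction is different: $\Lambda T_1\sim -C_1/y$ is a non-integrable tail, and since $2k-\gamma=0$ at $k=1$ the naive law $(b_1)_s+(2-\gamma)b_1^2=b_2$ carries \emph{no} leading-order drive for $b_1$. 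The paper's cure is the auxiliary profile $\Sigma_{b_1}$ of Lemma \ref{lemm:slowgrowtails}, built so that $\Sigma_{b_1}\equiv C_{b_1}T_1$ on $y\leq B_0$ and $\Sigma_{b_1}\sim C_1/y$ on $y\geq 2B$. Matching the constant $-C_{b_1}C_0$ against the tail $-C_1/y$ at $y\sim B_0\sim b_1^{-1/2}$ gives $C_{b_1}\sim C\sqrt{b_1}$ (see \eqref{eq:Cb1}), \emph{not} $1/|\log b_1|$. The modulation system is then
$$(b_k)_s+(2k-2+C_{b_1})b_1 b_k - b_{k+1}=0,$$
so the leading mode obeys $(b_1)_s\sim -Cb_1^{5/2}$, whence $b_1(s)\sim s^{-2/3}$ and $\lambda(s)\sim e^{-3s^{1/3}}$. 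The factor $|\log(T-t)|$ appears only after passing to physical time through $T-t=\int_s^\infty\lambda^2\,d\sigma$; it is not a logarithmic correction in the renormalized ODE. Your ODE $b_s=-cb^2/|\log b|$ integrates to $b\sim\log s/s$, which fed into $-\lambda_s/\lambda=b$ yields a different decay of $\lambda$ incompatible with \eqref{eq:quanblrate}. Correspondingly, the dissipation in the paper's Lyapunov estimate \eqref{eq:Es2sLya} carries powers of $b_1$, not $1/|\log b_1|$.

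A second, structural gap: you build the profile from only $b\,T_1\chi_B + b^2\,T_2\chi_B$, with two modulation parameters and ``two orthogonality conditions''. The coercivity of $\Es_{2L+2}=\int|\Ls^{L+1}q|^2$ (Lemma \ref{lemm:coeLk}) requires the $L+1$ orthogonality relations $\langle q,\Ls^k\Phi_M\rangle=0$, $0\leq k\leq L$, which in turn need $L+1$ free parameters $(\lambda,b_1,\dots,b_L)$, the full hierarchy $T_1,\dots,T_L$, and the homogeneous corrections $S_2,\dots,S_{L+2}$. With a two-parameter ansatz you cannot impose the orthogonality that makes the high-order energy control the weighted tails of $q$, and the bootstrap on $\|q\|_{\Hs_{2L+2}}$ would not close. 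You observe that $L$ must be large; the missing half of that observation is that the number of modulation parameters and the length of the profile hierarchy must grow with $L$ as well.
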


\begin{remark} It is worth to mention that our analysis relies only on the study of supercritical Sobolev norms buit on the linearized operator, thus, the finiteness of the $H^1$ norm of the initial data is not a requirement. Therefore, the constructed solutions $u$ can be taken to be of finite energy, if its energy is initially finite, thanks to the following trivial identity (testing \eqref{p1} by $\pt \Phi$ and using $\Phi \cdot \pt \Phi = 0$)
$$\frac 12 \frac d{dt}\int |\nabla \Phi|^2 + \int |\pt \Phi|^2 = 0.$$
In addition, if we add the assumption that the solution $u$ is of finite energy, it is required to suppose that the error $q\sim -Q$ as $x$ goes to Infinity to ensure that $u$ is zero at infinity.
\end{remark}

\begin{remark} It is worth to remark that the harmonic heat flow shares many features with the semilinear heat equation
\begin{equation}\label{eq:she}
\partial_t u = \Delta u + |u|^{p-1}u \quad \text{in}\; \Rb^d.
\end{equation}
A remarkable fact is that two important critical exponents appear when considering the dynamics of \eqref{eq:she}:
$$p_S = \frac{d + 2}{d-2} \quad \text{and} \quad p_{JL} = \left\{\begin{array}{ll} +\infty &\quad \text{for}\;\; d \leq 10,\\
 1 + \frac{4}{d - 4 - 2\sqrt{d - 1}}&\quad \text{for}\;\; d \geq 11,
\end{array}  \right.$$
correspond to the cases $d = 2$ and $d = 7$ in the study of equation \eqref{Pb} respectively. 

When $1 < p \leq p_S$, Giga and Kohn \cite{GKiumj87}, Giga, Matsui and Sasayama \cite{GMSiumj04} showed that all blowup solutions are of type I. Here the type I blowup means that
$$\limsup_{t \to T}(T-t)^\frac{1}{p-1}\|u(t)\|_{L^\infty} < +\infty,$$
otherwise we say the blowup solution is of type II.

When $p = p_S$, Filippas, Herrero and Vel\'azquez \cite{FHVslps00} formally constructed for \eqref{eq:she} type II blowup solutions in dimensions $3 \leq d \leq 6$, however, they could not do the same in dimensions $d \geq 7$. This formal result is partly confirmed by Schweyer \cite{Schfa12} in dimension $d = 4$. Interestingly, Collot, Merle and Rapha\"el \cite{CMRar16} show that type II blowup is ruled out in dimension $d \geq 7$ near the solitary wave.

When $p_S < p < p_{JL}$, Matano and Merle \cite{MMcpam04} (see also Mizoguchi \cite{Made04}) proved that only type I blowup occurs in the radial setting. 

When $p > p_{JL}$, Herrero and Vel\'azquez formally derived in \cite{HVcras94} the existence of type II blowup solutions with the quantized rates:
$$\|u(t)\|_{L^\infty} \sim (T-t)^ \frac{2\ell}{(p-1)\alpha(d,p)}, \quad \ell \in \mathbb{N}, \; 2\ell > \alpha.$$
The formal result was clarified in \cite{MMjfa09}, \cite{Mma07} and \cite{Car16}. The collection of these works yields a complete classification of the type II blowup scenario for the radially symmetric energy supercritical case. 

In comparison to the case of the semilinear heat equation \eqref{eq:she}, it might be possibe to prove that all blowup solutions to equation \eqref{Pb} are of type I in dimension $3\leq d \leq 6$. However, due to the lack of monotonicity of the nonlinear term, the analysis of the harmonic map heat flow \eqref{Pb} is much more difficult than the case of the semilinear heat equation \eqref{eq:she} treated in \cite{MMcpam04}. 
\end{remark}

\bigskip

Let us briefly explain the main steps of the proof of Theorem \ref{Theo:1}, which follows the method of  \cite{RSapde2014} treated for the critical case $d = 2$. We would like to mention that this kind of method has been successfully applied for various nonlinear evolution equations. In particular in the dispersive setting for the nonlinear Schr\"odinger equation both in the mass critical \cite{MRam05, MRcmp05, MRim04, MRgfa03} and mass supercritical \cite{MRRcmj2016} cases; the mass critical gKdV equation \cite{MMRasp15, MMRjems15, MMRam14};  the energy critical \cite{DKMcjm13}, \cite{HRapde12} and supercritical \cite{Car161} wave equation; the two dimensional critical geometric equations: the wave maps \cite{RRmihes12}, the Schr\"odinger maps \cite{MRRim13} and the harmonic heat flow \cite{RScpam13, RSapde2014}; the semilinear heat equation \eqref{eq:she} in the energy critical \cite{Sjfa12} and supercritical \cite{Car16} cases; and the two dimensional Keller-Segel model \cite{RSma14,GMarx16}. In all these works, the method relies on two arguments:
\begin{itemize}
\item Reduction of an infinite dimensional problem to a finite dimensional one, through the derivation of suitable Lyapunov functionals and the robust energy method as mentioned in the two step procedure above.
\item The control of the finite dimensional problem thanks to a topological argument based on index theory.
\end{itemize}
Note that this kind of topological arguments has proved to be successful also for the construction of type I blowup solutions for the semilinear heat equation \eqref{eq:she} in \cite{BKnon94}, \cite{MZdm97}, \cite{NZens16} (see also \cite{NZsns16} for the case of logarithmic perturbations, \cite{Breiumj90}, \cite{Brejde92} and \cite{GNZpre16a} for the exponential source, \cite{NZcpde15} for the complex-valued case), the Ginzburg-Landau equation in \cite{MZjfa08} (see also \cite{ZAAihn98} for an earlier work), a non-variational parabolic system in \cite{GNZpre16c} and the semilinear wave equation in \cite{CZcpam13}.\\ 
Note also that here we don't use the topological argument because the blow-up is stable.   

For the reader's convenience and for a better explanation, let's first introduce notations used throughout this paper.\\
\noindent - \textbf{Notation.} We will fix $d=7$, for all the rest of the paper. Given a large integer $L \gg 1$, we set
\begin{equation}\label{def:kbb}
\eta = \frac{3}{4L}.
\end{equation}
Note that for $d=7$, $~\gamma(7)=2$ where $\gamma$ has been defined in \eqref{def:gamome}. 
Let $\chi \in \Cc_0^\infty([0, +\infty))$ be a positive nonincreasing cutoff function with $\text{supp}(\chi) \subset [0,2]$ and $\chi \equiv 1$ on $[0,1]$. For all $M > 0$, we define
\begin{equation}\label{def:chiM}
\chi_M(y) = \chi\left(\frac y M\right).
\end{equation}
Given $b_1 > 0$ and $\lambda > 0$, we define 
\begin{equation}\label{def:B0B1}
B_0 = \frac{C_\chi}{\sqrt b_1}, \quad B_1 = B_0^{1 + \eta},
\end{equation}
where 
\begin{equation}\label{defofCchi}
C_\chi=\frac{(1+\int_1^2\chi(x)dx)^2(1/3+\int_1^2x^2\chi(x)dx)}{(1/2+\int_1^2x\chi(x)dx)^3},
\end{equation}

and denote by
$$f_\lambda(r) = f(y) \quad \text{with} \quad y = \frac{r}{\lambda}.$$

We also introduce the differential operator 
$$\Lambda f = y\partial_y f,$$
and the Schr\"odinger operator
\begin{equation}\label{def:Lc}
\Ls  = -\partial_{yy} - \frac{6}{y}\partial_y  + \frac{Z}{y^2}, \quad \text{with}\;\; Z(y)= 6\cos(2Q(y)).
\end{equation}

\medskip

\noindent - \textbf{Strategy of the proof.}  We now summarize the main ideas of the proof of Theorem \ref{Theo:1}, which follows the route map of \cite{RSapde2014} and \cite{MRRcmj2016}.\\

\noindent $(i)$ \textit{Renormalized flow and iterated resonances.} Following the scaling invariance of \eqref{Pb}, let us make the change of variables
\begin{equation*}
w(y,s) = u(r,t), \quad y = \frac{r}{\lambda(t)}, \quad \frac{ds}{dt} = \frac{1}{\lambda^2(t)},
\end{equation*}
which leads to the following renormalized flow:
\begin{equation}\label{eq:wys_i}
\partial_sw = \partial_y^2 w + \frac{6}{y}\partial_y w  + b_1\Lambda w - \frac{3}{y^2}\sin(2w), \quad b_1 = -\frac{\lambda_s}{\lambda}.
\end{equation}
Assuming that the leading part of the solution $w(y,s)$ is given by the ground state profile $Q$ admitting the asymptotic behavior \eqref{exp:Qr}, hence the remaining part is governed by the Schr\"odinger operator $\Ls$ defined by \eqref{def:Lc}. The linear operator $\Ls$ admits the factorization (see Lemma \ref{lemm:factorL} below) 
\begin{equation}\label{eq:facL_i}
\Ls = \As^*\As, \quad \As f = -\Lambda Q \py \left(\frac{f}{\Lambda Q} \right), \quad \As^*f = \frac{1}{y^{6}\Lambda Q}\py\left(y^{6}\Lambda Q f\right), 
\end{equation}
which directly implies
$$\Ls(\Lambda Q) = 0,$$
where from a direct computation,
$$\Lambda Q \sim \frac{2a_0}{y^2} \quad \text{as} \quad y \to +\infty.$$
More generally, we can compute the kernel of the powers of $\Ls$ through the iterative scheme
\begin{equation}\label{def:Tk_i}
\Ls T_{k + 1} = - T_k, \quad T_0 = \Lambda Q,
\end{equation}
which displays a non trivial tail at infinity (see Lemma \ref{lemm:GenLk} below),
\begin{equation}\label{eq:Tk_i}
T_k(y) \sim c_ky^{2k - 2} \quad \text{for}\quad y \gg 1.
\end{equation}

\noindent $(ii)$ \textit{Tail dynamics.} Following the approach in \cite{RSapde2014}, we look for a slowly modulated approximate solution to \eqref{eq:wys_i} of the form 
$$w(y,s) = Q_{b(s)}(y),$$
where 
\begin{equation}\label{def:Qb_i}
b = (b_1, \cdots, b_L), \quad Q_{b(s)}(y) = Q(y) + \sum_{i = 1}^Lb_iT_i(y) + \sum_{i = 2}^{L+2}S_i(y)
\end{equation}
with a priori bounds
$$b_i \sim b_1^{i+\frac{1}{2}+\frac{\eta}{10}}, \quad |S_i(y)| \lesssim b_1^{i+\frac{1}{2}+\frac{\eta}{10}} y^{2i - 3},$$
so that $S_i$ is in some sense homogeneous of degree $i$ in $b_1$, and behaves better than $T_i$ at infinity. 
The construction of $S_i$ is called the \textit{tail computation}. Let us illustrate the procedure of the \textit{tail computation}. We plug the decomposition \eqref{def:Qb_i} into \eqref{eq:wys_i} and choose the law for $(b_i)_{1 \leq i \leq L}$ which cancels the  leading order terms at infinity.\\
- At the order $\Oc(b_1)$: we cannot adjust the law of $b_1$ for the first term \footnote{if $(b_1)_s = -c_1 b_1$, then $-\lambda_s/\lambda \sim b_1 \sim e^{-c_1 s}$, hence after an integration in time, $|\log \lambda| \lesssim 1$ and there is no blowup.} and obtain from \eqref{eq:wys_i}, 
$$b_1(\Ls T_1 + \Lambda Q) = 0.$$
- At the order $\Oc(b_1^2, b_2)$: We obtain
$$(b_1)_sT_1 + b_1^2\Lambda T_1 + b_2\Ls T_2 + \Ls S_2 = b_1^2 NL_1(T_1, Q),$$
where $NL_1(T_1, Q)$ corresponds to nonlinear interaction terms. Note from \eqref{eq:Tk_i} and \eqref{def:Tk_i}, we have
$$T_1\sim -C_0+\frac{C_1}{y} \quad\text{ for } y\gg 1,$$
and
$$\Lambda T_1 \sim \frac{-C_1}{y} \quad \text{for}\quad y \gg 1, \quad \Ls T_2 = - T_1.$$
In order to minimize the growth of the tails of the profiles $S_k$ at infinity we will introduce a correction.
Approximately $S_2$ will solve:
$$\Ls S_2= -\Big(\underbrace{\Lambda T_1+\frac{(b_1)_s}{b_1^2}T_1}_{\sim\theta_1}\Big)+l.o.t,$$
to minimize the growth of $S_2$ at infinity we need to cancel the growth $\frac{1}{y}$ of $\Lambda T_1$ at infinity. To do so, set $\frac{(b_1)_s}{b_1^2}=-C_{b_1}$. If we use $C_{b_1} T_1$ to cancel the growth $\frac{1}{y}$ of $\Lambda T_1$ in the parabolic zone $y=\frac{1}{\sqrt{b_1}}$ formally by using that $T_1=-C_0+\dfrac{C_1}{y} + \Oc\left(\frac{|\log(y)|}{y^2}\right)\;\; \text{as} \;\; y \to + \infty$ we deduce that $C_{b_1}\sim C\sqrt{b_1}$, where $C>0$ is a constant related to the asymptotic behaviour of $Q$.
Hence, 
$$(b_1)_sT_1 + b_1^2\Lambda T_1 + b_2\Ls T_2 \sim \big[(b_1)_s + b_1^{\frac 52} - b_2\big]T_1.$$
It follows that the leading order growth for $y$ large is canceled by the choice 
$$(b_1)_s + b_1^{\frac 52} - b_2 = 0.$$
We then solve for 
$$\Ls S_2 = -b_1^2(\Lambda T_1 - C\sqrt{b_1}T_1) + b_1^2 NL_1(T_1, Q),$$
and check the improved decay 
$$|S_2(y)| \lesssim b_1^2y^{-1} \quad \text{for} \quad y \gg 1.$$
- At the order $\Oc(b_1^{k+1}, b_{k+1})$: we obtain an elliptic equation of the form
$$(b_k)_sT_k + b_1b_k\Lambda T_k + b_{k + 1}\Ls T_{k+1} + \Ls S_{k+1} = b_1^{k+1}NL_k(T_1, \cdots, T_k, Q).$$ 
From \eqref{eq:Tk_i} and \eqref{def:Tk_i}, we have 
$$(b_k)_sT_k + b_1b_k\Lambda T_k + b_{k + 1}\Ls T_{k+1} \sim \big[(b_k)_s + (2k - 2+C\sqrt{b_1})b_1b_k - b_{k+1}\big]T_{k},$$
which leads to the choice 
$$(b_k)_s + (2k - 2+C\sqrt{b_1})b_1b_k - b_{k+1} = 0,$$
for the cancellation of the leading order growth at infinity.
We then solve for the remaining $S_{k+1}$ term and check that $|S_{k+1}(y)|\lesssim b_1^{k+\frac{3}{2}}y^{2 k - 3}$ for $y$ large. We refer to Proposition \ref{prop:1} for all details of the \textit{tail computation}.\\

\noindent $(iii)$ \textit{The universal system of ODEs.} The above procedure leads to the following universal system of ODEs after $L$ iterations,
\begin{equation}\label{sys:bk_i}
\left\{ \begin{array}{l}
(b_k)_s + (2k - 2+C\sqrt{b_1})b_1b_k - b_{k+1} = 0, \quad 1 \leq k \leq L, \quad b_{L+1} = 0,\\
\quad \\
-\dfrac{\lambda_s}{\lambda} = b_1, \quad \dfrac{ds}{dt} = \frac{1}{\lambda^2}.
\end{array}\right.
\end{equation}
Unlike the critical case treated in \cite{RSapde2014}, there is no further logarithmic correction to take into account, and unlike the cases $d\geq 8,\quad \ell\geq1$ or $d=7,\quad\ell>1$ there are no unstable directions in the ODE system, and the key point here is the introduction of this $\sqrt{b_1}$ correction. 
Indeed, the solutions to \eqref{sys:bk_i} will behave approximately as 
\begin{equation}\label{eq:solbk_i}
\left\{\begin{array}{l}
b_1(s)\sim \frac{1}{s^{\frac 23}},\quad\quad
b_k(s) \sim \frac{1}{s^{\frac{2k}{3}+\frac 13}}, \quad 2 \leq k \leq L,\\
\lambda(s) \sim e^{-3s^{\frac 13}}.
\end{array}\right.
\end{equation}
In the original time variable $t$, this implies that $\lambda(t)$ goes to zero in finite time $T$ with the asymptotic 
$$\lambda(t) \sim \frac{\sqrt{T-t}}{|\log(T-t)|}.$$

\noindent $(iv)$ \textit{Decomposition of the flow and modulation equations.} Let the approximate solution $Q_b$ be given by \eqref{def:Qb_i} which by construction generates an approximate solution to the renormalized flow \eqref{eq:wys_i},
$$\Psi_b = \ps Q_b - \Delta Q_b + b \Lambda Q_b + \frac{3}{y^2}\sin(2Q_b) = Mod(t) + O(b_1^{2L + 2+\frac 32}),$$
where the modulation equation term is roughly of the form
$$Mod(t) = \sum_{i = 1}^L \big[(b_i)_s + (2i - 2+C\sqrt{b_1})b_1b_i - b_{i+1}\big]T_i.$$
We localize $Q_b$ in the zone $y \leq B_1$ to avoid the irrelevant growing tails for $y \gg \frac{1}{\sqrt{b_1}}$. We then take initial data of the form
$$u_0(y) = Q_{b(0)}(y) + q_0(y),$$
where $q_0$ is small in some suitable sense and $b(0)$  is chosen to be close to the exact solution \eqref{eq:solbk_i}. By a standard modulation argument, we introduce the decomposition of the flow
\begin{equation}\label{eq:dec_i}
u(r,t) = w(y,s) = \big(Q_{b(s)} + q\big)(y,s) = \big(Q_{b(t)} + v\big)\left(\frac{r}{\lambda(t)},t\right),
\end{equation}
where $L+1$ modulation parameters $(b(t), \lambda(t))$ are chosen in order to manufacture the orthogonality conditions:
\begin{equation}\label{eq:orh_i}
\left<q, \Ls^i \Phi_M \right> = 0, \quad 0 \leq i \leq L,
\end{equation}
where $\Phi_M$ (see \eqref{def:PhiM}) is some fixed direction depending on some large constant $M$, generating an approximation of the kernel of the powers of $\Ls$. This orthogonal decomposition \eqref{eq:dec_i}, which follows from the implicit function theorem, allows us to compute the modulation equations governing the parameters $(b(t), \lambda(t))$ (see Lemmas \ref{lemm:mod1} and \ref{lemm:mod2} below),
\begin{equation}\label{eq:mod_i}
\left|\frac{\lambda_s}{\lambda} + b_1\right| +  \sum_{i = 1}^L \big|(b_i)_s + (2i - 2+C\sqrt{b_1})b_1b_i - b_{i+1}\big| \lesssim \|q\|_{\Hs_{2\Bbbk}} + b_1^{L + 1 + \frac 34 +\frac{\eta}{4}},
\end{equation}
where 
\begin{align}\label{locnorm}
\|q\|_{\Hs_{2\Bbbk}}:= \int |\Ls^\Bbbk f|^2 + \int \frac{|\As (\Ls^{\Bbbk - 1} f)|^2}{y^2} + \sum_{m = 0}^{\Bbbk - 1} \int \frac{|\Ls^m f|^2}{y^4(1 + y^{4(\Bbbk - 1 - m)})} + \sum_{m = 0}^{\Bbbk-2}\frac{|\As(\Ls^{m}f)|^2}{y^6(1 + y^{4(\Bbbk - m - 2)})}.
\end{align}
measures a spatially localized norm of the error $q$.\\

\noindent $(v)$ \textit{Control of Sobolev norms.} According to \eqref{eq:mod_i}, we need to show that local norms of $q$ are under control and do not perturb the dynamical system \eqref{sys:bk_i}. This is achieved via high order mixed energy estimates which provide controls of the Sobolev norms adapted to the linear flow and based on the iterative powers of the linear operator $\Ls$. In particular, we have the following coercivity of the high energy under the orthogonality conditions \eqref{eq:orh_i} (see Lemma \ref{lemm:coeLk} and Lemma \ref{lemm:interbounds}),
$$\Es_{2L+2}(s)  = \int |\Ls^{L+1} q|^2 \gtrsim \int |\nabla^{2L+2} q|^2 + \int \frac{|q|^2}{1 + y^{4L+4}},$$
where $L+1$ is given by \eqref{def:kbb}. Here the factorization \eqref{eq:facL_i} will help to simplify the proof. As in \cite{RRmihes12}, \cite{RSapde2014} and \cite{MRRcmj2016}, the control of $\Es_{2L+2}$ is done through the use of the linearized equation in the original variables $(r,t)$, i.e. we work with $v$ in \eqref{eq:dec_i} and not $q$. The energy estimate is of the form (see Proposition \ref{prop:E2k})
\begin{equation}\label{eq:Ek_i}
\frac{d}{ds} \left\{\frac{\Es_{2L+2}}{\lambda^{4L-1}}\right\} \lesssim \frac{b_1^{2L + 1 + \frac{3}{2}+\frac{\eta}{4}}}{\lambda^{4L-1}},
\end{equation}
where the right hand side is controlled by the size of the error $\Psi_b$ in the construction of the approximate profile $Q_b$ above. An integration of \eqref{eq:Ek_i} in time by using initial smallness assumptions, $b_1 \sim \frac{1}{s^{\frac 23}}$ and $\lambda(s) \sim e^{-3s^{\frac 13}}$ yields the estimate
$$
\int |\nabla^{2L+2} q|^2 + \int \frac{|q|^2}{1 + y^{4L+4}} \lesssim \Es_{2L+2}(s) \lesssim b_1^{2L +\frac{3}{2}+\frac{\eta}{4}},$$
which is good enough to control the local norms of $q$ and close the modulation equations \eqref{eq:mod_i}.

Note that we also need to control lower energies $\Es_{2m}$ for $2 \leq m \leq L$ because the control of the high energy $\Es_{2L+2}$ alone is not enough to handle the nonlinear term. In particular, we exhibit a Lyapunov functional with the dynamical estimate
$$\frac{d}{ds}\left\{\frac{\Es_{2m}}{\lambda^{4m - 7}}\right\} \lesssim \frac{b_1^{2(m-1) +1+(\frac{1}{2}+2m)\frac{\eta}{2}}}{\lambda^{4m - 7}},$$
then, an integration in time yields
$$ \Es_{2m}(s) \lesssim b_1^{2(m-1) +(\frac{1}{2}+2m)\frac{\eta}{2}},$$
which is enough to control the nonlinear term. Let us remark that the condition $m \geq 2$ ensures $4m - 7 > 0$ so that $\Es_{2m}$ is always controlled.\\

The above scheme designs a bootstrap regime (see Definition \ref{def:1} for a precise definition) which traps blowup solution with speed \eqref{eq:quanblrate}, and the proof of Theorem \ref{Theo:1} follows.\\

\bigskip 

The paper is organized as follows. In Section \ref{sec:2}, we give the construction of the approximate solution $Q_b$ of \eqref{Pb} and derive estimates on the generated error term $\Psi_b$ (Proposition \ref{prop:1}) as well as its localization (Proposition \ref{prop:localProfile}). In Section \ref{sec:3}, we set up the bootstrap argument, and in the last section we close the bootsrap bounds which will imply Theorem \ref{Theo:1}.

\section{Construction of an approximate profile.}\label{sec:2}
This section is devoted to the construction of a suitable approximate solution to \eqref{Pb}  by using the same approach developed in \cite{RRmihes12}. Similar approachs can also be found in \cite{RScpam13}, \cite{HRapde12}, \cite{RSma14}, \cite{Sjfa12}, \cite{MRRcmj2016}. The key to this construction is the fact that the linearized operator $\Ls$ around $Q$ is completely explicit in the radial setting thanks to the explicit formulas of the kernel elements.

Following the scaling invariance of \eqref{Pb}, we introduce the following change of variables:
\begin{equation}\label{def:simiVars}
w(y,s) = u(r,t), \quad y = \frac{r}{\lambda(t)}, \quad \frac{ds}{dt} = \frac{1}{\lambda^2(t)},
\end{equation}
which leads to the following renormalized flow:
\begin{equation}\label{eq:wys}
\partial_sw = \partial_y^2 w + \frac{6}{y}\partial_y w  -\frac{\lambda_s}{\lambda}\Lambda w - \frac{3}{y^2}\sin(2w),
\end{equation}
where $\lambda_s = \frac{d\lambda}{ds}$. Noticing that in the setting \eqref{def:simiVars}, we have 
$$\partial_r u(r,t) = \frac{1}{\lambda(t)}\partial_y w(y,s)$$
and since we deal with the finite time blowup of the problem \eqref{Pb}, we would naturally impose the condition 
$$\lambda(t) \to 0 \quad \text{as} \quad t \to T,$$
for some $T \in (0, +\infty)$. Hence, $\partial_r u(r,t)$ blows up in finite time $T$.

Let us assume that the leading part of the solution of \eqref{eq:wys} is given by the harmonic map $Q$, which is the unique  solution (up to scaling) of the equation
\begin{equation}\label{eq:Qy}
Q'' + \frac{6}{y}Q' - \frac{3}{y^2}\sin(2Q) = 0, \quad Q(0) = 0, \; Q'(0) = 1.
\end{equation}
We aim at constructing an approximate solution of \eqref{eq:wys} close to $Q$. The natural way is to linearize equation \eqref{eq:wys} around $Q$, which generates the Schr\"odinger operator defined by \eqref{def:Lc}. Let us now recall the main properties of $\Ls$ in the following subsection.

\subsection{Structure of the linearized Hamiltonian.}
In this subsection, we recall the main properties of the linearized Hamiltonian close to $Q$, which is the heart of both construction of the approximate profile and the derivation of the coercivity properties serving for the high Sobolev energy estimates. Let us start by recalling the following result from Biernat \cite{BIEnon2015}, which gives the asymptotic behavior of the harmonic map $Q$:
\begin{lemma}[Development of the harmonic map $Q$] Let $d=7$, there exists a unique solution $Q$ to equation \eqref{eq:Qy}, which admits the following asymptotic behavior: For any $k \in\N^*$,\\
$(i)$ (Asymptotic behavior of $Q$) 
\begin{equation}\label{eq:asymQ}
Q(y) = \left\{\begin{array}{ll}
y + \sum \limits_{i = 1}^kc_iy^{2i + 1} + \Oc(y^{2k + 3}) &\text{as}\quad y \to 0, \\
&\\
\dfrac{\pi}{2} - \dfrac{a_0}{ y^2}+\dfrac{a_1}{y^3}+ \Oc\left(\dfrac 1{y^{4}}\right)\quad &\text{as} \quad y \to + \infty,
\end{array}
\right.
\end{equation}
where $a_0 > 0$ and $a_1>0$.\\
$(ii)$ (Degeneracy)
\begin{equation}\label{eq:asymLamQ}
\Lambda Q > 0, \quad \Lambda Q(y) = \left\{\begin{array}{ll}
y + \sum\limits_{i = 1}^kc_i' y^{2i + 1} + \Oc(y^{2k + 3}) &\text{as}\quad y \to 0, \\
&\\
\dfrac{2a_0}{ y^2}-\dfrac {3a_1}{y^3} + \Oc\left(\dfrac 1{y^4}\right)\quad &\text{as} \quad y \to + \infty,
\end{array}
\right.
\end{equation}
\end{lemma}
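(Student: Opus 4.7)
The plan is to establish both the near-origin and far-field expansions for $Q$ and then differentiate to obtain the corresponding expansions for $\Lambda Q = y Q'$. The existence of a solution to \eqref{eq:Qy} with the prescribed initial conditions and with $Q(y) \to \pi/2$ as $y \to \infty$ is referenced from the appendix of \cite{BIEnon2015}, so the task here is purely to extract the precise asymptotic form of the two expansions.

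For the behavior as $y \to 0$, observe that \eqref{eq:Qy} has a regular singular point at the origin and that the initial conditions $Q(0) = 0$, $Q'(0) = 1$ select a unique analytic solution (the linearization near $0$ is $Q'' + \tfrac{6}{y} Q' - \tfrac{6}{y^2} Q = 0$ with indicial roots $1$ and $-6$, and the condition $Q'(0)=1$ picks out the regular branch). Since \eqref{eq:Qy} is invariant under $Q \mapsto -Q$, by uniqueness this analytic solution must be odd in $y$, so one can write $Q(y) = y + \sum_{i \geq 1} c_i y^{2i+1}$. Substituting this ansatz into \eqref{eq:Qy}, expanding $\sin(2Q) = 2Q - \tfrac{(2Q)^3}{6} + \cdots$ as a power series, and matching coefficients of $y^{2i-1}$ yields a linear recursion determining each $c_i$ in terms of $c_0, \ldots, c_{i-1}$. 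Truncation at $i=k$ gives the claimed expansion with remainder $\Oc(y^{2k+3})$.

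For the behavior as $y \to +\infty$, set $v(y) = \pi/2 - Q(y)$. Using the identity $\sin(2(\pi/2 - v)) = \sin(2v)$, equation \eqref{eq:Qy} becomes
$$v'' + \frac{6}{y} v' + \frac{3}{y^2} \sin(2v) = 0.$$
Linearizing around $v = 0$ yields the Euler-type equation $v'' + \tfrac{6}{y} v' + \tfrac{6}{y^2} v = 0$ with indicial equation $\alpha^2 + 5\alpha + 6 = 0$, whose roots are $\alpha = -2$ and $\alpha = -3$; these are exactly the decay exponents predicted by \eqref{def:gamome} for $d = 7$. Substituting the ansatz $v = a_0 y^{-2} + b(y)$ with $b = o(y^{-2})$ into the nonlinear equation, and observing that the cubic contribution $\sin(2v) - 2v = \Oc(v^3) = \Oc(y^{-6})$ is subleading, one sees that $b$ is governed at the next order by the same homogeneous linear equation and is forced into the form $b = -a_1 y^{-3} + \Oc(y^{-4})$. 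The remainder bound is justified by a standard variation-of-parameters / contraction-mapping argument in the weighted norm $\sup_{y \geq R} y^4 |\cdot|$ for $R$ large, and the scheme can be iterated to generate higher-order expansions if needed. The positivity $a_0 > 0$ and $a_1 > 0$ is a byproduct of the concrete shooting construction of $Q$ in \cite{BIEnon2015}.

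The expansions for $\Lambda Q = yQ'$ then follow by term-by-term differentiation of the two expansions for $Q$; in particular $c_i' = (2i+1) c_i$ near $0$, and at infinity the coefficients $2a_0, -3a_1$ in \eqref{eq:asymLamQ} come directly from differentiating $-a_0 y^{-2} + a_1 y^{-3}$ and multiplying by $y$. The global positivity $\Lambda Q > 0$ on $(0, +\infty)$ follows from the monotonicity $Q'(y) > 0$ established along the shooting trajectory in \cite{BIEnon2015}. The only genuinely technical step is the careful control of the remainders $\Oc(y^{-4})$ at infinity, where one has to balance the two slowly-decaying Euler modes $y^{-2}, y^{-3}$ against the nonlinear self-interaction from $\sin(2v)$; this is routine asymptotic ODE analysis but requires clean bookkeeping of the successive corrections.
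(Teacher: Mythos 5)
Your proof is correct, but it takes a genuinely different route from the paper's. The paper performs the logarithmic change of variables $x=\log y$, $v(x)=2Q(y)-\pi$, which turns \eqref{eq:Qy} into an autonomous second-order equation (a damped pendulum) and then cites the phase-portrait analysis of pp.~184--185 of \cite{BIEnon2015}; the asymptotics at $y\to\infty$ are read off from the linearization of that autonomous system about the stable equilibrium, and the whole argument (including $a_0>0$, and, with an easy adaptation, $a_1>0$) is delegated to Biernat. You instead stay in the radial variable and run the classical Frobenius/Euler asymptotic ODE analysis directly: at the origin you extract the analytic odd branch from the indicial roots $1$ and $-6$ and a power-series recursion, and at infinity you set $v=\pi/2-Q$, read off the characteristic exponents $-2,-3$ of the Euler linearization (consistent with $\gamma(7)=2$), and iterate a variation-of-parameters / weighted contraction estimate to control the remainder, noting that $\sin(2v)-2v=O(v^3)=O(y^{-6})$ is subleading. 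Both approaches ultimately defer the substantive existence/uniqueness/sign facts ($Q$ exists, $Q'>0$, $a_0>0$, $a_1>0$) to \cite{BIEnon2015}, so the actual mathematical content added here is just the bookkeeping of the expansions. Your approach is arguably more elementary and self-contained at the level of the asymptotic form, since it does not require the reader to re-derive the autonomous phase-plane picture; the paper's approach buys a cleaner global picture (monotonicity and convergence to the equilibrium come from the phase portrait) at the cost of another change of variables. The one place your write-up is a little terse is the step promoting $v\to 0$ to $v=O(y^{-2})$ before the iteration can start — this requires a short bootstrap through the variation-of-parameters formula (each pass improves the decay rate until it saturates at the Euler exponent $-2$) — but this is the standard argument and you have flagged the contraction-mapping step, so it is an acceptable level of detail for a proof that is already citing Biernat for the core facts.
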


\begin{proof} The proof of \eqref{eq:asymQ} is done through the introduction of the variables $x = \log y$ and $v(x) = 2Q(y) - \pi$ and consists of the phase portrait analysis of the autonomous equation 
$$v''(x) + 5v'(x) + 5\sin(v(x)) = 0.$$
All details of the proof can be found at pages 184-185 in \cite{BIEnon2015}. The proof of \eqref{eq:asymLamQ} directly follows from the expansion \eqref{eq:asymQ}. However, the author was not interested of proving that $a_1>0$. Nevertheless, his proof for $a_0>0$ can easily be adapted to prove that $a_1>0$.
\end{proof}

The linearized operator $\Ls$ displays a remarkable structure given by the following lemma:
\begin{lemma}[Factorization of $\Ls$] \label{lemm:factorL}  Let $d=7$ and define the first order operators
\begin{align}
\As w  &= -\partial_y w + \frac{V}{y}w = - \Lambda Q \partial_y \left(\frac{w}{\Lambda Q}\right), \label{def:As}\\ 
\As^* w &= \frac{1}{y^6}\partial_y \big(y^6w\big) + \frac{V}{y}w  =  \frac{1}{y^6\Lambda Q} \partial_y \left(y^6 \Lambda Q w\right),\label{def:Astar}
\end{align}
where
\begin{equation}\label{eq:asympV}
V(y) := \Lambda \log(\Lambda Q) =  \left\{\begin{array}{ll}
1 + \Oc(y^2)\quad &\text{as}\quad y \to 0, \\
&\\
-2 + \Oc\left(\dfrac 1{y^{2}}\right)+ \Oc\left(\dfrac 1{y}\right)\quad &\text{as} \quad y \to + \infty.
\end{array}
\right.
\end{equation}
We have
\begin{equation}\label{eq:reLAAst}
\Ls = \As^* \As, \quad \tilde{\Ls} = \As \As^*,
\end{equation}
where $\tilde{\Ls}$ stands for the conjugate Hamiltonian.
\end{lemma}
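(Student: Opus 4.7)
The plan is to verify the three assertions in the lemma in order of increasing depth: the equivalent forms for $\As$ and $\As^*$, the asymptotic expansion of $V$, and the factorizations $\Ls = \As^*\As$ and $\tilde\Ls = \As \As^*$. The key observation is that the third item reduces, after an algebraic reorganization, to the identity $\Ls(\Lambda Q) = 0$, which itself follows by applying the scaling generator $\Lambda$ to the stationary equation \eqref{eq:Qy}.

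The two equivalent forms for $\As$ and $\As^*$ are direct applications of the Leibniz rule. Writing $V = y (\Lambda Q)'/\Lambda Q$, one has $(\Lambda Q)'/\Lambda Q = V/y$, so
$$-\Lambda Q\, \partial_y\!\left(\frac{w}{\Lambda Q}\right) = -\partial_y w + \frac{(\Lambda Q)'}{\Lambda Q}\, w = -\partial_y w + \frac{V}{y}\, w,$$
and symmetrically $\frac{1}{y^6 \Lambda Q}\partial_y\bigl(y^6 \Lambda Q\, w\bigr) = \partial_y w + 6w/y + Vw/y = \frac{1}{y^6}\partial_y(y^6 w) + (V/y) w$. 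The asymptotics of $V$ follow by substituting the expansion \eqref{eq:asymLamQ}: near $0$, $\Lambda Q = y + \Oc(y^3)$ yields $\log \Lambda Q = \log y + \Oc(y^2)$ and hence $V = 1 + \Oc(y^2)$; at infinity, $\Lambda Q = 2a_0/y^2 + \Oc(1/y^3)$ gives $\log \Lambda Q = \mathrm{const} - 2\log y + \Oc(1/y)$, hence $V = -2 + \Oc(1/y)$, matching the stated form.

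The substantive step is the factorization itself. I would compute $\As^*\As w$ by composing $\As w = -\partial_y w + (V/y) w$ with $\As^* u = \partial_y u + (6/y) u + (V/y) u$. The second-order part assembles to $-\partial_{yy} w$; for the first-order part, the $(V/y)\partial_y w$ contributions produced by the chain rule inside $\partial_y u$ and by $(V/y) u$ cancel, leaving exactly $-(6/y)\partial_y w$; and the zeroth-order coefficient is
$$\frac{1}{y^2}\bigl(yV' + 5V + V^2\bigr).$$
Matching $\Ls$ therefore reduces the lemma to the scalar identity $yV' + 5V + V^2 = 6\cos(2Q)$. Applying the quotient rule to $V = y (\Lambda Q)'/\Lambda Q$ gives $yV' + V^2 = V + y^2 (\Lambda Q)''/\Lambda Q$, so the identity is equivalent to
$$y^2 (\Lambda Q)'' + 6 y (\Lambda Q)' = 6\cos(2Q)\, \Lambda Q,$$
which is precisely $\Ls(\Lambda Q) = 0$ after clearing the denominator $y^2/\Lambda Q$. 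This vanishing is obtained by applying $\Lambda$ to \eqref{eq:Qy} and using $\Lambda \sin(2Q) = 2\cos(2Q)\, \Lambda Q$ together with the commutator of $\Lambda$ with the radial Laplacian in dimension $7$.

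The conjugate factorization $\tilde \Ls = \As \As^*$ is established by the same mechanical composition with the order of the first-order operators reversed and does not require any new input beyond the identities derived above. The main obstacle is not any individual step but the bookkeeping of the algebraic reduction: one must carefully track how the cross $(V/y)\partial_y w$ contributions cancel in $\As^*\As$ and how the nonlinear combination $yV' + 5V + V^2$ assembles, and then recognize it as the scaling-invariant rewriting of $\Ls(\Lambda Q) = 0$. Once this identification is in place, the remaining verifications are routine.
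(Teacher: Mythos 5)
Your proposal is correct: the equivalent forms of $\As$, $\As^*$ and the asymptotics of $V$ follow by direct computation from \eqref{eq:asymLamQ}, and the factorization reduces to the scalar identity $Z = \Lambda V + 5V + V^2$, which is exactly the relation the paper records in the remark following the lemma (equation \eqref{def:ZbyV}) and which, as you observe, is equivalent to $\Ls(\Lambda Q) = 0$. The paper leaves the lemma unproved, but the implicit route is the same one you take, so there is no substantive difference.
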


\begin{remark} The adjoint operator $\As^*$ is defined with respect to the Lebesgue measure
$$\int _0^{+\infty}(\As u) w y^{6}dy = \int_{0}^{+\infty} u(\As^* w)y^{6}dy.$$
\end{remark}

\begin{remark} We have 
\begin{equation}\label{eq:relLsLam}
\Ls(\Lambda w) = \Lambda (\Ls w) + 2 \Ls w - \frac{\Lambda Z}{y^2}w.
\end{equation}
Since $\Ls(\Lambda Q) = 0$, one can express the definition of $Z$ through the potential $V$ as follows: 
\begin{equation}\label{def:ZbyV}
Z(y) = V^2 + \Lambda V + 5V.
\end{equation}
Let $\tilde{Z}$ be defined by 
\begin{equation}\label{def:LstilbyZtil}
\tilde{\Ls} = -\partial_{yy} - \frac{6}{y}\py + \frac{\tilde{Z}}{y^2},
\end{equation}
then, a direct computation yields
\begin{equation}\label{def:ZtilbyV}
\tilde{Z}(y) = (V + 1)^2 + 5(V+1) - \Lambda V.
\end{equation}
\end{remark}

\bigskip

\noindent From \eqref{def:As} and \eqref{def:Astar}, we see that the kernel of $\As$ and $\As^*$ are explicit:
$$\left\{\begin{array}{ll}
\As w = 0 & \quad \text{if and only if}\quad w \in \text{Span}(\Lambda Q),\\
\As^* w = 0& \quad \text{if and only if}\quad w \in \text{Span}\left(\frac 1{y^{6}\Lambda Q}\right).\\
\end{array}\right.$$
Hence, the elements of the kernel of $\Ls$ are given by 
\begin{equation}\label{eq:kernalLc}
\Ls w = 0 \quad \text{if and only if}\quad w \in \text{Span}(\Lambda Q, \Gamma),
\end{equation}
where $\Gamma$ can be found from the Wronskian relation
\begin{equation}\label{eq:relWrons}
\Gamma' \Lambda Q - \Gamma (\Lambda Q)' = \frac{1}{y^{6}},
\end{equation}
that is
\begin{equation*}
\Gamma(y) = \Lambda Q(y) \int_1^y \frac{d\xi}{\xi^{6} (\Lambda Q(\xi))^2},
\end{equation*}
which admits the asymptotic behavior:
\begin{equation}\label{eq:asymGamma}
\Gamma(y) = \left\{\begin{array}{ll}
\dfrac{1}{7 y^{6}} + \Oc(y)\;\; &\text{as}\;\; y \to 0, \\
&\\
\dfrac{1}{2a_0y^3} + \Oc\left(\dfrac 1{y^5}\right)\;\; &\text{as} \;\; y \to + \infty,
\end{array}
\right.
\end{equation}
From \eqref{eq:kernalLc}, we may formally invert $\Ls$ as follows:
\begin{equation}\label{eq:invLc}
\Ls^{-1}f =  -\Gamma(y)\int_0^y f(x)\Lambda Q(x) x^{6}dx + \Lambda Q(y)\int_0^y f(x) \Gamma(x)x^{6}dx. 
\end{equation}
We define the following adapted derivatives of a function $f$ for all $i\geq 0$:
\begin{eqnarray}\label{adapt_deriv}
f_0=f,\quad f_{i+1}=\left\{\begin{array}{ll}
\As^*f_i\quad \mbox{ if } i \mbox{ is odd }\\
\As f_i\quad \mbox{ if } i \mbox{ is even }.
\end{array}
\right.
\end{eqnarray}
We introduce the following formal notation:
\begin{eqnarray}\label{Adef}
f_i=A^if_0=\left\{\begin{array}{ll}
\As\Ls^if_0\quad \mbox{ if } i \mbox{ is odd }\\
\Ls^i f_0\quad \mbox{ if } i \mbox{ is even }.
\end{array}
\right.
\end{eqnarray}
To avoid confusion we insist on the fact that this notation will be used only for the letters $f$, $v$, and $q$. If the notation $_i$ is used with an other letter it will have another meaning.
The factorization of $\Ls$ allows us to compute $\Ls^{-1}$ in an elementary two step processe that will help us to avoid tracking the cancellation in the formula \eqref{eq:invLc} induced by the Wronskian relation when estimating the growth of $\Ls^{-1}f$. In particular, we have the following:
\begin{lemma}[Inversion of $\Ls$] \label{lemm:inversionL} Let  $f$ be a $\Cc^\infty$ radially symmetric function and $w = \Ls^{-1}f$ be given by \eqref{eq:invLc}, then 
\begin{equation}\label{eq:relaAL}
\Ls w = f, \quad \As w = \frac{1}{y^{6}\Lambda Q} \int_0^yf(x) \Lambda Q(x) x^6 dx, \quad w = -\Lambda Q\int_0^y\frac{\As w(x)}{\Lambda Q(x)}dx.
\end{equation}
\end{lemma}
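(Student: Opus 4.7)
The plan is to verify the three identities of \eqref{eq:relaAL} directly from the explicit formula \eqref{eq:invLc}, exploiting the factorization $\Ls = \As^*\As$ from Lemma \ref{lemm:factorL} and the Wronskian relation \eqref{eq:relWrons}. The three identities will be established in the order $\As w$, then $\Ls w = f$, then the final reconstruction formula for $w$, because each step feeds the next.

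First, I would compute $\As w$ using the factored form $\As w = -\Lambda Q\,\partial_y(w/\Lambda Q)$ from \eqref{def:As}. Dividing \eqref{eq:invLc} by $\Lambda Q$ and differentiating, the two boundary terms produced by the fundamental theorem of calculus are
$$-\frac{\Gamma}{\Lambda Q}\cdot f\Lambda Q\,y^6 \quad \text{and} \quad f\Gamma\, y^6,$$
which cancel exactly. What remains is
$$\partial_y\!\left(\frac{w}{\Lambda Q}\right) = -\partial_y\!\left(\frac{\Gamma}{\Lambda Q}\right)\int_0^y f(x)\Lambda Q(x)x^6\,dx,$$
and the Wronskian relation \eqref{eq:relWrons} rewrites the prefactor as $\partial_y(\Gamma/\Lambda Q) = 1/(y^6(\Lambda Q)^2)$. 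Multiplying by $-\Lambda Q$ yields the claimed expression for $\As w$. Applying $\As^*$ in the divergence form \eqref{def:Astar} then gives $\As^*(\As w) = \frac{1}{y^6\Lambda Q}\,\partial_y(y^6\Lambda Q\cdot \As w) = \frac{1}{y^6\Lambda Q}\,\partial_y\!\left(\int_0^y f\Lambda Q\,x^6\,dx\right) = f$, proving $\Ls w = f$.

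For the last identity, I invert the relation $\As w = -\Lambda Q\,\partial_y(w/\Lambda Q)$ by integrating from $0$. The required boundary condition $w/\Lambda Q \to 0$ as $y\to 0^+$ is obtained from the asymptotics \eqref{eq:asymLamQ} and \eqref{eq:asymGamma}: for smooth $f$, the first term in \eqref{eq:invLc} behaves like $y^{-6}\cdot y^8 = y^2$ and the second like $y\cdot y = y^2$, whereas $\Lambda Q(y)\sim y$; hence $w/\Lambda Q = O(y)$ near the origin. Integrating the identity $\partial_y(w/\Lambda Q) = -\As w/\Lambda Q$ from $0$ to $y$ produces the third formula in \eqref{eq:relaAL}. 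The only mildly delicate points are the boundary-term cancellation in the first step and the vanishing of $w/\Lambda Q$ at the origin in the third, but both are direct consequences of the explicit asymptotics already recorded; no genuine obstacle is anticipated, since the Wronskian relation \eqref{eq:relWrons} is tailored to make precisely this inversion scheme work.
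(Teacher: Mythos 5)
Your proposal is correct and follows essentially the same route as the paper: both exploit the Wronskian relation \eqref{eq:relWrons} to compute $\As$ applied to the variation-of-parameters formula \eqref{eq:invLc} (you unpack $\As w=-\Lambda Q\,\partial_y(w/\Lambda Q)$ directly, the paper packages the same cancellation via $\As\Gamma=-1/(y^6\Lambda Q)$ and $\As(\Lambda Q)=0$, which is algebraically identical), and both then recover $w$ by integrating the resulting first-order identity. Your write-up is slightly more complete than the paper's, in that you explicitly verify $\As^*\As w=f$ and check the vanishing of $w/\Lambda Q$ at the origin from the asymptotics \eqref{eq:asymLamQ} and \eqref{eq:asymGamma}, but no new idea is involved.
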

\begin{proof} From the relation \eqref{eq:relWrons}, we compute
$$\As \Gamma = - \frac{1}{y^{6}\Lambda Q}.$$
Applying $\As$ to \eqref{eq:invLc} and using the cancellation $\As (\Lambda Q) = 0$, we obtain
$$\As w = \frac{1}{y^{6}\Lambda Q} \int_0^yf(x) \Lambda Q(x) x^{6}dx.$$
From the definition \eqref{def:As} of $\As$, we write
$$w = - \Lambda Q \int_0^y \frac{\As w}{\Lambda Q} dx.$$
This concludes the proof of Lemma \ref{lemm:inversionL}.
\end{proof}

\subsection{Admissible functions.} We define a class of admissible functions which displays a suitable behavior both at the origin and infinity.
\begin{definition}[Admissible function] \label{def:Admitfunc} Fix $\gamma > 0$, we say that a smooth function $f \in \Cc^\infty(\Rb_+, \Rb)$ is admissible of degree $(p_1, p_2) \in \mathbb{N} \times \mathbb{Z}$ if \\
$(i)\;$ $f$ admits a Taylor expansion to all orders around the origin, 
 $$f(y) = \sum_{k = p_1}^p c_ky^{2k + 1} + \Oc(y^{2p + 3});$$
$(ii)\;$ $f$ and its derivatives admit the bounds, for $y \geq 1$,
$$\forall k \in \mathbb{N}, \quad |\partial^k_y f(y)| \lesssim y^{2p_2 -2- k}.$$
\end{definition}
\begin{remark} Note from \eqref{eq:asymLamQ} that $\Lambda Q$ is admissible of degree $(0,0)$.
\end{remark}

One note that $\Ls$ naturally acts on the class of admissible function in the following way:
\begin{lemma}[Action of $\Ls$ and $\Ls^{-1}$ on admissible functions] \label{lemm:actionLL} Let $f$ be an admissible function of degree $(p_1, p_2) \in \mathbb{N} \times \mathbb{Z}$, then:\\
$(i)\;$ $\Lambda f$ is admissible of degree $(p_1, p_2)$.\\
$(ii)\,$ $\Ls f$ is admissible of degree $(\max\{0,p_1 - 1\}, p_2 - 1)$.\\
$(iii)\,$ $\Ls^{-1}f$ is admissible of degree $(p_1 + 1, p_2 + 1)$.
\end{lemma}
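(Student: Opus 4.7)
The plan is to verify the two defining conditions of admissibility — Taylor expansion at $0$ and polynomial decay at $\infty$ — separately for each of the three claims; smoothness of the outputs is either trivial (for $\Lambda$ and $\Ls$) or follows from smoothness of the integrands in the inversion formula (for $\Ls^{-1}$).

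For (i), I apply $\Lambda=y\py$ to the Taylor expansion term by term; since $\Lambda(y^{2k+1})=(2k+1)y^{2k+1}$ the leading exponent $2p_1+1$ is preserved and the remainder of order $2p+3$ is mapped to a remainder of the same order. At infinity, the identity $\py^k(\Lambda f) = y\py^{k+1}f + k\py^k f$ together with the assumed bound $|\py^j f|\lesssim y^{2p_2-2-j}$ immediately yields $|\py^k(\Lambda f)|\lesssim y^{2p_2-2-k}$.

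For (ii), the key inputs are the expansions of $Z(y)=6\cos(2Q(y))$ at $0$ and at $\infty$: since $Q(y)=y+\Oc(y^3)$, one has $Z(y)=6-12y^2+\Oc(y^4)$ near the origin, while at infinity $Z\to -6$ with $|\py^j Z|\lesssim y^{-j}$ for $j\ge 1$. A direct computation gives
\[\Ls(y^{2k+1}) = \bigl[6-(2k+1)(2k+6)\bigr]\,y^{2k-1} + \Oc(y^{2k+1}),\]
and the bracket vanishes exactly when $k=0$. Hence $\Ls$ maps a Taylor series starting at $y^{2p_1+1}$ to a series starting at $y^{2(p_1-1)+1}$ when $p_1\ge 1$, and still at $y$ when $p_1=0$, which is precisely degree $\max\{0,p_1-1\}$ at the origin. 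At infinity, each of the summands of $\Ls f=-\py^2 f-\frac{6}{y}\py f+\frac{Z}{y^2}f$, together with each of its $k$-th derivatives, is bounded by $y^{2(p_2-1)-2-k}$ via Leibniz combined with the admissibility bounds on $f$ and the pointwise bounds on $Z$.

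For (iii), I use the two-step inversion from Lemma \ref{lemm:inversionL}:
\[\As w(y)=\frac{1}{y^6\Lambda Q(y)}\int_0^y f(x)\Lambda Q(x)x^6\,dx, \qquad w(y)=-\Lambda Q(y)\int_0^y\frac{\As w(x)}{\Lambda Q(x)}\,dx.\]
At the origin, inserting the Taylor expansions of $f$ and $\Lambda Q$ (the latter of degree $(0,0)$) and integrating term by term, the first integrand behaves like $x^{2p_1+8}$, so after integration and division by $y^6\Lambda Q\sim y^7$ one obtains $\As w=\Oc(y^{2p_1+2})$; a second integration of $\As w/\Lambda Q\sim y^{2p_1+1}$ followed by multiplication by $\Lambda Q\sim y$ yields $w$ starting at $y^{2(p_1+1)+1}$, with the odd parity being automatic from that of $f$ and $\Lambda Q$. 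At infinity, using $|f(x)|\lesssim x^{2p_2-2}$ and $\Lambda Q\sim 2a_0 x^{-2}$, the integrand $f\Lambda Q x^6$ is $\lesssim x^{2p_2+2}$, so $\int_0^y f\Lambda Q\, x^6\,dx\lesssim y^{2p_2+3}$; dividing by $y^6\Lambda Q\sim y^4$ gives $|\As w|\lesssim y^{2p_2-1}$, and the final multiplication by $\Lambda Q$ then produces $|w|\lesssim y^{2p_2}=y^{2(p_2+1)-2}$. Higher-derivative bounds follow by differentiating the two integral formulas and reapplying the same estimates. The main technical point is tracking each expansion to one order higher than strictly needed so that the cancellations in the leading terms — in particular the vanishing of the bracket in (ii) at $k=0$ — do not swallow the next admissible behavior, and so that the parity of the successive primitives in (iii) indeed produces only odd powers at $0$.
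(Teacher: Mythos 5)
Your proof is correct, and for parts (i) and (ii) it takes a genuinely different route from the paper's. For (iii) you follow essentially the same two-step inversion argument via Lemma \ref{lemm:inversionL} that the paper uses. For (i) and (ii), however, the paper merely sketches a contradiction argument appealing back to (iii) (if $\Lambda f$ or $\Ls f$ were not admissible of the claimed degree, then applying $\Ls^{-1}$ would contradict the admissibility of $f$), whereas you compute directly. Your direct computation makes explicit the one nontrivial point that the paper leaves entirely implicit: the identity $\Ls(y^{2k+1}) = \bigl[6-(2k+1)(2k+6)\bigr]y^{2k-1}+\Oc(y^{2k+1})$ near the origin, with the bracket vanishing exactly at $k=0$, which is precisely the mechanism behind the $\max\{0,p_1-1\}$ truncation in the degree law and explains concretely why $\Ls$ never introduces a $y^{-1}$ singularity. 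That is a real gain in transparency over the paper's argument. Two small remarks. First, for the higher derivative bounds at infinity in (iii), the paper bootstraps through the ODE $\Ls w=f$ after estimating $w,\py w,\py^2 w$, rather than differentiating the integral formulas repeatedly as you suggest; the paper's route is somewhat cleaner, though both work. Second, the step $\int_1^y x^{2p_2+2}\,dx\lesssim y^{2p_2+3}$ used in (iii) (in both your version and the paper's) tacitly requires $2p_2+3>0$; for $p_2\le -2$ the integral is $\Oc(1)$ and the claimed decay of $\As w$ would generically fail. This never bites in the paper, since the lemma is only applied to profiles with $p_2\ge 0$, but the implicit restriction on $p_2$ is worth keeping in mind.
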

\begin{proof}
$(iii)$ We aim at proving that if $f$ is admissible of degree $(p_1, p_2)$, then $w = \Ls^{-1}f$ is admissible of degree $(p_1 + 1, p_2 + 1)$. To do so, we use Lemma \ref{lemm:inversionL} to estimate \\
- for $y \ll 1$, 
$$\As w = \frac{1}{y^6\Lambda Q}\int_0^yf\Lambda Q x^6dx =  \Oc \left(\frac{1}{y^7}\int_0^y x^{2p_1 +8}dx\right) = \Oc(y^{2p_1 + 2}),$$
$$w = - \Lambda Q \int_0^y\frac{\As w}{\Lambda Q}dx = \Oc\left(y\int_0^y x^{2p_1 + 1}dx \right) = \Oc(y^{2(p_1 + 1) + 1}),$$
- for $y \geq 1$, 
$$\As w =  \Oc \left(\frac{1}{y^4}\int_0^y x^{2p_2 +2}dx\right) = \Oc(y^{p_2-1}),$$
$$w = \Oc\left(\frac{1}{y^2}\int_0^y x^{2p_2 + 1}\right) = \Oc(y^{2(p_2 + 1) - 2}).$$
From the last formula in \eqref{eq:relaAL} and \eqref{eq:asympV}, we estimate
$$\partial_y w = - \partial_y \Lambda Q \int_0^y \frac{\As w}{\Lambda Q}dx - \As w = -\frac{\partial_y \Lambda Q}{\Lambda Q} w  - \As w = \Oc(y^{2(p_2 + 1) - 2 - 1}).$$
Using $\Ls w = f$, we get
$$\partial_{yy}w = \Oc \left(\frac{|\partial_y w|}{y} + \frac{|w|}{y^2} + |f|\right) =\Oc (y^{2(p_2+1) - 2 - 2}).$$
By taking radial derivatives of $\Ls w = f$, we obtain by induction
$$|\partial_y^k w| \lesssim y^{2(p_2 + 1) - 2 - k}, \quad k \in \mathbb{N}, \; y \geq 1.$$
$(i) - (ii)$ are a consequence of Definition \ref{def:Admitfunc}. One can prove them by contradiction, by supposing that $\Lambda f$ and $\Ls f$ are not admissible of degree $(p_1,p_2)$ and respectively $(\max\{0,p_1 - 1\}, p_2 - 2)$. Hence, this will induce that $f$ is not admissible of degree $(p_1,p_2)$, from the action of $\Ls^{-1}$, which is a contradiction.
This concludes the proof of Lemma \ref{lemm:actionLL}.
\end{proof}

The following lemma is a consequence of Lemma \ref{lemm:actionLL}:
\begin{lemma}[Generators of the kernel of $\Ls^k$] \label{lemm:GenLk} Let the sequence of profiles 
\begin{equation}\label{def:Tk}
T_k = (-1)^k\Ls^{-k} \Lambda Q, \quad k \in \mathbb{N},
\end{equation}
then\\
$T_k$ is admissible of degree $(k,k)$ for $k \in \mathbb{N}$.\\
\end{lemma}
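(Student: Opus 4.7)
The plan is to prove the lemma by a straightforward induction on $k$, leveraging Lemma \ref{lemm:actionLL}(iii) which transfers admissibility under $\Ls^{-1}$.

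For the base case $k = 0$, I note that $T_0 = \Lambda Q$, and as already observed in the remark following Definition \ref{def:Admitfunc} (using the expansion \eqref{eq:asymLamQ}), the function $\Lambda Q$ is admissible of degree $(0,0)$. For the inductive step, suppose $T_k$ is admissible of degree $(k,k)$. From the definition \eqref{def:Tk} we have
\[
T_{k+1} = (-1)^{k+1}\Ls^{-(k+1)}\Lambda Q = -\Ls^{-1}\bigl((-1)^k \Ls^{-k}\Lambda Q\bigr) = -\Ls^{-1}T_k,
\]
where $\Ls^{-1}$ is interpreted via formula \eqref{eq:invLc}. By Lemma \ref{lemm:actionLL}(iii), $\Ls^{-1}T_k$ is admissible of degree $(k+1,k+1)$, and multiplying by the scalar $-1$ preserves admissibility (the degree is unchanged and the coefficients in the Taylor expansion and pointwise bounds merely flip sign). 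Hence $T_{k+1}$ is admissible of degree $(k+1,k+1)$, closing the induction.

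Since Lemma \ref{lemm:actionLL}(iii) is the engine of the argument, there is essentially no nontrivial obstacle: all the analytic work (the integral estimates near the origin, the decay bounds at infinity, and the iterative control of radial derivatives via $\Ls w = f$) has already been carried out there. The only verification worth stating explicitly is that the recursion is consistent with the formal relation $\Ls T_{k+1} = -T_k$ used elsewhere in the paper, which is immediate from $T_{k+1}=-\Ls^{-1}T_k$ and the first identity in \eqref{eq:relaAL}. Thus the proof reduces to stating the base case, performing the one-line induction, and invoking Lemma \ref{lemm:actionLL}(iii).
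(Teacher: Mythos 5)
Your proof is correct and follows exactly the same route as the paper: the base case is $T_0=\Lambda Q$ admissible of degree $(0,0)$ via \eqref{eq:asymLamQ}, and the inductive step is a one-line application of Lemma \ref{lemm:actionLL}(iii) to $T_{k+1}=-\Ls^{-1}T_k$.
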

\begin{proof} $(i)$ We note from \eqref{eq:asymLamQ} that $\Lambda Q$ is admissible of degree $(0,0)$. By induction and part $(iii)$ of Lemma \ref{lemm:actionLL}, the conclusion then follows.\\
\end{proof}
Unlike the case $d\geq 8$, this particular case of $d=7$ requires that we introduce another notion of admissible fucntion.
\begin{definition}[$b_1$-admissible functions]\label{def:b1Admitfunc}
Let $f$ be a smooth function in $C^\infty(\Rb_+,\Rb)$, $f$ is called $b_1$-admissible of degree $(p_1,p_2)\in\N\times\Z$ if the following hold:
\begin{itemize}
\item For $y\leq 1$, 
\begin{equation}\label{tayl_f_b1-adm}
f(b_1,y)=\sum_{j=1}^J h_j(b_1)u_j(y),
\end{equation}
with $J\in\N^*$, $h_j(b_1)$ a smooth function away from the origin such that
\begin{align}\label{hjdef}
\forall i\in\N, \quad |\p^i_{b_1}h_j|\lesssim b_1^{\frac{1}{2}-i},
\end{align}
and $u_j(y)$ a smooth function such that
\begin{align}\label{ujdef}
\forall y\leq 1 \quad u_j(y)=\sum_{k=p_1}^pc_{k,j}.y^{2k+1}+O(y^{2p+3}).
\end{align}
\item  For all $y\geq2$ and all $i\geq0$

\begin{align}\label{fidef}
| A^if|\lesssim (\sqrt{b_1}y^{2p_2-i-2}+y^{2p_2-3-i})\indic_{\{y\leq 2B\}}+\Big(|\log y|y^{2p_2-4-i}+\frac{y^{2p_2-5-i}}{\sqrt{b_1}}\Big)\indic_{\{y\geq 2B\}},
\end{align}
where $B$ is defined in \eqref{defB} and for all $y\geq 2$, $i\geq0$ and $l\geq0$
\begin{align}\label{fbldef}
|\p_{b_1}^lA^if|\lesssim b_1^{\frac{1}{2}-l}y^{2p_2-2-i}\indic_{\{y\leq 2B\}}+b_1^{-\frac{1}{2}-l}y^{2p_2-5-i}\indic_{\{y\geq 2B\}}.
\end{align}
\end{itemize}
\end{definition}
In addition, we notice also that $\Ls$ naturally acts on the class of $b_1$-admissible function in the following way:
\begin{lemma}[Action of $\Ls$ and $\Ls^{-1}$ on $b_1$-admissible functions] \label{lemm:actionLLb_1} Let $f$ be a $b_1$-admissible function of degree $(p_1, p_2) \in \mathbb{N} \times \mathbb{Z}$, then: for all $k\geq1$\\
$(i)\,$ $\Ls^k f$ is $b_1$-admissible of degree $(\max\{0,p_1 - k\}, p_2 - k)$.\\
$(ii)\,$ $\Ls^{-k}f$ is $b_1$-admissible of degree $(p_1 + k, p_2 + k)$.
\end{lemma}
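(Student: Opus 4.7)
The plan is to proceed by induction on $k \geq 1$, so it suffices to establish both claims for $k=1$. The argument has a clean "near origin / far from origin" split, matching the two-part structure of Definition \ref{def:b1Admitfunc}, and leans heavily on the inversion formula from Lemma \ref{lemm:inversionL} together with the already-proved Lemma \ref{lemm:actionLL} for the classical notion of admissible function.

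\emph{Near the origin ($y \leq 1$).} Here I would exploit the decomposition \eqref{tayl_f_b1-adm}, writing $f(b_1,y) = \sum_{j=1}^J h_j(b_1) u_j(y)$ where the $u_j$ are smooth radial functions with the Taylor expansion \eqref{ujdef} (i.e.\ admissible in the sense of Definition \ref{def:Admitfunc}). Since $\Ls$ acts on $y$ only, we have $\Ls f = \sum_j h_j(b_1)\, \Ls u_j$ and $\Ls^{-1} f = \sum_j h_j(b_1)\, \Ls^{-1} u_j$. Lemma \ref{lemm:actionLL} then gives the Taylor expansions of $\Ls u_j$ and $\Ls^{-1} u_j$ at degrees $(\max\{0,p_1-1\}, p_2-1)$ and $(p_1+1,p_2+1)$ respectively, delivering a decomposition of the right form \eqref{tayl_f_b1-adm} with the same coefficients $h_j(b_1)$, which still satisfy \eqref{hjdef}.

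\emph{Far from the origin ($y \geq 2$).} Here I would apply Lemma \ref{lemm:inversionL}, which gives the two-step formula
\[
\As w(y) = \frac{1}{y^6 \Lambda Q(y)}\int_0^y f(x)\Lambda Q(x) x^6\, dx, \qquad w(y) = -\Lambda Q(y)\int_0^y \frac{\As w(x)}{\Lambda Q(x)}\, dx,
\]
with the asymptotic $\Lambda Q \sim 2a_0/y^2$ from \eqref{eq:asymLamQ}. The integrals must be split at $y=2B$, where the bound \eqref{fidef} changes form. In the inner zone $y \leq 2B$, inserting $|f| \lesssim \sqrt{b_1}\, y^{2p_2-2} + y^{2p_2-3}$ yields $|w| \lesssim \sqrt{b_1}\, y^{2(p_2+1)-2} + y^{2(p_2+1)-3}$, which is exactly \eqref{fidef} at shifted degree. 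In the outer zone $y \geq 2B$, write $\int_0^y = \int_0^{2B} + \int_{2B}^y$: the inner piece is bounded using the inner estimate evaluated at $2B \sim 1/\sqrt{b_1}$ and contributes an $O(1/\sqrt{b_1})$ term consistent with \eqref{fidef}, while the outer piece uses $|f|\lesssim |\log y|\, y^{2p_2-4} + y^{2p_2-5}/\sqrt{b_1}$ and produces the characteristic $|\log y|\, y^{2(p_2+1)-4}$ and $y^{2(p_2+1)-5}/\sqrt{b_1}$ terms. The bounds on $A^i w$ follow by applying $\As$ or $\As^*$ to these formulas and using \eqref{eq:asympV} to handle the factor $V/y$. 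The case of $\Ls f$ is immediate: from the definition \eqref{def:Lc} with bounded potential $Z = 6\cos(2Q)$, each derivative loses one power of $y$, producing the shift $p_2 \mapsto p_2-1$.

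\emph{Derivatives in $b_1$.} For \eqref{fbldef}, I would differentiate the inversion formula under the integral sign. Since $\Lambda Q$, $\Gamma$, $\As$ and $\As^*$ carry no $b_1$-dependence, the $l$-th $b_1$-derivative of $w=\Ls^{-1}f$ is represented by the same integrals with $f$ replaced by $\partial_{b_1}^l f$, and \eqref{fbldef} for $f$ plus the preceding integral estimates gives the claim at degree $p_2+1$ with factor $b_1^{1/2-l}$ (inner) and $b_1^{-1/2-l}$ (outer). The case $\Ls f$ is trivial here.

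\emph{Main obstacle.} The delicate point is the matching at $y=2B \sim 1/\sqrt{b_1}$: one must verify that the outer-zone estimates for $\Ls^{-1}f$ do not accumulate extra logarithmic factors under iteration of $\Ls^{-1}$ (so that the $|\log y|$ in \eqref{fidef} remains a single power) and that the boundary contributions produced when differentiating $\mathbf 1_{\{y\leq 2B\}}$ in $b_1$ are either zero or absorbed into the stated bounds. Both should work out because the inner and outer estimates agree, up to the logarithmic factor, at the transition $y=2B$, so the shifted-degree bounds close with the stated coefficients.
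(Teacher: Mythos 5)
Your overall strategy --- reducing the $y\leq 1$ case to Lemma \ref{lemm:actionLL} via the decomposition $f=\sum_j h_j(b_1)u_j(y)$, using the two-step inversion of Lemma \ref{lemm:inversionL} for $y\geq 2$, and commuting $\partial_{b_1}$ through the $b_1$-independent operators $\Ls,\As,\As^*$ --- is exactly what the paper does; its own proof is the one-line remark that the argument mirrors Lemma \ref{lemm:actionLL} plus the $b_1$-independence of $\Ls$. You have also correctly identified the only delicate spot, the matching at $y=2B$. The trouble is that your closing claim there is false, and it is precisely where a naive inversion argument breaks down.

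The inner and outer bounds in Definition \ref{def:b1Admitfunc} do \emph{not} agree at the transition $y=2B\sim b_1^{-1/2}$: the inner bound $\sqrt{b_1}\,y^{2p_2-2}+y^{2p_2-3}$ evaluates to $\sim b_1^{3/2-p_2}$ there, while the outer bound $|\log y|\,y^{2p_2-4}+y^{2p_2-5}/\sqrt{b_1}$ evaluates to $\sim |\log b_1|\,b_1^{2-p_2}$; the inner bound is larger by a factor $b_1^{-1/2}/|\log b_1|$. Consequently, when you split $\As w=\frac{1}{y^6\Lambda Q}\int_0^y f\Lambda Q\,x^6\,dx$ at $x=2B$, the inner piece contributes a $y$-independent constant of order $b_1^{-p_2-1}$, and the resulting term $\sim b_1^{-p_2-1}/y^4$ overshoots the required degree-$(p_2+1)$ outer bound for $\As(\Ls^{-1}f)$ by that same factor $b_1^{-1/2}/|\log b_1|$ throughout the layer $2B\lesssim y\lesssim b_1^{-3/4}$. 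Concretely, take any smooth $f$ with $f\equiv\sqrt{b_1}$ on $[1,B]$, supported in $[0,2B]$, with a $y^{3}$-type Taylor expansion at the origin: it is $b_1$-admissible of degree $(1,1)$, yet $\int_0^{2B}f\,\Lambda Q\,x^6dx\sim b_1^{-2}$ gives $\As(\Ls^{-1}f)(2B)\sim 1$, while the degree-$(2,2)$ outer bound demands $\lesssim\sqrt{b_1}\,|\log b_1|$. In the paper this gap is closed not by the abstract definition but by the cancellations engineered into $\Sigma_{b_1}$: the choices \eqref{B}, \eqref{C_b} are made precisely so that the offending boundary integrals vanish, and the actual $b_1$-admissible functions occurring in the iteration carry more structure than the definition records. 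Your proof therefore needs either to propagate those cancellations explicitly, or to strengthen Definition \ref{def:b1Admitfunc} so that the inner and outer bounds genuinely match at $y=2B$; asserting that they already agree does not suffice.
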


\begin{proof}
The proof is similar to the proof of Lemma \ref{lemm:actionLL} except that here we use the fact that $\Ls$ and $\Ls^{-1}$ are independent of $b_1$.
\end{proof}

Before we strart the construction of the approximate blow up profile, we will construct a familly of functions that will minimize the growth of the tails of the  approximate blow up profile at infinity.
Indeed, to do so we introduce the following family of smooth functions.
\begin{lemma}[Slowly growing tails]\label{lemm:slowgrowtails}
Let $(T_k)_{k\geq1}$ be the family of functions defined iteratively by \eqref{def:Tk}. Then the following family of profiles for all $k\geq1$ is $b_1$-admissible of degree $(k,k)$:
\begin{align}\label{def:tetak}
\theta_k=\Lambda T_k-(2k-2)T_k-(-1)^{k+1}\Ls^{-k+1}\Sigma_{b_1},
\end{align}
where 
\begin{align}\label{def:Sigmab}
\Sigma_{b_1}=\Ls^{-1}[-C_{b_1}\Lambda Q\chi_{B_0}-4a_0C_1(1-\chi_B)\Gamma].
\end{align}

With 
\begin{equation}\label{eq:Cb1}
C_{b_1}=\frac{4C_1\sqrt{b_1}}{a_0}+O(b_1),
\end{equation}

\begin{align}\label{defB}
B=\frac{\Big(1+\int_1^2\chi dx\Big)^3\Big(\frac{1}{3}+\int_1^2x^2\chi dx\Big)^2}{\Big(\frac{1}{2}+\int_1^2x\chi dx\Big)^5\sqrt{b_1}}+O(\sqrt{b_1}),
\end{align}

and $a_0,\quad C_1=\frac{3a_1}{2}>0$ are constants coming from the asymptotic behavior of the stationnary solution $Q$ defined in \eqref{eq:asymLamQ}.
\end{lemma}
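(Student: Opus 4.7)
\medskip
\noindent\textbf{Proof proposal.} The plan is to establish $b_1$-admissibility of $\theta_k$ by separately analyzing the two contributions $\Lambda T_k - (2k-2)T_k$ (which is $b_1$-independent) and $(-1)^{k+1}\Ls^{-k+1}\Sigma_{b_1}$ (which carries the $b_1$-dependence), and by showing that their tails at infinity are arranged so as to cancel the leading growths that would otherwise prevent the decay estimates \eqref{fidef}--\eqref{fbldef}. The key identity is the commutator relation \eqref{eq:relLsLam}, which combined with $\Ls T_k = -T_{k-1}$ gives
\begin{equation*}
\Ls(\Lambda T_k) = \Lambda(\Ls T_k) + 2\Ls T_k - \frac{\Lambda Z}{y^2}T_k = -\Lambda T_{k-1} - 2T_{k-1} - \frac{\Lambda Z}{y^2}T_k,
\end{equation*}
which together with $\Ls((2k-2)T_k) = -(2k-2)T_{k-1}$ yields
\begin{equation*}
\Ls\bigl(\Lambda T_k - (2k-2)T_k\bigr) = -\bigl(\Lambda T_{k-1}-(2k-4)T_{k-1}\bigr) - \frac{\Lambda Z}{y^2}T_k.
\end{equation*}
This recursion reduces the analysis of $\theta_k$ to that of $\theta_1$ modulo controllable lower-order terms driven by $\Lambda Z/y^2$, which inherits sufficiently good decay from \eqref{def:ZbyV} and \eqref{eq:asympV}.

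The next step is to pin down the asymptotics of $\Sigma_{b_1}$ in the three zones $y\leq B_0$, $B_0\leq y\leq B$, and $y\geq B$. On $y\leq B_0$ the second piece of \eqref{def:Sigmab} vanishes and the cutoff $\chi_{B_0}$ equals one on $[0,y]$, so by Lemma \ref{lemm:inversionL} one has exactly $\Sigma_{b_1}(y)=C_{b_1}T_1(y)$; since $T_1$ is admissible of degree $(1,1)$ and $C_{b_1}\sim\sqrt{b_1}$, this gives both the required Taylor structure \eqref{tayl_f_b1-adm}--\eqref{ujdef} near the origin and the first bound in \eqref{fidef} in the interior region. On $B_0\leq y\leq B$ only the first source term contributes to $\As\Sigma_{b_1}$ through the integral in \eqref{eq:relaAL}, and the specific choice of $C_{b_1}$ in \eqref{eq:Cb1} ensures the constant-in-$y$ piece of $\Sigma_{b_1}$ matches, modulo $O(b_1)$, the coefficient of the leading $y^{2k-2}$ tail of $T_k$, so that the $y^{2k-2}$ growth of $\Lambda T_k-(2k-2)T_k$ is canceled on this dyadic zone. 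On $y\geq B$ the second source term of \eqref{def:Sigmab} becomes active, and the choice of $B$ in \eqref{defB} (essentially solving $C_{b_1}\sim 1/B^2$ against the coefficient $4a_0C_1$ built from $\Gamma\sim (2a_0 y^3)^{-1}$) matches the two contributions continuously at $y\sim B$ and produces the expected $|\log y| y^{2k-4-i}$ and $y^{2k-5-i}/\sqrt{b_1}$ bounds dictated by \eqref{fidef} in the exterior zone.

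Applying $\Ls^{-(k-1)}$ iteratively to $\Sigma_{b_1}$ via Lemma \ref{lemm:inversionL} then propagates these pointwise bounds through the adapted derivatives $A^i$ by a straightforward induction, using that each application of $\Ls^{-1}$ raises the degree $(p_1,p_2)$ by $(1,1)$ (Lemma \ref{lemm:actionLLb_1}) and, crucially, preserves the two-scale structure in $b_1$ because the integrals in \eqref{eq:invLc} split naturally according to whether the integration variable lies in $\{x\leq B_0\}$, $\{B_0\leq x\leq B\}$, or $\{x\geq B\}$. The derivative bounds \eqref{fbldef} in $b_1$ follow from differentiating the explicit formulas \eqref{def:Sigmab}, \eqref{eq:Cb1}, and \eqref{defB} under the integral sign and noting that each $\p_{b_1}$ either produces a factor of $b_1^{-1}$ from $C_{b_1}\sim\sqrt{b_1}$, or acts on a cutoff $\chi_{B_0}$ or $\chi_B$ whose support is confined to a dyadic annulus around $B_0\sim b_1^{-1/2}$ or $B\sim b_1^{-1/2}$, producing exactly the claimed loss.

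The main technical obstacle I anticipate is the bookkeeping across the three zones near $y\sim B_0$ and $y\sim B$: one must track not only the leading asymptotic terms of $T_k$ at infinity but also the $|\log y|$ corrections coming from the resonance of $\Ls$ at the powers $y^{-2}$ and $y^{0}$, and check that the constants $C_{b_1}$ and $B$ given by \eqref{eq:Cb1} and \eqref{defB} are exactly those that cancel the boundary contributions of the integrals in $\Ls^{-1}$. This computation is what determines the precise form of $C_\chi$ in \eqref{defofCchi}. Once these constants are verified to match, the admissibility estimates follow from Lemma \ref{lemm:actionLLb_1} and an induction on $k$, starting from the explicit verification for $k=1$ where $\theta_1=\Lambda T_1-\Sigma_{b_1}$ can be treated directly using the asymptotic $T_1\sim -C_0+C_1/y+O(|\log y|/y^2)$ and the explicit form $\Sigma_{b_1}=C_{b_1}T_1$ on $y\leq B_0$.
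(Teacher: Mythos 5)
Your overall strategy is the same as the paper's: compute $\Sigma_{b_1}$ explicitly via Lemma~\ref{lemm:inversionL}, identify three zones $\{y\leq B_0\}$, $\{B_0\leq y\leq 2B\}$, $\{y\geq 2B\}$ in which $\Sigma_{b_1}$ has distinct behaviour, verify the $b_1$-admissibility of $\theta_1$ directly, then propagate to higher $k$ through the commutator identity \eqref{eq:relLsLam}. You also correctly identify that $\Ls^{-1}$ raises degree by $(1,1)$ (Lemma~\ref{lemm:actionLLb_1}) and that the $b_1$-derivatives of the cutoffs $\chi_{B_0},\chi_B$ are supported on dyadic annuli, which is what makes \eqref{fbldef} close.

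There is, however, one concrete conceptual error. You write that ``the specific choice of $C_{b_1}$ in \eqref{eq:Cb1} ensures the constant-in-$y$ piece of $\Sigma_{b_1}$ matches, modulo $O(b_1)$, the coefficient of the leading $y^{2k-2}$ tail of $T_k$, so that the $y^{2k-2}$ growth of $\Lambda T_k-(2k-2)T_k$ is canceled.'' This misidentifies the role of $\Sigma_{b_1}$. Since $T_k\sim c_ky^{2k-2}$ and $\Lambda T_k\sim (2k-2)c_ky^{2k-2}$ at infinity, the combination $\Lambda T_k-(2k-2)T_k$ has \emph{already} killed the $y^{2k-2}$ tail by construction; that is precisely why the operator $\Lambda-(2k-2)$ appears. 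The profile $\Sigma_{b_1}$ is there to cancel the \emph{subleading} tail that survives: for $k=1$ this is the $-C_1/y$ tail of $\Lambda T_1$ (see \eqref{LT1}), and it is exactly this $1/y$ growth that $\Sigma_{b_1}\sim C_1/y$ on $\{y\geq 2B\}$ removes. The system determining $C_{b_1}$, $\alpha=4a_0C_1$ and $B$ does not match coefficients of $y^{2k-2}$; rather, it forces the two Wronskian coefficients (the $\Gamma$- and $\Lambda Q$-components appearing when one inverts $\Ls$ via \eqref{eq:invLc}) to vanish on $\{y\geq 2B\}$, leaving only the $-\alpha/(4a_0y)=-C_1/y$ tail. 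On the intermediate zone $\{B_0\leq y\leq 2B\}$ there is no exact cancellation, only compatible $O(\sqrt{b_1})+O(1/y)$ bounds, consistent with \eqref{fidef}.

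A second, smaller gap: you derive the recursion $\Ls(\Lambda T_k-(2k-2)T_k)=-(\Lambda T_{k-1}-(2k-4)T_{k-1})-\frac{\Lambda Z}{y^2}T_k$ for the $b_1$-independent part, and you separately propose iterating $\Ls^{-1}$ on $\Sigma_{b_1}$, but you never combine the two to obtain the closed recursion
\begin{equation*}
\Ls\theta_{k+1}=-\theta_k-\frac{\Lambda Z}{y^2}T_{k+1},
\end{equation*}
which is what actually makes the induction close cleanly: since $\frac{\Lambda Z}{y^2}T_{k+1}$ is an (ordinary, $b_1$-independent) admissible function of degree $(k,k)$ and $\theta_k$ is $b_1$-admissible of degree $(k,k)$ by the induction hypothesis, one application of Lemma~\ref{lemm:actionLLb_1}\,(ii) gives $\theta_{k+1}=\Ls^{-1}(-\theta_k-\frac{\Lambda Z}{y^2}T_{k+1})$ $b_1$-admissible of degree $(k+1,k+1)$. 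Without writing this combined identity, the step from ``two separate recursions'' to ``$\theta_k$ $b_1$-admissible'' is not actually justified.
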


\begin{proof}

\begin{itemize}
\item[\underline{Step 1: Structure of $T_1$}.]
By definition \eqref{def:Tk} $T_1=-\Ls^{-1}\Lambda Q$ and thanks to Lemma \ref{lemm:inversionL} we can inverse $\Ls$ and compute $T_1$:

\begin{equation}\label{T1}
T_1(y)=\left\{\begin{array}{ll}
-\frac{y^3}{18}+\Oc(y^5)\;\; &\text{as}\;\; y \to 0, \\
&\\
-C_0+\dfrac{C_1}{y} + \Oc\left(\frac{|\log(y)|}{y^2}\right)\;\; &\text{as} \;\; y \to + \infty,
\end{array}
\right.
\end{equation}
with $C_0=\frac{a_0}{3}$, $C_1=\frac{3a_1}{2}$,
and similarly:

\begin{equation}\label{LT1}
\Lambda T_1=\left\{\begin{array}{ll}
-\frac{y^3}{6}+\Oc(y^5)\;\; &\text{as}\;\; y \to 0, \\
&\\
\dfrac{-C_1}{y} + \Oc\left(\frac{|\log(y)|}{y^2}\right)\;\; &\text{as} \;\; y \to + \infty.
\end{array}
\right.
\end{equation}
Now we will prove by induction that $\theta_k$ are $b_1$-admissible of degree $(k,k)$.
\item[\underline{Step 2: Intialization $k=1$}.]
We will prove here that $\theta_1=\Lambda T_1-\Sigma_{b_1}$ is a $b_1$-admissible function of degree $(1,1)$. To do so we need to estimate first $\Sigma_{b_1}$, but before estimating $\Sigma_{b_1}$ we will give a brief explanation of why do we introduce this correction $\Sigma_{b_1}$.
Actually, the introduction of $\Sigma_{b_1}$ will allow us to derive the law of $b_1$.  We will derive formally the law of $b_1$ here in this part and rigourosly later. Indeed, we are going to construct a solution of the form $Q_{b_1}=Q+\sum_{k=1}^Lb_1^kT_k+\sum_{k=2}^{L+2}S_k(b_1,y)$, the $T_k$ are defined by \eqref{def:Tk} and $S_k$ will solve some elliptic problem. In order to minimize the growth of the tails of the profiles $S_k$ at infinity we introduce the correction $\Sigma_{b_1}$.

Approximately $S_2$ will solve:
$$\Ls S_2= -\Big(\underbrace{\Lambda T_1+\frac{(b_1)_s}{b_1^2}T_1}_{\sim\theta_1}\Big)+l.o.t,$$
to minimize the growth of $S_2$ at infinity we need to cancel the growth $\frac{1}{y}$ of $\Lambda T_1$ at infinity. To do so, set $\frac{(b_1)_s}{b_1^2}=-C_{b_1}$. If we use $-C_{b_1} T_1$ to cancel the growth $\frac{1}{y}$ of $\Lambda T_1$ in the zone $y=\frac{1}{\sqrt{b_1}}$ formally by using that $T_1=C_0+\dfrac{C_1}{y} + \Oc\left(\frac{|\log(y)|}{y^2}\right)\;\; \text{as} \;\; y \to + \infty$ we deduce that $C_{b_1}\sim \sqrt{b_1}$. However, we need to cancel the growth in the whole zone $\{y\geq\frac{1}{\sqrt{b_1}}\}$, and using $C_{b_1}T_1$ will not be adapted.
Hence, we introduce a new function $\Sigma_{b_1}$ that will behave like $C_{b_1}T_1$ in the zone $\{y\leq\frac{1}{\sqrt{b_1}}\}$ and like $\frac{1}{y}$ in the zone $\{y\geq\frac{1}{\sqrt{b_1}}\}$ to cancel the growth of $\Lambda T_1$.
If one naively construct $\Sigma_{b_1}$ by using matching asymptotic then the profile $S_2$ that we will obtain will not be smooth.
Hence, to overcome all these difficulties we construct $\Sigma_{b_1}$ by solving :
$$\Ls\Sigma_{b_1}=-C_{b_1}\Lambda Q\chi_{B_0}-\alpha(1-\chi_B)\Gamma,$$
where $C_{b_1}$,$\alpha$ and $B$ will be determined to force $\Sigma_{b_1}$ to behave like $C_{b_1}T_1$ in the zone $\{y\leq\frac{1}{\sqrt{b_1}}\}$ and like $\frac{1}{y}$ in the zone $\{y\geq\frac{1}{\sqrt{b_1}}\}$.
In addition, if we apply the linear operator $\Ls$ twice to $\Sigma_{b_1}$, we get that $\Ls^2(\Sigma_{b_1})=0$ for all $y\in (0,B_0)\cup(2B,\infty)$ since $\Lambda Q$ and $\Gamma$ are in the kernel of $\Ls$.
Indeed, by using \eqref{eq:invLc} we deduce
\begin{align}
\Sigma_{b_1}&=C_{b_1}\Gamma(y)\int_0^y (\Lambda Q)^2\chi_{B_0} x^{6}dx - C_{b_1}\Lambda Q\int_0^y \Lambda Q\chi_{B_0} \Gamma x^{6}dx\nonumber\\
&+\alpha \Gamma\int_0^y \Gamma\Lambda Q(1-\chi_{B}) x^{6}dx - \alpha\Lambda Q\int_0^y (1-\chi_{B})\Gamma^2x^{6}dx.
\end{align}

Hence, we will fix $C_{b_1}$, $\alpha$, and $B$ to get that 
\begin{align}
\Sigma_{b_1}=\left\{\begin{array}{ll}
C_{b_1} T_1\quad y\leq B_0\\
-\frac{C_1}{y}+O\Big(\frac{1}{y^2}\Big)+O\Big(\frac{1}{\sqrt{b_1}y^3}\Big)\quad y\geq 2B.
\end{array}
\right.
\end{align}

Indeed, suppose $B\geq B_0$, when $y\geq 2B$ we compute
\begin{align}
&\hspace{1cm}\Sigma_{b_1}=-\Gamma\Bigg[\alpha B^2\Big(\frac{1}{2}+\int_1^2x\chi dx\Big)-C_{b_1}\int_0^\infty(\Lambda Q)^2\chi_{B_0}x^6dx\Bigg]\nonumber\\
&\hspace{0.5cm}-\Lambda Q\Bigg[C_{b_1}\int_0^{+\infty}\Gamma\Lambda Q\chi_{B_0}x^6dx-\alpha\frac{B}{4a_0^2}\Big(1+\int_1^2\chi dx\Big)\Bigg]-\frac{\alpha}{4a_0y}+O\Bigg(\frac{1}{\sqrt{b_1}y^3}\Bigg).
\end{align}
Hence, if we choose $C_{b_1}$, $\alpha$ and $B$ such that
\begin{align}
\alpha B^2\Big(\frac{1}{2}+\int_1^2x\chi dx\Big)-C_{b_1}\int_0^\infty(\Lambda Q)^2\chi_{B_0}x^6dx&=0\\
C_{b_1}\int_0^{+\infty}\Gamma\Lambda Q\chi_{B_0}x^6dx-\alpha\frac{B}{4a_0^2}\Big(1+\int_1^2\chi dx\Big)&=0,
\end{align}
and
$$\alpha=4a_0C_1.$$
It follows that
\begin{align}\label{B}
B=\frac{\int_0^\infty(\Lambda Q)^2\chi_{B_0}x^6dx\Big(1+\int_1^2\chi dx\Big)}{4a_0^2\int_0^{+\infty}\Gamma\Lambda Q\chi_{B_0}x^6dx\Big(\frac{1}{2}+\int_1^2x\chi dx\Big)},
\end{align}
\begin{align}\label{C_b}
C_{b_1}=\frac{C_1\int_0^\infty(\Lambda Q)^2\chi_{B_0}x^6dx\Big(1+\int_1^2\chi dx\Big)^2}{4a_0^3\Bigg(\int_0^{+\infty}\Gamma\Lambda Q\chi_{B_0}x^6dx\Bigg)^2\Big(\frac{1}{2}+\int_1^2x\chi dx\Big)},
\end{align}
and
$$\alpha=4a_0C_1.$$
In addition, we estimate
\begin{align}
\int_0^\infty(\Lambda Q)^2\chi_{B_0}x^6dx=\frac{4a_0^2B_0^3}{3}\Big(\frac{1}{3}+\int_1^2x^2\chi dx\Big)+O(B_0^2),\\
\int_0^\infty\Lambda Q\Gamma\chi_{B_0}x^6dx=B_0^2\Big(\frac{1}{2}+\int_1^2 x\chi_{B_0}dx\Big)+O(B_0).
\end{align}
Hence, by the choice of $B_0=\frac{C_\chi}{\sqrt{b_1}}$ with $C_\chi=\frac{(1+\int_1^2\chi dx)^2(1/3+\int_1^2x^2\chi dx)}{(1/2+\int_1^2x\chi dx)^3}$ and plugging those above estimates in \eqref{B} and \eqref{C_b} we deduce that
\begin{align}
C_{b_1}=\frac{C_1}{a_0}\sqrt{b_1}+O(b_1),
\end{align}
and
\begin{align}
B=B_0\frac{\Big(1+\int_1^2\chi dx\Big)\Big(\frac{1}{3}+\int_1^2x^2\chi dx\Big)}{\Big(\frac{1}{2}+\int_1^2x\chi dx\Big)^2}+O(\sqrt{b_1}).
\end{align}

Now we can estimate $\Sigma_{b_1}$ everywhere.
We estimate first $\Sigma_{b_1}$ for $B_0\leq y\leq\frac{16}{3}B_0$
\begin{align}
\Sigma_{b_1}=O(\sqrt{b_1})+O\Big(\frac{1}{y}\Big).
\end{align}
Consequently, we deduce that
\begin{align}\label{sigmab_est}
\Sigma_{b_1}=\left\{\begin{array}{ll}
C_{b_1} T_1\quad y\leq B_0\\
O(\sqrt{b_1})+O\Big(\frac{1}{y}\Big) \quad B_0\leq y \leq 2B\\
\frac{C_1}{y}+O\Big(\frac{1}{y^2}\Big)+O\Big(\frac{1}{\sqrt{b_1}y^3}\Big)\quad  y\geq 2B.
\end{array}
\right.
\end{align}
One can choose $\chi$ so that $B\geq B_0$, this choice is possible since the constant $\tilde{C}_\chi=\frac{\Big(1+\int_1^2\chi dx\Big)\Big(\frac{1}{3}+\int_1^2x^2\chi dx\Big)}{\Big(\frac{1}{2}+\int_1^2x\chi dx\Big)^2}$ depends on the decay of the cut-off function in the zone $x\in(1,2)$.
And if one take the slowest decay or the fastest decay possible for a cut-off function $\chi$ the constant $\tilde{C}_\chi$ will be bigger than 1 for those choices of $\chi$.

Hence, we deduce the following on $\Sigma_{b_1}$, it will have for $y\leq 1$ the taylor expansion of a $b_1$-admissible function \eqref{tayl_f_b1-adm}, with $J=1$, $h_j(b_1)=C_{b_1}$, and $u_1=T_1$, which implies that the degree $p_1$ of $\Sigma_{b_1}$ is 1 close to zero. In addition, an easy computation yields the bound for all $l\geq0$
$$|\p_{b_1}^l C_{b_1}|\lesssim b_1^{\frac{1}{2}-l},$$
which implies \eqref{hjdef}.
Hence, now for $1\leq y\leq 2B$ we have from \eqref{sigmab_est} and \eqref{LT1} that

$$\theta_1=\Lambda T_1-\Sigma_{b_1}=O(\sqrt{b_1})+O\Big(\frac{1}{y}\Big),$$
and for $y\geq 2B$,
$$\theta_1=O\Big(\frac{|\log(y)|}{y^2}\Big)+O\Big(\frac{1}{\sqrt{b_1}y^3}\Big),$$
which implies the bound \eqref{fidef} for $i=0$ in the definition of $b_1$-admissible functions.
Now we verify the bound \eqref{fidef} for $i\geq1$.
Indeed, we have the following bound thanks to \eqref{eq:relaAL}:
\begin{equation}
\As\Sigma_{b_1}=\left\{\begin{array}{ll}
O\Big(\frac{\sqrt{b_1}}{y}\Big)+O\Big(\frac{1}{y^2}\Big) \mbox{ for } 1\leq y\leq 2B\\
\frac{C_1}{y^2}+O\Big(\frac{1}{y^3}\Big)+O\Big(\frac{1}{\sqrt{b_1}y^4}\Big) \mbox{ for } y\geq 2B. 
\end{array}
\right.
\end{equation}
And for all $y\geq 2$
$$\As\Lambda T_1=-\frac{C_1}{y^2}+O\big(\frac{|\log y|}{y^3}\Big),$$
it follows that 
\begin{equation}
|\As\theta_1|\lesssim \left\{\begin{array}{ll}
\frac{\sqrt{b_1}}{y}+\frac{1}{y^2} \mbox{ for } 1\leq y\leq 2B\\
\frac{|\log y|}{y^3}+\frac{1}{\sqrt{b_1}y^4} \mbox{ for } y\geq 2B
\end{array}
\right.
\end{equation}
which concludes the proof of the bound \eqref{eq:relaAL} for $i=1$.
In addition, we have that for all $y\geq 1$:
$$\Ls\Lambda T_1=-\frac{4C_1}{y^3}+O\Big(\frac{|\log y|}{y^4}\Big),$$
and
\begin{equation}
\Ls\Sigma_{b_1}=\left\{\begin{array}{ll}
O\Big(\frac{\sqrt{b_1}}{y^2}\Big)+O\Big(\frac{1}{y^3}\Big) \mbox{ for } 1\leq y\leq 2B\\
\frac{-4C_1}{y^3}+O\Big(\frac{1}{y^4}\Big)+O\Big(\frac{1}{\sqrt{b_1}y^5}\Big) \mbox{ for } y\geq 2B, 
\end{array}
\right.
\end{equation}
which imply
\begin{equation}
|\Ls\theta_1|\lesssim \left\{\begin{array}{ll}
\frac{\sqrt{b_1}}{y^2}+\frac{1}{y^3} \mbox{ for } 1\leq y\leq 2B\\
\frac{|\log y|}{y^4}+\frac{1}{\sqrt{b_1}y^5} \mbox{ for } y\geq 2B,
\end{array}
\right.
\end{equation}
and the control of higher order derviatives follows by iterating the same process. It remains to look at the derivatives with respect to $b_1$ of $\theta_1$. For all $l\geq 1$
\begin{equation}
\p_{b_1}^l\theta_1=-\p_{b_1}^l\Sigma_{b_1},
\end{equation}
and notice that on one hand for $y\geq 2B$ we have by the definition of $\Sigma_{b_1}$
$$\p_{b_1}^l\theta_1=0.$$
On the other hand by using Leibniz rule and $\Big|\p_{b_1}^l\chi_{B}\Big|\lesssim \frac{\indic_{\{B\leq y \leq 2B\}}}{b_1^l}$ we obtain for $1\leq y\leq 2B$
\begin{align}
|\p_{b_1}^l\theta_1|\lesssim b_1^{\frac{1}{2}-l}\Big(1+\frac{1}{y}\Big).
\end{align}
To control the derivatives of higher order of the form $\p_{b_1}^lA^i\theta_1$ one can iterate the same process to get the bound \eqref{fbldef}, which concludes the proof of $\theta_1$ being a $b_1$-admissible function of degree $(1,1)$.
\item[\underline{Step 3:$k\rightarrow k+1$}.] We suppose that $\theta_k$ is a $b_1$-admissible function of degree $(k,k)$.
Let's prove that $\theta_{k+1}$ is a $b_1$-admissible function of degree $(k+1,k+1)$.
From \eqref{eq:relLsLam} we obtain
\begin{align}
&\Ls \theta_{k+1} =  \Ls \Lambda T_{k+1} - 2k\Ls T_{k+1}-(-1)^k\Ls^{-k+1}\Sigma_{b_1}\nonumber\\
&\qquad \qquad  = \Lambda T_k - (2k - 2)T_k - \frac{\Lambda Z}{y^2}T_{k+1}+(-1)^{k+1}\Ls^{-k+1}\Sigma_{b_1}\nonumber\\
&\qquad \qquad  =-\theta_k- \frac{\Lambda Z}{y^2}T_{k+1}.
\label{eq:tmpLTk1}
\end{align}
From part $(i)$ of Lemma\ref{lemm:GenLk}, we know that $T_{k+1}$ is admissible of degree $(k+1,k+1)$. From \eqref{def:ZbyV} and \eqref{eq:asympV}, one can check that $\frac{\Lambda Z}{y^2}T_{k+1}$ admits the asymptotic:
$$\frac{\Lambda Z}{y^2}T_{k+1} = \Oc(y^{2k + 1})  \quad \text{as} \quad y \to 0,$$
and 
$$\partial_y^j\left(\frac{\Lambda Z}{y^2}T_{k+1}\right) = \Oc(y^{2(k+1) - j - 5}) \quad \text{as}\quad y \to +\infty.$$
Together with the induction hypothesis and the fact that $\frac{\Lambda Z}{y^2}T_{k+1}$ is independent of $b_1$, we deduce that the right hand side of \eqref{eq:tmpLTk1} is $b_1$-admissible of degree $(k, k)$. The conclusion then follows by using part $(iii)$ of Lemma \ref{lemm:actionLL}. This ends the proof of Lemma \ref{lemm:GenLk}.
\end{itemize}
\end{proof}

We end this subsection by introducing a simple notion of homogeneous admissible function.
\begin{definition}[Homogeneous admissible function] Let $L \gg 1$ be an integer and $m = (m_1, \cdots, m_L) \in \mathbb{N}^L$, we say that a function $f(b,y)$ with $b = (b_1, \cdots, b_L)$ is homogeneous of degree $(p_1, p_2, p_3) \in \mathbb{N} \times \mathbb{Z}\times \mathbb{N}$ if it is a finite linear combination of monomials
$$\tilde f (b_1,y)\prod_{k = 1}^Lb_k^{m_k},$$
with $\tilde{f}(b_1,y)$ $b_1$-admissible of degree $(p_1, p_2)$ in the sense of Definition \ref{def:b1Admitfunc} and
$$(m_1, \cdots, m_L) \in \mathbb{N}^L, \quad\sum_{k = 1}^L km_k = p_3.$$
We set 
$$\text{deg}(f):= (p_1, p_2, p_3).$$  
\end{definition}

\subsection{Slowly modulated blow-up profile.}
In this subsection, we use the explicit structure of the linearized operator $\Ls$ to construct an approximate blow-up profile. In particular, we claim the following:
 \begin{proposition}[Construction of the approximate profile] \label{prop:1}  Let $L \gg 1$ be an integer. Let $M > 0$ be a large enough universal constant, then there exist a small enough universal constant $b^*(M,L) > 0$ such that the following holds true. Let a $\Cc^1$ map
 $$b = (b_1, \cdots, b_L):[s_0,s_1] \mapsto (-b^*, b^*)^L,$$
with a priori bounds in $[s_0,s_1]$:
\begin{equation}\label{eq:relb1bk}
0 < b_1 < b^*, \quad |b_k| \lesssim b_1^{k+\frac{1}{2}}, \quad 2 \leq k \leq L, 
\end{equation}
Then there exist homogeneous profiles 
$$S_1 = 0, \quad S_k = S_k(b,y), \quad 2 \leq k \leq L + 2,$$
such that
\begin{equation}\label{eq:Qbform}
Q_{b(s)}(y) = Q(y) + \sum_{k = 1}^L b_k(s)T_k(y) + \sum_{k = 2}^{L+2}S_k(b,y) \equiv Q(y) + \Theta_{b(s)}(y),
\end{equation}
generates an approximate solution to the remormalized flow \eqref{eq:wys}:
\begin{equation}\label{def:Psib}
\partial_s Q_{b} - \partial_{yy}Q_b - \frac{6}{y}\partial_yQ_b + b_1 \Lambda Q_b + \frac{3}{y^2}\sin(2Q_b) = \Psi_b + \textup{Mod}(t),
\end{equation}
with the following property:\\
$(i)$ (Modulation equation)
\begin{equation}\label{eq:Modt}
\textup{Mod}(t) = \sum_{k = 1}^L\Big[(b_k)_s + (2k - 2+C_{b_1})b_1b_k - b_{k + 1}\Big] \left[T_k + \sum_{j = k + 1}^{L+2}\frac{\partial S_j}{\partial b_k}\right], 
\end{equation}
where we use the convention $b_{j} = 0$ for $j \geq L+1$.\\
$(ii)$ (Estimate on the profiles) The profiles $(S_k)_{2 \leq k \leq L+2}$ are homogeneous with 
\begin{align*}
&\text{deg}(S_k) = (k,k,k) \quad \text{for} \quad 2 \leq k \leq L+2,\\
&\frac{\partial S_k}{\partial b_m} = 0 \quad \text{for}\quad 2 \leq k \leq m \leq L.
\end{align*}
$(iii)$ (Estimate on the error $\Psi_b$) For all $0 \leq m \leq L$, there holds:\\
- (global weight bound)
\begin{equation}\label{eq:estGlobalPsib}
\int_{\{y \leq B_1\}}|\Ls^{m + 1} \Psi_b|^2 + \int_{\{y \leq B_1\}} \frac{|\Psi_b|^2}{1 + y^{4(m + 1 )}} \lesssim b_1^{2(m+1)+1-\frac{\eta}{2}},
\end{equation}
\begin{equation}\label{eq:estlocalPsibL}
\int_{\{y \leq B_1\}}|\Ls^{L + 1} \Psi_b|^2 + \int_{\{y \leq B_1\}} \frac{|\Psi_b|^2}{1 + y^{4(L + 1 )}} \lesssim b_1^{2(L+1)+\frac{5}{2}-\frac{7\eta}{2}}
\end{equation}
where $B_1$, is defined in \eqref{def:B0B1}.
\begin{equation}\label{eq:estlocalPsibB04}
\int_{\{y \leq \frac{B_0}{2}\}}|\Ls^{L + 1} \Psi_b|^2 + \int_{\{y \leq \frac{B_0}{2}\}} \frac{|\Psi_b|^2}{1 + y^{4(L + 1 )}} \lesssim b_1^{2(L+1)+\frac{5}{2}}.
\end{equation}
For $M<B_0$, we get
\begin{equation}\label{eq:verylocalPsiM}
\int_{\{y \leq M\}}|\Ls^{L + 1} \Psi_b|^2 + \int_{\{y \leq M\}} \frac{|\Psi_b|^2}{1 + y^{4(L + 1 )}} \lesssim b_1^{2L+7}.
\end{equation}
\end{proposition}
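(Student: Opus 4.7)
The plan is to construct $Q_b$ by an iterative tail computation, following the route outlined in the introduction after Theorem \ref{Theo:1}. Substitute the ansatz \eqref{eq:Qbform} into the renormalized flow \eqref{eq:wys} and Taylor expand the nonlinearity as
$$\frac{3}{y^2}\sin(2Q_b) = \frac{3}{y^2}\sin(2Q) + \frac{Z}{y^2}\Theta_b + N(\Theta_b),$$
where $N(\Theta_b)=\frac{3}{y^2}\bigl[\sin(2Q+2\Theta_b)-\sin(2Q)-2\cos(2Q)\Theta_b\bigr]$ collects the nonlinear remainder. Using $Q'' + \frac{6}{y}Q' = \frac{3}{y^2}\sin(2Q)$ and $\Ls = -\partial_{yy}-\frac{6}{y}\partial_y+\frac{Z}{y^2}$, the linear part of the equation reduces to $\sum_k\bigl[(b_k)_s T_k + b_1 b_k \Lambda T_k\bigr] + \sum_k \Ls S_k + (\text{lower order from }\partial_s S_k,\,b_1\Lambda S_k)$, together with the identity $\Ls T_{k+1}=-T_k$ and $\Ls(\Lambda Q)=0$.

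Next I would collect terms by their homogeneity degree in $b=(b_1,\dots,b_L)$. The order-$b_1$ term is $b_1(\Ls T_1+\Lambda Q)=0$ automatically, so no modulation equation arises at this order. At degree $k\in\{2,\dots,L\}$, the residual is, up to lower-order terms,
$$(b_{k-1})_s T_{k-1} + b_1 b_{k-1}\Lambda T_{k-1} + b_k\Ls T_k + \Ls S_k + (\text{homogeneous degree-$k$ pieces of }N(\Theta_b)),$$
which, using $\theta_{k-1}=\Lambda T_{k-1}-(2(k-1)-2)T_{k-1}-(-1)^{k}\Ls^{-k+2}\Sigma_{b_1}$ and the choice $(b_1)_s/b_1^2=-C_{b_1}$, can be rewritten as
$$\bigl[(b_{k-1})_s+(2k-4+C_{b_1})b_1 b_{k-1}-b_k\bigr]T_{k-1}+b_1 b_{k-1}\theta_{k-1}+\Ls S_k+(\dots).$$
Imposing the modulation law $(b_{k-1})_s+(2k-4+C_{b_1})b_1b_{k-1}-b_k=0$ kills the slowly decaying $T_{k-1}$ tail, and I then define $S_k:=-\Ls^{-1}\{b_1 b_{k-1}\theta_{k-1}+(\text{degree-}k\text{ remainder})\}$. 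Because Lemma \ref{lemm:slowgrowtails} gives $\theta_{k-1}$ as $b_1$-admissible of degree $(k-1,k-1)$ and Lemma \ref{lemm:actionLLb_1}(ii) moves this to $(k,k)$ under $\Ls^{-1}$, an induction combined with the action rules of Lemmas \ref{lemm:actionLL}-\ref{lemm:actionLLb_1} shows that $S_k$ is homogeneous of degree $(k,k,k)$. The condition $\partial_{b_m}S_k=0$ for $m\geq k$ follows because at each step only $b_1,\dots,b_{k-1}$ enter the source in $\Ls S_k$. Iterating through $k=2,\dots,L+2$ produces the profile $Q_b$ of \eqref{eq:Qbform} with $\textup{Mod}(t)$ exactly as in \eqref{eq:Modt}, the extra $\frac{\partial S_j}{\partial b_k}$ contributions arising from $\partial_s Q_b$ via the chain rule in $b$.

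The error $\Psi_b$ then collects (a) homogeneous pieces of degree $>L+2$ produced at the final stages of the cascade, (b) nonlinear-remainder tails $N(\Theta_b)$ beyond what was absorbed, and (c) cutoff errors from the localization built into $\Sigma_{b_1}$ near $y\sim B_0$ and $y\sim B$. The four estimates \eqref{eq:estGlobalPsib}-\eqref{eq:verylocalPsiM} are then bookkeeping with the $b_1$-admissible bounds of Definition \ref{def:b1Admitfunc}: in the very local zone $y\leq M$ the profiles are polynomial and smooth so one obtains the sharp $b_1^{2L+7}$ bound; on $y\leq B_0/2$ the $\Sigma_{b_1}$ cutoff is invisible so only the residual of degree $L+2$ contributes, giving $b_1^{2L+7/2}$; on $y\leq B_1=B_0^{1+\eta}$ the logarithmic and $1/(\sqrt{b_1}y^3)$ tails from \eqref{sigmab_est} produce the $b_1^{-\eta/2}$ and $b_1^{-7\eta/2}$ losses. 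The intermediate-degree estimate \eqref{eq:estGlobalPsib} follows by the same scheme applied one step earlier in the cascade.

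The hard part is the last step of the tail computation for each $S_k$: the $\sqrt{b_1}$ correction $C_{b_1}$ is forced by the precise matching between $\Sigma_{b_1}$ and the linear combination $\Lambda T_{k-1}-(2k-4)T_{k-1}$ in the whole matching region $y\in[B_0,2B]$, not merely pointwise; if the constants $B$ and $C_{b_1}$ of Lemma \ref{lemm:slowgrowtails} were off, the $b_1$-admissible degree of $\theta_{k-1}$ would drop and $S_k$ would acquire a tail growth incompatible with $\textrm{deg}(S_k)=(k,k,k)$, destroying the energy estimates used later. A secondary technical point is that $N(\Theta_b)$ must be expanded to order $L+2$ in the Taylor sense and each piece tracked as a homogeneous admissible function; the combinatorics are handled by noting that $\Theta_b$ has a factor of $b_1$ in every monomial, so only finitely many terms of $N(\Theta_b)$ enter up to degree $L+2$, and each such term lies in the admissible class by the stability of that class under products and polynomial composition.
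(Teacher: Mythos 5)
Your proposal follows the same route as the paper's own proof: substitute the ansatz into \eqref{eq:wys}, split into a linear part and a Taylor remainder, rewrite $\Lambda T_k - (2k-2+C_{b_1})T_k$ through $\theta_k$ and $\overline{\Sigma}_{b_1}=\Sigma_{b_1}-C_{b_1}T_1$, define $S_{k}=-\Ls^{-1}F_k$ with $F_k$ homogeneous of degree $(k-1,k-1,k)$ via the action lemmas, read off $\textup{Mod}(t)$ from the chain rule applied to $\partial_s S_j$, and estimate $\Psi_b$ by splitting it into the $\Ls^{-k+1}\overline{\Sigma}_{b_1}$ contributions (your "cutoff errors", supported in $\{y\ge B_0\}$), the residual $E_{L+2}$, and the nonlinear remainders $R_1,R_2$. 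This is exactly the paper's decomposition and the key mechanism you single out — that $C_{b_1}$, $B_0$, $B$ must be matched so that $\theta_k$ stays $b_1$-admissible of degree $(k,k)$ — is indeed the heart of the construction.

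One small slip to fix in the bookkeeping: for \eqref{eq:estlocalPsibB04} you quote $b_1^{2L+\frac 72}$, but the stated bound is $b_1^{2(L+1)+\frac 52}=b_1^{2L+\frac 92}$. On $\{y\le B_0/2\}$ the term $\Psi_b^0=\sum_k(-1)^{k+1}b_1 b_k\Ls^{-k+1}\overline{\Sigma}_{b_1}$ vanishes, so the bound comes from $E_{L+2}$ (homogeneous of degree $(L+2,L+2,L+3)$) together with the nonlinear remainders; tracking the $\sqrt{b_1}$ weight in \eqref{fidef} and the $b$-monomial prefactor from \eqref{eq:relb1bk} gives the extra power of $b_1$. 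Your descriptions of the $b_1^{-\eta/2}$ and $b_1^{-\frac{7\eta}{2}}$ losses over $\{y\le B_1\}$, and of the $b_1^{2L+7}$ bound for $y\le M$, are consistent with \eqref{eq:estGlobalPsib}, \eqref{eq:estlocalPsibL}, \eqref{eq:verylocalPsiM}.
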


\begin{proof}
We want to construct the profiles $T_k$ such that $\Psi_b(y)$ defined from \eqref{def:Psib} has the \emph{least possible growth} as $y \to +\infty$. The key to this construction is the fact that the structure of the linearized operator $\Ls$ defined in \eqref{def:Lc} is completely explicit and the introduction of $\Sigma_{b_1}$ which will allow us to cancel the worst growth of $T_k$ at infinity.
 This procedure will lead to the leading-order modulation equation
\begin{equation}
(b_k)_s = -(2k - 2+C_{b_1})b_1b_k + b_{k+1} \quad \text{for}\quad 1 \leq k \leq L,
\end{equation}
which actually cancels the worst growth of $S_k$ as $y \to +\infty$.

\paragraph{$\bullet$ Expansion of $\Psi_b$}. From \eqref{def:Psib} and \eqref{eq:Qy}, we write
\begin{align*}
&\partial_s Q_{b} - \partial_{yy}Q_b - \frac{6}{y}\partial_y Q_b + b_1 \Lambda Q_b + \frac{3}{y^2}\sin(2Q_b)\\
&= \underbrace{b_1\Lambda Q + \partial_s \Theta_b +\Ls(\Theta_b) + b_1 \Lambda \Theta_b}_{:=A_1}+ \underbrace{\frac{3}{y^2}\left[\sin(2Q + 2\Theta_b) - \sin(2Q) - 2\cos(2Q)\Theta_b\right]}_{:=A_2}.
\end{align*}
Using the expression \eqref{eq:Qbform} of $\Theta_b$, the definition \eqref{def:Tk} of $T_k$ (note that $\Ls T_k = - T_{k-1}$ with the convention $T_0 = \Lambda Q$)and the definition of $\theta_k$ \eqref{def:tetak} , we write 
\begin{align*}
A_1&= b_1 \Lambda Q + \sum_{k = 1}^L \Big[(b_k)_s T_k + b_k \Ls T_k + b_1b_k\Lambda T_k \Big] + \sum_{k = 2}^{L+2}\Big[\partial_s S_k + \Ls S_k + b_1 \Lambda S_k\Big]\\
&=  \sum_{k = 1}^L \Big[(b_k)_sT_k - b_{k+1}T_k + b_1b_k \Lambda T_k \Big] + \sum_{k = 2}^{L+2}\Big[\partial_s S_k + \Ls S_k + b_1 \Lambda S_k\Big]\\
&= \sum_{k = 1}^L \Big[(b_k)_s - b_{k+1} + (2k - 2+C_{b_1})b_1b_k\Big]T_k\\
&\qquad + \sum_{k = 1}^L\Big[\Ls S_{k + 1} + \partial_s S_k + b_1b_k \big[\Lambda T_k - (2k - 2+C_{b_1})T_k\big] + b_1 \Lambda S_k\Big]\\
& \qquad\qquad+ \Big[\Ls S_{L+2} + \partial_s S_{L+1} + b_1\Lambda S_{L+1}\Big] + \Big[\partial_sS_{L+2} + b_1\Lambda S_{L+2}\Big],
\end{align*}
where $C_{b_1}$ is defined by \eqref{eq:Cb1}.
We now use the fact that we expect $(b_j)_s\sim -(2j - 2+C_{b_1})b_1b_j + b_{j+1}$ to write
\begin{align*}
\partial_sS_k = \sum_{j = 1}^L(b_j)_s\frac{\partial S_k}{\partial b_j} &= \sum_{j = 1}^L\Big[(b_j)_s + (2j - 2+C_{b_1})b_1b_j - b_{j + 1} \Big]\frac{\partial S_k}{\partial b_j} \\
&\qquad - \sum_{j = 1}^L\Big[(2j - 2+C_{b_1})b_1b_j - b_{j + 1} \Big]\frac{\partial S_k}{\partial b_j}.
\end{align*} 
Hence, 
\begin{align*}
A_1 = \text{Mod}(t) &+ \sum_{k = 1}^{L+1}\left[\Ls S_{k + 1} + E_k\right] + E_{L+2},
\end{align*}
where for $k = 1, \cdots, L$,
\begin{equation}\label{def:Ek1}
E_k= b_1b_k \big[\Lambda T_k - (2k - 2+C_{b_1})T_k\big] + b_1 \Lambda S_k - \sum_{j = 1}^{k-1}\Big[(2j - 2+C_{b_1})b_1b_j - b_{j + 1} \Big]\frac{\partial S_k}{\partial b_j},
\end{equation} 
and for $k = L+1, L+2$,
\begin{equation}\label{def:EkL12}
E_k = b_1 \Lambda S_k - \sum_{j = 1}^{L}\Big[(2j - 2+C_{b_1})b_1b_j - b_{j + 1} \Big]\frac{\partial S_k}{\partial b_j}.
\end{equation}

For the expansion of the nonlinear term $A_2$, let us denote 
$$f(x) = \sin(2x)$$
and use a Taylor expansion to write (see pages 1740 in \cite{RSapde2014} for a similar computation)
\begin{align*}
A_2 = \frac{3}{y^2}\left[\sum_{i = 2}^{L+2}\frac{f^{(i)}(Q)}{i!}\Theta_b^i + R_2\right] = \frac{3}{y^2}\left[\sum_{i = 2}^{L+2}P_i + R_1 + R_2\right],
\end{align*}
where 
\begin{equation}\label{def:Pj}
P_i = \sum_{j = 2}^{L+2}\frac{f^{(j)}(Q)}{j!}\sum_{|J|_1 = j, |J|_2 = i}c_J \prod_{k = 1}^Lb_k^{i_k}T_k^{i_k}\prod_{k=2}^{L+2}S_k^{j_k},
\end{equation}
\begin{equation}\label{def:R_1}
R_1 = \sum_{j = 2}^{L+2}\frac{f^{(j)}(Q)}{j!}\sum_{|J|_1 = j, |J|_2 \geq L+3}c_J \prod_{k = 1}^Lb_k^{i_k}T_k^{i_k}\prod_{k=2}^{L+2}S_k^{j_k},
\end{equation}
\begin{equation}\label{def:R2}
R_2 = \frac{\Theta_b^{L+3}}{(L+2)!}\int_0^1(1 - \tau)^{L+2}f^{(L+3)}(Q + \tau \Theta_b)d\tau,
\end{equation}
with $J = (i_1, \cdots, i_L, j_2, \cdots, j_{L+2}) \in \mathbb{N}^{2L + 1}$ and 
\begin{equation}\label{def:J1J2}
|J|_1 = \sum_{k = 1}^L i_k + \sum_{k = 2}^{L+2}j_k, \quad |J|_2 = \sum_{k=1}^L ki_k + \sum_{k = 2}^{L+2}kj_k.
\end{equation}
We now use the definition \eqref{def:tetak} of $\theta_k$ to rewrite:
$$\Lambda T_k-(2k-2+C_{b_1})T_k=\theta_k+(-1)^{k+1}\Ls^{-k+1}\Sigma_{b_1}-C_{b_1}T_k=\theta_k+(-1)^{k+1}\Ls^{-k+1}(\Sigma_{b_1}-C_{b_1}T_1).$$
Hence, $E_k$ for $1\leq k\leq L$ becomes
\begin{align*}
E_k&=b_1b_k \big[\theta_k+(-1)^{k+1}\Ls^{-k+1}(\Sigma_{b_1}-C_{b_1}T_1)\big] + b_1 \Lambda S_k - \sum_{j = 1}^{k-1}\Big[(2j - 2+C_{b_1})b_1b_j - b_{j + 1} \Big]\frac{\partial S_k}{\partial b_j}\nonumber\\
&=\tilde{E}_k+(-1)^{k+1}b_1b_k\Ls^{-k+1}(\Sigma_{b_1}-C_{b_1}T_1),
\end{align*}
with
\begin{align}\label{def:tildeEk}
\tilde{E}_k=b_1b_k\theta_k + b_1 \Lambda S_k - \sum_{j = 1}^{k-1}\Big[(2j - 2+C_{b_1})b_1b_j - b_{j + 1} \Big]\frac{\partial S_k}{\partial b_j}
\end{align}

which induces the final expression for the error $\Psi_b$: 
\begin{align}\label{eq:expanPsib}
\Psi_b &= \sum_{k=1}^L(-1)^{k+1}b_1b_k\Ls^{-k+1}(\Sigma_{b_1}-C_{b_1}T_1)+\sum_{k = 1}^{L}\left[\Ls S_{k + 1} + \tilde{E}_k + \frac{3}{y^2}P_{k+1}\right]\nonumber\\
 &+\Big[\Ls S_{L+2}+E_{L+1}+\frac{3}{y^2}P_{L+2}\Big] + E_{L+2} + \frac{3}{y^2}(R_1 + R_2).
\end{align}

\paragraph{$\bullet$ Construction of $S_k$.} From the expression of $\Psi_b$ given in \eqref{eq:expanPsib}, we construct iteratively the sequences of profiles $(S_k)_{1 \leq k \leq L+2}$ through the scheme
\begin{equation}\label{def:Sk}
\left\{\begin{array}{lll}
S_1 &= 0, \\
S_k &= - \Ls^{-1}F_k, \quad 2 \leq k \leq L,\\
S_{L+2}&=-\Ls^{-1}F_{L+2}, 
\end{array}\right.
\end{equation}
where 
$$F_k = \tilde{E}_{k-1} + \frac{3}{y^2}P_{k} \quad \text{for} \quad 2\leq k \leq L+1,$$
and
$$F_{L+2} = E_{L+1} + \frac{3}{y^2}P_{L+2}.$$
We claim by induction on $k$ that $F_k$ is homogeneous with 
\begin{eqnarray}\label{eq:degFk}
\text{deg}(F_k) = (k-1, k-1, k) \quad \text{for} \quad 2 \leq k \leq L+2,\\
\end{eqnarray}
and 
\begin{eqnarray}\label{eq:estparFk}
\frac{\partial F_k}{\partial b_m} = 0 \quad \text{for}\quad 2 \leq k \leq m \leq L+2.
\end{eqnarray}
From item $(iii)$ of Lemma \ref{lemm:actionLL} and \eqref{eq:degFk}, we deduce that $S_k$ is homogeneous of degree
$$\text{deg}(S_k) = (k,k,k) \quad \text{for}\quad 2 \leq k \leq L+2,$$
and from \eqref{eq:estparFk}, we get
$$\frac{\partial S_k}{\partial b_m} = 0 \quad \text{for} \quad 2 \leq k \leq m \leq L+2,$$
which is the conclusion of item $(ii)$.\\

Let us now give the proof of \eqref{eq:degFk} and \eqref{eq:estparFk}. We proceed by induction.\\
\noindent - Case $k = 2$: We compute explicitly from \eqref{def:tildeEk} and \eqref{def:Pj},
$$F_2 = \tilde{E}_1 + \frac{3}{y^2}P_2 = b_1^2\left[\theta_1 + \frac{6f''(Q)}{2y^2}T_1^2\right],$$
which directly follows \eqref{eq:estparFk}. From Lemma \ref{lemm:GenLk} and \ref{lemm:slowgrowtails} , we know that $T_1$ and $\theta_1$ are admissible of degree $(1,1)$ and $b_1$-admissible of degree $(1,1)$ respectively. Using \eqref{eq:asymQ}, one can check the bound
\begin{equation}\label{eq:estfjm}
\forall m,j \in \mathbb{N}^2, \quad\left|\partial_y^m \left(\frac{f^{(j)}(Q)}{y^2}\right)\right| \lesssim y^{-2 - m} \quad \text{as}\quad y \to +\infty.
\end{equation}
Since $T_1$ is admissible of degree $(1,1)$, we have that
$$\forall m \in \mathbb{N}, \quad |\partial_y^m(T_1^2)| \lesssim y^{- m}  \quad \text{as}\quad y \to +\infty.$$
By the Leibniz rule we get that
$$\forall m,j \in \mathbb{N}^2, \quad\left|\partial_y^m \left(\frac{f^{(j)}(Q)}{y^2} T_1^2\right)\right|  \lesssim y^{-2
 - m}.$$
We also have the expansion near the origin, 
$$\frac{f^{(j)}(Q)}{y^2}T_1^2 = \sum_{i = 2}^k c_iy^{2i + 1} + \Oc(y^{2k + 3}), \quad k \geq 1.$$
Hence, $\frac{f''(Q)}{y^2}T_1^2$ is admissible of degree $(2,0)$, which concludes the proof of \eqref{eq:degFk} for $k = 2$.\\

\noindent - Case $k \to k+1$: Estimate \eqref{eq:estparFk} holds by direct inspection. We suppose that $S_k$ is homogeneous of degree $(k, k, k)$ and prove that $S_{k + 1}$ is homogeneous of degree $(k+1, k+1, k+1)$. In particular, the claim immediately follows from part $(iii)$ of Lemma \ref{lemm:actionLL} once we show that $F_{k+1}$ is homogeneous with
\begin{equation}\label{eq:Fk1EkPk1}
\text{deg}(F_{k+1}) =  (k, k, k+1).
\end{equation}
From part $(ii)$ of Lemma \ref{lemm:GenLk} and the a priori assumption \eqref{eq:relb1bk}, we see that $b_1b_k\theta_k$ is homogeneous of degree $(k, k, k+1)$. From part $(i)$ of Lemma \ref{lemm:actionLL} and the induction hypothesis, $b_1 \Lambda S_k$ is also homogeneous of degree $(k, k, k+1)$. By definition, $b_j\frac{\partial S_k}{\partial b_j}$ is homogeneous and has the same degree as $S_k$. For $j\geq2$, by definition and induction we have that 
$$\Big[(2j - 2+C_{b_1})b_1b_j - b_{j + 1} \Big]\frac{\partial S_k}{\partial b_j}$$
is homogeneous of degree $(k,k,k+1)$. We just prove the initialization of the induction the rest is left to the reader. For $j=1$ we have that:
$$\left(C_{b_1}b_1- \frac{b_{2}}{b_{1}}\right)\left(b_1\frac{\partial S_k}{\partial b_1}\right)$$
is homogeneous of degree $(k, k, k+1)$. From definitions \eqref{def:Ek1} and \eqref{def:EkL12}, we derive  
\begin{equation}
\left\{\begin{array}{lll}
\text{deg}(\tilde{E}_k) &= (k, k, k+1), \quad 1\leq k \leq L, \\
\text{deg}(E_{L+1}) &= (L+1, L+1, L+2). 
\end{array}\right.
\end{equation}
It remains to control the term $\frac{P_{k+1}}{y^2}$. From the definition \eqref{def:Pj}, we see that $\frac{P_{k+1}}{y^2}$ is a linear combination of monomials of the form 
$$M_J(y) = \frac{f^{(j)}(Q)}{y^2}\prod_{m = 1}^L b_m^{i_m}T_m^{i_m}\prod_{m = 2}^{L+2}S_m^{j_m},$$
with 
$$J= (i_1, \cdots, i_L, j_2, \cdots, j_{L+2}), \quad |J|_1 = j, \; |J|_2 = k+1, \; 2 \leq j \leq k+1.$$
Recall from part $(i)$ of Lemma \ref{lemm:GenLk} the bound
$$\forall n \in \mathbb{N}, \quad |\partial_y^n T_m| \lesssim y^{2m -2- n} \quad \text{as} \quad y \to +\infty,$$
and from the induction hypothesis and the a priori bound \eqref{eq:relb1bk},
$$\forall n \in \mathbb{N}, \quad |\partial_y^n S_m| \lesssim b_1^{m+\frac{1}{2}}y^{2m - 2 - n} \quad \text{as} \quad y \to +\infty. $$
Together with the bound \eqref{eq:estfjm}, we obtain the following bound at infinity,
$$| M_J|\lesssim b_1^{|J|_2+\frac{|J|_1}{2}} y^{2|J|_2-2|J|_1 - 2} \lesssim b_1^{k+1}y^{2(k+1) - 2}.$$
The control of $\partial_y^n M_J$ follows by the Leibniz rule and the above estimates. One can also check that $M_J$ is of order $y^{2(k+1) + j - 1}$ near the origin. This concludes the proof of \eqref{eq:Fk1EkPk1} as well as part $(ii)$ of Proposition \ref{prop:1}.\\

\paragraph{$\bullet$ Estimate on $\Psi_b$.} From \eqref{eq:expanPsib} and \eqref{def:Sk}, the expression of $\Psi_b$ is now reduced to 
\begin{align}
\Psi_b =\underbrace{\sum_{k=1}^L(-1)^{k+1}b_1b_k\Ls^{-k+1}\overline{\Sigma}_{b_1}}_{\Psi_b^0}+ E_{L+2} + \frac{6}{y^2}(R_1 + R_2),
\end{align}
where $E_{L+2}$, $R_1$, $R_2$ are given by \eqref{def:EkL12}, \eqref{def:R_1} and \eqref{def:R2}. 
\begin{equation*}
\overline{\Sigma}_{b_1}=\Sigma_{b_1}-C_{b_1}T_1.
\end{equation*}
Notice first that on one hand 
\begin{equation}\label{supsigbar}
\text{Supp}~\overline{\Sigma}_{b_1} \subset\Big\{y\geq B_0\Big\}.
\end{equation}
One the other hand we have from \eqref{sigmab_est} that
\begin{align*}
|\Ls^{-k+m+2}\overline{\Sigma}_{b_1}|\lesssim \sqrt{b_1}y^{2k-2m-4}.
\end{align*}
Thus from \eqref{eq:relb1bk} we get for all $m\geq1$
\begin{align}
\int_{\{y \leq B_1\}}|\Ls^{m + 1}\Psi_b^0|^2 y^6dy&\lesssim b_1^4\int_{\{y\leq 2B_1\}}|\Ls^{m+1}\overline{\Sigma}_{b_1}|^2y^6dy\nonumber\\
&+\sum_{k=2}^Lb_1^{2k+3}\int_{\{y\leq 2B_1\}}|\Ls^{-k+2+m}\overline{\Sigma}_{b_1}|^2y^6dy\nonumber\\
&\lesssim b_1^{2(m+1)+1+(2m+\frac{1}{2})\eta},
\end{align}
Remark that when $m=L$ we have
\begin{align}
\int_{\{y \leq B_1\}}|\Ls^{L + 1}\Psi_b^0|^2 y^6dy &\lesssim b_1^4\int_{\{y\leq 2B_1\}}|\Ls^{L+1}\overline{\Sigma}_{b_1}|^2y^6dy\nonumber\\
&+\sum_{k=2}^Lb_1^{2k+3}\int_{\{y\leq 2B_1\}}|\Ls^{-k+2+L}\overline{\Sigma}_{b_1}|^2y^6dy\nonumber\\
&\lesssim b_1^{2(L+1)+\frac{5}{2}+\frac{\eta}{2}}.
\end{align}

We start by estimating $E_{L+2}$ term defined by \eqref{def:EkL12}. Since $S_{L+2}$ is homogeneous of degree $(L+2, L+2, L+2)$ and thus so are $\Lambda S_{L+2}$ and $b_1 \frac{\partial S_{L+2}}{\partial b_1}$. This implies that $E_{L+2}$ is homogeneous of degree $(L+2, L+2, L+3)$. 
Using part $(ii)$ of Lemma \ref{lemm:actionLL}, we estimate for all $0 \leq m \leq L$
\begin{align*}
\int_{y \leq 2B_1}|\Ls^{m + 1}E_{L+2}|^2 &\lesssim b_1^{2L+7}\int_{1\leq y \leq 2B_1}|y^{2(L+2) - 3 - 2 m -2}|^2 y^{6}dy\\
&\qquad \quad \lesssim b_1^{2(m+1) + 1+(2m-\frac{7}{2})\eta},
\end{align*}
where $\eta = \frac{3}{4L}$.
And when $m=L$ we get similarly,
\begin{align*}
\int_{y \leq 2B_1}|\Ls^{L + 1}E_{L+2}|^2 &\lesssim b_1^{2(L+1) + \frac{5}{2}-\frac{7\eta}{2}}.
\end{align*}
We now turn to the control of the term $\frac{R_1}{y^2}$, which is a linear combination of terms of the form (see \eqref{def:R_1})
$$\tilde{M}_J = \frac{f^{(j)}(Q)}{y^2}\prod_{n = 1}^Lb_n^{i_n}T_n^{i_n}\prod_{n = 2}^{L+2}S_n^{j_n},$$
with 
$$J = (i_1, \cdots, i_L, j_2, \cdots, j_{L+2}), \; |J|_1 = j, \; |J|_2 \geq L+3, \; 2 \leq j \leq L+2.$$
Using the admissibility of $T_n$ and the homogeneity of $S_n$, we get the bound
$$|\tilde{M}_J| \lesssim b_1^{|J|_2+\frac{|J|_1}{2}}y^{2|J|_2 - 2|J|_1 - 2}  \quad \text{as}\quad y \to +\infty,$$
and similarly for higher derivatives by the Leibniz rule. Thus, we obtain the round estimate for all $1 \leq m \leq L$,
\begin{align*}
\int_{y \leq 2B_1}\left|\Ls^{m + 1}\left(\frac{R_1}{y^2}\right)\right|^2 &\lesssim b_1^{2|J|_2+|J|_1}\int_{y \leq 2B_1}|y^{2|J|_2 -2|J|_1-6 -2m}|^2 y^6dy\\
&\quad\lesssim b_1^{2(m+1) + 1-\frac{\eta}{2}}.
\end{align*}
The term $\frac{R_2}{y^2}$ is estimated exactly as for the term $\frac{R_1}{y^2}$ using the definition \eqref{def:R2}. Similarly, the control of $\int_{y \leq 2B_1} \frac{|\Psi_b|^2}{(1 + y^{4(m + 1)})}$ is obtained along the exact same lines as above. This concludes the proof of \eqref{eq:estGlobalPsib}. The local estimate \eqref{eq:estlocalPsibB04} directly follows from the homogeneity of $S_k$ and the admissibility of $T_k$ and the fact that for $y\leq \frac{B_0}{4}$ $\psi_b^0=0$. This concludes the proof of Proposition \ref{prop:1}.
\end{proof}

\bigskip

We now proceed to a simple localization of the profile $Q_b$ to avoid the growth of tails in the region $y \geq 2B_1 \gg B_0$. More precisely, we claim the following:

\begin{proposition}[Estimates on the localized profile] \label{prop:localProfile} Under the assumptions of Proposition \ref{prop:1}, we assume in addition the a priori bound 
\begin{equation}\label{eq:apriorib1}
|(b_1)_s| \lesssim b_1^{\frac{5}{2}}.
\end{equation}
Consider the localized profile
\begin{equation}\label{def:Qbtil}
\tilde{Q}_{b(s)}(y) = Q(y) + \sum_{k = 1}^Lb_k\tilde{T}_k + \sum_{k = 2}^{L+2}\tilde{S}_k \quad \text{with}\quad \tilde{T}_k = \chi_{B_1}T_k, \; \tilde{S}_k = \chi_{B_1}S_k,
\end{equation}
where $B_1$ and $\chi_{B_1}$ are defined as in \eqref{def:B0B1} and \eqref{def:chiM}. Then
\begin{equation}\label{def:Psibtilde}
\partial_s \tilde Q_{b} - \partial_{yy}\tilde Q_b - \frac{6}{y}\partial_y \tilde Q_b + b_1 \Lambda \tilde Q_b + \frac{3}{y^2}\sin(2\tilde Q_b) = \tilde \Psi_b + \chi_{B_1}\, \textup{Mod}(t),
\end{equation}
where $\tilde \Psi_{b}$ satisfies the bounds:\\

\noindent $(i)\;$ (Large Sobolev bound) For all $1 \leq m \leq L-1$,
\begin{align}
&\int |\Ls^{m + 1} \tilde{\Psi}_b|^2 + \int \frac{|\As\Ls^{m} \tilde{\Psi}_b|^2}{1 + y^2} \nonumber\\
& \qquad + \int \frac{|\Ls^m \tilde{\Psi}_b|^2}{1 + y^4} + \int \frac{|\tilde \Psi_b|^2}{1 + y^{4(m + 1)}} \lesssim b_1^{2(m+1)+ (\frac{1}{2}+2m)\eta},\label{eq:estPsibLarge1}
\end{align}
and 
\begin{align}
&\int |\Ls^{L + 1} \tilde{\Psi}_b|^2 + \int \frac{|\As\Ls^{L} \tilde{\Psi}_b|^2}{1 + y^2} \nonumber\\
& \qquad + \int \frac{|\Ls^{L} \tilde{\Psi}_b|^2}{1 + y^4} + \int \frac{|\tilde \Psi_b|^2}{1 + y^{4(L + 1)}} \lesssim b_1^{2(L+1)+ \frac{3}{2}+\frac{\eta}{2}},\label{eq:estPsibLarge2}
\end{align}
\noindent $(ii)\;$(Improved local bound) for all $0<M<\frac{B_1}{2}$ there exists $C$ such that for all $1\leq m\leq L$,
\begin{align}
&\int_{\{y\leq 2M\}} |\Ls^{m+1
} \tilde{\Psi}_b|^2 \lesssim M^C b_1^{2L+7},
\label{eq:localProfile}
\end{align}
\noindent $(iii)\;$ (Very Local Bound) for all $1\leq m\leq L$,
\begin{align}
&\int_{\{y\leq\frac{B_0}{2}\}} |\Ls^{m + 1} \tilde{\Psi}_b|^2 + \int_{\{y\leq\frac{B_0}{2}\}} \frac{|\As\Ls^{m} \tilde{\Psi}_b|^2}{1 + y^2} \nonumber\\
& \qquad + \int_{\{y\leq\frac{B_0}{2}\}} \frac{|\Ls^m \tilde{\Psi}_b|^2}{1 + y^4} + \int_{\{y\leq\frac{B_0}{2}\}} \frac{|\tilde \Psi_b|^2}{1 + y^{4(m + 1)}} \lesssim b_1^{2(m+1)+\frac{5}{2}},\label{eq:estPsiblocalB0}
\end{align}
%
\end{proposition}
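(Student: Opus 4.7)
The plan is to obtain $\tilde\Psi_b$ by direct substitution of $\tilde Q_b=Q+\chi_{B_1}\Theta_b$ into the left-hand side of \eqref{def:Psibtilde} and then separate the resulting expression into four structurally distinct pieces: a localized original error, a far-field remainder, a nonlinear correction, and a commutator of $\chi_{B_1}$ with the linear operator. Using Leibniz, the harmonic-map identity $\p_{yy}Q+\tfrac{6}{y}\py Q=\tfrac{3}{y^2}\sin(2Q)$ from \eqref{eq:Qy}, and \eqref{def:Psib}, one arrives at the identity
\begin{align*}
\tilde\Psi_b &= \chi_{B_1}\Psi_b + (1-\chi_{B_1})\,b_1\,\Lambda Q + \frac{3}{y^2}\bigl[\sin(2\tilde Q_b)-\chi_{B_1}\sin(2Q_b)-(1-\chi_{B_1})\sin(2Q)\bigr] \\
& \qquad + \bigl[(b_1)_s\,\p_{b_1}\chi_{B_1} + b_1\,\Lambda\chi_{B_1} - \p_{yy}\chi_{B_1} - \tfrac{6}{y}\,\py\chi_{B_1}\bigr]\Theta_b - 2\,\py\chi_{B_1}\,\py\Theta_b,
\end{align*}
in which the last three contributions are supported in the corona $\Cc:=\{B_1\leq y\leq 2B_1\}$ (and the second in $\{y\geq B_1\}$); this localization is the crucial structural feature.

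For the global Sobolev bounds \eqref{eq:estPsibLarge1}--\eqref{eq:estPsibLarge2} I would estimate each of the four weighted quantities on the left piece by piece. The first term $\chi_{B_1}\Psi_b$ is directly controlled by Proposition \ref{prop:1}: \eqref{eq:estGlobalPsib} for $1\leq m\leq L-1$ and the sharper \eqref{eq:estlocalPsibL} for $m=L$. The far-field piece $(1-\chi_{B_1})b_1\Lambda Q$ decays as $b_1 y^{-2}$, and applying $\Ls^{m+1}$ produces $\lesssim b_1\,y^{-2-2(m+1)}$ outside $\{y\leq B_1\}$; integrating against $y^6\,dy$ over $\{y\geq B_1\}$ with $B_1\sim b_1^{-(1+\eta)/2}$ gives exactly the target exponent with its $\eta$-loss. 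The nonlinear correction is a Taylor remainder in $\Theta_b$ supported on $\Cc$; using $|\Theta_b|\lesssim b_1$ there (higher $k\geq 2$ contributions being suppressed by $b_1^{3/2-(k-1)\eta}$) and analogous derivative bounds yields the required size. The commutator is the most delicate: the $\chi_{B_1}$-derivatives contribute factors $B_1^{-j}\indic_\Cc$, and the worst time-derivative piece is controlled via the a priori bound \eqref{eq:apriorib1}, namely $|(b_1)_s\,\p_{b_1}\chi_{B_1}|\lesssim b_1^{5/2}\cdot b_1^{-1}=b_1^{3/2}$ on $\Cc$, which pairs correctly with $|\Theta_b|$ there.

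For the improved local estimate \eqref{eq:localProfile}, since $\chi_{B_1}\equiv 1$ on $\{y\leq 2M\}\subset\{y\leq B_1/2\}$ one has $\tilde\Psi_b=\Psi_b$ on that set, and \eqref{eq:estlocalPsibL} together with a polynomial loss $M^C$ absorbing the $y^{2(L+1)}$-type weights on a bounded set yields the claim. The very local bound \eqref{eq:estPsiblocalB0} rests on the key observation \eqref{supsigbar}: the $\overline\Sigma_{b_1}$-generated piece $\Psi_b^0$ is supported in $\{y\geq B_0\}$ and hence vanishes on $\{y\leq B_0/2\}$; only $E_{L+2}$ and the nonlinear remainders $\tfrac{6}{y^2}(R_1+R_2)$ survive there, and their sharper homogeneity-based control, free of any $\eta$-loss, reproduces $b_1^{2(m+1)+5/2}$ exactly as in the derivation of \eqref{eq:estlocalPsibB04}.

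The main obstacle I anticipate is the tight bookkeeping of $b_1$-powers on the corona $\Cc$: the worst commutator term (coming from $b_1\cdot b_L\cdot\py T_L$, together with the companion contributions from $\theta_k$ and $S_k$) must be estimated so that the cancellation between the prefactor $b_1^{k+1/2}$, the spatial weight $y^{2k-2}$ evaluated at $y\sim B_1$, and the corona volume $\int_\Cc y^6\,dy\sim B_1^7$ produces exactly $b_1^{2(L+1)+3/2+\eta/2}$ in the threshold case $m=L$. This is where the definitions $\eta=3/(4L)$ and $B_1=B_0^{1+\eta}$ are tight, and the count mirrors the one already performed for \eqref{eq:estlocalPsibL}.
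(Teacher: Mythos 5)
Your proposal reproduces the paper's proof almost verbatim: the decomposition $\tilde\Psi_b=\chi_{B_1}\Psi_b+\hat\Psi_b$, the identification of the three corona pieces $\hat\Psi_b^{(1)}=b_1(1-\chi_{B_1})\Lambda Q$, the Taylor remainder $\hat\Psi_b^{(2)}$, and the cut-off commutator $\hat\Psi_b^{(3)}$ (including the observation that $\partial_s\chi_{B_1}=(b_1)_s\partial_{b_1}\chi_{B_1}$ is controlled by \eqref{eq:apriorib1} to give $b_1^{3/2}\indic_{\Cc}$), the piecewise $L^2$-estimates using \eqref{eq:estGlobalPsib}--\eqref{eq:estlocalPsibL} for $\chi_{B_1}\Psi_b$ and the bound $|\Theta_b|\lesssim b_1$ on the corona for the rest, and the support argument deriving the local bounds from the vanishing of $\hat\Psi_b$ on $\{y\leq 2M\}$ and of $\Psi_b^0$ on $\{y\leq B_0/2\}$ — all of this matches the paper's route.

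One localized slip: for the improved local bound \eqref{eq:localProfile}, after the correct observation that $\tilde\Psi_b=\Psi_b$ on $\{y\le 2M\}$, you cite \eqref{eq:estlocalPsibL}. That estimate gives only $b_1^{2(L+1)+\frac 52-\frac{7\eta}{2}}=b_1^{2L+\frac 92-\frac{7\eta}{2}}$, which, since $b_1<1$ and $\frac 92-\frac{7\eta}{2}<7$, is strictly weaker than the claimed $b_1^{2L+7}$ and cannot be upgraded by any $M^C$ factor. What is actually needed (and what the paper invokes, though also rather tersely) is the homogeneity-based \emph{pointwise} control of $\Psi_b$ on bounded sets — i.e.\ \eqref{eq:verylocalPsiM}, which exploits that $\Psi_b^0$ vanishes for $y$ small and that $E_{L+2}$ and $\frac{R_1}{y^2},\frac{R_2}{y^2}$ carry the full prefactor $b_1^{L+\frac 72}$ in their homogeneity — integrated over $\{y\leq 2M\}$ with the polynomial weight absorbed into $M^C$. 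Your sketch already has the right raw material (you use exactly this structure for \eqref{eq:estPsiblocalB0}); the correction is just to deploy it for \eqref{eq:localProfile} as well rather than falling back on the global $L^2$-estimate.
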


\begin{proof} By a direct computation, we have
\begin{align*}
&\partial_s \tilde Q_{b} - \partial_{yy}\tilde Q_b - \frac{6}{y}\partial_y \tilde Q_b + b_1 \Lambda \tilde Q_b + \frac{3}{y^2}\sin(2\tilde Q_b)\\
& =\chi_{B_1}\left[\partial_s Q_{b} - \partial_{yy}Q_b - \frac{6}{y}\partial_y Q_b + b_1 \Lambda Q_b + \frac{3}{y^2}\sin(2 Q_b) \right] \\
& \quad + \Theta_b \left[ \partial_s \chi_{B_1} - \left(\partial_{yy}\chi_{B_1} + \frac{6}{y}\partial_y \chi_{B_1}\right)  + b_1 \Lambda \chi_{B_1}\right]  - 2\partial_y \chi_{B_1}\partial_y\Theta_b + b_1(1 - \chi_{B_1})\Lambda Q \\
& \qquad + \frac{3}{y^2}\left[\sin(2\tilde Q_b) - \sin(2Q) - \chi_{B_1}(\sin(2Q_b) - \sin(2Q)) \right].
\end{align*}
According to \eqref{def:Psib} and \eqref{def:Psibtilde}, we write
$$\tilde{\Psi}_b = \chi_{B_1}\Psi_b + \hat\Psi_b,$$
where 
\begin{align*}
\hat \Psi_b &=\underbrace{ b_1(1 - \chi_{B_1})\Lambda Q}_{\hat \Psi_b^{(1)}}\\
& \quad + \underbrace{\frac{3}{y^2}\left[\sin(2\tilde Q_b) - \sin(2Q) - \chi_{B_1}(\sin(2Q_b) - \sin(2Q)) \right]}_{\hat \Psi_b^{(2)}}\\
& \qquad +\underbrace{ \Theta_b \left[ \partial_s \chi_{B_1} - \left(\partial_{yy}\chi_{B_1} + \frac{6}{y}\partial_y \chi_{B_1}\right)  + b_1 \Lambda \chi_{B_1}\right]  - 2\partial_y \chi_{B_1}\partial_y\Theta_b}_{\hat \Psi_b^{(3)}}.
\end{align*}

The contribution of the term $\chi_{b_1}\Psi_b$ to the bounds \eqref{eq:estPsibLarge1}, \eqref{eq:estPsibLarge2} and \eqref{eq:estPsiblocalB0} follows exactly the same as in the proof of \eqref{eq:estGlobalPsib}. We therefore left to estimate the term $\hat{\Psi}_b$. Since all the terms in the expression of $\hat{\Psi}_b$ are localized in $B_1 \leq y \leq 2B_1$ except for the first one whose support is a subset of $\{y \geq B_1\}$. Hence, the estimates \eqref{eq:localProfile} and \eqref{eq:estPsiblocalB0} directly follow from \eqref{eq:estlocalPsibB04} and \eqref{eq:verylocalPsiM}. 

Let us now find the contribution of $\hat \Psi_b$ to the bounds \eqref{eq:estPsibLarge1} and \eqref{eq:estPsibLarge2}. We estimate 
\begin{equation*}
\forall n \in \mathbb{N}, \quad \left|\frac{d^n}{dy^n}(1 - \chi_{B_1})\Lambda Q\right| \lesssim \frac{1}{y^{2 + n}}\mathbf{1}_{y \geq B_1},
\end{equation*}
hence, using the definition \eqref{def:B0B1} of $B_1$, we estimate for all $0 \leq m \leq L$,
\begin{equation*}
\int|\Ls^{m + 1} \hat \Psi_b^{(1)}|^2 \lesssim b_1^2 \int_{y \geq B_1}\frac{y^{6}}{y^{4(m + 1) + 4}} \lesssim b_1^{2(m +1)+\frac{1}{2}+(2m+\frac{1}{2})\eta}.
\end{equation*}
If $m=L$ and $\eta=\frac{3}{4L}$ we have
\begin{equation*}
\int|\Ls^{L + 1} \hat \Psi_b^{(1)}|^2 \lesssim b_1^2 \int_{y \geq B_1}\frac{y^{6}}{y^{4(L + 1) + 4}} \lesssim b_1^{2(L +1)+ \frac{1}{2}+(2L+\frac{1}{2})\eta}=b_1^{2(L+1)+2+\frac{\eta}{2}}.
\end{equation*}
For the nonlinear term $\hat \Psi_b^{(2)}$, we use that
\begin{equation}\label{eq:hatpsi2}
|\hat \Psi_b^{(2)}|\lesssim\frac{\chi_{B_1}|\Theta_b|}{y^2}.
\end{equation}
In addition, we note from the admissibility of $T_k$ and the homogeneity of $S_k$ that for $y \leq 2B_1$,
\begin{equation}\label{eq:estThetab}
\forall n \in \mathbb{N}, \quad \left|\partial_y^n \Theta_b \right| \lesssim b_1y^{-n}\mathbf{1}_{y \geq B_1}+\sum_{k = 2}^L b_1^{k+\frac{1}{2}}y^{2k - 2- n}\mathbf{1}_{y \geq B_1}.
\end{equation}
Using \eqref{eq:estThetab} and \eqref{eq:hatpsi2}, we obtain the round bound
\begin{align*}
\left|\partial_y^n \hat \Psi_b^{(2)}\right|\quad \lesssim b_1y^{-2-n}\chi_{B_1}+\sum_{k = 2}^L b_1^{k+\frac{1}{2}}y^{2(k-1) - 2 - n} \chi_{B_1}.
\end{align*}
We then estimate for $1 \leq m \leq L$, 
\begin{align*}
\int|\Ls^{m + 1} \hat \Psi_b^{(2)}|^2 &\lesssim b_1^2\int_{y\leq2B_1}y^{6-4m-8}dy+ \sum_{k=2}^L b_1^{2k+1} \int_{y \leq 2B_1}y^{4k-6-4m}dy\\
& \quad \lesssim b_1^{2(m+1) + (2m+\frac{1}{2})\eta}.
\end{align*}
If $m=L$ and $\eta=\frac{3}{4L}$, we get
\begin{align*}
\int|\Ls^{L + 1} \hat \Psi_b^{(2)}|^2 \lesssim b_1^{2(L+1)+\frac{3}{2} + \frac{5}{2}\eta}
\end{align*}
To control $\hat \Psi_b^{(3)}$, we first note that its support is included in $\{B_1\leq y\leq2B_1\}$. From the definition \eqref{def:chiM} and the assumption \eqref{eq:apriorib1} we obtain that 
$$|\partial_s \chi_{B_1}| \lesssim \frac{(b_1)_s}{b_1} \frac{y}{B_1} \mathbf{1}_{B_1\leq y \leq 2B_1}\lesssim b_1^{\frac{3}{2}} \mathbf{1}_{B_1\leq y \leq 2B_1}.$$
Using \eqref{eq:estThetab}, we estimate for $1 \leq m \leq L$, 
\begin{align*}
\int |\Ls^{m + 1} \hat \Psi_b^{(3)}|^2 &\lesssim b_1^2\int_{y\leq2B_1}y^{6-4m-8}dy+ \sum_{k=2}^L b_1^2 b_1^{2k+1}\int_{B_1 \leq y \leq 2B_1}y^{6+4k-4m-8}dy\\
&\quad  \lesssim b_1^{2(m+1) + (2m+\frac{1}{2})\eta}.
\end{align*}
Gathering all the bounds yields
\begin{align*}
\int |\Ls^{m + 1}\hat \Psi_b|^2 &\lesssim b_1^{2(m+1) + (2m+\frac{1}{2})\eta},
\end{align*}
and for $m=L$,
\begin{align*}
\int |\Ls^{L + 1}\hat \Psi_b|^2 &\lesssim b_1^{2(L+1)+\frac{3}{2} +\frac{1}{2}\eta}.
\end{align*}
The control of  $\int \frac{|\As \Ls^{m}\tilde \Psi_b|^2}{1 + y^2}$, $\int \frac{|\Ls^{m}\tilde \Psi_b|^2}{1 + y^4}$ and $\int \frac{|\hat \Psi_b|^2}{1 + y^{4( m + 1)}}$ are obtained along the exact same lines as above. 
The improved local bound is coming from the fact that $Supp (\hat{\Psi}_b)\bigcap\{y\leq2M\}=\emptyset$.
Indeed, for all $y\leq 2M$ we have that $\tilde{\Psi}_b=\Psi_b$.
Hence,
$$\int_{\{y\leq2M\}}|\Ls^{m+1} \Psi_b|^2\lesssim M^Cb_1^{2L+7}.$$
This concludes the proof  of \eqref{eq:estPsibLarge1}, \eqref{eq:estPsibLarge2}, \eqref{eq:localProfile} and \eqref{eq:estPsiblocalB0} as well as Proposition \eqref{prop:localProfile}.


\end{proof}
%

\section{The trapped regime.} \label{sec:3}
This section is devoted to introduce the open set of our initial data.  We proceed in 2 subsections:\\
- In the first subsection, we give an equivalent formulation of the linearization of the problem in the setting \eqref{eq:dec_i}.\\
- In the second subsection, we prepare the set of initial data such that the solutions trapped in this set satisfies the conclusion of Theorem \ref{Theo:1}.
\subsection{Linearization of the problem.} 
Let $L \gg 1$ be an integer and $s_0 \gg 1$, we introduce the renormalized variables:
\begin{equation}
y = \frac{r}{\lambda(t)}, \quad s = s_0 + \int_0^t \frac{d\tau}{\lambda^2(\tau)},
\end{equation} 
and the decomposition
\begin{equation}\label{def:qys}
u(r,t) = w(y,s) = \big(\tilde{Q}_{b(s)} + q\big)(y,s) = \big(\tilde{Q}_{b(t)} + q\big)\left(\frac{r}{\lambda(t)}, t\right),
\end{equation}
where $\tilde Q_b$ is constructed in Proposition \ref{prop:localProfile} and the modulation parameters 
$$\lambda(t) > 0, \quad b(t) = (b_1(t), \cdots, b_L(t))$$
are determined from the $L+1$ orthogonality conditions:
\begin{equation}\label{eq:orthqPhiM}
\left<q, \Ls^k \Phi_M \right> = 0, \quad 0 \leq k \leq L,
\end{equation} 
where $\Phi_M$ is a fixed direction depending on some large constant $M$ defined by
\begin{equation}\label{def:PhiM}
\Phi_M = \sum_{k = 0}^L c_{k,M}\Ls^k(\chi_M \Lambda Q),
\end{equation}
with 
\begin{equation}\label{def:ckM}
c_{0,M} = 1, \quad c_{k, M} = (-1)^{k+1}\frac{\sum_{j = 0}^{k - 1}c_{j,M}\left<\chi_M \Ls^j(\chi_M \Lambda Q), T_k\right> }{\left<\chi_M \Lambda Q, \Lambda Q\right>}, \quad 1 \leq k \leq L.
\end{equation}
Here, $\Phi_M$ is build to ensure the nondegeneracy 
\begin{equation}\label{eq:PhiMLamQ}
\left<\Phi_M, \Lambda Q\right> = \left<\chi_M \Lambda Q, \Lambda Q\right> \gtrsim M^{3},
\end{equation}
and the cancellation 
\begin{equation}\label{id:TkPhiM0}
\left<\Phi_M, T_k\right> = \sum_{j = 0}^{k - 1}c_{j,M}\left<\Ls^j(\chi_M\Lambda Q), T_k\right> + c_{k,M} (-1)^k \left<\chi_M \Lambda Q, \Lambda Q\right> = 0,
\end{equation}
In particular, we have 
\begin{equation}\label{id:TkPhiMi}
\left<\Ls^iT_k, \Phi_M \right> = (-1)^k\left<\chi_M \Lambda Q, \Lambda Q\right>\delta_{i,k}, \quad 0 \leq i,k\leq L.
\end{equation}

\medskip

From \eqref{eq:wys}, we see that $q$ satisfies the equation:
\begin{equation}\label{eq:qys}
\partial_s q - \frac{\lambda_s}{\lambda} \Lambda q + \Ls q = - \tilde{\Psi}_b  - \widehat{\textup{Mod}} + \Hc(q) - \Nc(q) \equiv \Fc,
\end{equation}
where 
\begin{equation}\label{def:Modhat}
\widehat{\textup{Mod}} = - \left(\frac{\lambda_s}{\lambda} + b_1\right)\Lambda \tilde{Q}_b - \chi_{B_1}\textup{Mod},
\end{equation}
and $\Hc$ is the linear part given by 
\begin{equation}\label{def:Lq}
\Hc(q) = \frac{6}{y^2}\big[\cos(2Q) - \cos(2\tilde Q_b)\big]q,
\end{equation}
and $\Nc$ is the nonlinear term
\begin{equation}\label{def:Nq}
\Nc(q) = \frac{3}{y^2}\big[\sin(2\tilde Q_b + 2q) - \sin(2\tilde Q_b) - 2q \cos(2\tilde{Q}_b) \big].
\end{equation}
We also need to write the equation \eqref{eq:qys} in the original variables. To do so, let define the rescaled linearized operator:
\begin{equation}\label{def:Llambda}
\Ls_\lambda = - \partial_{rr} - \frac{6}{r}\partial_r + \frac{Z_\lambda}{r^2},
\end{equation}
and the renormalized function
$$v(r,t) = q(y,s), \quad \partial_t v = \frac{1}{\lambda^2(t)}\left(\partial_s q - \frac{\lambda_s}{\lambda} \Lambda q\right)_\lambda$$
then from \eqref{eq:qys}, $v$ satisfies the equation
\begin{equation}\label{eq:vrt}
\partial_t v + \Ls_\lambda v = \frac{1}{\lambda^2}\Fc_\lambda, \quad \Fc_\lambda(r,t) = \Fc(y,s).
\end{equation}
Note that 
$$\Ls_\lambda = \frac{1}{\lambda^2}\Ls.$$

\subsection{Preparation of the initial data.}
We describe in this subsection the set of initial data $u_0$ of the problem \eqref{Pb} as well as the initial data for $(b, \lambda)$ leading to the blowup scenario of Theorem \ref{Theo:1}. Assume that $u_0$ takes the form 
\begin{equation}\label{eq:asumUosmall}
u_0 = Q + q_0 \quad \text{with} \quad \|q_0\|_{\Hs_{2L+2}}\ll 1,
\end{equation}
where 
$$\|f\|_{\Hs_{2L+2}}:= \int |\Ls^{L+1} f|^2 + \int \frac{|\As (\Ls^{L} f)|^2}{y^2} + \sum_{m = 0}^{L} \int \frac{|\Ls^m f|^2}{y^4(1 + y^{4(L - m)})} + \sum_{m = 0}^{\Bbbk-2}\frac{|\As(\Ls^{m}f)|^2}{y^6(1 + y^{4(L - m - 1)})}.$$
By a standard argument (see for example \cite{MRRim13}, \cite{RScpam13, RSapde2014}), the smallness assumption \eqref{eq:asumUosmall} is propagated on a small time interval $[0,t_1)$. The existence of the decomposition
\begin{equation}\label{def:qrt}
u(r,t) = \left(\tilde Q_{b(t)} + q\right)\left(\frac{r}{\lambda(t)}, t\right), \quad \lambda(t) > 0, \; b = (b_1, \cdots, b_L),
\end{equation}
is a standard consequence of the implicit function theorem and the explicit relations
$$\left.\frac{\partial}{\partial \lambda}(\tilde{Q}_{b(t)})_{\lambda}, \frac{\partial}{\partial b_1}(\tilde{Q}_{b(t)})_{\lambda}, \cdots, \frac{\partial}{\partial b_L}(\tilde{Q}_{b(t)})_{\lambda}  \right|_{\lambda = 1, b = 0} = (\Lambda Q, T_1, \cdots, T_L),$$
which implies the nondegeneracy of the Jacobian
$$\left|\left<\frac{\partial}{\partial (\lambda, b_j)}(\tilde{Q}_{b(t)})_{\lambda}, \Ls^i\Phi_M \right>_{1 \leq j \leq L, 0 \leq i \leq L} \right|_{\lambda = 1, b = 0} = \left|\left<\chi_M \Lambda Q, \Lambda Q\right>\right|^{L+1} \ne 0.$$
We now set up the bootstrap for the control of the parameters $(b,\lambda)$ and the radiation $q$. We will measure the regularity of the map through the following coercive norms of $q$:\\
\begin{equation}
\Es_{2k} = \int|\Ls^{k}q|^2 \geq C(M)\sum_{m=0}^{k-1} \int \frac{|\Ls^m q|^2}{1 + y^{4(k - m)}} \quad \text{for}\quad 2 \leq k \leq L+1,
\end{equation}

Our construction is build on a careful choice of the initial data for the modulation parameter $b$ and the radiation $q$ at time $s = s_0$. In particular, we will choose it in the following way:

\begin{definition}[Choice of the initial data] \label{def:1} Let $s_0 \geq 1$, we assume that 
\begin{itemize}
\item Smallness of the initial perturbation for the $b_k$ modes: 
\begin{equation}\label{eq:initbk}
|b_k(s_0)| <s_0^{-k+\frac{1}{3}} \text{for}\;\; 1 \leq k \leq L,
\end{equation}
\item Smallness of the data:
\begin{equation}\label{eq:intialbounE2m}
\sum_{k =2}^{L+1} \Es_{2k}(s_0) +\|q(s_0)\|_{\Hs_{2L+2}}< b_1(s_0)^{10L+4},
\end{equation}
\item Normalization: up to a fixed rescaling, we may always assume
\begin{equation}
\lambda(s_0) = 1.
\end{equation}
\end{itemize}
\end{definition}

\begin{proposition}[Bootstrap]\label{proposition:Bootstrap}
Let $K>0$ be a large enough constant, and  for all $s\in[s_0,s_1]$ and for all $2\leq k\leq L$ assume that
\begin{equation}\label{bootstrap_b}
|b_k|\leq K_1b_1^{k+\frac{1}{2}+\frac{\eta}{10}},
\end{equation}
and
\begin{equation}\label{bootstrapboundE2k}
\Es_{2k}\leq Kb_1^{2(k-1)+(2k+\frac{1}{2})\eta+\frac{\eta}{10}},
\end{equation}
\begin{eqnarray}\label{bootstrapboundE2L2}
\Es_{2L+2}+\|q\|_{\Hs_{2L+2}}\leq Kb_1^{2L+\frac{3}{2}+\frac{\eta}{10}}.
\end{eqnarray}
Then the regime is trapped, in particular when $\frac{s_1}{2}<s\leq s_1$ we obtain the same inequalities \eqref{bootstrap_b},\eqref{bootstrapboundE2k}, and \eqref{bootstrapboundE2L2} with $K_1$ and $K$ replaced by $\frac{K_1}{2}$ and $\frac{K}{2}$.
\end{proposition}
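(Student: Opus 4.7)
The plan is to close the bootstrap in three stages: (i) modulation equations for $(\lambda, b_1,\dots,b_L)$ obtained by projecting \eqref{eq:qys} onto the directions $\mathcal{L}^i\Phi_M$; (ii) weighted energy estimates for $\mathcal{E}_{2m}$, $2\le m\le L$, and $\mathcal{E}_{2L+2}+\|q\|_{\mathscr{H}_{2L+2}}$, performed in the original variables $(r,t)$ via Lyapunov functionals; (iii) an ODE analysis that turns the modulation equations into strict improvements of the $b_k$ bounds. Throughout, I would rely on Proposition~\ref{prop:localProfile} to bound the forcing term $\tilde\Psi_b$, on the coercivity of $\mathscr{H}_{2k}$ under the orthogonality conditions \eqref{eq:orthqPhiM} (cf.\ the norm definition \eqref{locnorm}), and on the a priori bounds $b_1 \sim s^{-2/3}$, $\lambda \sim e^{-3s^{1/3}}$ which follow from the leading ODE \eqref{sys:bk_i}.

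For the modulation equations, differentiating each orthogonality condition $\langle q, \mathcal{L}^i\Phi_M\rangle = 0$ in $s$, substituting \eqref{eq:qys}, and using the cancellations \eqref{id:TkPhiMi} together with the definition \eqref{def:Modhat} of $\widehat{\mathrm{Mod}}$, produces the estimates
\begin{equation*}
\left|\tfrac{\lambda_s}{\lambda} + b_1\right| + \sum_{k=1}^L \left|(b_k)_s + (2k-2+C_{b_1})b_1 b_k - b_{k+1}\right| \lesssim \|q\|_{\mathscr{H}_{2L+2}} + b_1^{L+1+\frac{3}{4}+\frac{\eta}{4}},
\end{equation*}
where the first term on the right comes from the local projection of $q$ against the localized kernel elements and the second from the sharp error bound \eqref{eq:estlocalPsibL}. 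The bootstrap \eqref{bootstrapboundE2L2} then makes both terms strictly smaller than $b_1^{L+1+\frac12+\text{gain}}$, which is exactly what is needed for the ODE step.

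For the energies $\mathcal{E}_{2m}$, I would work with $v(r,t)=q(y,s)$ as indicated in the strategy paragraph and compute the $L^2$ derivative in $t$ of the suitably weighted quantity $\mathcal{E}_{2m}/\lambda^{4m-7}$, exploiting the factorization $\mathcal{L}=\mathcal{A}^*\mathcal{A}$ from Lemma~\ref{lemm:factorL} to integrate by parts and produce a coercive dissipation. The forcing by $\tilde\Psi_b$ is absorbed through \eqref{eq:estPsibLarge1}, the modulation term via the orthogonality conditions (so that only super-critical powers of $b_1$ survive), the linear term $\mathcal{H}(q)$ via the pointwise control $|Q_b-Q|\lesssim b_1^{1/2}|\Lambda Q|$ near the origin, and the nonlinear term $\mathcal{N}(q)$ via the interpolation bounds from the lower energies (this is why the tower $2\le m\le L$ must be controlled simultaneously, and why the condition $4m-7>0$ is needed). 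Integrating in $s$ from $s_0$ with \eqref{eq:intialbounE2m} as initial data and using $\int_{s_0}^s b_1^{-1}(\tau)\,d\tau \lesssim s^{5/3}$ together with the exponential gain coming from $\lambda^{-(4m-7)}$ yields bounds of the form $\mathcal{E}_{2m}\lesssim b_1^{2(m-1)+(\frac12+2m)\eta/2}/(\text{constant depending on }K)$, strictly improving \eqref{bootstrapboundE2k} and \eqref{bootstrapboundE2L2} for $s$ in the upper half of the interval, where the initial contribution has been dissipated.

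The main obstacle, and the genuinely new point in the $d=7$ case, is the ODE step for the $b_k$ modes. Unlike higher dimensions where unstable eigenvalues must be eliminated by codimension arguments, here the linearization around the explicit solution $b_k^{(0)}(s)$ of \eqref{sys:bk_i} has only stable/neutral directions precisely because of the $\sqrt{b_1}$ correction in the coefficient $2k-2+C_{b_1}$. I would set $U_k(s) = b_k(s) - b_k^{(0)}(s)$, diagonalize the constant-coefficient part of the resulting linear system in the variable $\sigma=\log s$, and check that every eigenvalue has non-positive real part, with the zero eigenvalue corresponding to a time translation that is fixed by the choice of $s_0$. The forcing term (coming from the modulation bound above) is of order $b_1^{L+1+\frac{3}{4}+\frac{\eta}{4}}$, which is strictly smaller than the threshold $b_1^{k+\frac12+\eta/10}$ for every $k\le L$ provided $L$ is large; Duhamel's formula then yields $|U_k(s)|\le \tfrac{K_1}{2}b_1^{k+\frac12+\eta/10}$ on the upper half $(s_1/2, s_1]$, strictly improving \eqref{bootstrap_b} and closing the bootstrap.
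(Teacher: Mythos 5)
Your overall scheme — modulation equations from the orthogonality conditions, energy Lyapunov functionals in the original variables, then an ODE analysis — matches the paper's, and your description of the energy step is essentially what the paper does in Proposition~\ref{prop:E2k} and the last subsection. But there are two genuine problems in the ODE/modulation part.

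First, your modulation bound is off by a square root. Projecting \eqref{eq:qys} against $\Ls^i\Phi_M$ controls the radiation contribution by Cauchy--Schwarz in $L^2$, producing $\sqrt{\Es_{2L+2}}$, not $\|q\|_{\Hs_{2L+2}}$ (nor $\Es_{2L+2}$). Lemma~\ref{lemm:mod1} gives $\lesssim \sqrt{\Es_{2L+2}}/M^{3/2} + b_1^{L+\frac 72}$, and under the bootstrap $\sqrt{\Es_{2L+2}}\sim b_1^{L+\frac 34+\frac{\eta}{20}}$, not $b_1^{2L+\frac32}$. Your assertion that \eqref{bootstrapboundE2L2} makes both terms $\lesssim b_1^{L+\frac32+\text{gain}}$ is therefore false. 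This is fatal for the top mode $b_L$: closing $|b_L|\lesssim b_1^{L+\frac12+\frac{\eta}{10}}$ requires $|(b_L)_s+(2L-2+C_{b_1})b_1 b_L|\lesssim b_1^{L+\frac32+\text{gain}}$, while the raw bound only yields $b_1^{L+\frac34}$. The paper handles this through the improved modulation equation (Lemma~\ref{lemm:mod2}), introducing the corrected mode $\tilde b_L=b_L+\langle\Ls^Lq,\chi_{B_0}\Lambda Q\rangle/D_L^{b_1}$ and, in Step~8 of the proof of Proposition~\ref{prop:E2k}, the compensating "time oscillation" correction $\xi_L$ in the energy so that the two improvements are consistent. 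This is the one genuinely new technical device needed to close the bootstrap, and your proposal omits it.

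Second, the "diagonalize the constant-coefficient part in $\sigma=\log s$" approach is imported from the $d\geq 8$ (or $\ell>1$) situation, where $b_k\sim c_k/s^k$ and the linearized ODE system becomes autonomous in $\log s$ after scaling. Here $b_1\sim s^{-2/3}$ and the coefficient $(2k-2+C_{b_1})b_1$ contains $C_{b_1}\sim C\sqrt{b_1}$, which is not a constant multiple of $b_1$; the system is not autonomous in $\log s$, and the leading balance changes — for $k=1$, $(b_1)_s$ is of the same order as $C_{b_1}b_1^2$, while for $k\ge 2$ the term $(b_k)_s$ is subleading to $b_1 b_k$ and $b_{k+1}$. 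The paper does not diagonalize at all: it integrates each scalar ODE $(b_k)_s - (2k-2+C_{b_1})\frac{\lambda_s}{\lambda}b_k = b_{k+1} + O(\cdots)$ directly using $\lambda^{2k-2+C_{b_1}}$ as an integrating factor, proceeding by descending induction from $k=L$ (where the $\tilde b_L$ correction is used) down to $k=2$. This cascading argument is both simpler and correctly adapted to the non-homogeneous time law of the $d=7$, $\ell=1$ case, and is what you would need to replace your diagonalization step with.
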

\begin{remark} Note that  $\|q\|_{\Hs_{2L+2}}$ is controlled by $\Es_{2L+2}$ thanks to the coercive property (see Lemma \ref{lemm:coeLk})
$$\Es_{2L+2} \gtrsim \int \frac{|\As (\Ls^{L} q)|^2}{y^2} + \sum_{m = 0}^{L} \int \frac{|\Ls^m q|^2}{y^4(1 + y^{4(L- m)})} + \sum_{m = 0}^{L-1}\frac{|\As(\Ls^{m}q)|^2}{y^6(1 + y^{4(L - m - 1)})},$$
hence, 
$$\|q(s)\|_{\Hs_{2L+2}} \lesssim \Es_{2L+2}.$$
\end{remark}
\section{Closing the bootstrap and proof of Theorem \ref{Theo:1}.} \label{sec:4}

In this section, we aim at proving Proposition \ref{proposition:Bootstrap} which is the heart of our analysis. We proceed in three separate subsections:

- In the first subsection, we derive the laws for the parameters $(b,\lambda)$ thanks to the orthogonality condition \eqref{eq:orthqPhiM} and the coercivity of the powers of $\Ls$.

- In the second subsection, we prove the main monotonicity tools for the control of the infinite dimensional part of the solution. In particular, we derive a suitable Lyapunov functional for the $\Es_{2L+2}$ energy as well as the monotonicity formula for the lower Sobolev energy.

- In the third subsection, we conclude the proof of Proposition \ref{proposition:Bootstrap} thanks to the identities obtained in the first two parts.

\subsection{Modulation equations.}
We derive here the modulation equations for $(b, \lambda)$. The derivation is mainly based on the orthogonality \eqref{eq:orthqPhiM} and the coercivity of the powers of $\Ls$. Let us start with elementary estimates related to the fixed direction $\Phi_M$.
\begin{lemma}[Estimate for $\Phi_M$]\label{lemm:estPhiM} Given $\Phi_M$ as defined in \eqref{def:PhiM}, we have the followings:
$$|c_{k,M}| \lesssim M^{2k} \quad \text{for all}\;\; 1 \leq k \leq L,$$
$$\int |\Phi_M|^2 \lesssim M^3, \quad \int|\Ls \Phi_M|^2 \lesssim \frac{1}{M}.$$
\end{lemma}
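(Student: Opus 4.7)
The plan is to establish the three estimates in order, using self-adjointness of $\Ls$ with respect to the weight $y^6\,dy$ to reduce the coefficient computation to explicit one-dimensional integrals, and to combine this with a pointwise annulus bound on $\Ls^j(\chi_M\Lambda Q)$ for the $L^2$-norms.

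For $|c_{k,M}|\lesssim M^{2k}$, I would proceed by induction on $k$ via the defining recursion \eqref{def:ckM} (equivalently, the cancellation identity \eqref{id:TkPhiM0}). Since $\Ls T_m=-T_{m-1}$ with $T_0=\Lambda Q$, and since $\Ls^j(\chi_M\Lambda Q)$ lies in $C_c^\infty(\Rb_+)$ so that boundary terms in integration by parts vanish, self-adjointness yields
\begin{equation*}
\langle\Ls^j(\chi_M\Lambda Q),T_k\rangle=(-1)^j\,\langle\chi_M\Lambda Q,T_{k-j}\rangle,\qquad 0\le j\le k.
\end{equation*}
Using the asymptotic $\Lambda Q\sim 2a_0/y^2$ from \eqref{eq:asymLamQ} and the admissibility bound $|T_m(y)|\lesssim y^{2m-2}$ for $y\ge 1$ from Lemma \ref{lemm:GenLk}, together with the polynomial vanishing at the origin, the right-hand side is bounded by $M^{2(k-j)+3}$. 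Multiplying by the inductive bound $|c_{j,M}|\lesssim M^{2j}$ produces $M^{2k+3}$ uniformly in $j<k$; dividing by $\langle\chi_M\Lambda Q,\Lambda Q\rangle\gtrsim M^3$ from \eqref{eq:PhiMLamQ} closes the induction.

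For the two $L^2$-bounds, the key observation is that for $j\ge 1$ the iterate $\Ls^j(\chi_M\Lambda Q)$ is supported in the annulus $\{M\le y\le 2M\}$: on $\{y\le M\}$ one has $\chi_M\equiv 1$ and $\Ls(\Lambda Q)=0$, on $\{y\ge 2M\}$ the profile vanishes, and $\Ls$ being a differential operator preserves this support. On the annulus, combining $|\partial_y^m\chi_M|\lesssim M^{-m}$ with $|\partial_y^m\Lambda Q|\lesssim y^{-2-m}$ via Leibniz gives the pointwise bound $|\Ls^j(\chi_M\Lambda Q)|\lesssim M^{-2j-2}$. Expanding $\Phi_M$ via \eqref{def:PhiM}, the $k=0$ term integrates directly, $\int|\chi_M\Lambda Q|^2 y^6\,dy\lesssim \int_1^{2M} y^2\,dy\lesssim M^3$, while each $k\ge 1$ term contributes
\begin{equation*}
|c_{k,M}|^2\int_M^{2M} M^{-4k-4}\,y^6\,dy\lesssim M^{4k}\cdot M^{-4k+3}=M^3.
\end{equation*}
The same scheme applied to $\Ls\Phi_M$ shifts every index by one and each bound by a factor $M^{-4}$, yielding the required $1/M$.

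The principal technical point is the annulus bound $|\Ls^j(\chi_M\Lambda Q)|\lesssim M^{-2j-2}$ for general $j$, which I would establish by a secondary induction on $j$, tracking derivatives of both $\chi_M$ and $\Lambda Q$ and using that each application of $\Ls$ contributes at most two derivatives (each costing a factor $M^{-1}$ on the annulus) together with a zeroth-order term of size $|Z/y^2|\lesssim M^{-2}$. Once this is in hand the rest is routine bookkeeping: a single induction for the coefficients and elementary one-dimensional integrals for the $L^2$-norms.
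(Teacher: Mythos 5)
Your proof is correct and follows essentially the same route as the paper: induction on $k$ using self-adjointness of $\Ls$ to move the operator onto $T_k$ (via $\Ls^j T_k=(-1)^j T_{k-j}$, which the paper phrases equivalently through admissibility of $\Ls^j T_{k+1}$), then the annulus support of $\Ls^j(\chi_M\Lambda Q)$ for $j\geq1$ with the pointwise bound $\lesssim M^{-2j-2}$ to control the $L^2$ integrals. You spell out the annulus bound that the paper leaves implicit, but there is no difference of substance.
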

\begin{proof}
Arguing by induction, we assume that
\begin{equation*}
|c_{j,M}| \lesssim M^{2j}, \quad 1 \leq j \leq k.
\end{equation*}
Using the fact that $\Ls^j T_{i}$ is admissible of degree $(\max\{0,i-j\},i-j)$, we estimate from the definition \eqref{def:ckM},
\begin{align*}
|c_{k+1,M}|& \lesssim \frac{1}{M^{3}}\sum_{j = 0}^k M^{2j} \int |\chi_M \Lambda Q \Ls^j(T_{k+1})|\\
&\lesssim \frac{1}{M^{3}}\sum_{j = 0}^k M^{2j} \int_{y \leq M}\frac{y^{6}}{y^2}y^{2(k+1 - j) - 2}dy \lesssim M^{2(k+1)}.
\end{align*}
Using the estimate for $c_{k,M}$ yields
\begin{equation*}
\int |\Phi_M|^2 \lesssim \int |\chi_M \Lambda Q|^2 + \sum_{j = 1}^L |c_{j,M}|^2\int|\Ls^j(\chi_M \Lambda Q)|^2 \lesssim M^3,
\end{equation*}
and 
\begin{equation*}
\int|\Ls \Phi_M|^2 \lesssim \sum_{j = 0}^L |c_{j,M}|^2\int |\Ls^{j+1}(\chi_M \Lambda Q)|^2  \lesssim \frac{1}{M}.
\end{equation*}
This concludes the proof of Lemma \ref{lemm:estPhiM}.
\end{proof}

From the orthogonality conditions \eqref{eq:orthqPhiM} and equation \eqref{eq:qys}, we claim the following:
\begin{lemma}[Modulation equations] \label{lemm:mod1} For $K \geq 1$, we assume that there is $s_0(K) \gg 1$ such that $b(s)\mbox{ and } q(s)$ verify the bootstrap assumption for $s \in [s_0, s_1]$ for some $s_1 \geq s_0$. Then, the followings hold for $s \in [s_0, s_1]$:
\begin{equation}\label{eq:ODEbkl}
\sum_{k = 1}^{L-1}\left|(b_k)_s + (2k - 2+C_{b_1})b_1b_k - b_{k+1} \right| + \left|b_1 + \frac{\lambda_s}{\lambda}\right| \lesssim \frac{\sqrt{\Es_{2L+2}}}{M^{\frac{3}{2}}}+b_1^{L+\frac{7}{2}},
\end{equation}
and 
\begin{equation}\label{eq:ODEbL}
\left|(b_L)_s + (2L - 2+C_{b_1})b_1b_L \right| \lesssim  \frac{\sqrt{\Es_{2L+2}}}{M^{\frac{3}{2}}} + b_1^{L +\frac{7}{2}}.
\end{equation}
\end{lemma}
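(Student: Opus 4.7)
The plan is to project the equation \eqref{eq:qys} for $q$ against each of the $L+1$ directions $\Ls^k \Phi_M$ for $k = 0, 1, \dots, L$, and then exploit the diagonal structure produced by the key identity \eqref{id:TkPhiMi}. Since $\Phi_M$ is time-independent, differentiating the orthogonality conditions \eqref{eq:orthqPhiM} in $s$ yields $\langle \partial_s q, \Ls^k \Phi_M \rangle = 0$. Substituting \eqref{eq:qys} and using the self-adjointness of $\Ls$ with respect to the weighted measure $y^{6}\,dy$, we obtain
\begin{equation*}
\langle \widehat{\textup{Mod}}, \Ls^k \Phi_M \rangle
= \tfrac{\lambda_s}{\lambda}\langle \Lambda q, \Ls^k \Phi_M\rangle - \langle q, \Ls^{k+1}\Phi_M\rangle - \langle \tilde{\Psi}_b, \Ls^k\Phi_M\rangle + \langle \Hc(q), \Ls^k\Phi_M\rangle - \langle \Nc(q), \Ls^k\Phi_M\rangle.
\end{equation*}
For $k \leq L-1$ the orthogonality kills $\langle q, \Ls^{k+1}\Phi_M\rangle$; only for $k=L$ is this term present and handled by Cauchy--Schwarz using Lemma \ref{lemm:estPhiM} and $\sqrt{\Es_{2L+2}}$.

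I would then extract the leading part of $\langle \widehat{\textup{Mod}}, \Ls^k \Phi_M\rangle$. Writing $\widehat{\textup{Mod}} = -(\lambda_s/\lambda + b_1)\Lambda \tilde Q_b - \chi_{B_1}\textup{Mod}$ and splitting $\Lambda \tilde Q_b = \Lambda Q + \Lambda \Theta_b$, I use two facts: (a) $\Ls(\Lambda Q) = 0$, so $\langle \Lambda Q, \Ls^k\Phi_M\rangle = 0$ for $k \geq 1$ while $\langle \Lambda Q, \Phi_M\rangle = \langle \chi_M\Lambda Q, \Lambda Q\rangle \gtrsim M^{3}$; (b) the cancellation identity \eqref{id:TkPhiMi}, $\langle \Ls^i T_j, \Phi_M\rangle = (-1)^j\langle \chi_M\Lambda Q, \Lambda Q\rangle\delta_{i,j}$. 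Since $\textup{supp}\,\Phi_M \subset \{y \leq 2M\}$ and $B_1 \gg M$, the cut-off $\chi_{B_1}$ acts as identity, so the $j$-th term of $\textup{Mod}$ contributes precisely $[(b_j)_s + (2j-2+C_{b_1})b_1 b_j - b_{j+1}](-1)^j \langle \chi_M\Lambda Q, \Lambda Q\rangle\,\delta_{j,k}$, yielding a diagonal leading system: the $k=0$ equation reads off $\lambda_s/\lambda + b_1$, and for $1 \leq k \leq L$ each equation reads off exactly one modulation quantity. The contributions from the $\partial S_i/\partial b_j$ corrections in $\textup{Mod}$ and from $\Lambda \Theta_b$ are perturbations of size $O(\sqrt{b_1})$ times the diagonal entries (via the homogeneity estimates of Proposition \ref{prop:1} and the bootstrap bound \eqref{bootstrap_b}), so the full $(L+1)\times(L+1)$ linear system is still invertible by triangular perturbation.

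It remains to estimate the genuine error terms. For $\langle \tilde \Psi_b, \Ls^k \Phi_M\rangle$ I use that $\Ls^k \Phi_M$ is supported in $\{y \leq 2M\}$, apply self-adjointness, and bound by Cauchy--Schwarz using the very local bound \eqref{eq:verylocalPsiM}: this produces a contribution of order $M^{C}b_1^{L+7/2}$ which, after division by $\langle \chi_M\Lambda Q, \Lambda Q\rangle \gtrsim M^{3}$, yields the $b_1^{L+7/2}$ term on the right of \eqref{eq:ODEbkl}--\eqref{eq:ODEbL}. For $\langle \Hc(q), \Ls^k \Phi_M\rangle$, the potential satisfies $\cos(2Q) - \cos(2\tilde Q_b) = O(\Theta_b) = O(b_1)$ on the support of $\Phi_M$, and Cauchy--Schwarz together with the coercivity inequality $\int_{y \leq 2M} |q|^2/y^4 \lesssim \Es_{2L+2}$ (consequence of Lemma \ref{lemm:coeLk}) together with $\|\Phi_M\|_{L^2} \lesssim M^{3/2}$ yields a bound of the form $M^{3/2}\sqrt{\Es_{2L+2}}$, hence the $\sqrt{\Es_{2L+2}}/M^{3/2}$ contribution after division. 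The nonlinear term $\langle \Nc(q), \Ls^k \Phi_M\rangle$ uses $|\Nc(q)| \lesssim |q|^2/y^2$ together with a pointwise $L^\infty$ bound on $q$ in $\{y \leq 2M\}$ (again by coercivity), producing a contribution quadratic in $\sqrt{\Es_{2L+2}}$ and hence absorbable. Finally $\frac{\lambda_s}{\lambda}\langle \Lambda q, \Ls^k \Phi_M\rangle$ is quadratic in the small parameters and handled by the bootstrap.

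The main technical obstacle is the bookkeeping of the $M$-dependence (large but fixed) versus the $b_1$-dependence (tending to zero) across all error terms: one must arrange that the errors from $\tilde \Psi_b$ contribute exclusively to the $b_1^{L+7/2}$ term while the $q$-dependent errors contribute to the $\sqrt{\Es_{2L+2}}/M^{3/2}$ term, with no cross-degradation, and that the off-diagonal perturbations in the linear system inherit the same structure after inversion. Once this is arranged, collecting the $L+1$ inverted equations gives \eqref{eq:ODEbkl} and \eqref{eq:ODEbL}.
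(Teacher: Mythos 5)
Your proposal takes essentially the same route as the paper's own proof: project the $q$-equation \eqref{eq:qys} onto $\Ls^k\Phi_M$ for $0\le k\le L$, use the self-adjointness of $\Ls$ and the orthogonality conditions, exploit the cancellation \eqref{id:TkPhiMi} to isolate the diagonal modulation quantities, and control the errors via the local bound on $\tilde\Psi_b$, Cauchy--Schwarz with $\Es_{2L+2}$, and coercivity for $\Hc(q)$, $\Nc(q)$, and $\frac{\lambda_s}{\lambda}\Lambda q$. The one presentational difference is that the paper collects the off-diagonal residuals into the single quantity $D(t)=|b_1+\lambda_s/\lambda|+\sum_k|(b_k)_s+(2k-2+C_{b_1})b_1 b_k-b_{k+1}|$ and closes with a round bound $D(t)\lesssim M^C\sqrt{\Es_{2L+2}}+b_1^{L+7/2}$ rather than phrasing it as inverting a near-diagonal $(L+1)\times(L+1)$ system; also the estimate you need is \eqref{eq:localProfile} for $\tilde\Psi_b$ (equivalent to \eqref{eq:verylocalPsiM} on $\{y\le 2M\}$ since $\tilde\Psi_b=\Psi_b$ there).
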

\begin{proof} We start with the law for $b_L$. Let 
$$D(t) = \left|b_1 + \frac{\lambda_s}{\lambda}\right| + \sum_{k = 1}^L \left|(b_k)_s + (2k - 2+C_{b_1})b_1b_k - b_{k+1}\right|,$$
where we recall that $b_{k} \equiv 0$ if $k \geq L + 1$.

Now, we take the inner product of \eqref{eq:qys} with $\Ls^L \Phi_M$ and use the orthogonality \eqref{eq:orthqPhiM} to write
\begin{align}
\left<\widehat{\text{Mod}}(t), \Ls^L\Phi_M\right> &= -\left<\Ls^L\tilde{\Psi}_b, \Phi_M \right> -\left<\Ls^{L+1} q, \Phi_M \right>\nonumber\\
&\qquad  -\left<-\frac{\lambda_s}{\lambda}\Lambda q - \Lc(q) + \Nc(q), \Ls^L\Phi_M \right>.\label{eq:bL}
\end{align}
From the definition \eqref{def:PhiM}, we see that $\Phi_M$ is localized in $y \leq 2M$. From \eqref{def:Modhat} and \eqref{eq:Modt}, we compute by using the identity \eqref{id:TkPhiMi}, 
$$\left<\widehat{\text{Mod}}(t), \Ls^L\Phi_M\right> = (-1)^L\left<\Lambda Q, \Phi_M \right>\left[(b_L)_s + (2L - 2+C_{b_1})b_1b_L\right] + \Oc(M^C b_1D(t)).$$
The error term is estimated by using \eqref{eq:localProfile} with $m = L - 1$ and Lemma \ref{lemm:estPhiM}, 
\begin{align*}
\left|\left<\Ls^L\tilde{\Psi}_b, \Phi_M \right>\right| &\leq \left(\int_{y \leq 2M}|\Ls^L \tilde{\Psi_b}|^2 \right)^\frac 12 \left(\int_{y \leq 2M} |\Phi_M|^2 \right)^\frac 12\\
&\quad \lesssim M^{\frac{3}{2}}b_1^{L+\frac{7}{2}} \lesssim b_1^{L+\frac{7}{2}}.
\end{align*}
For the linear term, we apply the Cauchy-Schwartz inequality,
\begin{align*}
\left|\left<\Ls^{L+1} q,\Phi_M \right>\right| \leq \sqrt{\Es_{2L+2}} \left(\int |\Phi_M|^2 \right)^{\frac 12} \leq M^{\frac{3}{2}}\sqrt{\Es_{2L+2}}.
\end{align*}
The remaining terms are easily estimated by using the following bound coming from Lemma \ref{lemm:coeLk} and Lemma \ref{lemm:coerL},
\begin{equation}\label{est:Es2K1}
\Es_{2L+2}(q) \gtrsim \int \frac{|\Ls q|^2}{y^4(1 + y^{4(L - 1)})} \gtrsim \int \frac{|\py q|^2}{y^4(1 + y^{4(L - 1) + 2})} + \int \frac{q^2}{y^6(1 + y^{4(L - 1) + 2})},
\end{equation}
which implies
$$\left|\left<-\frac{\lambda_s}{\lambda} \Lambda q + \Hc(q) + \Nc(q) , \Ls^L\Phi_M\right>\right| \lesssim M^Cb_1\left(\sqrt{\Es_{2L+2}} + D(t)\right). $$
Put all the above estimates into \eqref{eq:bL} and use \eqref{eq:PhiMLamQ}, we arrive at
\begin{equation}\label{eq:bLtm1}
\left|(b_L)_s + (2L - 2+C_{b_1})b_1b_L\right| \lesssim \frac{\sqrt{\Es_{2L+2}}}{M^{\frac{3}{2}}} + b_1^{L+\frac{7}{2}} + M^Cb_1D(t).
\end{equation}

\medskip

For the modulation equations for $b_k$ with $1 \leq k \leq L-1$, we take the inner product of \eqref{eq:qys} with $\Ls^k \Phi_M$ and use the orthogonality \eqref{eq:orthqPhiM} to write for $1 \leq k \leq L-1$,
\begin{align*}
\left<\widehat{\text{Mod}}(t), \Ls^k\Phi_M\right> &= -\left<\Ls^k\tilde{\Psi}_b, \Phi_M \right> -\left<-\frac{\lambda_s}{\lambda}\Lambda q - \Lc(q) + \Nc(q), \Ls^k\Phi_M \right>.
\end{align*}
Proceed as for $b_L$, we end up with 
\begin{equation}\label{eq:bktm1}
\left|(b_k)_s + (2k - \gamma)b_1b_k - b_{k+1}\right| \lesssim b_1^{L+\frac{7}{2}} + M^Cb_1\left(\sqrt{\Es_{2L+2}} + D(t)\right).
\end{equation}
Similarly, we have by taking the inner product of \eqref{eq:qys} with $\Phi_M$,
\begin{equation}\label{eq:lamtm1}
\left|\frac{\lambda_s}{\lambda} + b_1\right| \lesssim b_1^{L+\frac{7}{2}} + M^Cb_1\left(\sqrt{\Es_{2L+2}} + D(t)\right).
\end{equation}
From \eqref{eq:bLtm1}, \eqref{eq:bktm1} and \eqref{eq:lamtm1}, we obtain the round bound
$$D(t) \lesssim M^C \sqrt{\Es_{2L+2}} + b_1^{L+\frac{7}{2}}.$$
The conclusion then follows by substituting this bound into \eqref{eq:bLtm1}, \eqref{eq:bktm1} and \eqref{eq:lamtm1}. This ends the proof of Lemma \ref{lemm:mod1}.

\end{proof}

From the bound for $\Es_{2L+2}$ given in Proposition \ref{proposition:Bootstrap} and the modulation equation \eqref{eq:ODEbL}, we only have the pointwise bound
$$|(b_L)_s + (2L - 2+C_{b_1})b_1b_L| \lesssim b_1^{L+\frac{3}{4}+\frac{\eta}{4}},$$
which is not good enough  since we expect
$$|(b_L)_s + (2L - 2+C_{b_1})b_1b_L| \lesssim b_1^{L+1+\frac{1}{2}+\frac{\eta}{4}}.$$

We claim that the main linear term can be removed up to an oscillation in time leading to the improved modulation equation for $b_L$ as follows:
Set 
\begin{align}\label{dotprodLQ}
D_L^{b_1}=\left<\Lambda Q+(-1)^L\Ls^L(\sum_{k=0}^2\frac{\p S_{L+k}}{\p b_L}), \chi_{\frac{B_0}{4}}\Lambda Q\right>.
\end{align}

\begin{lemma}[Improved modulation equation for $b_L$] \label{lemm:mod2} Under the assumption of Lemma \ref{lemm:mod1}, the following bound holds for all $s \in [s_0, s_1]$:
\begin{align}
&\left|(b_L)_s + (2L - 2+C_{b_1})b_1b_L + \frac{d}{ds} \left\{ \frac{\left<\Ls^L q, \chi_{\frac{B_0}{4}}\Lambda Q\right>}{\left<\Lambda Q+(-1)^L\Ls^L(\sum_{k=0}^2\frac{\p S_{L+k}}{\p b_L}),\chi_{\frac{B_0}{4}} \Lambda Q \right>}\right\}\right|\nonumber\\
& \qquad \qquad \qquad \lesssim  \frac 1 {B_0^{\frac{3}{2}}} \left[C(M)\sqrt{\Ec_{2L+2}} + b_1^{L+1+\frac{3}{4}+\frac{\eta}{4}}\right].\label{eq:ODEbLimproved}
\end{align}
\end{lemma}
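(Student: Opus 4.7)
The plan is to replace the test function $\Ls^L\Phi_M$ used in Lemma~\ref{lemm:mod1} by the unbounded (as $b_1\to 0$) test function $\chi_{B_0/4}\Lambda Q$, whose $L^2(y^{6}dy)$-norm grows like $B_0^{3/2}$ and whose pairing $D_L^{b_1}$ with the main modulation direction is of order $B_0^3$. Dividing by this large denominator produces the claimed gain from $M^{-3/2}$ in \eqref{eq:ODEbL} to $B_0^{-3/2}\sim b_1^{3/4}$ in \eqref{eq:ODEbLimproved}. The heart of the argument is to write the dangerous linear term $\langle\Ls^{L+1}q,\chi_{B_0/4}\Lambda Q\rangle$ as a total time derivative plus a small commutator, exploiting $\Ls\Lambda Q=0$.

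I would apply $\Ls^L$ to \eqref{eq:qys}, pair against $\chi_{B_0/4}\Lambda Q$, integrate by parts using self-adjointness of $\Ls^L$, and use $\Ls\Lambda Q=0$ to rewrite
\begin{align*}
\tfrac{d}{ds}\langle\Ls^L q,\chi_{B_0/4}\Lambda Q\rangle &= \langle\Ls^L q,\partial_s(\chi_{B_0/4}\Lambda Q)\rangle - \langle\Ls^L q,[\Ls,\chi_{B_0/4}]\Lambda Q\rangle \\
&\quad + \tfrac{\lambda_s}{\lambda}\langle\Ls^L\Lambda q,\chi_{B_0/4}\Lambda Q\rangle + \langle\Fc,\Ls^L(\chi_{B_0/4}\Lambda Q)\rangle.
\end{align*}
Inside the $-\widehat{\textup{Mod}}$ piece of $\Fc$, the identity $\Ls^L T_k=(-1)^k\Ls^{L-k}\Lambda Q$ kills every $k<L$ contribution coming from the $T_k$ part of \eqref{eq:Modt}, so only $k=L$ survives and, combined with the corrections $\p_{b_L}S_{L+k}$, $k=0,1,2$ (with $\p_{b_L}S_L=0$ by Proposition~\ref{prop:1} (ii)), assembles exactly into $(-1)^{L+1}\bigl[(b_L)_s+(2L-2+C_{b_1})b_1 b_L\bigr]\,D_L^{b_1}$. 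Dividing by $-(-1)^L D_L^{b_1}$ and absorbing the time derivative into $\frac{d}{ds}\{\langle\Ls^L q,\chi_{B_0/4}\Lambda Q\rangle/D_L^{b_1}\}$ generates one residual piece $\langle\Ls^L q,\chi_{B_0/4}\Lambda Q\rangle\cdot\frac{d}{ds}(1/D_L^{b_1})$, controlled via $|D_L^{b_1}|\gtrsim B_0^3$, the $b_1$-homogeneity of $D_L^{b_1}$ through $S_{L+1},S_{L+2}$, and $|(b_1)_s|\lesssim b_1^{5/2}$ from \eqref{eq:apriorib1}.

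Each remaining error must fit inside $B_0^{3/2}[C(M)\sqrt{\Ec_{2L+2}}+b_1^{L+1+3/4+\eta/4}]$ (the target RHS multiplied by $D_L^{b_1}$). The crucial commutator $[\Ls,\chi_{B_0/4}]\Lambda Q$ is supported on $\{B_0/4\leq y\leq B_0/2\}$ and pointwise $\lesssim 1/B_0^4$; Cauchy--Schwarz combined with the coercivity $\int|\Ls^L q|^2/(1+y^4)\lesssim \Ec_{2L+2}$ of Lemma~\ref{lemm:coeLk} then yields the sharp $|\langle\Ls^L q,[\Ls,\chi_{B_0/4}]\Lambda Q\rangle|\lesssim B_0^{3/2}\sqrt{\Ec_{2L+2}}$. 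The $\tilde\Psi_b$ contribution is handled by the very-local bound \eqref{eq:verylocalPsiM} applied at scale $M=B_0/4$, which is available because $\Psi_b^0$ is supported on $\{y\geq B_0\}$ (recall \eqref{supsigbar}), so $\tilde\Psi_b=\Psi_b$ on the support of $\Ls^L(\chi_{B_0/4}\Lambda Q)$; this gives a $b_1^{L+7/4}$-type pointwise-integrated bound, well inside the target. The nonlinear $\Nc(q)$ and linear $\Hc(q)$ terms are absorbed using the local smallness $|\tilde Q_b-Q|\lesssim b_1 y$ on $\{y\leq B_0/2\}$ together with the bootstrap; the transport term $\frac{\lambda_s}{\lambda}\langle\Ls^L\Lambda q,\chi_{B_0/4}\Lambda Q\rangle$ and the cutoff-derivative term $\partial_s\chi_{B_0/4}$ are dealt with via \eqref{eq:apriorib1}; the $(\lambda_s/\lambda+b_1)\Lambda\tilde Q_b$ piece and the residual $k<L$ modulation terms $[(b_k)_s+\ldots]\,\p_{b_k}S_j$ in \eqref{eq:Modt} are absorbed using the crude modulation estimate \eqref{eq:ODEbkl}.

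The main technical obstacle is the careful bookkeeping required to confirm that every error --- and in particular the commutator estimate and the very-local $\tilde\Psi_b$ estimate --- really saturates at the correct scale $B_0^{3/2}$, neither larger (which would fail the bootstrap) nor requiring coercivity beyond what Lemma~\ref{lemm:coeLk} gives. A secondary obstacle is the $b_1$-dependence of the denominator $D_L^{b_1}$: extracting the correct scaling of $\frac{d}{ds}(1/D_L^{b_1})$ requires combining the leading $D_L^{b_1}\sim B_0^3$ with the non-leading corrections coming from the profiles $S_{L+1}$, $S_{L+2}$ of Proposition~\ref{prop:1}, none of which contribute at leading order but all of which must be shown to produce small enough residual pieces.
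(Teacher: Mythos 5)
Your proposal follows the paper's route essentially exactly: pair $\Ls^L$ of the $q$-equation against $\chi_{B_0/4}\Lambda Q$, isolate the $(b_L)_s$-coefficient via $\Ls^L T_k=0$ for $k<L$, $\Ls^L T_L=(-1)^L\Lambda Q$ and $\partial_{b_L}S_L=0$ (together with the disjoint supports of $1-\chi_{B_1}$ and $\chi_{B_0/4}$) so that the modulation pairing assembles into $(-1)^L D_L^{b_1}\bigl[(b_L)_s+(2L-2+C_{b_1})b_1 b_L\bigr]$, then extract the $d/ds$ over the denominator $D_L^{b_1}\sim B_0^3$ and bound each residual piece at scale $B_0^{3/2}$. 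Your only deviation — rewriting $\langle\Ls^{L+1}q,\chi_{B_0/4}\Lambda Q\rangle=\langle\Ls^L q,[\Ls,\chi_{B_0/4}]\Lambda Q\rangle$ via $\Ls\Lambda Q=0$ and estimating the commutator — yields the same $B_0^{3/2}\sqrt{\Es_{2L+2}}$ that the paper obtains directly from Cauchy--Schwartz with $\|\chi_{B_0/4}\Lambda Q\|_{L^2}\lesssim B_0^{3/2}$, so this is a cosmetic rather than substantive difference.
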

\begin{proof} We apply $\Ls^L$ to \eqref{eq:qys} and take the inner product with $\chi_{\frac{B_0}{4}}\Lambda Q$ ( the choice of $\chi_{\frac{B_0}{4}}$ is to improve the size of $\tilde{\Psi}_b$, since the support of all the bad contributions are beyond $B_0$) to get
\begin{align}
&D_L^{b_1} \left\{\frac{d}{ds}\left[\frac{ \left<\Ls^L q, \chi_{\frac{B_0}{4}}\Lambda Q\right>}{D_L^{b_1}}\right]-  \left<\Ls^L q, \chi_{\frac{B_0}{4}}\Lambda Q\right> \frac{d}{ds}\left[\frac{1}{ D_L^{b_1}} \right] \right\}\nonumber\\
& = \left<\Ls^L q, \Lambda Q \partial_s(\chi_{\frac{B_0}{4}})\right> - \left<\Ls^{L+1} q, \chi_{\frac{B_0}{4}}\Lambda Q\right> + \frac{\lambda_s}{\lambda}\left<\Ls^L\Lambda q,  \chi_{\frac{B_0}{4}}\Lambda Q\right> \nonumber\\
& \quad - \left<\Ls^L \tilde{\Psi}_b,\chi_{\frac{B_0}{4}}\Lambda Q\right> - \left< \Ls^L\widehat{\textup{Mod}}(t), \chi_{\frac{B_0}{4}}\Lambda Q\right>  + \left<\Ls^L\big(\Hc(q) - \Nc(q)\big), \chi_{\frac{B_0}{4}}\Lambda Q\right>.\label{eq:LqLamQt1}
\end{align}

We recall from \eqref{eq:asymLamQ} that 
\begin{equation}\label{est:LQ2chiB0}
B_0^{3} \lesssim |\left<\Lambda Q+(-1)^L\Ls^L(\sum_{k=0}^2\frac{\p S_{L+k}}{\p b_L}), \chi_{\frac{B_0}{4}}\Lambda Q\right> | \lesssim B_0^{3}.
\end{equation}
Let us estimate the second term in the left hand side of \eqref{eq:LqLamQt1}. We use  Cauchy-Schwartz and Lemma \ref{lemm:coeLk} to estimate 
\begin{align}
\left|\left<\Ls^{L} q, \chi_{\frac{B_0}{4}}\Lambda Q\right>\right|& \lesssim B_0^2\|\chi_{\frac{B_0}{4}}\Lambda Q\|_{L^2}\left(\int\frac{|\Ls^L q|^2}{1 + y^4} \right)^\frac 12\nonumber\\
&\quad \lesssim B_0^{\frac 72}\sqrt{\Es_{2L+2}}.\label{est:LsLqchiB0LQ}
\end{align}
We estimate from \eqref{est:LQ2chiB0} 
\begin{align*}
\left|\left<\Ls^L q, \chi_{\frac{B_0}{4}}\Lambda Q\right> \frac{d}{ds}\left[\frac{1}{ D_L^{b_1}} \right]\right|& \lesssim \frac{|\left<\Ls^L q, \chi_{\frac{B_0}{4}}\Lambda Q\right>|}{|D_L^{b_1}|^2} |(b_1)_s||\p_{b_1}D_L^{b_1}|\lesssim \frac{\sqrt{\Es_{2L+2}}}{B_0^{\frac{5}{2}}}.
\end{align*}

For the first three terms in the right hand side of \eqref{eq:LqLamQt1}, we use the Cauchy-Schwartz, Lemma \ref{lemm:coeLk} and the fact that $\Ls(\Lambda Q) = 0$ to find that
\begin{align*}
\left|\left<\Ls^{L} q,\Lambda Q \partial_s(\chi_{\frac{B_0}{4}})\right>\right|& \lesssim \left|\frac{(b_1)_s}{b_1} \right| \left(\int_{\frac{B_0}{4} \leq y \leq \frac{B_0}{2}} (1 + y^4) |\Lambda Q|^2 \right)^\frac 12 \left(\int\frac{|\Ls^L q|^2}{1 + y^4} \right)^\frac 12\\
&\quad \lesssim b_1^{\frac 32} B_0^{\frac 72}\sqrt{\Es_{2L+2}} \lesssim B_0^{\frac 12}\sqrt{\Es_{2L+2}}
\end{align*}
and
\begin{align*}
\left|\left<\Ls^{L+1} q, \chi_{\frac{B_0}{4}}\Lambda Q\right>\right|& \lesssim \left(\int |\chi_{\frac{B_0}{4}}\Lambda Q|^2 \right)^\frac 12 \left(\int|\Ls^{L+1} q|^2 \right)^\frac 12\\
&\quad \lesssim B_0^{\frac 32}\sqrt{\Es_{2L+2}},
\end{align*}
and 
\begin{align*}
\left|\frac{\lambda_s}{\lambda}\left<\Ls^L\Lambda q,  \chi_{\frac{B_0}{4}}\Lambda Q\right> \right| & \lesssim b_1 \left(\int (1 + y^{4L+2})|\Ls^L(\chi_{\frac{B_0}{4}} \Lambda Q)|^2 \right)^\frac 12 \left(\int \frac{|\py q|^2}{1 + y^{4L + 2}}\right)^\frac 12 \\
&\lesssim B_0^{\frac 12}\sqrt{\Es_{2L+2}}.
\end{align*}
The error term is estimated by using \eqref{eq:estPsiblocalB0},
\begin{align*}
\left|\left<\Ls^L \tilde{\Psi}_b,\chi_{\frac{B_0}{4}}\Lambda Q\right>\right| &\lesssim \left(\int (1 + y^{4(L + 1)})|\Ls^L(\chi_{\frac{B_0}{4}} \Lambda Q)|^2 \right)^\frac 12\left(\int_{\{y\leq\frac{B_0}{2}\}}\frac{|\tilde{\Psi}_b|^2}{1 + y^{4(L + 1)}} \right)^\frac 12\\
&\quad \lesssim B_0^{\frac 72} b_1^{L + 1 + \frac 54 -\frac{\eta}{2}}\lesssim B_0^{\frac 32}b_1^{L+\frac{3}{4}+\frac{\eta}{2}}.
\end{align*}
The last term in the right hand side of \eqref{eq:LqLamQt1} is estimated in the same way, by using that $|\Nc(q)|\lesssim\frac{|q|^2}{y^2}$ and $|\Hc(q)|\lesssim\frac{|q|}{y^2}$,
\begin{align*}
\left|\left<\Ls^L\big(\Hc(q) - \Nc(q)\big), \chi_{\frac{B_0}{4}}\Lambda Q\right> \right|& \lesssim \int |\Hc(q) \Ls^L(\chi_{\frac{B_0}{4}} \Lambda Q)| + \int |\Nc(q) \Ls^L(\chi_{\frac{B_0}{4}} \Lambda Q)|\\
& \lesssim \left(\int \frac{|\Hc(q)|^2}{1 + y^{4L}}\right)^\frac{1}{2}\left(\int (1 + y^{4L}) |\Ls^L(\chi_{\frac{B_0}{4}}\Lambda Q)|^2) \right)^\frac{1}{2}\\
&\quad  + \int \frac{|\Nc(q)|}{1 + y^{4L+2}}\left\| (1 + y^{4L+2}) |\Ls^L(\chi_{\frac{B_0}{4}}\Lambda Q)| \right\|_{L^\infty}\\
& \qquad \lesssim B_0^{\frac 32} \sqrt{\Es_{2L+2}} + B_0^{2L} \Es_{2L+2}\\
& \qquad \quad\lesssim B_0^{\frac 32} \sqrt{\Es_{2L+2}}.
\end{align*}
For the remaining term, we recall that $\Ls(\Lambda Q) = 0$, $\Ls^L T_k = 0$ for $1 \leq k \leq L-1$, $\Ls^L T_L = (-1)^L \Lambda Q$ and note that the support of $1-\chi_{B_1}$ and $\chi_{B_0}$ are disjoint, which we will use in the decomposition 
$$\Ls^L(T_k\chi_{B_1})=\Ls^L(T_k)+\Ls^L(-T_k(1-\chi_{B_1})).$$
 We then write from \eqref{def:Modhat} and \eqref{eq:Modt}, 
\begin{align*}
&\left|\left< \Ls^L\widehat{\textup{Mod}}(t),\chi_{\frac{B_0}{4}}\Lambda Q\right> - (-1)^LD_L^{b_1} \left[(b_L)_s + (2L - 2+C_{b_1})b_1b_L \right] \right|\\
&\quad \lesssim \sum_{k = 1}^{L-1} \left|(b_k)_s + (2k - 2+C_{b_1})b_1b_k - b_{k+1} \right| \left|\left<\sum_{j = k+1}^{L+2} \frac{\partial \tilde{S}_j}{\partial b_k}, \Ls^L (\chi_{\frac{B_0}{4}}\Lambda Q)\right>\right|\\
&\qquad + \left|\frac{\lambda_s}{\lambda} + b_1\right| \left|\left<\Lambda \tilde{\Theta}_b,\Ls^L( \chi_{\frac{B_0}{4}}\Lambda Q)\right>\right|.
\end{align*}
Recall that $T_k$ is admissible of degree $(k,k)$ and $S_k$ is homogeneous of degree $(k, k, k)$, we derive the round bounds for $y \leq B_0$:
$$|\Lambda \Theta_b| \lesssim \frac{b_1}{y}, \quad \sum_{j = k+1}^{L+2}\left|\frac{\partial S_j}{\partial b_k} \right| \leq \sum_{j = k+1}^{L+2}b_1^{(j-k)+\frac{1}{2}}y^{2j-3} \lesssim b_1^{\frac 52-L}.$$
Thus, from Lemma \ref{lemm:mod1}, we derive the bound
\begin{align*}
&\left|\frac{\lambda_s}{\lambda} + b_1\right| \left|\left<\Lambda \tilde{\Theta}_b,\Ls^L(\chi_{\frac{B_0}{4}}\Lambda Q)\right>\right|\\
& \quad  + \sum_{k = 1}^{L} \left|(b_k)_s + (2k - 2+C_{b_1})b_1b_k - b_{k+1} \right| \left|\left<\sum_{j = k+1}^{L+2} \frac{\partial \tilde{S}_j}{\partial b_k}, \Ls^L (\chi_{\frac{B_0}{4}}\Lambda Q)\right>\right|\\
& \qquad \lesssim \left( C(M)\sqrt{\Es_{2L+2}} +  b_1^{L + 1 + \frac{3}{4}+\frac{\eta}{2}}\right)B_0^{\frac 32}.
\end{align*}

The equation \eqref{eq:ODEbLimproved} follows by gathering all the above estimates into \eqref{eq:LqLamQt1}, dividing both sides of \eqref{eq:LqLamQt1} by $(-1)^L D_L^{b_1}$. This finishes the proof of Lemma \ref{lemm:mod2}.
\end{proof}

\subsection{Monotonicity.}

We derive in this subsection the main monotonocity formula for $\Es_{2k}$ for $ 2 \leq k \leq L+1$. We claim the following which is the heart of this paper:

\begin{proposition}[Lyapounov monotonicity for the high Sobolev norm] \label{prop:E2k} We have 
\begin{align}
&\frac{d}{dt}\left\{\frac{\Es_{2L+2}}{\lambda^{4L- 3}}\left[1 + \Oc\left(b_1^{\frac{\eta}{8}}\right) \right]\right\}\nonumber\\
&\qquad \qquad \leq \frac{b_1}{\lambda^{4L- 1}}\left[\frac{\Es_{2L+2}}{M^{\frac{3}{2}}} + b_1^{L+\frac{3}{4}+\frac{\eta}{4}}\sqrt{\Es_{2L+2}} + b_1^{2L+\frac 32+\frac{\eta}{2}}\right],\label{eq:Es2sLya}
\end{align}
and for $2\leq m \leq L$,
\begin{align}
&\frac{d}{dt}\left\{\frac{\Es_{2m}}{\lambda^{4m - 7}}\left[1 + \Oc(b_1) \right]\right\} \leq \frac{b_1}{\lambda^{4m - 5}}\left[b_1^{m-1+(\frac 12+2m)\frac{\eta}{4}}\sqrt{\Es_{2m}} + b_1^{2(m-1)+(\frac 12+2m)\frac{\eta}{2}}\right],\label{eq:Es2sLyam}
\end{align}
\end{proposition}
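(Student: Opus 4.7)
The plan is to compute the time derivative in the original variables $(r,t)$, where the energy takes the conveniently scale-covariant form $\tilde{\mathcal{E}}_{2k}(t) := \int |\Ls_\lambda^k v|^2 r^6\,dr = \Es_{2k}/\lambda^{4k-7}$, using equation \eqref{eq:vrt} for $v$. A direct differentiation yields
$$\tfrac{1}{2}\tfrac{d}{dt}\tilde{\mathcal{E}}_{2k} = -\!\int \Ls_\lambda^k v\cdot \Ls_\lambda^{k+1}v\, r^6 dr + \tfrac{1}{\lambda^2}\!\int \Ls_\lambda^k v\cdot \Ls_\lambda^k \Fc_\lambda\, r^6 dr + \!\int \Ls_\lambda^k v\cdot [\p_t,\Ls_\lambda^k]v\, r^6 dr.$$
Using the factorization $\Ls_\lambda = \As_\lambda^*\As_\lambda$ and integrating by parts, the first term becomes $-\int |\As_\lambda \Ls_\lambda^k v|^2 r^6 dr\le 0$, providing the dissipative sign; the commutator reduces, via telescoping and $\p_t \Ls_\lambda = (b_1/\lambda^2)(\Lambda Z)_\lambda/r^2$, to terms controlled by $b_1/\lambda^2 \cdot \tilde{\mathcal{E}}_{2k}$ after redistribution of derivatives. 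The forcing term is then split according to $\Fc = -\tilde{\Psi}_b - \widehat{\mathrm{Mod}} + \Hc(q) - \Nc(q)$.

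For the profile error $\tilde{\Psi}_b$, Cauchy--Schwarz combined with the bounds \eqref{eq:estPsibLarge1}, \eqref{eq:estPsibLarge2} and the improved local bounds \eqref{eq:estPsiblocalB0} yields exactly the $b_1^{2\ell}$ and $b_1^{\ell}\sqrt{\Es_{2k}}$ contributions that appear on the right-hand sides of \eqref{eq:Es2sLya} and \eqref{eq:Es2sLyam}. The linear term $\Hc(q)$ is pointwise of size $O(b_1/y^2)|q|$, so that after pairing with $\Ls_\lambda^k v$, Hardy-type inequalities and the coercivity in Lemma \ref{lemm:coeLk} absorb it at the cost of a single factor $b_1$. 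The nonlinear term $\Nc(q) \sim q^2/y^2$ is treated by interpolation against the high-order energy combined with Sobolev embedding, and the bootstrap \eqref{bootstrapboundE2L2} makes it strictly subdominant.

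The main obstacle is the contribution of $\widehat{\mathrm{Mod}}$. Expanding $\Ls_\lambda^k \widehat{\mathrm{Mod}}$ using \eqref{eq:Modt} and the identity $\Ls^{j}T_k = (-1)^{j}T_{k-j}$ for $j\le k$ (and $\Ls^{j}T_k = 0$ otherwise), the pairing with $\Ls_\lambda^k v$ reduces, after integration by parts, to scalar products against $T_{k-j}$ or $\Lambda Q$. For $2\le m\le L$ these projections are controlled by the orthogonality \eqref{eq:orthqPhiM} and the pointwise modulation bounds of Lemma \ref{lemm:mod1}, producing the $1+O(b_1)$ correction in \eqref{eq:Es2sLyam}. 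For $\Es_{2L+2}$, however, $\Ls^{L+1}T_k=0$ for every $k\le L$, so only the corrections $\partial S_j/\partial b_k$ survive; after integration by parts the surviving pairing is of the form $[(b_L)_s + (2L-2+C_{b_1})b_1 b_L]\,D_L^{b_1}\,\langle \Ls^L q,\chi_{B_0/4}\Lambda Q\rangle/D_L^{b_1}$ up to lower-order terms. The naive bound \eqref{eq:ODEbL} only gives $|(b_L)_s + \ldots| \lesssim b_1^{L+3/4+\eta/20}$, short by roughly one power of $b_1^{1/4}$ of what is needed. Here we invoke Lemma \ref{lemm:mod2}, which rewrites the combination as $-\tfrac{d}{ds}\bigl(\langle \Ls^L q,\chi_{B_0/4}\Lambda Q\rangle/D_L^{b_1}\bigr)$ plus admissible remainders of size $B_0^{-3/2}\bigl(C(M)\sqrt{\Es_{2L+2}} + b_1^{L+1+3/4+\eta/4}\bigr)$. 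We absorb this total derivative into the left-hand side: since the absorbed quantity is bounded, via \eqref{est:LsLqchiB0LQ} and $B_0\sim b_1^{-1/2}$, by $B_0^{1/2}\sqrt{\Es_{2L+2}} \lesssim b_1^{\eta/8}\tilde{\mathcal{E}}_{2L+2}$, this produces exactly the correction factor $1+O(b_1^{\eta/8})$ announced in \eqref{eq:Es2sLya}.

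Finally, we collect all contributions, convert back to the rescaled time by multiplying through by $\lambda^{7-4k}$ and using $\tfrac{d}{dt}\lambda^{7-4k} = (4k-7)b_1\lambda^{5-4k}$, and use the bootstrap bounds \eqref{bootstrap_b}, \eqref{bootstrapboundE2k}, \eqref{bootstrapboundE2L2} to absorb subdominant terms. The output is exactly \eqref{eq:Es2sLya} and \eqref{eq:Es2sLyam}. The sharpness of the modulation step for $b_L$ is the only genuinely delicate point; everything else is a careful but routine application of integration by parts, Cauchy--Schwarz, and the coercivity and admissibility estimates developed earlier.
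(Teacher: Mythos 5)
Your overall plan—working in the original variables so that $\tilde{\mathcal E}_{2k}=\int|\Ls_\lambda^kv|^2 r^6dr = \Es_{2k}/\lambda^{4k-7}$, differentiating, extracting the dissipation $-\int|\As_\lambda\Ls_\lambda^k v|^2$, splitting the forcing via $\Fc=-\tilde\Psi_b-\widehat{\mathrm{Mod}}+\Hc-\Nc$, and invoking Lemma \ref{lemm:mod2} via the auxiliary function $\xi_L$ to handle the $(b_L)_s$ borderline—is the same route the paper takes. But there is a genuine gap in your treatment of the commutator, and it is precisely the step where the paper's proof introduces a non-obvious cross-term correction that you have not reproduced.

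You assert that the commutator contribution ``reduces, via telescoping and $\p_t\Ls_\lambda=(b_1/\lambda^2)(\Lambda Z)_\lambda/r^2$, to terms controlled by $b_1/\lambda^2\cdot\tilde{\mathcal E}_{2k}$.'' Scaled back, this is $\frac{b_1}{\lambda^{4L-1}}\Es_{2L+2}$. But the right-hand side of \eqref{eq:Es2sLya} only allows $\frac{b_1}{\lambda^{4L-1}}\frac{\Es_{2L+2}}{M^{3/2}}$, so your bound is a factor $M^{3/2}$ too large and the bootstrap cannot close (the whole point of the $1/M^{3/2}$ is to beat the implicit constants when $M$ is taken large). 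Concretely: the leading $m=0$ piece of $\int\Ls_\lambda^{L+1}v\cdot[\p_t,\Ls_\lambda^{L+1}]v$ is, after one integration by parts, $\int\frac{\p_tZ_\lambda}{r^2}|v_{2L+1}|^2=\frac{b_1}{\lambda^{4L-1}}\int\frac{\Lambda Z}{y^2}q_{2L+1}^2$ (up to the $\lambda_s/\lambda+b_1$ remainder), and since $|\Lambda Z|/y^2\sim 1/y^2$ for $y\sim 1$, the coercivity \eqref{eq:Enercontrol} gives no smallness: this is genuinely $\sim b_1\Es_{2L+2}/\lambda^{4L-1}$, not $b_1^2\Es_{2L+2}/\lambda^{4L-1}$, and not $b_1\Es_{2L+2}/(\lambda^{4L-1}M^{3/2})$.

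The paper's proof handles this by adding the cross-term $2\int\frac{b_1(\Lambda V)_\lambda}{\lambda^2r}v_{2L+1}v_{2L}$ to the Lyapunov functional (see \eqref{eq:EnerID}). Its time derivative contains $-\int\frac{b_1(\Lambda V)_\lambda}{\lambda^2r}v_{2L+1}\As_\lambda^*v_{2L+1}$, which by the algebraic identity $\int\frac{\Lambda V}{y}f\,\As^*f=\int\frac{\Lambda V(2V+7)-\Lambda^2V}{2y^2}f^2=\int\frac{\Lambda\tilde Z}{2y^2}f^2$ exactly cancels the dangerous potential term. After this cancellation, every remaining commutator piece is linear in $\p_tZ_\lambda$ but is paired with the \emph{dissipation} $\tilde\Ls_\lambda v_{2L+1}$ via Young, giving the paper's estimate \eqref{est:comtor} and the improved coefficient $\frac{Cb_1^2}{\lambda^{4L-1}}\Es_{2L+2}$, which is now safely subleading. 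The cross-term itself is $O(b_1^{1/2})$ small relative to $\tilde{\mathcal E}_{2L+2}$, and together with the $\xi_L$ oscillation term it produces the $[1+\Oc(b_1^{\eta/8})]$ correction in the Proposition. Without this cancellation your estimate cannot produce the $1/M^{3/2}$ smallness, so you should either incorporate the cross-term into your Lyapunov functional or explain an alternative mechanism that gains an extra power of $b_1$ on the dangerous potential term; as written, the proposal would not prove the Proposition.
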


\begin{proof} The proof uses some ideas developed in \cite{RSapde2014} and \cite{MRRcmj2016}. Because the proof of \eqref{eq:Es2sLyam} follows exactly the same lines as for \eqref{eq:Es2sLya}, we only deal with the proof of \eqref{eq:Es2sLya}. Let us start the proof of \eqref{eq:Es2sLya}.

\noindent \textbf{Step 1: Suitable derivatives and energy identity.} For $k \in \mathbb{N}$, we define the suitable derivatives of $q$ and $v$ as follows:
\begin{equation}\label{eq:notq2k1}
q_{2k} = \Ls^kq, \quad q_{2k + 1} = \As \Ls^k q, \quad v_{2k} = \Ls^k_\lambda v, \quad v_{2k + 1} = \As_\lambda \Ls^k_\lambda v,
\end{equation}
where $q = q(y,s)$ and $v = v(r,t)$ satisfy \eqref{eq:qys} and \eqref{eq:vrt} respectively, the linearized operator $\Ls$ and $\Ls_\lambda$ are defined by \eqref{def:Lc} and \eqref{def:Llambda}, $\As$ and $\As^*$ are the first order operators defined by \eqref{def:As} and \eqref{def:Astar}, and
$$\As_\lambda f = -\partial_r f + \frac{V_\lambda}{r} f, \quad \As^*_\lambda f = \frac{1}{r^{6}}\partial_r(r^{6}f) + \frac{V_\lambda}{r} f,$$
with $V = \Lambda \log \Lambda Q$ admitting the asymptotic behavior as in \eqref{eq:asympV}.

With the notation \eqref{eq:notq2k1}, we note that 
$$q_{2k + 1} = \As q_{2k}, \quad q_{2k+2} = \As^*q_{2k + 1}, \quad v_{2k + 1} = \As_\lambda v_{2k}, \quad v_{2k+2} = \As^*_\lambda v_{2k + 1}.$$
Recall from Lemma \ref{lemm:factorL}, we have the following factorization:
$$\Ls  = \As^* \As, \quad \tilde\Ls = \As \As^*,  \quad \Ls_\lambda = \As^*_\lambda \As_\lambda, \quad \tilde{\Ls}_\lambda = \As_\lambda \As^*_\lambda,$$
where 
\begin{equation}\label{def:Lstil}
\tilde{\Ls} = -\partial_{yy} - \frac{6}{y}\py + \frac{\tilde{Z}}{y^2},
\end{equation}
and 
\begin{equation}\label{def:LstilLam}
\tilde{\Ls}_\lambda = -\partial_{rr} - \frac{6}{r}\partial_r + \frac{\tilde{Z}_\lambda}{r^2},
\end{equation}
with $\tilde{Z}$ expressed in terms of $V$ as in \eqref{def:ZtilbyV}.\\

We apply $\Ls_\lambda^{L}$ to \eqref{eq:vrt} and use the notation \eqref{eq:notq2k1} to derive 
\begin{equation}\label{eq:v2k2}
\pt v_{2L} + \Ls_\lambda v_{2L} = [\pt, \Ls^{L}_\lambda]v + \Ls_\lambda^{L}\left( \frac{1}{\lambda^2} \Fc_\lambda\right).
\end{equation}
Now apply $\As_\lambda$ to \eqref{eq:v2k2} yields
\begin{equation}\label{eq:v2k1}
\pt v_{2L+1} + \tilde \Ls_\lambda v_{2L+1} = \frac{\pt V_\lambda}{r} v_{2L} +\As_\lambda [\pt, \Ls^{L}_\lambda]v + \As_\lambda \Ls_\lambda^{L}\left( \frac{1}{\lambda^2} \Fc_\lambda\right), 
\end{equation}
Since $\Ls_\lambda = \frac{1}{\lambda^2}\Ls$, we then have 
$$\Ls^k_\lambda v = \frac{1}{\lambda^{2k}}\Ls^k q, \quad \text{hence,}\quad \int|\Ls^k_\lambda v|^2 = \frac{1}{\lambda^{4k - d}}\int |\Ls^k q|^2.$$
Using the definition \eqref{def:LstilLam} of $\tilde{\Ls}_\lambda$ and an integration by parts, we write
\begin{align*}
\frac{1}{2}\frac{d}{dt} \left(\frac{1}{\lambda^{4L-3}} \Es_{2L+2}\right) &= \frac{1}{2}\frac{d}{dt} \int |\Ls_\lambda^{L+1} v|^2 = \frac{1}{2}\frac{d}{dt}\int \tilde{\Ls}_\lambda v_{2L+1} v_{2L+1}\\
&= \int \tilde{\Ls}_\lambda v_{2L+1} \pt v_{2L+1} + \frac{1}{2}\int \frac{\pt (\tilde Z_\lambda)}{r^2}v^2_{2L+1}\\
&= \int \tilde{\Ls}_\lambda v_{2L+1} \pt v_{2L+1} + b_1\int \frac{(\Lambda \tilde{Z})_\lambda}{2\lambda^2r^2}v^2_{2L+1} - \left(\frac{\lambda_s}{\lambda} + b_1\right) \int \frac{(\Lambda \tilde{Z})_\lambda}{2\lambda^2r^2}v^2_{2L+1}.
\end{align*}
Using the definition \eqref{def:Astar} of $\As^*$ and an integration by parts together with the definition \eqref{def:ZtilbyV} of $\tilde{Z}$, we write
\begin{align*}
\int \frac{b_1(\Lambda V)_\lambda}{\lambda^2 r} v_{2L+1} \As^*_\lambda v_{2L+1} &= \frac{b_1}{\lambda^{4L-1}}\int \frac{\Lambda V}{y}q_{2L+1} \As^*q_{2L+1}\\
&= \frac{b_1}{\lambda^{4L-1}}\int \frac{\Lambda V(2V + 7) - \Lambda^2 V}{2y^2}q^2_{2L+1}\\
&= \frac{b_1}{\lambda^{4L-1}} \int \frac{\Lambda \tilde{Z}}{2y^2}q^2_{2L+1} = \int \frac{b_1(\Lambda \tilde{Z})_\lambda}{2\lambda^2r^2}v^2_{2L+1}.
\end{align*}
From \eqref{eq:v2k2}, we write
\begin{align*}
\frac{d}{dt}\int \frac{b_1(\Lambda V)_\lambda}{\lambda^2r}v_{2L+1} v_{2L} &= \int \frac{d}{dt}\left(\frac{b_1(\Lambda V)_\lambda}{\lambda^2r}\right)v_{2L+1} v_{2L}  + \int \frac{b_1(\Lambda V)_\lambda}{\lambda^2r}  v_{2L}\pt v_{2L+1}\\
& + \int \frac{b_1(\Lambda V)_\lambda}{\lambda^2r} v_{2L+1} \left[-\As^*_\lambda v_{2L+1} + [\pt, \Ls^{L}_\lambda]v + \Ls_\lambda^{L}\left( \frac{1}{\lambda^2} \Fc_\lambda\right)\right].
\end{align*}
Gathering all the above identities and using \eqref{eq:v2k1} yields the energy identity
\begin{align}
&\frac{1}{2}\frac{d}{dt}\left\{ \left(\frac{1}{\lambda^{4L-3}} \Es_{2L+2}\right) + 2 \int \frac{b_1(\Lambda V)_\lambda}{\lambda^2r}v_{2L+1} v_{2L} \right\} \label{eq:EnerID}\\
&= - \int |\tilde\Ls_\lambda v_{2L+1}|^2 - \left(\frac{\lambda_s}{\lambda} + b_1\right) \int \frac{(\Lambda \tilde{Z})_\lambda}{2\lambda^2r^2}v^2_{2L+1} - \int \frac{b_1(\Lambda V)_\lambda}{\lambda^2r} v_{2L} \tilde{\Ls}_\lambda v_{2L+1}\nonumber\\
&\quad + \int \frac{d}{dt}\left(\frac{b_1(\Lambda V)_\lambda}{\lambda^2r}\right)v_{2L+1} v_{2L} + \int \frac{b_1(\Lambda V)_\lambda}{\lambda^2r} v_{2L+1} \left[[\pt, \Ls^{L}_\lambda]v + \Ls_\lambda^{L}\left( \frac{1}{\lambda^2} \Fc_\lambda\right)\right]\nonumber\\
& \quad + \int \left(\tilde{\Ls}_\lambda v_{2L+1} +  \frac{b_1(\Lambda V)_\lambda}{\lambda^2r}  v_{2L}\right)\left[\frac{\pt V_\lambda}{r} v_{2L} +\As_\lambda [\pt, \Ls^{L}_\lambda]v + \As_\lambda \Ls_\lambda^{L}\left( \frac{1}{\lambda^2} \Fc_\lambda\right) \right].\nonumber
\end{align}
We now estimate all terms in \eqref{eq:EnerID}. The proof uses the coercivity estimate given in Lemma \ref{lemm:coeLk}. In particular, we shall apply Lemma \ref{lemm:coeLk} with $k = L$ to have the estimate
\begin{equation}\label{eq:Enercontrol}
\Es_{2L+2} \gtrsim \int \frac{|q_{2L+1}|^2}{y^2} + \sum_{m = 0}^{L} \int \frac{|q_{2m}|^2}{y^4(1 + y^{4(L- m)})} + \sum_{m = 0}^{L-1}\int \frac{|q_{2m +1}|^2}{y^6(1 + y^{4(L-1-m)})}.
\end{equation}
 
\noindent \textbf{Step 2: Control of the lower order quadratic terms.} Let us start with the second term in the left hand side of \eqref{eq:EnerID}. From \eqref{eq:asympV} and \eqref{def:ZtilbyV}, we have the round bound
\begin{equation}\label{eq:estLamZV}
|\Lambda \tilde{Z}(y)| + |\Lambda V(y)| \lesssim \frac{y}{1 + y^2}, \quad \forall y \in [0, +\infty).
\end{equation}
By making a change of variables and using the Cauchy-Schwartz inequality, we obtain that
\begin{align*}
\left|\int \frac{b_1(\Lambda V)_\lambda}{\lambda^2r}v_{2L+1} v_{2L}\right| &= \left|\frac{b_1}{\lambda^{4L-3}}\int \frac{\Lambda V}{y}q_{2L+1}q_{2L}\right|\\
&\lesssim \frac{b_1}{\lambda^{4L-3}} \left(\int \frac{|q_{2L+1}|^2}{y^2}\right)^\frac 12 \left(\int \frac{|q_{2L}|^2}{1 + y^2}\right)^\frac 12.
\end{align*}
From Lemma \ref{lemm:interbounds} together with \eqref{eq:Enercontrol} we have
$$\left(\int \frac{|q_{2L+1}|^2}{y^2}\right)^\frac 12 \left(\int \frac{|q_{2L}|^2}{1 + y^2}\right)^\frac 12\lesssim (\Es_{2L})^{\frac 14}(\Es_{2L+2})^{\frac 34},$$
which implies

\begin{align*}
\left|\int \frac{b_1(\Lambda V)_\lambda}{\lambda^2r}v_{2L+1} v_{2L}\right| &\lesssim \frac{b_1}{\lambda^{4L-3}} (\Es_{2L})^{\frac 14}(\Es_{2L+2})^{\frac 34}.
\end{align*}
By using the boostrap bounds we get
\begin{align*}
\left|\int \frac{b_1(\Lambda V)_\lambda}{\lambda^2r}v_{2L+1} v_{2L}\right| &\lesssim \frac{b_1^{\frac{1}{2}}}{\lambda^{4L-3}}\Es_{2L+2}.
\end{align*}

Using \eqref{eq:estLamZV}, \eqref{eq:ODEbkl},\eqref{eq:Enercontrol} and the bootstrap bound \eqref{bootstrapboundE2L2}, we deduce
\begin{align*}
\left|\left(\frac{\lambda_s}{\lambda} + b_1\right)\int \frac{(\Lambda \tilde{Z})_\lambda}{\lambda^2 r}v^2_{2L+1} \right| &= \left|\left(\frac{\lambda_s}{\lambda} + b_1\right) \frac{1}{\lambda^{4L-1}}\int \frac{\Lambda \tilde{Z}}{y^2}q^2_{2L+1} \right|\\
& \lesssim \frac{(b_1^{L+1 + \frac{3}{4}+\frac{\eta}{2}}+\sqrt{\Es_{2L+2}})}{\lambda^{4L-1}}\int \frac{q^2_{2L+1}}{y^2} \lesssim \frac{b_1^{2}}{\lambda^{4L-1}}\Es_{2L+2}.
\end{align*}
For the third term in the right hand side of \eqref{eq:EnerID}, we write
\begin{align*}
\left|\int \frac{b_1(\Lambda V)_\lambda}{\lambda^2r} v_{2L} \tilde{\Ls}_\lambda v_{2L+1}\right| &\leq \frac{1}{4}\int |\tilde{\Ls}_\lambda v_{2L+1}|^2 + 4\int \left(\frac{b_1(\Lambda V)_\lambda}{\lambda^2r}\right)^2 v^2_{2L}\\
&= \frac{1}{4}\int |\tilde{\Ls}_\lambda v_{2L+1}|^2 + \frac{4b_1^2}{\lambda^{4L-1}}\int \frac{|\Lambda V|^2}{y^2}q^2_{2L}\\
& \leq \frac{1}{4}\int |\tilde{\Ls}_\lambda v_{2L+1}|^2 + \frac{Cb_1^2}{\lambda^{4L-1}} \Es_{2L+2}.
\end{align*}
A direct computation yields the round bound
$$\left|\frac{d}{dt}\left(\frac{b_1(\Lambda V)_\lambda}{\lambda^2}\right)\right| \lesssim \frac{b_1^{\frac 52}}{\lambda^4}(|\Lambda V| + |\Lambda^2V|).$$
Thus, we use \eqref{eq:estLamZV}, the Cauchy-Schwartz inequality, \eqref{eq:Enercontrol}, Lemma \ref{lemm:interbounds} and bootstrap bound \eqref{bootstrapboundE2L2} to estimate
\begin{align*}
\left|\int \frac{d}{dt}\left(\frac{b_1(\Lambda V)_\lambda}{\lambda^2r}\right)v_{2L+1} v_{2L} \right| &\lesssim \frac{b_1^{\frac 52}}{\lambda^{4L-1}}\int \frac{|\Lambda V| + |\Lambda^2V|}{y}|q_{2L+1}q_{2L}|\\
& \lesssim \frac{b_1^{\frac 52}}{\lambda^{4L-1}}\left(\int \frac{q^2_{2L+1}}{y^2}\right)^\frac 12 \left(\int \frac{q^2_{2L}}{1 + y^2}\right)^\frac 12\\
& \lesssim \frac{b_1^{\frac 52}}{\lambda^{4L-1}}(\Es_{2L+2})^{\frac 34}(\Es_{2L})^{\frac 14}\lesssim\frac{b_1^{2}}{\lambda^{4L-1}}\Es_{2L+2}.
\end{align*}
Similarly, we have 
\begin{align*}
&\left|\int \left(\tilde{\Ls}_\lambda v_{2L+1} +  \frac{b_1(\Lambda V)_\lambda}{\lambda^2r}  v_{2L}\right)\frac{\pt V_\lambda}{r} v_{2L}\right|\\
&\qquad \leq \frac{1}{4}\int |\tilde{\Ls}_\lambda v_{2L+1}|^2 + \frac{Cb_1^2}{\lambda^{4L-1}} \int \frac{|\Lambda V|^2}{y^2}q^2_{2L}\\
&\qquad \leq \frac{1}{4}\int |\tilde{\Ls}_\lambda v_{2L+1}|^2 + \frac{Cb_1^2}{\lambda^{4L-1}}\Es_{2L+2},
\end{align*}
and 
\begin{align*}
& \left|\int \frac{b_1(\Lambda V)_\lambda}{\lambda^2r} v_{2L+1}[\pt, \Ls^{L}_\lambda]v \right| + \left|\int \left(\tilde{\Ls}_\lambda v_{2L+1} +  \frac{b_1(\Lambda V)_\lambda}{\lambda^2r}  v_{2L}\right)\As_\lambda [\pt, \Ls^{L}_\lambda]v \right|\\
&\leq \frac{1}{4}\int |\tilde{\Ls}_\lambda v_{2L+1}|^2 + C\left(\frac{b_1^2}{\lambda^{4L- 4}}\Es_{2L+2} + \int \frac{\big|[\pt, \Ls_\lambda^{L}]v\big|^2}{\lambda^2(1 + y^2)} + \int \big|\As_\lambda[\pt, \Ls_\lambda^{L}]v\big|^2\right).
\end{align*}
We claim the bound 
\begin{equation}\label{est:comtor}
\int \frac{\big|[\pt, \Ls_\lambda^{L}]v\big|^2}{\lambda^2(1 + y^2)} + \int \big|\As_\lambda[\pt, \Ls_\lambda^{L}]v\big|^2 \lesssim \frac{b_1^2}{\lambda^{4L-1}}\Es_{2L+2},
\end{equation}
whose proof is left to Appendix \ref{ap:EstComm}. 

The collection of all the above estimates to \eqref{eq:EnerID} yields 
\begin{align}
\frac{1}{2}\frac{d}{dt}\left\{ \frac{\Es_{2L+2}}{\lambda^{4L-1}} \Big[1 + \Oc(b_1^{\frac 12})\Big] \right\} &\leq -\frac{1}{4} \int |\tilde\Ls_\lambda v_{2L+1}|^2 + \frac{Cb_1^2}{\lambda^{4L-1}}\Es_{2L+2}\nonumber\\
& \quad + \int \frac{b_1(\Lambda V)_\lambda}{\lambda^2r} v_{2L+1} \Ls_\lambda^{L}\left( \frac{1}{\lambda^2} \Fc_\lambda\right)\nonumber\\
& \quad +\int \frac{b_1(\Lambda V)_\lambda}{\lambda^2r}  v_{2L} \As_\lambda\Ls_\lambda^{L}\left( \frac{1}{\lambda^2} \Fc_\lambda\right)\nonumber\\
&\quad + \int \tilde{\Ls}_\lambda v_{2L+1}\As_\lambda \Ls_\lambda^{L}\left( \frac{1}{\lambda^2} \Fc_\lambda\right).\label{eq:EnerID1}
\end{align}

\noindent \textbf{Step 3: Further use of dissipation.} We aim at estimating all terms in the right hand side of \eqref{eq:EnerID1}. From \eqref{eq:estLamZV}, \eqref{eq:Enercontrol} and the Cauchy-Schwartz inequality, we write 
\begin{align*}
\left|\int \frac{b_1(\Lambda V)_\lambda}{\lambda^2r} v_{2L+1} \Ls_\lambda^{L}\left( \frac{1}{\lambda^2} \Fc_\lambda\right) \right|&= \left|\frac{b_1}{\lambda^{4L-1}}\int \frac{\Lambda V}{y}q_{2L+1}\Ls^{L}\Fc \right|\\
& \quad \lesssim \frac{b_1}{\lambda^{4L-1}} \left(\int q_{2L+1}^2 \right)^\frac{1}{2}\left(\int \frac{|\Ls^{L}\Fc|^2}{1 + y^4} \right)^\frac{1}{2}\\
& \quad \lesssim \frac{b_1}{\lambda^{4L-1}} (\Es_{2L})^{\frac 14}(\Es_{2L+2})^{\frac 14}\left(\int \frac{|\Ls^{L}\Fc|^2}{1 + y^4} \right)^\frac{1}{2}.
\end{align*}
Similarly, we have 
\begin{align*}
\left|\int \frac{b_1(\Lambda V)_\lambda}{\lambda^2r} v_{2L} \As_\lambda\Ls_\lambda^{L}\left( \frac{1}{\lambda^2} \Fc_\lambda\right) \right|&= \left|\frac{b_1}{\lambda^{4L-1}}\int \frac{\Lambda V}{y}q_{2L}\As\Ls^{L}\Fc \right|\\
& \quad \lesssim \frac{b_1}{\lambda^{4L-1}} \left(\int \frac{q_{2L}^2}{1+y^2} \right)^\frac{1}{2}\left(\int \frac{|\As\Ls^{L}\Fc|^2}{1 + y^2} \right)^\frac{1}{2}\\
& \quad \lesssim \frac{b_1}{\lambda^{4L-1}} (\Es_{2L})^{\frac 14}(\Es_{2L+2})^{\frac 14}\left(\int \frac{|\As\Ls^{L}\Fc|^2}{1 + y^2} \right)^\frac{1}{2}.
\end{align*}
For the last term in \eqref{eq:EnerID1}, let us introduce the function
\begin{equation}\label{def:xiL}
\xi_L = \frac{\left<\Ls^Lq, \chi_{B_0}\Lambda Q\right>}{\left<\Lambda Q+(-1)^L\Ls^L(\sum_{k=0}^2\frac{\p S_{L+k}}{\p{b_L}}), \chi_{B_0}\Lambda Q\right>}\chi_{B_1}\left(T_L+\sum_{k=0}^2\frac{\p S_{L+k}}{\p b_L}\right),
\end{equation}
and the decomposition 
\begin{equation}\label{eq:decomF}
\Fc = \partial_s \xi_L + \Fc_0 + \Fc_1, \quad \Fc_0 = - \tilde{\Psi}_b - \underbrace{(\widehat{\text{Mod}} + \partial_s \xi_L)}_{\equiv \widetilde{\text{Mod}}}, \quad \Fc_1 = \Hc(q) - \Nc(q),
\end{equation}
where $\tilde \Psi_b$ is referred to \eqref{def:Psibtilde}, $\widehat{\text{Mod}}$, $\Hc(q)$ and $\Nc(q)$ are defined as in \eqref{def:Modhat} \eqref{def:Lq} and \eqref{def:Nq} respectively.
Actually, we introduced the decomposition \eqref{eq:decomF} and $\xi_L$ to take advantage of the improved bound obtained in Lemma  \ref{lemm:mod2}.
We now write
\begin{align*}
&\int \tilde{\Ls}_\lambda v_{2L+1}\As_\lambda \Ls_\lambda^{L}\left( \frac{1}{\lambda^2} \Fc_\lambda\right)\\
& \qquad = \frac{1}{\lambda^{4L-1}}\left(\int \As^* q_{2L+1}\Ls^{L+1}(\partial_s \xi_L) + \int \As^* q_{2L+1}\Ls^{L+1} \Fc_0 + \int \tilde{\Ls}q_{2L+1} \As \Ls^{L}\Fc_1\right)\\
& \qquad \leq \frac{1}{\lambda^{4L-1}}\int \Ls^{L+1} q \Ls^{L+1}(\partial_s \xi_L) + \frac{C}{\lambda^{4L-1}} \left(\int |\Ls^{L+1} q|^2\right)^\frac{1}{2}\left(\int |\Ls^{L+1} \Fc_0| \right)^\frac 12\\
&\qquad \qquad \qquad \qquad \qquad \qquad \qquad  \qquad + \frac{1}{8}\int |\tilde{\Ls}_\lambda v_{2L+1}|^2 + \frac{C}{\lambda^{4L-1}}\int |\As \Ls^{L}\Fc_1|^2\\
&\qquad = \frac{1}{\lambda^{4L-1}}\int \Ls^{L+1} q \Ls^{L+1}(\partial_s \xi_L) + \frac{1}{8}\int |\tilde{\Ls}_\lambda v_{2L+1}|^2 \\
&\qquad \qquad+ \frac{C}{\lambda^{4L-1}}\left( \sqrt{\Es_{2L+2}}\left\|\Ls^{L+1} \Fc_0 \right\|_{L^2} + \left\|\As \Ls^{L}\Fc_1 \right\|_{L^2}^2 \right).
\end{align*}
Injecting all these bounds into \eqref{eq:EnerID1} yields
\begin{align}
&\frac{1}{2}\frac{d}{dt}\left\{ \frac{\Es_{2L+2}}{\lambda^{4L-3}}\Big[1  + \Oc(b_1^{\frac 12})\Big] \right\}\nonumber\\
&\qquad \leq -\frac{1}{8} \int |\tilde\Ls_\lambda v_{2L+1}|^2 + \frac{Cb_1^2}{\lambda^{4L-1}}\Es_{2L+2} + \frac{1}{\lambda^{4L-1}}\int \Ls^{L+1} q \Ls^{L+1}(\partial_s \xi_L)\nonumber\\
& \qquad \quad + \frac{b_1}{\lambda^{4L-1}} (\Es_{2L})^{\frac 14}(\Es_{2L+2})^{\frac 14}\left[\left(\int \frac{|\As\Ls^{L}\Fc|^2}{1 + y^2} \right)^\frac{1}{2} + \left(\int \frac{|\Ls^{L}\Fc|^2}{1 + y^4} \right)^\frac{1}{2} \right]\nonumber\\
&\qquad  \quad + \frac{C}{\lambda^{4L-1}}\left( \sqrt{\Es_{2L+2}}\left\|\Ls^{L+1} \Fc_0 \right\|_{L^2} + \left\|\As \Ls^{L}\Fc_1 \right\|_{L^2}^2 \right). \label{eq:EnerID2}
\end{align}
\noindent \textbf{Step 4: Estimates for $\tilde \Psi_b$ term.} Recall from \eqref{eq:estPsibLarge2} that we already have the following estimate for $\tilde{\Psi}_b$:
\begin{equation}\label{eq:PsibtilE2k}
\left\|\Ls^{L+1} \tilde\Psi_b \right\|_{L^2} + \left(\int \frac{|\As\Ls^{L}\tilde\Psi_b|^2}{1 + y^2} \right)^\frac{1}{2} + \left(\int \frac{|\Ls^{L}\tilde\Psi_b|^2}{1 + y^4} \right)^\frac{1}{2} \lesssim b_1^{L + 1 + \frac{3}{4}+\frac{\eta}{4}}.
\end{equation}
\noindent \textbf{Step 5: Estimates for $\widetilde{\text{Mod}}$ term.} We claim the following:
\begin{align}
\left(\int \frac{|\Ls^{L}\widetilde{\text{Mod}}|^2}{1 + y^4} \right)^\frac{1}{2} + \left(\int \frac{|\As\Ls^{L}\widetilde{\text{Mod}}|^2}{1 + y^2} \right)^\frac{1}{2}+\left(\int \left|\Ls^{L+1} \widetilde{\text{Mod}}\right|^2 \right)^\frac{1}{2}\nonumber\\
\lesssim b_1^{1+\frac \eta4}\left(\frac{\sqrt{\Es_{2L+2}}}{M^{\frac{3}{2}}} + b_1^{L+ \frac{3}{4}+\frac{\eta}{4}}\right),\label{eq:ModtilE2k}
\end{align}
where
\begin{equation*}
\widetilde{\text{Mod}} = \widehat{\text{Mod}} + \partial_s \xi_L.
\end{equation*}
\noindent Let us prove \eqref{eq:ModtilE2k}. We only deal with the first term since the second term is estimated similarly.We write
\begin{align*}
\widetilde{\text{Mod}} &= - \left(\frac{\lambda_s}{\lambda} + b_1\right)\Lambda \tilde{Q}_b + \sum_{i = 1}^{L-1}\big[(b_i)_s + (2i - 2+C_{b_1})b_1 b_i - b_{i + 1}\big]\tilde T_i\\
& \quad + \sum_{i = 1}^{L-1}\big[(b_i)_s + (2i - 2+C_{b_1})b_1 b_i - b_{i + 1}\big] \chi_{B_1}\sum_{j = i+1}^{L+2}\frac{\partial S_j}{\partial b_i}\\
&\quad + \left[(b_L)_s + (2i - 2+C_{b_1})b_1 b_L + \frac{d}{ds}\left\{\frac{\left<\Ls^Lq, \chi_{B_0}\Lambda Q\right>}{D_{L}^{b_1}} \right\} \right]\chi_{B_1}\left( T_L+\sum_{k=0}^2\frac{\p S_{L+k}}{\p b_L}\right)\nonumber\\
& + \frac{\left<\Ls^Lq, \chi_{B_0}\Lambda Q\right>}{D_L^{b_1}}\left[\partial_s\chi_{B_1}\left( T_L+\sum_{k=0}^2\frac{\p S_{L+k}}{\p b_L}\right)+\chi_{b_1}\p_s\left(\sum_{k=0}^2\frac{\p S_{L+k}}{\p b_L}\right)\right], 
\end{align*}
where $\tilde{Q}_b$ is defined as in \eqref{def:Qbtil} and we know from Lemma \ref{lemm:GenLk} that $T_i$ is admissible of degree $(i,i)$ and from Proposition \ref{prop:1} that $S_j$ is homogeneous of degree $(j, j, j)$. \\
Since $|b_j|\lesssim b_1^{j+\frac{1}{2}}$ and $\Ls \Lambda Q = 0$, we use Lemma \ref{lemm:actionLL} to estimate
\begin{align*}
\int \frac{|\Ls^{L}\Lambda \tilde{Q}_b|^2}{1 + y^4} &\lesssim \sum_{i = 1}^L b_i^2\int \frac{|\Ls^{L} \Lambda \tilde{T}_i|^2}{1 + y^4} + \sum_{i = 2}^{L+2} \int \frac{|\Ls^{L} \Lambda \tilde{S}_i|^2}{1 + y^4}\\
&\quad\lesssim \sum_{i = 1}^L b_1^{2i+1}\int_{y \leq 2B_1}\frac{y^{6}dy}{1 + y^{4(L+1 - i) + 4}} + \sum_{i = 2}^{L+2}b_1^{2i+1}\int_{y \leq 2B_1} \frac{y^{6}dy}{1 + y^{4(L+1 - i) + 6}} \lesssim b_1^{\frac 72}.
\end{align*}
Using the cancellation $\Ls^{L+1}T_i = 0$ for $1 \leq i \leq L$ and the admissibility of $T_i$, we estimate
$$\sum_{i = 1}^{L-1} \int \frac{|\Ls^{L} (\chi_{B_1}T_i)|^2}{1 + y^4} \lesssim \sum_{i = 1}^{L-1} \int_{B_1 \leq y \leq 2B_1 } y^{4(i - L-1) -4 + 6}dy \lesssim b_1^{\frac{5}{2}}.$$
Using the homogeneity of $S_j$, we estimate for $1 \leq i \leq L$,
$$\sum_{i=1}^{L-1}\sum_{j = i + 1}^{L+2} \int \frac{1}{1 + y^4}\left|\Ls^{L}\left(\chi_{B_1}\frac{\partial S_j}{\partial b_i}\right)\right|^2 \lesssim \sum_{i=1}^{L-1}\sum_{j = i + 1}^{L+2}b_1^{2(j-i)+1}\int_{B_1 \leq y \leq 2B_1}y^{4(j-L)-6 - 4}y^{6}dy \lesssim b_1^{\frac 72-\frac{5\eta}{2}}.$$
The collection of the above bounds together yields the estimate
\begin{align*}
\int |\Ls^{L+1} \Lambda \tilde{Q}_b|^2 + \sum_{i = 1}^{L-1}\int |\Ls^{L+1} \tilde{T}_i|^2 + \sum_{i = 1}^{L-1}\sum_{j = i + 1}^{L+2}\int \left|\Ls^{L+1} \left(\chi_{B_1}\frac{\partial S_j}{\partial b_i}\right)\right|^2 \lesssim b_1^{\frac 52},
\end{align*}
and
\begin{align}\label{est:LkTL}
\int \left|\Ls^{L+1}\left(\chi_{B_1}\left( T_L+\sum_{k=0}^2\frac{\p S_{L+k}}{\p b_L}\right)\right)\right|^2\lesssim b_1^{\frac{1+\eta}{2}}.
\end{align}
From \eqref{est:LQ2chiB0} and \eqref{est:LsLqchiB0LQ}, we have the bound
\begin{equation}\label{est:CLxiL}
\left|\frac{\left<\Ls^Lq, \chi_{B_0}\Lambda Q\right>}{D_L^{b_1}}\right| \lesssim \sqrt{B_0}\sqrt{\Es_{2L+2}} = b_1^{-\frac{1}{4}}\sqrt{\Es_{2L+2}}.
\end{equation}
We also have 
$$\int \left|\Ls^{L+1} \left(\partial_s \left[\chi_{B_1} \left( T_L+\sum_{k=0}^2\frac{\p S_{L+k}}{\p b_L}\right)\right]\right)\right|^2 \lesssim b_1^{\frac 72}.$$
The collection of the above bounds together with Lemmas \ref{lemm:mod1} and \ref{lemm:mod2} yields
\begin{align*}
\left(\int |\Ls^{L+1} \widetilde{\text{Mod}}|^2 \right)^\frac{1}{2} &\lesssim b_1^{1+\frac \eta4}\left(\frac{\sqrt{\Es_{2L+2}}}{M^{\frac{3}{2}}} + b_1^{L+\frac{3}{4}+\frac{\eta}{4}}\right),
\end{align*}
which is the conclusion of \eqref{eq:ModtilE2k}.\\

Injecting the estimates \eqref{eq:PsibtilE2k}, \eqref{eq:ModtilE2k} into \eqref{eq:EnerID2}, we arrive at 
\begin{align}
&\frac{1}{2}\frac{d}{dt}\left\{ \frac{\Es_{2L+2}}{\lambda^{4L-3}}\Big[1 + \Oc(b_1^{\frac 12})\Big] \right\}\nonumber\\
&\qquad \leq -\frac{1}{8} \int |\tilde\Ls_\lambda v_{2L+1}|^2 + \frac{b_1}{\lambda^{4L-1}}\left(\frac{\Es_{2L+2}}{M^{\frac{3}{2}}} + b_1^{\frac 14}\Es_{2L+2} + b_1^{L+\frac{3}{4}}\sqrt{\Es_{2L+2}}\right) \nonumber\\
&\qquad  \quad + \frac{\sqrt{b_1}\sqrt{\Es_{2L+2}}}{\lambda^{4L-1}}\left[\left(\int \frac{|\As\Ls^{L}\Fc_1|^2}{1 + y^2} \right)^\frac{1}{2} + \left(\int \frac{|\Ls^{L}\Fc_1|^2}{1 + y^4} \right)^\frac{1}{2} \right]\nonumber\\
&\qquad \qquad + \frac{1}{\lambda^{4L-1}}\left\|\As \Ls^{L}\Fc_1 \right\|_{L^2}^2 + \frac{1}{\lambda^{4L-1}}\int \Ls^{L+1} q \Ls^{L+1}(\partial_s \xi_L). \label{eq:EnerID3}
\end{align}

\medskip

\noindent \textbf{Step 6: Estimates for the linear small term $\Hc(q)$.} We claim the following 
\begin{equation}\label{est:HqE2k}
\int |\As \Ls^{L}\Hc(q)|^2 + \int  \frac{|\As \Ls^{L}\Hc(q)|^2}{1 + y^2} + \frac{| \Ls^{L}\Hc(q)|^2}{1 + y^4} \lesssim b_1^2 \Es_{2L+2}. 
\end{equation}
We only deal with the estimate for the first term because the last two terms are estimated similarly. Let us rewrite from \eqref{def:Lq} the definition of $\Hc(q)$, 
$$\Hc(q) = \Phi q \quad \text{with} \quad \Phi = \frac{6}{y^2}\left[\cos(2Q) - \cos(2Q + 2\tilde{\Theta}_b)\right],$$
where 
$$\tilde{\Theta}_b = \sum_{i = 1}^Lb_i \tilde{T}_i + \sum_{i = 2}^{L+2}\tilde{S}_i(b,y).$$
From the asymptotic behavior of $Q$ given in \eqref{eq:asymQ}, the admissibility of $T_i$ and the homogeneity of $S_i$, we deduce that $\Phi$ is a regular function both at the origin and at infinity. We then apply the Leibniz rule \eqref{eq:LeibnizALk} with $k = L$ and $\phi = \Phi$ to write 
$$\As \Ls^{L} \Hc(q) = \sum_{m = 0}^{L}\Big[ q_{2m +1}\Phi_{2L+1, 2m + 1} + q_{2m}\Phi_{2L+1, 2m}\Big],$$
where $\Phi_{2L+1, i}$ with $0 \leq i \leq 2L+1$ are defined by the recurrence relation given in Lemma \ref{lemm:LeibnizLk}. In particular, we have the following estimate
$$|\Phi_{k, i}| \lesssim \frac{b_1}{1 + y^{2 + (k-i)}}\lesssim \frac{b_1}{1 + y^{1 + k - i}}, \quad \forall k \geq 1, \;\; 0 \leq i \leq k.$$
Hence, we estimate from \eqref{eq:Enercontrol},
\begin{align*}
\int |\As \Ls^{L} \Hc(q)|^2 &\lesssim \sum_{m = 0}^{L}\left[\int|q_{2m + 1} \Phi_{2L+1,2m + 1}|^2 + \int |q_{2m} \Phi_{2L+1,2m}|^2\right]\\
&\quad \lesssim b_1^2\sum_{m = 0}^{L}\left[\int \frac{|q_{2m + 1}|^2}{1 + y^{2 + 2(2L+1 - 2m - 1)}} +  \int \frac{|q_{2m}|^2}{1 + y^{2 + 2(2L+1 - 2m)}} \right]\\
&\qquad \lesssim b_1^2\sum_{m = 0}^{L}\left[\int \frac{|q_{2m + 1}|^2}{1 + y^{2 + 4(L - m )}} +  \int \frac{|q_{2m}|^2}{1 + y^{4 + 4(L - m)}} \right]\\
&\qquad \quad\lesssim b_1^2 \Es_{2L+2}.
\end{align*}
This concludes the proof of \eqref{est:HqE2k}.\\

\noindent \textbf{Step 7: Estimates for the nonlinear term $\Nc(q)$.} This is the most delicate point in the proof of \eqref{eq:Es2sLya}.  We claim the following 
\begin{equation}\label{est:NqE2k1}
\int |\As \Ls^{L}\Nc(q)|^2  \leq b_1^{2L+ \frac 72+\frac \eta2}, 
\end{equation}
\begin{equation}\label{est:NqE2k}
\int  \frac{|\As \Ls^{L}\Nc(q)|^2}{1 + y^2} + \int \frac{| \Ls^{L}\Nc(q)|^2}{1 + y^4} \leq b_1^{2L+4}, 
\end{equation}
We only deal with the proof of \eqref{est:NqE2k1} since the same proof holds for \eqref{est:NqE2k}.\\

\noindent - \textit{Control for $y < 1$.} Let us rewrite from \eqref{def:Nq} the definition of $\Nc(q)$,
$$\Nc(q) = \frac{q^2}{y} \Phi \quad \text{with} \quad \Phi = \left[-\frac{6}{y}\int_0^1 (1 -\tau)\sin(2\tilde{Q}_b + 2\tau q)d\tau\right].$$
From \eqref{eq:expandqat0} and the admissibility of $T_i$, we write
\begin{equation}\label{eq:expanq2y}
\frac{q^2}{y} = \frac{1}{y}\left(\sum_{i = 0}^{L+1} c_i T_i(y) + r_q(y)\right)^2 = \sum_{i = 0}^{L}\tilde{c}_i y^{2i + 1} + \tilde{r}_q \quad \text{for}\;\; y < 1,
\end{equation}
where 
\begin{equation*}
|\tilde c_i|\lesssim \Es_{2L+2}, \quad |\partial^j_y \tilde r_q(y)| \lesssim y^{2L - \frac 72 - j} |\ln y|^{L+1} \Es_{2L+2}, \quad 0\leq j \leq 2L+1, \;\; y < 1.
\end{equation*}
Let $\tau \in [0,1]$ and
$$v_\tau = \tilde{Q}_b + \tau q,$$
we obtain from Proposition \ref{prop:1} and the expansion \eqref{eq:expandqat0}, 
$$v_\tau = \sum_{i = 0}^{L}\hat c_i y^{2i + 1} + \hat r_q,$$
with 
$$|\hat c_i|\lesssim 1, \quad |\py^j \hat r_q| \lesssim y^{2L- \frac 72 - j}|\ln y|^{L+1}, \;\; 0 \leq j \leq 2L+1, \;\; y <1.$$
Together with the Taylor expansion of $\sin(x)$ at $x=0$, we write
\begin{equation}\label{eq:expPhiy}
\Phi(q) = \sum_{i = 0}^{L}\bar c_i y^{2i} + \bar r_q,
\end{equation}
with 
$$|\bar c_i| \lesssim 1, \quad |\py^j \bar r_q| \lesssim y^{2L - \frac{9}{2}  - j}|\ln y|^{L+1}, \quad 0 \leq j \leq 2L+1, \quad y < 1.$$
From \eqref{eq:expanq2y} and \eqref{eq:expPhiy}, we have the expansion of $\Nc$ near the origin,
$$\Nc(q) = \sum_{i = 0}^{L}\hat{\tilde{c}}_i y^{2i + 1} + \hat{\tilde{r}}_q,$$
with 
$$|\hat{\tilde{c}}_i| \lesssim \Es_{2L+2}, \quad |\py^j \hat{\tilde{r}}_q| \lesssim y^{2L - \frac 72 -j}|\ln y|^{L+1} \Es_{2L+2}, \quad 0 \leq j \leq 2L+1, \;\; y < 1.$$
From the definition of $\As$ and $\As^*$ (see \eqref{def:As} and \eqref{def:Astar}), one can check that for $y < 1$,
$$|\As \Ls^{L} \hat{\tilde{r}}_q | \lesssim \sum_{i = 0}^{2L+1} \frac{\py^i \hat{\tilde{r}}_q}{y^{2L+1 - i}} \lesssim \Es_{2L+2} \sum_{i = 0}^{2L+1}\frac{y^{2L - \frac{7}{2} - i}|\ln y|^L+1}{y^{2L+1 - i}} \lesssim y^{-\frac{5}{2}}|\ln y|^{L+1}\Es_{2L+2}.$$
Note from the asymptotic behavior \eqref{eq:asympV} of $V$ that $\As(y) = \Oc(y^2)$ for $y < 1$, which  implies 
$$\left|\As \Ls^{L} \left(\Nc(q)-\hat{\tilde{r}}_q\right)\right| \lesssim \sum_{i = 0}^{L}|\hat{\tilde{c}}_i| y^2 \lesssim y^2\Es_{2L+2}.$$
We then conclude 
$$\int_{y < 1} |\As \Ls^{L}\Nc(q)|^2 \lesssim \Es_{2L+2}^2\int_{y < 1}y|\ln y|^{2L+2} dy \lesssim \Es_{2L+2}^2 \lesssim b_1^{2L + 4}.$$

\medskip

\noindent - \textit{Control for $y \geq 1$.} Let us rewrite from the definition of $\Nc(q)$,
\begin{equation}\label{def:Zpsi}
\Nc(q) = Z^2 \psi, \quad Z = \frac{q}{y}, \quad \psi = -6\int_0^1(1 - \tau)\sin(2\tilde{Q}_b + 2\tau q) d\tau.
\end{equation}
Note from the definitions of $\As$ and $\As^*$ that 
$$\forall k \in \mathbb{N}, \quad |\As \Ls^{k} f| \lesssim \sum_{i = 0}^{2k + 1} \frac{|\py^i f|}{y^{2k + 1 - i}},$$
from which and the Leibniz rule, we write 
\begin{align*}
\int_{y \geq 1}|\As \Ls^{L}\Nc(q)|^2 &\lesssim \sum_{k = 0}^{2L+1}\int_{y \geq 1}\frac{|\py^k\Nc(q)|^2}{y^{4L+1 - 2k - 2}}\\
&\lesssim \sum_{k = 0}^{2L+1}\sum_{i = 0}^k \int_{y \geq 1} \frac{|\py^i Z^2|^2|\py^{k - i}\psi|^2}{y^{4L+1 - 2k - 2}}\\
&\lesssim \sum_{k = 0}^{2L+1}\sum_{i = 0}^k \sum_{m = 0}^i \int_{y \geq 1} \frac{|\py^m Z|^2 |\py^{i - m}Z|^2 |\py^{k - i}\psi|^2}{y^{4L+1 - 2k - 2}}.
\end{align*}

We aim at using the pointwise estimate \eqref{eq:pointwise_yg1} to prove that for $0 \leq k \leq 2L+1$, $0 \leq i \leq k$ and $0 \leq m \leq i$,
\begin{equation}\label{def:Akmi}
A_{k,i,m} := \int_{y \geq 1} \frac{|\py^m Z|^2 |\py^{i - m}Z|^2 |\py^{k - i}\psi|^2}{y^{4L+2 - 2k}} \leq b_1^{2L + \frac 72+\frac \eta2},
\end{equation}
which concludes the proof of \eqref{est:NqE2k1}.

To prove \eqref{def:Akmi}, we have to consider 3 cases:\\

\noindent - \underline{\textbf{The initial case: $k = 0$.}} Since $0 \leq m \leq i \leq k$, then $k = i = m = 0$. Although this is the simplest case, it gives us a basic idea about how to handle the other cases. From \eqref{def:Zpsi}, it is obvious to see that $|\psi|$ is uniformly bounded.  We write 
\begin{align*}
A_{0,0,0}  = \int_{y \geq 1} \frac{|q|^4|\psi|^2}{y^{4L+6}} y^{6}dy  \lesssim \int_{1 \leq y \leq B_0} \frac{|q|^4}{y^{4L}} dy + \int_{y \geq B_0} \frac{|q|^4}{y^{4L}}dy.
\end{align*}
Using \eqref{eq:pointwise_yg1}, and the bootstrap assumption, we estimate 
\begin{align*}
&\int_{1 \leq y \leq B_0} \frac{|q|^4}{y^{4L}}dy\\
& \quad \lesssim \left\|\frac{y^{5}|q|^2}{y^{2(2L+1)}}\right\|_{L^\infty(y > 1)}\left\|\frac{y^{5}|q|^2}{y^{2(2\ell + 3)}}\right\|_{L^\infty(y > 1)}\int_{1 \leq y \leq B_0}y^{4\ell-2}dy\\
& \qquad \lesssim \Es_{2L+2}\Es_{2\ell + 2} B_0^{4\ell-1}\\
& \quad \qquad \lesssim b_1^{2L + \frac 72 +\frac \eta2},
\end{align*}
for all $1\leq\ell\leq L$.\\
For the integral on the domain $y \geq B_0$, let us write
\begin{align*}
&\int_{y \geq B_0} \frac{|q|^4}{y^{4L}}dy\\
&\quad \lesssim \left\|\frac{y^{5}|q|^2}{y^{2(2L+1  - 2\ell)}}\right\|_{L^\infty(y > 1)}\left\|\frac{y^{5}|q|^2}{y^{2(2\ell +1)}}\right\|_{L^\infty(y > 1)}\int_{y \geq B_0} \frac{dy}{y^{6}}\\
& \qquad \lesssim \Es_{2L+2 - 2\ell}\Es_{2\ell +2} B_0^{-5}\lesssim b_1^{2L + \frac{11}{2}}.
\end{align*}
This concludes the proof of \eqref{def:Akmi} when $k = i = m = 0$. \\

\noindent  - \underline{\textbf{Case II: $k \geq 1$ and $k = i$}}. We first use the Leibniz rule to write
\begin{equation}\label{eq:pylZexp}
\forall l \in \mathbb{N}, \quad |\py^l Z|^2 \lesssim \sum_{j = 0}^l \frac{|\py^j q|^2}{y^{2 + 2l - 2j}},
\end{equation}
from which, 
\begin{align*}
A_{k,k,m} &\lesssim \sum_{j = 0}^m \sum_{l = 0}^{k-m} \int_{y \geq 1} \frac{|\py^j q|^2 |\py^l q|^2}{y^{4 L - 2j - 2l + 6}}y^{6}dy.
\end{align*}
We claim that for all $(j,l) \in \mathbb{N}^2$ and $1 \leq j + l \leq 2L+1$,
\begin{equation}\label{est:Bjl}
B_{j,l,0}:= \int_{y \geq 1} \frac{|\py^j q|^2 |\py^l q|^2}{y^{4 L - 2j - 2l + 6}}y^{6}dy \lesssim b_1^{2L +\frac 72},
\end{equation}
which immediately follows \eqref{def:Akmi} for the case when $k = i$. 

To prove \eqref{est:Bjl}, we proceed as for the case $k = 0$ by splitting the integral in two parts as follows:
\begin{align*}
B_{j,l,0} &= \int_{1 \leq y \leq B_0} \frac{\big(y^{5}|\py^j q|^2\big)\big(y^{5}|\py^l q|^2\big)}{y^{4L - 2j - 2l + 10}}dy\\
&\quad  + \int_{y \geq B_0} \frac{\big(y^{5}|\py^j q|^2\big)\big(y^{5}|\py^l q|^2\big)}{y^{4L+4 - 2j - 2l}}\frac{dy}{y^{6}}\\
&\qquad \lesssim \left\|\frac{\big(y^{5}|\py^j q|^2\big)\big(y^{5}|\py^l q|^2\big)}{y^{4L - 2j - 2l + 10}} \right\|_{L^\infty(y \geq 1)}b_1^{-\frac 12}\\
&\qquad \quad + \left\|\frac{\big(y^{5}|\py^j q|^2\big)\big(y^{5}|\py^l q|^2\big)}{y^{4L+4 - 2j - 2l}} \right\|_{L^\infty(y \geq 1)}b_1^{\frac{5}{2}}\\
& \qquad \qquad = \left\|\frac{\big(y^{5}|\py^j q|^2\big)\big(y^{5}|\py^l q|^2\big)}{y^{2J_1 - 2j + 2J_2 - 2l}} \right\|_{L^\infty(y \geq 1)}b_1^{-\frac{1}{2}}\\
&\qquad \qquad \quad + \left\|\frac{\big(y^{5}|\py^j q|^2\big)\big(y^{5}|\py^l q|^2\big)}{y^{2J_3 - 2j + 2J_4 - 2l}} \right\|_{L^\infty(y \geq 1)}b_1^{\frac{5}{2}}\\
& \qquad \qquad \qquad := B_{j,l,0,J_1, J_2} b_1^{-\frac{1}{2}} + B_{j,l,0, J_3, J_4}b_1^{\frac{5}{2}},
\end{align*}
where $J_n (n = 1,2,3,4)$ satisfy
$$J_1 + J_2 = 2L+5, \quad J_3 + J_4 = 2L+2.$$
We now estimate $B_{j,l,0,J_1, J_2}$.\\
- If $l$ is even, we take 
$$J_2 = \left\{ \begin{array}{ll}
l + 2 &\quad \text{if}\quad l \leq 2L-2,\\
l &\quad \text{if}\quad l = 2L.
\end{array} \right.$$
This gives
$$4 \leq J_2 \leq 2L, \quad 5 \leq J_1 = 2L+5 - J_2 \leq 2L+1.$$
Using \eqref{eq:pointwise_yg1}, we have the estimate
\begin{align*}
 B_{j,l,0,J_1, J_2}& \lesssim \left\|\frac{y^{5}|\py^j q|^2}{y^{2J_1 - 2j}}\right\|_{L^\infty(y \geq 1)} \left\|\frac{y^{5}|\py^l q|^2}{y^{2J_2 - 2l}} \right\|_{L^\infty(y \geq 1)}\\
 &\quad  \lesssim \Es_{J_1 + 1} \sqrt{\Es_{J_2} \Es_{J_2 + 2}}.
\end{align*}
- If $l$ is odd, we simply take $J_2 = l + 1$ which gives
 $$4 \leq J_2 \leq 2L, \quad 5 \leq J_1 \leq 2L+1.$$
Hence,
\begin{align*}
 B_{j,l,0,J_1, J_2}&\lesssim \Es_{J_1 + 1} \sqrt{\Es_{J_2} \Es_{J_2 + 2}}.
\end{align*}
Recall from the bootstrap assumption that for all even integer $m$ in the range $4 \leq m \leq 2L+2$,
\begin{equation}\label{est:Esmeven}
\Es_{m} \lesssim b_1^{m-1+(\frac 12+2m)\frac{\eta}{4}}.
\end{equation}
Hence, we obtain 
$$B_{j,l,0,J_1, J_2} \lesssim b_1^{2L+\frac 72+\frac \eta2}.$$
Similarly, one prove that 
$$B_{j,l,0,J_3, J_4} \lesssim b_1^{2L+\frac 72+\frac \eta2} \quad \text{for} \quad J_3 + J_4 = 2L+ 2.$$
Therefore, 
\begin{align*}
B_{j,l,0} & \lesssim b_1^{2L + \frac 72+\frac \eta2}.
\end{align*}
This concludes the proof of \eqref{est:Bjl} as well as \eqref{def:Akmi} when $k = i$. \\

\noindent  - \underline{\textbf{Case III: $k \geq 1$ and $k - i \geq 1$.}} Let us write from \eqref{def:Akmi} and \eqref{eq:pylZexp}, 
\begin{equation}\label{eq:Akmi11}
A_{k,m,i} \lesssim \sum_{j = 0}^{m}\sum_{l = 0}^{i - m} \int_{y \geq 1} \frac{|\py^j q|^2 |\py^l q|^2}{y^{4L+6 - 2j - 2l}} \frac{|\py^{k - i} \psi|^2}{y^{-2(k - i)}}.
\end{equation}
At this stage, we need to precise the decay of $|\py^n \psi|$ to a    chive the bound \eqref{def:Akmi}. To do so, let us recall that $T_i$ is admissible of degree $(i,i)$ (see Lemma \ref{lemm:GenLk}) and $S_i$ is homogeneous of degree $(i,i,i)$ (see Proposition \ref{prop:1}). Together with \eqref{eq:asymQ}, we estimate for all $j\geq1$ and $y\geq 1$
$$|\py^j \tilde{Q}_b | \lesssim \frac{1}{y^{2 + j}} + \sum_{l = 1}^{L}b_1^{l+\frac{1}{2}} y^{2l-2-j} \mathbf{1}_{\{y \leq 2B_1\}} \lesssim \frac{1}{y^{2+j}}\mathbf{1}_{\{y \leq B_0\}}+\frac{b_1^{\frac{j+1}{2}+\frac{3\eta}{4}}}{y^{\frac{1}{2}}}\mathbf{1}_{\{B_0\leq y \leq 2B_1\}}.$$
Let $\tau \in [0,1]$ and $v_\tau = \tilde{Q}_b + \tau q$. We use the Faa di Bruno formula to write\\ 
\begin{align*}
\forall 1\leq n \leq 2L+1, \quad |\py^n \psi|^2 &\lesssim \int_0^1 \sum_{m^* = n} |\partial_{v_\tau}^{m_1 + \cdots+m_n}\sin(v_\tau)|^2  \prod_{i = 1}^n |\py^i \tilde{Q}_b + \py^iq|^{2m_i} d\tau \nonumber\\
&\quad \lesssim \sum_{m^* = n}  \prod_{i = 1}^n \left(|\py^i \tilde{Q}_b|^2 +  |\py^iq|^2\right)^{m_i}, \quad m^* = \sum_{i = 1}^n im_i.
\end{align*}
We split the estimation of the nonlinear term when $y\geq1$ into 2 steps, $y\leq B_0$ and $y\geq B_0$.
For $1 \leq y \leq B_0$, we use \eqref{eq:pointwise_yg1} and the bootstrap assumption \eqref{bootstrapboundE2L2} to estimate 
\begin{equation*}
|\py^iq|^2 = y^{4L+2 - 2i}\left|\frac{\py^i q}{y^{2L+1 - i}}\right|^2\lesssim y^{4L-3 - 2i}\Es_{2L+2}\lesssim \frac{b_1^{1+\frac \eta2}}{y^{4+2i}},
\end{equation*}
 from which, we have
\begin{equation}\label{est:psin1B0}
|\py^n \psi|^2 \lesssim \sum_{m^* = n}  \prod_{i = 1}^n \frac{1}{y^{2im_i+4m_i}} \lesssim \frac{1}{y^{2n+4}}, \quad \forall 1 \leq y \leq B_0.
\end{equation}
For $y \geq B_0$, by using again \eqref{eq:pointwise_yg1} and the bootstrap assumption \eqref{bootstrapboundE2k} we deduce that
$$|\py^i q|^2\lesssim y^4\Big|\frac{\py^i q}{y^2}\Big|^2\lesssim \frac{b_1^{i+2+\frac{5\eta}{2}}}{y}.$$
Hence, for all $y\geq B_0$ we get
\begin{align}\label{est:psinB0g}
|\py^n \psi|^2 &\lesssim \sum_{m^* = n}  \prod_{i = 1}^n \left(|\py^i \tilde{Q}_b|^2 +  |\py^iq|^2\right)^{m_i}, \quad m^* = \sum_{i = 1}^n im_i\nonumber\\
&\lesssim \sum_{m^* = n}  \prod_{i = 1}^{n}\left(\frac{b_1^{i+1+\frac{3\eta}{4}}}{y} +  \frac{b_1^{i+2+\frac {5\eta}{2}}}{y}\right)^{m_i}\lesssim \frac{b_1^{n+1}}{y}.
\end{align}
Injecting \eqref{est:psin1B0}, and \eqref{est:psinB0g} into \eqref{eq:Akmi11}, we obtain that for all $1\leq k-i\leq 2L+1$
\begin{align*}
A_{k,i,m} &\lesssim \sum_{j = 0}^{m}\sum_{l = 0}^{i - m} \left(\int_{1 \leq y \leq B_0} \frac{|\py^j q|^2 |\py^l q|^2}{y^{4L+4 - 2j - 2l + 6}} +b_1^{k-i+1} \int_{ y \geq B_0} \frac{|\py^j q|^2 |\py^l q|^2}{y^{4L+1 - 2j - 2l + 6 -2(k-i)}}\right).
\end{align*}
Arguing as for the proof of \eqref{est:Bjl}, we end up with 
\begin{align*}
A_{k,i,m} &\lesssim b_1^{2L+\frac{7}{2}+\frac{\eta}{2}}.
\end{align*}
This finishes the proof of \eqref{def:Akmi} as well as \eqref{est:NqE2k1}. Since the proof of \eqref{est:NqE2k} follows exactly the same lines as for the proof of \eqref{est:NqE2k1}, we omit its proof here. 

\medskip

Inserting \eqref{est:HqE2k}, \eqref{est:NqE2k1} and \eqref{est:NqE2k} into \eqref{eq:EnerID3} and recalling from \eqref{bootstrapboundE2L2} that $\Es_{2L+2} \leq K b_1^{2L+\frac 32+\frac{\eta}{10}}$, we arrive at
\begin{align}
\frac{1}{2}\frac{d}{dt}\left\{ \frac{\Es_{2L+2}}{\lambda^{4L-3}}\Big[1 + \Oc(b_1)\Big] \right\}
&\lesssim \frac{b_1}{\lambda^{4L-1}}\left(\frac{\Es_{2L+2}}{M^{\frac{3}{2}}} + b_1^{L+\frac{3}{4}+\frac{\eta}{4}}\sqrt{\Es_{2L+2}} + b_1^{2L+\frac{3}{2}+\frac{\eta}{2}}\right)\nonumber\\
&\qquad + \frac{1}{\lambda^{4L-1}}\int \Ls^{L+1} q \Ls^{L+1}(\partial_s \xi_L). \label{eq:EnerID4}
\end{align}

\noindent \textbf{Step 8: Time oscillations.} In this step, we want to find the contribution of the last term in \eqref{eq:EnerID4} to the estimate \eqref{eq:Es2sLya}. Let us write
\begin{align}
\frac{1}{\lambda^{4L-1}}\int \Ls^{L+1} q \Ls^{L+1}(\partial_s \xi_L) &= \frac{d}{ds} \left\{\frac{1}{\lambda^{4L-1}}\left[\int \Ls^{L+1} q \Ls^{L+1} \xi_L - \frac{1}{2}\int |\Ls^{L+1} \xi_L|^2 \right]\right \}\nonumber\\
&\quad +\frac{4L-1}{\lambda^{4L-1}} \frac{\lambda_s}{\lambda} \left[\int \Ls^{L+1} q \Ls^{L+1} \xi_L + \frac{1}{2}\int |\Ls^{L+1} \xi_L|^2\right]\nonumber\\
&\qquad  - \frac{1}{\lambda^{4L-1}}\int \Ls^{L+1}(\ps q - \ps \xi_L)  \Ls^{L+1} \xi_L.\label{id:Aextract}
\end{align}
From \eqref{est:LkTL} and \eqref{est:CLxiL}, we have
\begin{equation}\label{est:LkxiL2}
\int|\Ls^{L+1} \xi_L|^2 \lesssim b_1^{\frac{\eta}{2}}\Es_{2L+2}.
\end{equation} 
This follows
\begin{align*}
\left|\int \Ls^{L+1} q \Ls^{L+1} \xi_L \right| &\lesssim \left(\int |\Ls^{L+1} q|^2\right)^\frac{1}{2}\left(\int |\Ls^{L+1} \xi_L|^2\right)^\frac{1}{2}\\
 &\quad \lesssim b_1^{\frac{\eta}{4}}\Es_{2L+2}.
\end{align*}
Since $\frac{dt}{ds} = \lambda^2$, we then write 
\begin{equation}\label{est:A1}
\frac{d}{ds} \left\{\frac{1}{\lambda^{4L-1}}\Big[\int \Ls^{L+1} q \Ls^{L+1} \xi_L - \frac{1}{2}\int |\Ls^{L+1} \xi_L|^2\Big]\right\} = \frac{d}{dt}\left(\frac{\Es_{2L+2}}{\lambda^{4L-3}}\mathcal{O}(b_1^{\frac{\eta}{4}}) \right).
\end{equation}
Noting from \eqref{eq:ODEbkl} that $\left|\frac{\lambda_s}{\lambda}\right| \lesssim b_1$, this gives
\begin{equation}\label{est:A2}
\left|\frac{\lambda_s}{\lambda} \left[\int \Ls^{L+1} q \Ls^{L+1} \xi_L + \frac{1}{2}\int |\Ls^{L+1} \xi_L|^2\right]\right| \lesssim b_1 b_1^{\frac{\eta}{4}}\Es_{2L+2}.
\end{equation}
For the last term in \eqref{id:Aextract}, we use equation \eqref{eq:qys} and the decomposition \eqref{eq:decomF} to write
\begin{align}
\int \Ls^{L+1}(\ps q - \ps \xi_L)  \Ls^{L+1} \xi_L &= \left[-\int \Ls^{L+1} q \Ls^{L+2}\xi_L + \frac{\lambda_s}{\lambda}\int \Lambda q \Ls^{2L+2}\xi_L\right] \nonumber\\
&+ \int \Ls^{L+1}\big[-\tilde \Psi_b - \widetilde{Mod} + \Hc(q) + \Nc(q)\big] \Ls^{L+1}\xi_L.\label{eq:A3exp}
\end{align}
Using \eqref{est:CLxiL}, the admissibility of $T_L$ and $S_j$ and the fact that $\Ls^k T_i = 0$ if $i < k$, we estimate 
\begin{align*}
\int |\Ls^{L+2}\xi_L|^2 & \lesssim \left|\frac{\left<\Ls^Lq, \chi_{B_0}\Lambda Q\right>}{D_L^{b_1}} \right|^2\int\left|\Ls^{L+2}\left[(1 - \chi_{B_1})T_L+\chi_{B_1}\sum_{k=0}^2\frac{\p S_{L+k}}{\p b_L}\right]\right| ^2\\
&\lesssim b_1^{- \frac{1}{2}}\Es_{2L+2}\int_{y \geq B_1}y^{2(2L - 2 - 2(L+2))} y^{6}dy\\
&\quad \lesssim b_1^{-\frac{1}{2}}\Es_{2L+2}b_1^{\frac 52(1+\eta)} \lesssim b_1^{2+\frac{5\eta}{2}}\Es_{2L+2}.
\end{align*}
from which we obtain
\begin{equation*}
\left|\int \Ls^{L+1} q \Ls^{L+2}\xi_L\right| \lesssim b_1 b_1^{\frac{5\eta}{4}}\Es_{2L+2},
\end{equation*}
Similarly, we have the estimate
\begin{align*}
\int (1 + y^{4(L+1)})|\Ls^{2L+2}\xi_L|^2 &\lesssim b_1^{\frac{\eta}{2}}\Es_{2L+2}, 
\end{align*}
hence, using \eqref{est:Es2K1} and \eqref{eq:ODEbkl}, we get
\begin{equation*}
\left|\frac{\lambda_s}{\lambda}\int \Lambda q \Ls^{2L+2}\xi_L \right| \lesssim b_1 \left(\int \frac{|\py q|^2}{1 + y^{4L+2}} \right)^\frac{1}{2}\left(\int (1 + y^{4(L+1)})|\Ls^{2L+2}\xi_L|^2  \right)^\frac{1}{2} \lesssim b_1 b_1^{\frac{\eta}{4}}\Es_{2L+2}.
\end{equation*}
From \eqref{est:LkxiL2}, \eqref{eq:PsibtilE2k} and \eqref{eq:ModtilE2k}, we have
\begin{align*}
\left|\int \Ls^{L+1} (\tilde{\Psi}_b + \widetilde{Mod}) \Ls^{L+1} \xi_L\right| &\lesssim \left(\int |\Ls^{L+1} \xi_L|^2 \right)^\frac{1}{2}\left(\int|\Ls^{L+1} (\tilde \Psi_b + \widetilde{Mod})|^2 \right)^\frac{1}{2}\\
&\quad \lesssim b_1b_1^{\frac{1+\eta}{4}}\Es_{2L+2} + b_1^{\frac{5}{4}} b_1^{L+1 + \frac{3}{4}+\frac{\eta}{4}}\sqrt{\Es_{2L+2}}.
\end{align*}
In the same manner, we have the estimate
\begin{align*}
\int (1 + y^{4})|\Ls^{L+2}\xi_L|^2 &\lesssim b_1^{-\frac{1}{2}}\Es_{2L+2}\int_{y \geq B_1}y^{4}y^{2(2L - \gamma - 2(L+2))}y^{6}dy\lesssim b_1^{\frac{\eta}{2}}\Es_{2L+2}, 
\end{align*}
from which together with \eqref{est:HqE2k} and \eqref{est:NqE2k}, we get the bound
\begin{align*}
\left|\int \Ls^{L} (\Hc(q) + \Nc(q)) \Ls^{L+2}\xi_L \right|& \lesssim \left(\int \frac{|\Ls^{L}(\Hc(q) + \Nc(q))|^2}{1 + y^4} \right)^\frac{1}{2}\left(\int (1 + y^4)|\Ls^{L+2}\xi_L|^2\right)^\frac{1}{2}\\
&\quad \lesssim b_1 b_1^{\frac{2}{8}}\Es_{2L+2} + b_1 b_1^{L + \frac{3}{4}+\frac{1}{8}}\sqrt{\Es_{2L+2}}.
\end{align*}
Collecting these final bounds into \eqref{eq:A3exp} yields 
\begin{equation}\label{est:A3}
\left|\int \Ls^{L+1}(\ps q - \ps \xi_L)  \Ls^{L+1} \xi_L\right| \lesssim b_1 b_1^{\frac{\eta}{4}}\Es_{2L+2} + b_1^{1+\frac{\eta}{4}} b_1^{L + \frac{3}{4}+\frac{\eta}{4}}\sqrt{\Es_{2L+2}}.
\end{equation}

\medskip

\noindent Substituting \eqref{id:Aextract}, \eqref{est:A1}, \eqref{est:A2} and \eqref{est:A3} into \eqref{eq:EnerID4} concludes the proof of \eqref{eq:Es2sLya} as well as Proposition \ref{prop:E2k}.
\end{proof}

\subsection{Proof of Proposition \ref{proposition:Bootstrap}.}
We give the proof of Proposition \ref{proposition:Bootstrap} in this subsection in order to complete the proof of Theorem \ref{Theo:1}. 
Before going to the proof of \eqref{bootstrap_b}, \eqref{bootstrapboundE2k}, and \eqref{bootstrapboundE2L2} let us compute explicitly the scaling parameter $\lambda$. To do so, let us note from the bootstrap assumption on $b_2\lesssim b^{\frac{5}{2}+\frac{\eta}{10}}$ that
$$(b_1)_s+C_{b_1}b_1^2=O(b^{\frac{5}{2}+\frac{\eta}{10}}),$$
with $C_{b_1}=\frac{6a_0}{a_1}\sqrt{b_1}+O(b_1)$ where $a_0>0$ and $a_1>0$ are coming from the asymptotic behaviour of $Q$ \eqref{eq:asymQ}.
Hence, we deduce that
\begin{align}\label{b1est}
b_1=\frac{C}{s^{\frac{2}{3}}}+O\Big(\frac{1}{s^{\frac{2}{3}+\frac{2\eta}{30}}}\Big).
\end{align}
Using \eqref{b1est} and \eqref{eq:ODEbkl} yields
\begin{equation}\label{eq:lam10}
-\frac{\lambda_s}{\lambda} = \frac{C}{s^{\frac{2}{3}}} + \Oc\left(\frac{1}{s^{\frac{2}{3}+\frac{2\eta}{30}}}\right),
\end{equation}
from which we get,
\begin{equation}\label{eq:Lamdas}
\lambda(s) =C(s_0)e^{-Cs^{\frac{1}{3}}+O(s^{\frac{1}{3}-\frac{2\eta}{30}})}.
\end{equation}
We start first by closing  the booststrap bound \eqref{bootstrap_b}.\\

\noindent - \textit{Control of the modes $b_k$'s.} We close the control of the  modes $(b_{1}, \cdots, b_L)$. We first treat the case when $k = L$. Let 
$$\tilde{b}_L = b_L + \frac{\left<\Ls^Lq,\chi_{B_0}\Lambda Q \right>}{D_L^{b_1}},$$
then from \eqref{est:CLxiL} and \eqref{bootstrapboundE2L2}, 
$$|\tilde b_L - b_L| \lesssim b_1^{-\frac{1}{4}}\sqrt{\Es_{2L+2}} \lesssim b_1^{L+\frac{1}{2}+\frac{\eta}{4}},$$
and hence from the improved modulation equation \eqref{eq:ODEbLimproved}, 
\begin{align*}
|(\tilde{b}_L)_s + (2L - 2+C_{b_1})b_1\tilde{b}_L|&\lesssim b_1|\tilde b_L - b_L| + \frac{1}{B_0^{\frac{3}{2}}}\left[C(M)\sqrt{\Es_{2L+2}} + b_1^{L+ \frac{3}{4}+\frac{\eta}{4}}\right]\\
&\quad \lesssim b_1^{L + 1 +\frac{1}{2}+\frac{\eta}{4}}.
\end{align*}
This follows
$$\left|\frac{d}{ds} \left\{\frac{\tilde b_L}{\lambda^{2L - 2+C_{b_1}}} \right\}\right| \lesssim \frac{b_1^{L + 1 + \frac{1}{2}+\frac{\eta}{4}}}{\lambda^{2L - 2+C_{b_1}}}.$$
Integrating this identity in time from $s_0$ and recalling that $\lambda(s_0) = 1$ yields
\begin{align*}
\tilde{b}_L(s) \lesssim C\lambda(s)^{2L - 2 + C_{b_1}}\left(\tilde{b}_L(s_0) + \underbrace{\int_{s_0}^s\frac{b_1(\tau)^{L + 1 +\frac{1}{2}+\frac{\eta}{4}}}{\lambda(\tau)^{2L - 2 +C_{b_1}}} d\tau}_{:=F(s,s_0)}\right).
\end{align*}
Using \eqref{est:CLxiL}, $b_1(s) \sim \frac{C}{s^{\frac{2}{3}}}$, the initial bounds \eqref{eq:initbk} and \eqref{eq:intialbounE2m} together with \eqref{eq:Lamdas} , we estimate for $s_0$ large enough
\begin{align}\label{estFss_0}
F(s,s_0)=\int_{s_0}^s\frac{b_1(\tau)^{L + 1 +\frac{1}{2}+\frac{\eta}{4}}}{\lambda(\tau)^{2L - 2 +C_{b_1}}} d\tau=C(s_0)+\frac{C_Fb_1^{L+\frac{1}{2}+\frac{\eta}{4}}}{\lambda(s)^{2L - 2 +C_{b_1}}}+O\Big(\frac{b_1^{L+1+\frac{\eta}{4}}}{\lambda(s)^{2L - 2 +C_{b_1}}}\Big),
\end{align}
where $C_F>0$.
It follows,
$$b_L(s) \lesssim |\tilde{b}_L(s)| + |\tilde{b}_L(s) - b_L(s)| \lesssim b_1^{L+\frac{1}{2}+\frac{\eta}{4}},$$
which concludes the proof of \eqref{bootstrap_b} for $k = L$.
Now we will propagate this improvement that we found for the bound of $b_L$ to all $b_k$ for all $2 \leq k \leq L - 1$. To do so we do a descending induction where the initialization is for $k=L$. Let assume  the bound 
$$|b_k|\lesssim  b_1^{k+\frac{1}{2} + \frac{\eta}{4}},$$
for $k+1$ and let's prove it for $k$. 
Indeed, from \eqref{eq:ODEbkl} and the induction bound, we have 
$$\left|(b_k)_s - (2k - 2+C_{b_1})\frac{\lambda_s}{\lambda}b_k\right| \lesssim b_1^{L + 1} + |b_{k + 1}| \lesssim b_1^{k + 1+\frac 12 +\frac{\eta}{4}},$$
which follows
$$\left|\frac{d}{ds}\left\{\frac{b_k}{\lambda^{2k - 2+C_{b_1}}}\right\} \right| \lesssim \frac{b_1^{k + 1 +\frac 12 +\frac{\eta}{4}}}{\lambda^{2k - 2+C_{b_1}}}.$$
Integrating this identity in time as for the case $k = L$, we end-up with 
\begin{align*}
b_k(s) &\lesssim C\lambda(s)^{2k - 2+C_{b_1}}\left(b_k(s_0) + \int_{s_0}^s\frac{b_1(\tau)^{k + 1 +\frac 12 +\frac{\eta}{4}}}{\lambda(\tau)^{2k - 2+C_{b_1}}} d\tau\right)\\
&\quad \lesssim b_1^{k+\frac 12 +\frac{\eta}{4}},
\end{align*}
where we used the initial bound \eqref{eq:initbk} and \eqref{estFss_0}. This concludes the proof of \eqref{bootstrap_b}.\\
\noindent - \textit{Improved control of $\Es_{2L+2}$}: We aim at using \eqref{eq:Es2sLya} to derive the improved bound on \eqref{bootstrapboundE2L2}. To do so, we inject the bootstrap bound of $\Es_{2L+2}$ into the monotonicity formula \eqref{eq:Es2sLya} and integrate in time by using $\lambda(s_0) = 1$: for all $s \in [s_0, s_1)$,
\begin{align*}
\Es_{2L+2}(s) &\leq C \lambda(s)^{4L-3}\left[\Es_{2L+2}(s_0) + \left(\frac{K}{M^{\frac{3}{2}}} + \sqrt K +  1\right)\int_{s_0}^s\frac{b_1^{2L+1+\frac 32+\frac{\eta}{10}}}{\lambda(\tau)^{4L-3}}d\tau\right].
\end{align*}
Using \eqref{estFss_0} and the initial bound \eqref{eq:intialbounE2m} , we get
$$\Es_{2L+2}(s) \leq C\left(\frac{K}{M^{\frac{3}{2}}} + \sqrt K + 1\right)b_1^{2L + \frac{3}{2}+\frac{\eta}{10}} \leq \frac{K_1}{2}b_1^{2L + \frac{3}{2}+\frac{\eta}{10}}, $$
for $K_1 = K(M)$ large enough. This concludes the proof of \eqref{bootstrapboundE2L2}.\\

\noindent - \textit{Improved control of $\Es_{2m}$.} We can improve the control of $\Es_{2m}$ by using the monotonicity formula \eqref{eq:Es2sLyam}.From the boostrap bound of $\Es_{2m}$ and the fact that $b_1(s) \sim \frac{1}{s^{\frac 23}}$, we integrate \eqref{eq:Es2sLyam} in time $s$ by using $\lambda(s_0) = 1$ to find that
\begin{align*}
\Es_{2m}(s) &\leq C \lambda(s)^{4m - 7}\left[\Es_{2m}(s_0)  + (1+\sqrt{K})\underbrace{\int_{s_0}^s \frac{ b_1^{2(m-1)+\frac{\eta}{10}+1}}{\lambda(\tau)^{4m - 7}}d\tau}_{:=G(s,s_0)}\right]
\end{align*}
Using the initial bound \eqref{eq:intialbounE2m} and \eqref{b1est} we obtain
$$G(s,s_0)\lesssim b_1^{2(m-1)+\frac{\eta}{10}},$$
which implies,
$$\Es_{2m}(s) \leq C(1 + \sqrt{K})b_1^{2(m-1)+\frac{\eta}{10}}\leq \frac{K}{2}b_1^{2(m-1)+\frac{\eta}{10}},$$
for $K$ large, which closes the bootstrap bound \eqref{bootstrapboundE2k} for $ 2 \leq m \leq L$.

\appendix
\section{Coercivity of the adapted norms.}
We give in this section the coercivity estimates for the operator $\Ls$ as well as the iterates of $\Ls$  under some suitable orthogonality condition. We first recall the standard Hardy type inequalities for the class of radially symmetric functions, 
$$\Dc_{rad} = \{f \in \Cc_c^\infty(\Rd) \; \text{with radial symmetry}\}.$$
For simplicity, we write
$$\int f := \int_0^{+\infty} f(y)y^{d-1}dy.$$
and
$$D^k = \left\{\begin{array}{ll} \Delta^m &\quad \text{if}\;\; k = 2m,\\
 \partial_y \Delta^m &\quad \text{if} \;\; k= 2m + 1.
\end{array}\right.$$

We have the following:
\begin{lemma}[Hardy type inequalities]\label{lemm:Hardy} Let $d \geq 7$ and $f \in \Dc_{rad}$, then\\
$(i)$ (Hardy near the origin)
\begin{equation*}
\int_{0}^1 \frac{|\py f|^2}{y^{2i}} \geq \frac{(d - 2 - 2i)^2}{4}\int_{0}^1 \frac{f^2}{y^{2+2i}} - C(d)f^2(1), \quad i = 0, 1, 2.
\end{equation*}
$(ii)$ (Hardy away from the origin for the non-critical exponent) Let $\alpha > 0, \alpha \ne \frac{d-2}{2}$, then
\begin{align}
&\int_1^{+\infty} \frac{|\py f|^2}{y^{2\alpha}} \geq \left(\frac{d - (2\alpha + 2)}{2}\right)^2\int_1^{+\infty} \frac{f^2}{y^{2 + 2\alpha}} - C(\alpha,d)f^2(1), \label{eq:HarA2} \\
&\int_1^{+\infty} \frac{|\py f|^2}{y^{2\alpha}} \geq \left(\frac{d - (2\alpha + 2)}{2}\right)^2\left\|\frac {f}{y^{\alpha+ 1 - d/2}} \right\|^2_{L^\infty(y \geq 1)}- C(\alpha,d)f^2(1). \label{eq:HarA3}
\end{align}
$(iii)$ (Hardy away from the origin for the critical exponent) Let $\alpha = \frac{d-2}{2}$, then
$$
\int_1^{+\infty} \frac{|\py f|^2}{y^{2\alpha}} \geq \frac 14 \int_1^{+\infty} \frac{f^2}{y^{2 + 2\alpha} (1 + \log y)^2} - C(d)f^2(1).$$
$(iv)$ (General weighted Hardy) For any $\mu > 0$, $k \geq 2$ be an integer and $1 \leq j \leq k-1$,
$$\int \frac{|D^jf|^2}{1 + y^{\mu + 2(k - j)}} \lesssim_{j,\mu} \int \frac{|D^k f|^2}{1 + y^\mu} + \int \frac{f^2}{1 + y^{\mu + 2k}}.$$
\end{lemma}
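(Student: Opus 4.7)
The plan is to handle all four assertions by classical integration by parts against the radial measure $y^{d-1}dy$, combined with Cauchy--Schwarz and, where needed, completion of squares.

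For part (i), I would expand the nonnegative integral $\int_0^1 |f'+\beta f/y|^2\, y^{d-1-2i}dy \geq 0$ with the choice $\beta = -(d-2-2i)/2$. Three terms emerge: the desired $\int_0^1 |f'|^2/y^{2i}$, a positive multiple of $\int_0^1 f^2/y^{2+2i}$, and a cross term $-2\beta\int_0^1 ff'\,y^{d-2-2i}dy$ which integrates by parts to $-\beta f(1)^2 + \beta(d-2-2i)\int_0^1 f^2/y^{2+2i}$ (the boundary at $y=0$ vanishes because $d-2-2i > 0$ for $d\geq 7$, $i \in \{0,1,2\}$, and $f$ is smooth radial). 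Collecting terms yields the sharp constant $(d-2-2i)^2/4$ together with a boundary error of order $f^2(1)$. The proof of \eqref{eq:HarA2} is structurally identical on $[1,+\infty)$: compact support of $f\in\Dc_{rad}$ erases the boundary at infinity, and the non-criticality $\alpha \neq (d-2)/2$ is precisely what keeps the relevant constants finite.

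For the pointwise variant \eqref{eq:HarA3}, I would control the envelope $y^{d-2-2\alpha}f(y)^2$ directly. Fixing $y_0 \geq 1$ and using the fundamental theorem of calculus,
\begin{equation*}
y_0^{d-2-2\alpha}f(y_0)^2 = f(1)^2 + (d-2-2\alpha)\int_1^{y_0}f^2\, x^{d-3-2\alpha}dx + 2\int_1^{y_0}ff'\,x^{d-2-2\alpha}dx,
\end{equation*}
and after taking a supremum over $y_0\geq 1$, the two right-hand integrals are controlled via \eqref{eq:HarA2} and Cauchy--Schwarz against $\int_1^{\infty}|f'|^2 y^{d-1-2\alpha}dy$. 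Part (iii) is the borderline case $d-2-2\alpha=0$ where the algebraic primitive degenerates; the remedy is to replace the weight $1/y^{2+2\alpha}$ by its log-corrected version $1/(y^{2+2\alpha}(1+\log y)^2)$ and to exploit the identity $\partial_y(-1/(1+\log y)) = 1/(y(1+\log y)^2)$, after which the same integration-by-parts runs through verbatim.

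Part (iv) then follows by a short descending induction on $j$. The one-step weighted Hardy, obtained by applying parts (i)--(iii) separately on $\{y\leq 1\}$ and $\{y\geq 1\}$ and patching with a smooth cutoff whose derivatives contribute only to an $\int f^2$-type remainder, bounds $\int|D^j f|^2/(1+y^{\mu+2(k-j)})$ by $\int |\partial_y D^j f|^2/(1+y^{\mu+2(k-j-1)})$ plus a tail controlled by $\int f^2/(1+y^{\mu+2k})$. Relating $\partial_y D^j$ to $D^{j+1}$ (they differ by weighted lower-order pieces of the radial Laplacian, easily absorbed) and iterating from $j$ up to $k-1$ closes the claim. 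The main obstacle is the pointwise estimate \eqref{eq:HarA3}: unlike the $L^2$-type statements (i), \eqref{eq:HarA2}, (iii), it forces one to leave the comfort of a single integral identity, invoke a supremum argument, and crucially exploit the compact support of $f\in\Dc_{rad}$; all other parts are standard one-variable calculus once the weight is chosen correctly.
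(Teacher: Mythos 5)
The paper does not prove this lemma at all; it simply refers to Lemma~B.1 of \cite{MRRcmj2016}, so there is no in-house argument to compare against. Your proposal is the standard self-contained multiplier proof of these Hardy inequalities, and in substance it is the argument that the cited reference (and most texts) use. The route is sound; the inequalities hold with the stated constants.

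Two small cautions. In part (i) your signs are internally inconsistent: you expand $\int_0^1|f'+\beta f/y|^2 y^{d-1-2i}dy$, whose cross term is $+2\beta\int ff'\,y^{d-2-2i}$, but then write it as $-2\beta\int ff'\,y^{d-2-2i}$ and integrate by parts accordingly. With the square as you wrote it, the correct choice is $\beta=+\tfrac{d-2-2i}{2}$; with the cross term as you manipulated it, you are really expanding $|f'-\beta f/y|^2$ and then $\beta=-\tfrac{d-2-2i}{2}$ is the right one. Either convention lands on $\tfrac{(d-2-2i)^2}{4}$ and a boundary term of sign $-f^2(1)$, so the conclusion is fine, but as written the intermediate identity is off and could cost you the sharp constant. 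The same remark applies to \eqref{eq:HarA2}.

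The thinnest spot is part (iv). Applying the one-variable Hardy on $\{y\leq1\}$ and $\{y\geq1\}$ to $g=D^jf$ produces remainders of the form $|D^jf(1)|^2$, which are not yet controlled by $\int f^2/(1+y^{\mu+2k})$; absorbing them requires either a one-dimensional trace bound $|D^jf(1)|^2\lesssim\int_{1/2}^{2}(|D^jf|^2+|\partial_yD^jf|^2)$ followed by an $\varepsilon$-absorption after summing over all $j$ from $1$ to $k-1$, or an interpolation step --- the cutoff alone does not dispose of them. You should also be explicit that $\partial_yD^j=D^{j+1}$ only for even $j$; for odd $j$, $D^{j+1}=\partial_yD^j+\tfrac{d-1}{y}D^j$, and the lower-order term $\tfrac{d-1}{y}D^jf$ carries exactly the extra decay needed, but this is the very quantity you are trying to bound, so it too must go into the $\varepsilon$-absorption. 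None of this changes the verdict that the approach is correct, but a careful write-up of (iv) would need those details.
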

\begin{proof} See Lemma B.1 in \cite{MRRcmj2016}.
\end{proof}

From the Hardy type inequalities, we derive the following coercivity of $\As^*$: 
\begin{lemma}[Weight coercivity of $\As^*$]\label{lemm:coerAst} Let $\alpha \geq 0$, there exists $c_\alpha > 0$ such that for all $f \in \Dc_{rad}$ with 
$$i = 0, 1,2, \quad \int \frac{|\py f|^2}{y^{2i}(1 + y^{2\alpha})} + \int \frac{f^2}{y^{2i + 2}(1 + y^{2\alpha})} < +\infty,
$$
then 
\begin{equation}\label{eq:coerAst}
i = 0, 1,2, \quad \int \frac{|\As^* f|^2}{y^{2i}(1 + y^{2\alpha})} \geq c_\alpha\left(\int \frac{|\py f|^2}{y^{2i}(1 + y^{2\alpha})} + \int \frac{f^2}{y^{2i + 2}(1 + y^{2\alpha})}\right).
\end{equation}
\end{lemma}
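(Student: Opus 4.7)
The plan is to prove \eqref{eq:coerAst} by a direct integration-by-parts argument starting from the explicit first-order form $\As^* f = \py f + \tilde W f$, where $\tilde W(y) := (6 + V(y))/y$. Thanks to \eqref{eq:asympV}, the function $\tilde W$ is smooth on $(0,\infty)$ and satisfies $\tilde W(y) = 7/y + \Oc(y)$ as $y\to 0$ and $\tilde W(y) = 4/y + \Oc(1/y^3)$ as $y\to\infty$, so $\tilde W$ stays strictly positive and behaves like $c/y$ with $c>0$ in both limits. First I would expand $|\As^* f|^2 = |\py f|^2 + \tilde W\,\py(f^2) + \tilde W^2 f^2$, multiply by $y^{-2i}(1+y^{2\alpha})^{-1}$, and integrate the cross term by parts (the boundary contributions vanish since $f\in\Dc_{rad}$ is compactly supported). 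This produces the identity
\begin{equation*}
\int \frac{|\As^*f|^2}{y^{2i}(1+y^{2\alpha})} = \int \frac{|\py f|^2}{y^{2i}(1+y^{2\alpha})} + \int P_{i,\alpha}(y)\, f^2\, dy,
\end{equation*}
with the explicit potential
\begin{equation*}
P_{i,\alpha}(y) = \frac{(6+V)^2 + (2i+1)(6+V) - \Lambda V}{y^{2i+2}(1+y^{2\alpha})} + \frac{2\alpha\,(6+V)\, y^{2\alpha}}{y^{2i+2}(1+y^{2\alpha})^2}.
\end{equation*}

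Next I would analyse $P_{i,\alpha}$ asymptotically. Using the expansions \eqref{eq:asympV} of $V$ and the fact that $\Lambda V \to 0$ both at $0$ and at $\infty$, the numerator of the leading piece converges to $49 + 7(2i+1) = 56 + 14i$ as $y\to 0$ and to $16 + 4(2i+1) = 20 + 8i$ as $y\to\infty$, both strictly positive. The second piece of $P_{i,\alpha}$ is manifestly nonnegative because $6+V$ stays between its endpoint values $7$ and $4$ by smoothness and boundedness of $V$. Hence for some $R\gg 1$ and $\delta\ll 1$, one has a pointwise bound $P_{i,\alpha}(y) \geq c_\alpha\, y^{-2i-2}(1+y^{2\alpha})^{-1}$ on $(0,\delta)\cup(R,\infty)$, which is exactly the kind of control required by the right-hand side of \eqref{eq:coerAst}.

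The main obstacle is the intermediate region $y\in[\delta,R]$, where the bracket $(6+V)^2 + (2i+1)(6+V) - \Lambda V$ could in principle dip below zero and where the weight $y^{-2i-2}(1+y^{2\alpha})^{-1}$ is bounded. To handle this I would fix a smooth cutoff $\zeta$ supported in $[\delta/2,2R]$, equal to one on $[\delta,R]$, and estimate
\begin{equation*}
\int \zeta(y)\, f^2\, dy \lesssim \int_{\delta/2}^{2R} f^2\, dy,
\end{equation*}
which by the one-dimensional Hardy inequalities of Lemma \ref{lemm:Hardy} (applied after fixing a convenient boundary point and using compact support) is controlled by a small multiple of $\int |\py f|^2/(y^{2i}(1+y^{2\alpha}))$ plus a bounded multiple of $\int f^2/(y^{2i+2}(1+y^{2\alpha}))$ restricted to where the weight is equivalent to $1$; the latter is itself absorbed by the derivative term via a standard Poincar\'e-type argument on $[\delta/2,2R]$. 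Choosing the small constant appropriately, the negative contribution of $P_{i,\alpha}$ on the compact transition region is absorbed into an arbitrarily small fraction of the derivative integral, and combining with the asymptotic positivity of $P_{i,\alpha}$ outside $[\delta,R]$ yields \eqref{eq:coerAst}. The three cases $i=0,1,2$ are treated identically, with the constants only depending on the universal profile $V$, the exponent $\alpha$, and $i$.
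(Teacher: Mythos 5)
Your integration-by-parts has a sign/coefficient error that changes the conclusion. Write $\As^* f = \py f + \tfrac{6+V}{y}f$ and expand $|\As^* f|^2 = |\py f|^2 + \tfrac{6+V}{y}\py(f^2) + \tfrac{(6+V)^2}{y^2}f^2$; integrating the cross term against the measure $y^6\,dy$ (implicit in the notation $\int$) gives
\begin{equation*}
-\int \py\!\left[\frac{(6+V)\,y^{5-2i}}{1+y^{2\alpha}}\right] f^2\,dy,
\end{equation*}
and since $\py(y^{5-2i}) = (5-2i)\,y^{4-2i}$ the numerator in your $P_{i,\alpha}$ should read $(6+V)^2 - (5-2i)(6+V) - \Lambda V$, not $(6+V)^2 + (2i+1)(6+V) - \Lambda V$. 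The discrepancy is exactly $6(6+V)$, i.e.\ you dropped the $\py(y^6)=6y^5$ contribution from the volume factor when moving the derivative. With the corrected coefficient, the numerator tends to $14+14i$ as $y\to 0$ (fine, positive) but to $-4+8i$ as $y\to\infty$: for $i=0$ this limit is $-4<0$. Including the $\alpha$-dependent piece, the potential at infinity behaves like $(-4+8i+8\alpha)\,y^{-2i-2-2\alpha}$, which is strictly negative whenever $i=0$ and $\alpha<1/2$ — a range the lemma must cover since it allows $\alpha\geq 0$.

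Consequently the central claim of your proof, namely that $P_{i,\alpha}(y)\gtrsim y^{-2i-2}(1+y^{2\alpha})^{-1}$ pointwise on $(0,\delta)\cup(R,\infty)$, is false for $i=0$ and $\alpha$ small, and positivity outside a compact set cannot be the mechanism. The correct argument must use the Hardy inequality \emph{at infinity} (Lemma \ref{lemm:Hardy}, part $(ii)$) to absorb the negative tail of the potential into a small fraction of $\int |\py f|^2 y^{6-2i-2\alpha}\,dy$: the available Hardy constant there is $\bigl(\tfrac{5-2i-2\alpha}{2}\bigr)^2$, which for $i=0,\alpha=0$ equals $25/4>4$, so the absorption does close, but it is quantitative and not automatic. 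Your cutoff-plus-Poincar\'e step is only set up to handle the compact transition region and the $f^2(1)$ boundary terms from Hardy, and as written it cannot repair the loss at infinity. The fix requires rechecking the potential computation, splitting $(0,\infty)$ into a near-origin region, a compact transition, and a far region, and on the far region comparing the negative limiting coefficient against the Hardy constant rather than asserting positivity.
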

\begin{proof} 
See Lemma A.2 in \cite{IGN16}.
\end{proof}

We also need the following subcoercivity of $\As$. 
\begin{lemma}[Weight coercivity of $\As$] \label{lemm:subcoerA} Let $p \geq 0$ and $i = 0, 1, 2$ such that $|2p + 2i - 1| \ne 0$. 
For all $f \in \mathcal{D}_{rad}$ with 
$$\int \frac{|\py f|^2}{y^{2i}(1 + y^{2p})}  + \int \frac{f^2}{y^{2i + 2}(1 + y^{2p})} < +\infty,$$
we have
\begin{align}
\int \frac{|\As f|^2}{y^{2i}(1 + y^{2p})} \gtrsim \int \frac{|\py f|^2}{y^{2i}(1 + y^{2p})} & + \int \frac{f^2}{y^{2i + 2}(1 + y^{2p})} \nonumber\\
&\quad - \left[f^2(1) + \int \frac{f^2}{1 +y^{2i + 2p + 4}} \right].\label{eq:subcoerA}
\end{align}
Assume in addition that 
\begin{equation}\label{cond:A3}
\left<f, \Phi_M\right> = 0 \quad \text{if}\quad 2i + 2p > 1,
\end{equation}
where $\Phi_M$ is defined in \eqref{def:PhiM}, we have 
\begin{equation}\label{eq:coerA}
\int \frac{|\As f|^2}{y^{2i}(1 + y^{2p})} \gtrsim \int \frac{|\py f|^2}{y^{2i}(1 + y^{2p})} + \int \frac{f^2}{y^{2i + 2}(1 + y^{2p})}.
\end{equation}
\end{lemma}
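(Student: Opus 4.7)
The strategy is to first establish the subcoercivity \eqref{eq:subcoerA} by an integration-by-parts identity, and then to upgrade it to the full coercivity \eqref{eq:coerA} by a standard compactness-and-contradiction argument that uses the orthogonality against $\Phi_M$.

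For the first step I would introduce the weight $\mu(y) = 1/[y^{2i}(1+y^{2p})]$ and write, using the definition $\As f = -\py f + (V/y)f$,
\[
\int |\As f|^2 \mu \, y^{6} dy = \int |\py f|^2 \mu\, y^6 dy - 2\int \frac{V f\, \py f}{y}\mu\, y^6 dy + \int \frac{V^2 f^2}{y^2}\mu\, y^6 dy.
\]
An integration by parts on the cross term gives
\[
-2\int \frac{V\mu y^{5}}{1}\, f\,\py f\, dy = \int f^2\, \py\!\left(V\mu y^{5}\right) dy - \big[V\mu y^{5} f^2\big]_0^\infty,
\]
so that, collecting,
\[
\int |\As f|^2 \mu y^6 dy = \int |\py f|^2 \mu y^6 dy + \int f^2 \left[ \py(V\mu y^5) + V^2 \mu y^4 \right] dy - \big[V\mu y^5 f^2\big]_0^\infty.
\]
Splitting the boundary integral at $y=1$ produces the term $f^2(1)$ in \eqref{eq:subcoerA} (boundary contributions at $0$ and $\infty$ vanish for admissible $f$ since $i\leq 2$ and by a cut-off approximation at infinity). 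The effective potential $W(y):=\py(V\mu y^5)+V^2\mu y^4$ is analyzed using the asymptotics \eqref{eq:asympV}: for $y\geq 1$ the leading contribution is $4\mu y^4$ times corrections of order $1/y$, while for $y\leq 1$ one gets a positive contribution from the explicit $1/y^{2i+2}$ singularity after using $V\to 1$. In both regimes, up to remainders that are integrable against $f^2/(1+y^{2i+2p+4})$, one obtains $W(y) y^{-6} \geq c/[y^{2i+2}(1+y^{2p})]$. Combining with the weighted Hardy inequality of Lemma~\ref{lemm:Hardy} to upgrade the $f^2$-control into the $|\py f|^2$-control on the left, this yields \eqref{eq:subcoerA}.

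The second step upgrades subcoercivity to coercivity via contradiction. Assuming \eqref{eq:coerA} fails, there would exist a sequence $f_n \in \Dc_{\mathrm{rad}}$, normalized so that the right-hand side equals $1$, with $\int |\As f_n|^2 \mu\, y^6dy \to 0$. By \eqref{eq:subcoerA} and the Rellich-type compactness for the weighted embedding (local in $y$), a subsequence converges weakly in the natural Hilbert space and strongly on compacts to some $f_\infty$ satisfying $\As f_\infty = 0$ and the orthogonality $\langle f_\infty, \Phi_M\rangle = 0$. Since $\ker \As = \mathrm{Span}(\Lambda Q)$ by \eqref{def:As}, one has $f_\infty = c\Lambda Q$; the condition $2i+2p>1$ guarantees $\Lambda Q$ lies in the weighted space at infinity (as $\Lambda Q \sim 2a_0/y^2$), so this limit is admissible. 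The nondegeneracy \eqref{eq:PhiMLamQ} then forces $c=0$, i.e. $f_\infty \equiv 0$. One then has to check that the bad terms on the right of \eqref{eq:subcoerA}, namely $f_n^2(1) + \int f_n^2/(1+y^{2i+2p+4})$, also tend to zero along the subsequence, which contradicts the normalization. This last fact uses the trace continuity of $f\mapsto f(1)$ into the weighted space together with the strong local convergence.

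\textbf{Main obstacle.} The delicate point is the positivity analysis of the effective potential $W(y)$. Near the origin the two contributions in $W$ nearly cancel at leading order (the naive expansion $\py(V\mu y^5)+V^2\mu y^4 \sim (5-2i)\mu y^4$ plus $\mu y^4$ from the $V^2$ term gives a positive coefficient $(6-2i)$ provided $i\leq 2$, but the argument is sensitive to the exponent); at infinity the $-2V\mu y^5$ contribution must be balanced against $V^2\mu y^4 \sim 4\mu y^4$, which works thanks to the precise asymptotic $V\to -2$. Handling these exponent-dependent cancellations uniformly in $(i,p)$, while isolating only the acceptable far-field remainder $\int f^2/(1+y^{2i+2p+4})$, is the technical core of the argument. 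The passage to \eqref{eq:coerA} is then routine, being essentially a copy of the analogous argument appearing in [IGN16, Lemma~A.2] and [RSapde2014].
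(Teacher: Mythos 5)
The paper itself does not prove this lemma; it is cited from the companion reference \cite{IGN16}, so there is no in-text argument to compare against. Your overall plan --- an integration-by-parts identity producing an effective potential $W$, Hardy absorption, then a compactness--contradiction argument using $\ker\As=\mathrm{Span}(\Lambda Q)$ and the nondegeneracy $\langle\Phi_M,\Lambda Q\rangle\neq 0$ --- is the standard and correct route. The second step (compactness, and the observation that $\Lambda Q$ only lies in the weighted space when $2i+2p>1$, which is exactly when the orthogonality is invoked) is sound.

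There is, however, a genuine gap in the first step. You assert that, up to remainders integrable against $f^2/(1+y^{2i+2p+4})$, the potential satisfies $W(y)\,y^{-6}\gtrsim 1/[y^{2i+2}(1+y^{2p})]$. This is false for $2i+2p<3$. Expanding near infinity with $V\to-2$ and $\mu y^5\sim y^{5-2i-2p}$, the leading term of $W=\py(V\mu y^5)+V^2\mu y^4$ is $2(2i+2p-3)\,y^{4-2i-2p}$, which is \emph{negative} at exactly the exponent you need, not a lower-order remainder. The argument must therefore genuinely borrow from the $|\py f|^2$-term via the far-field Hardy inequality, and the competition is sharp: taking a fraction $\theta\in(0,1)$ of $\int_1^\infty|\py f|^2\mu\,y^6dy$ and using the Hardy constant $\bigl(\tfrac{5-2i-2p}{2}\bigr)^2$ in $d=7$, one needs
\[
\left(\frac{5-2i-2p}{2}\right)^2 \;>\; 2\,(3-2i-2p),
\]
which, with $t=2i+2p$, reduces to $(t-1)^2>0$. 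This is precisely the hypothesis $|2p+2i-1|\neq 0$, which your sketch never engages with and which is used at this exact point; without it the Hardy absorption cannot close. So the ``technical core'' you flag is real, but as written the sketch silently assumes away the very place where the stated hypothesis enters, and the claimed pointwise positivity of $W$ (even up to remainders) does not hold.
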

\begin{proof} 
See Lemma A.3 in \cite{IGN16}.
\end{proof}

From the coercivities of $\As$ and $\As^*$, we claim the following coercivity for $\Ls$:
\begin{lemma}[Weighted coercivity of $\Ls$ under a suitable orthogonality condition] \label{lemm:coerL}  Let $k \in \mathbb{N}$, $i = 0, 1, 2$ and $M = M(k)$ large enough, then there exists $c_{M,k} > 0$ such that for all $f \in \Dc_{rad}$ satisfying
$$ \int \frac{|\As f|^2}{y^{2i}(1 + y^{2k + 2})} + \int \frac{|f|^2}{y^{2i + 2}(1 + y^{2k + 2})} < +\infty,$$
and the orthogonality 
$$\left<f, \Phi_M\right> = 0 \quad \text{if}\;\; 2i + 2k > -1,$$ 
where $\Phi_M$ is defined by \eqref{def:PhiM}, there holds:
\begin{equation}\label{eq:coerL}
\int \frac{|{\Ls} f|^2}{y^{2i}(1 + y^{2k})} \geq c_{M,k} \left(\int \frac{|\partial_{yy}f|^2}{y^{2i}(1 + y^{2k})} +  \frac{|\py f|^2}{y^{2i}(1 + y^{2k + 2})} +  \frac{|f|^2}{y^{2i + 2}(1 + y^{2k + 2})}\right).
\end{equation}
and 
\begin{equation}\label{eq:coerL1}
\int \frac{|{\Ls} f|^2}{y^{2i}(1 + y^{2k})} \geq c_{M,k} \int \left(\frac{|\As f|^2}{y^{2i}(1 + y^{2k + 2})} + \int \frac{|f|^2}{y^{2i + 2}(1 + y^{2k + 2})} \right).
\end{equation}
\end{lemma}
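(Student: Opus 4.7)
The plan is to exploit the factorization $\Ls = \As^*\As$ together with the companion coercivity estimates provided by Lemmas \ref{lemm:coerAst} and \ref{lemm:subcoerA}. The structure is a two step reduction: $\Ls f \leadsto \As f \leadsto f$, where the orthogonality against $\Phi_M$ enters precisely when inverting $\As$ modulo its one-dimensional kernel $\mathrm{Span}(\Lambda Q)$.

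First I would establish \eqref{eq:coerL1}. Writing $\Ls f = \As^*(\As f)$ and applying Lemma \ref{lemm:coerAst} to $g := \As f$ with $\alpha = k$ yields
\begin{equation*}
\int \frac{|\Ls f|^2}{y^{2i}(1+y^{2k})} \gtrsim \int \frac{|\As f|^2}{y^{2i+2}(1+y^{2k})}.
\end{equation*}
Since the weight $\frac{1}{y^{2i+2}(1+y^{2k})}$ dominates $\frac{1}{y^{2i}(1+y^{2k+2})}$ pointwise (the two agree up to constants for $y \ge 1$, while the former is strictly more singular at the origin), this already controls $\int \frac{|\As f|^2}{y^{2i}(1+y^{2k+2})}$. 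A second application, now of Lemma \ref{lemm:subcoerA} with $p = k+1$ and using the hypothesis $\langle f, \Phi_M\rangle = 0$ (which is mandated since $2i+2p\ge 2>1$), gives
\begin{equation*}
\int \frac{|\As f|^2}{y^{2i}(1+y^{2k+2})} \gtrsim \int \frac{|\py f|^2}{y^{2i}(1+y^{2k+2})} + \int \frac{|f|^2}{y^{2i+2}(1+y^{2k+2})}.
\end{equation*}
Chaining the two displays furnishes \eqref{eq:coerL1} and simultaneously produces every term on the right-hand side of \eqref{eq:coerL} save the one involving $\py^2 f$.

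To recover the missing second-derivative contribution in \eqref{eq:coerL}, I would invert the defining relation
\begin{equation*}
\py^2 f = -\Ls f - \frac{6}{y}\py f + \frac{Z}{y^2}f,
\end{equation*}
and use the uniform bound $|Z|\lesssim 1$ coming from $Z = 6\cos(2Q)$ to obtain $|\py^2 f|^2 \lesssim |\Ls f|^2 + \frac{|\py f|^2}{y^2} + \frac{|f|^2}{y^4}$. Integrating this against $\frac{1}{y^{2i}(1+y^{2k})}$ leaves two error integrals $\int\frac{|\py f|^2}{y^{2i+2}(1+y^{2k})}$ and $\int\frac{|f|^2}{y^{2i+4}(1+y^{2k})}$ to be absorbed. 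I would control each by re-running the two step chain with shifted indices, i.e., applying Lemma \ref{lemm:coerAst} with the weight $\frac{1}{y^{2i+2}(1+y^{2k})}$ and then Lemma \ref{lemm:subcoerA} with parameters $(i{+}1,k)$ in place of $(i,k{+}1)$; the general weighted Hardy inequality of Lemma \ref{lemm:Hardy}(iv) handles the borderline value $i=2$ where the shifted index $i{+}1=3$ escapes the range of Lemma \ref{lemm:subcoerA}.

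The one delicate point is the bookkeeping in this last step: the two error weights coming from the $\py^2 f$ identity are strictly more singular at the origin than the weights produced naturally by Steps 1--2, so they cannot be absorbed by a direct monotonicity comparison and require either the shifted coercivity chain or a dedicated Hardy estimate near zero, together with the choice of $M=M(k)$ large enough so that the kernel contribution of $\As$ tested against $\Phi_M$ is reabsorbed into the right-hand side. Once this is settled, both \eqref{eq:coerL} and \eqref{eq:coerL1} follow with a single coercivity constant $c_{M,k}>0$.
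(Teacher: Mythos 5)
Your overall plan --- factorize $\Ls = \As^*\As$, apply the weighted $\As^*$-coercivity of Lemma~\ref{lemm:coerAst} to $g = \As f$ with $\alpha = k$, then the weighted $\As$-coercivity \eqref{eq:coerA} of Lemma~\ref{lemm:subcoerA} with $p = k+1$ together with the orthogonality $\langle f,\Phi_M\rangle = 0$ --- is the natural argument given the Appendix lemmas, and Steps~1--3 of your chain are correct. The pointwise weight comparison between $\frac{1}{y^{2i+2}(1+y^{2k})}$ and $\frac{1}{y^{2i}(1+y^{2k+2})}$ is valid, and the two applications together do produce \eqref{eq:coerL1} as well as the $\partial_y f$ and $f$ terms of \eqref{eq:coerL}.

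The gap is the $\partial_{yy}f$ term at the extremal index $i=2$. After inverting the equation, and using $|\partial_y f|\lesssim |\As f|+|f|/y$ together with the Step~1 output $\int\frac{|\As f|^2}{y^{2i+2}(1+y^{2k})}$, everything reduces to absorbing $\int\frac{|f|^2}{y^{2i+4}(1+y^{2k})}$. Your shifted application of Lemma~\ref{lemm:subcoerA} with $(i',p)=(i+1,k)$ handles this for $i=0,1$, but for $i=2$ the shifted index $i'=3$ escapes that lemma's stated range, as you flag. The proposed remedy, Lemma~\ref{lemm:Hardy}(iv), is however not the right tool: its weights $\frac{1}{1+y^\mu}$ are bounded near $y=0$, so it only controls decay at infinity and is blind to the additional $y^{-2}$ singularity at the origin that $\frac{1}{y^{2i+4}(1+y^{2k})}$ carries relative to the weights produced by Steps~1--3. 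What is actually needed for $i=2$ is either a near-origin Hardy inequality at exponent $i'=3$, or the analogue of \eqref{eq:coerA} at $i'=3$; these hold in $d=7$ since $(d-2-2i')^2/4 = 1/4 > 0$, but they lie outside the ranges stated in Lemma~\ref{lemm:Hardy}(i) and Lemma~\ref{lemm:subcoerA}, so they must be proved separately before being used. One would then split $\int\frac{|f|^2}{y^{2i+4}(1+y^{2k})}$ at $y=1$, control the far region by the Step~3 output $\int\frac{|f|^2}{y^{2i+2}(1+y^{2k+2})}$, invoke the extended near-origin estimate for $y<1$, and absorb the boundary term $f^2(1)$ via the orthogonality against $\Phi_M$ with $M$ large, as you suggest. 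As written, though, the appeal to Lemma~\ref{lemm:Hardy}(iv) does not close the $i=2$ case.
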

\begin{proof} 
See Lemma A.4 in \cite{IGN16}.
\end{proof}

We are now in a position to  prove the coercivity of ${\Ls}^k$ under a suitable orthogonality condition.  We claim the following:
\begin{lemma}[Coercivity of the iterate of $\Ls$] \label{lemm:coeLk}  Let $k \in \mathbb{N}$ and $M = M(k)$ large enough, then there exists $c_{M,k} > 0$ such that for all $f \in \Dc_{rad}$ satisfying

$$\int \frac{|\As (\Ls^k f)|^2}{y^2} + \sum_{m = 0}^k \int \frac{|\Ls^m f|^2}{y^4(1 + y^{4(k - m)})} + \sum_{m = 0}^{k-1}\frac{|\As(\Ls^{m}f)|^2}{y^6(1 + y^{4(k - m - 1)})} < +\infty,$$
and the orthogonal condition 
$$\left<f, \Ls^m \Phi_M\right> = 0, \quad 0 \leq m \leq k ,$$
there holds:
\begin{align} \label{eq:coerLk}
\Es_{2k + 2}(f) = \int |\Ls^{k+1} f|^2 &\geq c_{M,k}\left\{\int \frac{|\As (\Ls^k f)|^2}{y^2} \right.\nonumber \\
& \quad +\left. \sum_{m = 0}^k \int \frac{|\Ls^m f|^2}{y^4(1 + y^{4(k - m)})} + \sum_{m = 0}^{k-1}\frac{|\As(\Ls^{m}f)|^2}{y^6(1 + y^{4(k - m - 1)})}\right\}.
\end{align}
\end{lemma}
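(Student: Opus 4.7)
The plan is to prove \eqref{eq:coerLk} by induction on $k$, with the base case $k = 0$ being an immediate application of Lemma \ref{lemm:coerL} with parameters $(i, \mu) = (0, 0)$ under the orthogonality $\langle f, \Phi_M \rangle = 0$. Indeed, the weights $\frac{1}{1+y^{2}}$ and $\frac{1}{y^{2}}$ appearing in the RHS of Lemma \ref{lemm:coerL} and of \eqref{eq:coerLk} are comparable at infinity, and for radial smooth functions the behavior near $0$ is dictated by the Taylor expansion $\Ls^{m} f = O(y^{2j+1})$, so no integrability issues arise. Minor weight discrepancies near the origin can be absorbed into boundary terms that are themselves controlled by Hardy-type estimates in Lemma \ref{lemm:Hardy}.

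For the induction step, suppose the result holds for $k - 1$. Set $g_m := \Ls^m f$ for $0 \le m \le k$. Using the self-adjointness of $\Ls = \As^* \As$ with respect to the Lebesgue measure $y^6 dy$ (the boundary terms in the integration by parts vanish thanks to the regularity of $f$ at $0$ and the orthogonality conditions), the hypothesis $\langle f, \Ls^m \Phi_M \rangle = 0$ translates into $\langle g_m, \Phi_M \rangle = 0$ for every $0 \le m \le k$, which is precisely what is needed to invoke Lemma \ref{lemm:coerL} on $g_m$.

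The main step is then an iterated application of Lemma \ref{lemm:coerL}, descending from $m = k$ down to $m = 0$. At step $m$, apply the lemma with $(i, \mu) = (0, 2(k - m))$ to $g_m$:
\begin{equation*}
\int \frac{|\Ls g_m|^{2}}{1 + y^{4(k - m)}}
\;\gtrsim\; \int \frac{|\As g_m|^{2}}{1 + y^{4(k - m) + 2}}
\;+\; \int \frac{g_m^{2}}{y^{2}(1 + y^{4(k - m) + 2})}.
\end{equation*}
The left-hand side of the $m$-th inequality is dominated, up to the weight $y^4$, by the $g_{m-1}$-term extracted at step $m - 1$, so chaining the inequalities produces the full collection of weighted norms in $\Es_{2k + 2}$. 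The term $\int |\Ls g_k|^2 = \int |\Ls^{k+1} f|^2$ seeds the iteration; after $k+1$ applications we recover all the summands in \eqref{eq:coerLk}, namely $\int \frac{|\As \Ls^m f|^2}{y^6(1 + y^{4(k-m-1)})}$ for $0 \le m \le k-1$, $\int \frac{|\Ls^m f|^2}{y^4(1 + y^{4(k-m)})}$ for $0 \le m \le k$, and the boundary term $\int \frac{|\As \Ls^k f|^2}{y^2}$.

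The main obstacle will be the careful bookkeeping of weights: at each descending step one must verify that the $\frac{g_m^2}{y^2 (1+y^{4(k-m)+2})}$ extracted term dominates (up to a constant and up to discarding integrable contributions near $y=1$) the weight $\frac{1}{1+y^{4(k-(m-1))}}$ needed to start the next step, and that the $\frac{|\As g_m|^2}{1 + y^{4(k-m)+2}}$ term is comparable to the $\frac{|\As \Ls^m f|^2}{y^6(1+y^{4(k-m-1)})}$ advertised in \eqref{eq:coerLk}. These comparisons follow by splitting into $\{y \leq 1\}$ and $\{y \geq 1\}$ and using the generalized Hardy inequality of Lemma \ref{lemm:Hardy}(iv) to trade powers of $y$ between lower-order derivatives and $\Ls$-iterates. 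Finally, any boundary-type terms $f^2(1)$ produced along the way by Lemma \ref{lemm:subcoerA} are absorbed using the orthogonalities $\langle g_m, \Phi_M \rangle = 0$, precisely as in the proof of Lemma \ref{lemm:coerL} itself.
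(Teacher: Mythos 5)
The paper itself gives no proof here (it points to Lemma A.5 of \cite{IGN16}), so I will assess your argument on its own merits. Your overall plan --- reduce the orthogonality hypotheses to $\langle \Ls^m f, \Phi_M\rangle = 0$ by self-adjointness and then chain the one-step coercivity of $\Ls$ down the iterates $g_m = \Ls^m f$ --- is the right one, and the self-adjointness step is fine. But there is a genuine gap in the weight bookkeeping near the origin that your proposed ``Hardy trading'' cannot repair.

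Your iteration fixes $i=0$ at every step, i.e. you apply Lemma \ref{lemm:coerL} with weight $\frac{1}{1+y^{4(k-m)}}$, extracting $\frac{|\As g_m|^2}{1+y^{4(k-m)+2}}$ and $\frac{g_m^2}{y^2(1+y^{4(k-m)+2})}$. Near $y=0$ these behave like $O(1)$ and $O(y^{-2})$, whereas the target summands in \eqref{eq:coerLk} are $\frac{|\As g_m|^2}{y^6(1+y^{4(k-m-1)})}\approx y^{-6}|\As g_m|^2$ and $\frac{|g_m|^2}{y^4(1+y^{4(k-m)})}\approx y^{-4}|g_m|^2$. The target weights are strictly larger near the origin, so the $i=0$ chain does not dominate them, and the inequality you would prove is strictly weaker than \eqref{eq:coerLk}. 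Lemma \ref{lemm:Hardy}(iv), which you invoke to ``trade powers of $y$,'' only involves weights of the form $\frac{1}{1+y^{\mu+\cdots}}$ and hence provides no additional singularity at $y=0$; it cannot upgrade a $y^0$ or $y^{-2}$ weight to $y^{-6}$ or $y^{-4}$.

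The missing idea is that the parameter $i\in\{0,1,2\}$ in Lemmas \ref{lemm:coerAst}, \ref{lemm:subcoerA} and \ref{lemm:coerL} must ramp up along the descent. Concretely, one first uses the $\As^*$-coercivity (Lemma \ref{lemm:coerAst} with $i=0$) to get $\int|\Ls^{k+1}f|^2\gtrsim \int\frac{|\As g_k|^2}{y^2}$, then the $\As$-coercivity (Lemma \ref{lemm:subcoerA} with $i=1$) plus the orthogonality $\langle g_k,\Phi_M\rangle=0$ to get $\int\frac{|g_k|^2}{y^4}$; writing $g_k=\As^*(\As g_{k-1})$ and applying Lemma \ref{lemm:coerAst} with $i=2$ now produces $\int\frac{|\As g_{k-1}|^2}{y^6}$ with the correct $y^{-6}$ singularity. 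From there on one keeps $i=2$ near the origin and only lets the polynomial weight at infinity grow by $y^4$ per level, which requires splitting into $\{y\le 1\}$ and $\{y\ge 1\}$ and using Lemma \ref{lemm:Hardy}(ii)--(iii) on the far-field piece. Your write-up never brings $i=1,2$ into play and so cannot reach the stated weights; this is the concrete fix needed to make the induction close.
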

\begin{proof} 
See Lemma A.5 in \cite{IGN16}.
\end{proof}

\section{Interpolation bounds.}
We derive in this section interpolation bounds on $q$ which are the consequence of the coercivity property given in Lemma \ref{lemm:coeLk}. We have the following:
\begin{lemma}[Interpolation bounds]\label{lemm:interbounds} $\quad$\\
$(i)$ Weighted bounds for $q_i$: for $1 \leq m \leq L+1$,
\begin{equation}\label{eq:qmbyE2k}
\int |q_{2m}|^2 + \sum_{i = 0}^{2k - 1}\int \frac{|q_i|^2}{y^2(1 + y^{4m - 2i - 2})} \leq C(M)\Es_{2m}.
\end{equation}
$(ii)$ Development near the origin:
\begin{equation}\label{eq:expandqat0}
q = \sum_{i = 1}^{L+1}c_i T_{L+1 - i} + r_q,
\end{equation}
with bounds 
$$|c_i| \lesssim \sqrt{\Es_{2L+2}},$$
$$|\py^j r_q| \lesssim y^{2L+2 - \frac{d}{2} - j}|\ln(y)|^{L+1}\sqrt{\Es_{2L+2}}, \quad 0 \leq j \leq 2L+1, \;\; y < 1.$$
$(iii)$ Bounds near the origin for $q_i$ and $\py^i q$: for $y \leq \frac 12$,
\begin{align*}
|q_{2i}| + |\py^{2i}q| &\lesssim y^{-\frac{d}{2} + 2}|\ln y|^L+1\sqrt{\Es_{2L+2}}, \quad \text{for}\quad 0 \leq i \leq L,\\
|q_{2i - 1}| + |\py^{2i-1}q|& \lesssim y^{-\frac{d}{2} + 1}|\ln y|^L+1\sqrt{\Es_{2L+2}}, \quad \text{for} \quad 1 \leq i \leq L+1.
\end{align*}
$(iv)$ Weighted bounds for $\py^iq$: for $1 \leq m \leq L+1$,
\begin{equation}\label{eq:weipykq}
\sum_{i = 0}^{2m}\int \frac{|\py^i q|^2}{1 + y^{4m - 2i}} \lesssim \Es_{2m}.
\end{equation}
Moreover, let $(i,j) \in \mathbb{N}\times \mathbb{N}^*$ with $2 \leq i + j \leq 2L+2$, then 
\begin{equation}\label{eq:weipyijq}
\int \frac{|\py^i q|^2}{1 + y^{2j}} \lesssim \left\{\begin{array}{ll}
\Es_{2m} &\quad \text{for}\quad i+j = 2m, \;\; 2 \leq m \leq L+1,\\
\sqrt{\Es_{2m}}\sqrt{\Es_{2(m+1)}}&\quad \text{for}\quad  i+j = 2m + 1,\;\; 2 \leq m \leq L.
\end{array} \right.
\end{equation}
$(v)$ Pointwise bound far away. Let $(i,j) \in \mathbb{N}\times \mathbb{N}$ with $2 \leq i + j \leq 2L+1$, we have for $y \geq 1$,
\begin{equation}\label{eq:pointwise_yg1}
\left|\frac{\py^i q}{y^{j}}\right|^2 \lesssim \frac 1{y^{5}}\left\{\begin{array}{ll}
\Es_{2m} &\quad \text{for}\quad i+j + 1 = 2m, \;\; 2 \leq m \leq L+1,\\
\sqrt{\Es_{2m}}\sqrt{\Es_{2(m+1)}}&\quad \text{for}\quad  i+j = 2m,\;\; 2 \leq m \leq L.
\end{array} \right.
\end{equation}
\end{lemma}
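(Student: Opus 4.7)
The proof proposal is to derive all five parts from the coercivity result in Lemma \ref{lemm:coeLk} (which controls the full adapted Sobolev norm by $\Es_{2L+2}$), combined with the Hardy inequalities in Lemma \ref{lemm:Hardy} and the explicit two-step inversion formula of Lemma \ref{lemm:inversionL}. Part $(i)$ is the most direct: since $q_{2m} = \Ls^m q$ and $q_{2m+1} = \As \Ls^m q$ by \eqref{eq:notq2k1}, Lemma \ref{lemm:coeLk} (applied at level $k = m-1$, and then downgrading the weight by Hardy near the origin using part $(i)$ of Lemma \ref{lemm:Hardy} to absorb the $y^{-2}$ factor) immediately produces \eqref{eq:qmbyE2k}.

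For parts $(ii)$ and $(iii)$, the plan is to start from $f_{2L+2}:= \Ls^{L+1} q \in L^2$ and iteratively apply the inversion scheme \eqref{eq:relaAL}: at each step, $\As^{-1}$ adds a factor $y$ (integration in $x \in [0,y]$ against $x^6\Lambda Q$) and $\Ls^{-1}$ adds a second factor, producing after $L+1$ iterations a function that agrees with $q$ modulo the kernel basis $\{T_0, T_1,\dots,T_L\}$ (the singular Wronskian companions $\Gamma, \Ls^{-1}\Gamma,\dots$ being excluded by the requirement that $q$ be smooth at $0$). The coefficients $c_i$ are read off from the Taylor coefficients of $q$ at the origin, and Cauchy--Schwarz against $\Lambda Q x^6$ with the $L^2$ bound $\|\Ls^{L+1}q\|_{L^2}^2 \le \Es_{2L+2}$ yields $|c_i| \lesssim \sqrt{\Es_{2L+2}}$. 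The remainder $r_q$ inherits the pointwise decay $y^{2L+2 - 7/2 - j}$ from this iterated integration, and the logarithmic powers $|\log y|^{L+1}$ enter each time one of the $L+1$ integrations reaches the critical Sobolev exponent, via part $(iii)$ of Lemma \ref{lemm:Hardy}. Part $(iii)$ then follows by plugging the expansion $(ii)$ into the admissibility bounds on $T_k$ from Lemma \ref{lemm:GenLk}.

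For part $(iv)$, the standard approach is to use the relation $-\py^2 q = \Ls q + \frac{6}{y}\py q - \frac{Z}{y^2}q$ to trade two powers of $\py$ for one application of $\Ls$ modulo lower-order terms, bootstrapping inductively on $i$. Combined with the weighted Hardy inequality of Lemma \ref{lemm:Hardy}$(iv)$, this yields the integer-order case $i+j = 2m$ from $(i)$. The half-integer case $i+j = 2m+1$ comes from splitting one derivative and invoking Cauchy--Schwarz: $\int \frac{|\py^i q|^2}{1+y^{2j}} \lesssim \bigl(\int\frac{|\py^{i-1}q|^2}{1+y^{2j-2}}\bigr)^{1/2}\bigl(\int \frac{|\py^{i+1}q|^2}{1+y^{2j+2}}\bigr)^{1/2}$ (after an integration by parts), the two factors being controlled respectively by $\Es_{2m}$ and $\Es_{2(m+1)}$. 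Part $(v)$ finally follows from the 1D radial Sobolev embedding $|f(y)|^2 y^{d-1} \lesssim \int_y^{\infty}|\py f|\,|f|\,r^{d-1}\,dr$, applied with $d=7$ to $f = \py^i q / y^j$, which converts the weighted $L^2$ bounds of $(iv)$ into the claimed pointwise estimate with the $y^{-5}$ prefactor.

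The main obstacle is the bookkeeping in parts $(ii)$--$(iii)$: one must track precisely how each of the $L+1$ inversions either produces a polynomial kernel contribution (contributing to some $c_i$) or loses half a power of $y$ to hit the critical Hardy exponent (contributing a $\log$ factor). All other parts are reasonably mechanical once this expansion is in hand; the subtleties elsewhere are limited to making sure the indices in the weighted Hardy estimates match the assumptions of Lemma \ref{lemm:Hardy}, in particular avoiding the critical exponent $\alpha = 5/2$ that would require the logarithmic Hardy.
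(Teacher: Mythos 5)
Your overall architecture — coercivity for $(i)$, iterated inversion of $\As,\As^*$ for $(ii)$–$(iii)$, weighted Hardy and interpolation for $(iv)$, and a radial Sobolev estimate for $(v)$ — matches the paper's. But two of the mechanisms you invoke in parts $(ii)$–$(iii)$ are not the ones the proof actually uses, and the gap is not merely cosmetic.

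First, you say the coefficients $c_i$ are ``read off from the Taylor coefficients of $q$ at the origin, and Cauchy--Schwarz against $\Lambda Q\,x^6$\,\dots yields $|c_i|\lesssim\sqrt{\Es_{2L+2}}$.'' That Cauchy--Schwarz step controls the $\As^*$-inverted \emph{remainder} $r_{2m+1}$, not the kernel coefficient. When you invert $\Ls$ the pre-image is only fixed up to $\mathrm{span}(\Lambda Q,\Gamma)$; the $\Gamma$-component is excluded because it would make $\int_{y\le 1}|q_{2m}|^2/y^4$ infinite (which \eqref{eq:qmbyE2k} forbids), and the $\Lambda Q$-coefficient is pinned down by sampling $q_{2L+2-2m}$ at a point $a\in(1/2,1)$ chosen so that $|q_{2L+2-2m}(a)|^2\lesssim\int_{y\le 1}|q_{2L+2-2m}|^2\lesssim\Es_{2L+2}$, and solving for the coefficient. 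Without this mean-value/sampling step your proposal does not actually produce a bound on $c_i$. Second, you attribute the $|\log y|^{L+1}$ factors to the critical Hardy inequality of Lemma \ref{lemm:Hardy}$(iii)$; that lemma is never invoked here. The logs come from the explicit formula $r_{2m+2}=-\Lambda Q\int_a^y (r_{2m+1}/\Lambda Q)\,dx$: the integrand is bounded by $C\,y^{2m-d/2+1}|\ln x|^m/x$ on the relevant range, and $\int_a^y\frac{|\ln x|^m}{x}\,dx\lesssim|\ln y|^{m+1}$ supplies exactly one new log per inversion. This is elementary calculus, not the critical Sobolev exponent.

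In part $(iv)$ the paper takes a slightly different (and cleaner) route than you sketch: instead of repeatedly trading $\py^2$ for $\Ls$ and bootstrapping, it uses the algebraic identity $\py^j q=\sum_{i=0}^j P_{i,j}q_i$ with $|P_{i,j}|\lesssim y^{i-j}$ (from \eqref{eq:pyjf}) to reduce at once to the $q_i$-weighted bounds of $(i)$ and the near-origin pointwise bounds of $(iii)$. For the odd case $i+j=2m+1$ the paper interpolates in the \emph{weight}, i.e.\ applies Cauchy--Schwarz to $\frac{1}{1+y^{4m-2i+2}}=\frac{1}{(1+y^{4m-2i})^{1/2}(1+y^{4m-2i+4})^{1/2}}$, rather than in the derivative order as you propose; your integration-by-parts route would also produce a cross term from differentiating the weight that needs to be absorbed, which the weight-interpolation avoids. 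Part $(v)$ is as you describe: differentiate $(\py^i q)^2/y^{2j}$, integrate from $y$ to $\infty$, and convert the flat integrals to the $y^6\,dy$ ones at cost $y^{-5}$, using $y\ge 1$ and the bounds from $(iv)$.
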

\begin{proof} $(i)$ The estimate \eqref{eq:qmbyE2k} directly follows from Lemma \ref{lemm:coeLk}. \\
$(ii)$ We claim that for $1 \leq m \leq L+1$, $q_{2L+2 -2m}$ admits the Taylor expansion at the origin
\begin{equation}\label{eq:expandq2k}
q_{2L+2-2m} = \sum_{i = 1}^m c_{i,m}T_{m - i} + r_{2m},
\end{equation}
with the bounds 
$$|c_{i,m}| \lesssim \sqrt{\Es_{2L+2}},$$
$$|\py^j r_{2m}| \lesssim y^{2m - \frac{d}{2} - j}|\ln(y)|^m\sqrt{\Es_{2L+2}}, \quad 0 \leq j \leq 2m - 1, \;\; y < 1,$$
The expansion \eqref{eq:expandqat0} then follows from \eqref{eq:expandq2k} with $m = L+1$.

We proceed by induction in $m$ for the proof of \eqref{eq:expandq2k}. For $m = 1$, we write from the definition \eqref{def:Astar} of $\As^*$,
$$r_1(y) = q_{2L+1}(y) = \frac{1}{y^{6}\Lambda Q}\int_0^y q_{2L+2} \Lambda Q x^{6}dx + \frac{d_1}{y^{6}\Lambda Q}.$$
Note from \eqref{eq:qmbyE2k} that $\int \frac{|q_{2L+1}|^2}{y^2} \lesssim \Es_{2L+2}$ and from \eqref{eq:asymLamQ} that $\Lambda Q \sim y$ as $y \to 0$, we deduce that $d_1 = 0$. Using the Cauchy-Schwartz inequality, we derive the pointwise estimate
$$|r_1(y)| \leq \frac{1}{y^d} \left(\int_0^y |q_{2L+2}|^2 x^{6}dx\right)^\frac{1}{2}\left(\int_0^y x^2x^{6} dx\right)^\frac{1}{2} \lesssim y^{-\frac{d}{2} + 1}\sqrt{\Es_{2L+2}}, \quad y < 1.$$ 
We remark that there exists $a \in (1/2,1)$ such that 
$$|q_{2L+1}(a)|^2 \lesssim \int_{y \leq 1} |q_{2L+1}|^2 \lesssim \Es_{2L+2}.$$
We then define 
$$r_2(y) = -\Lambda Q \int_a^y \frac{r_1}{\Lambda Q}dx,$$
and obtain from the pointwise estimate of $r_1$, 
$$|r_2(y)| \lesssim y y^{-\frac{d}{2} + 1}\sqrt{\Es_{2L+2}}\int_a^y \frac{dx}{x} \lesssim y^{-\frac{d}{2} + 2}|\ln(x)|\sqrt{\Es_{2L+2}}, \quad y < 1.$$
By construction and the definition \eqref{def:As} of $\As$, we have 
$$\As r_2 = r_1 = q_{2L+1}, \quad \Ls r_2 = \As^*q_{2L+1} = q_{2L+2} = \Ls q_{2L}.$$
Recall that $\text{span}(\Ls) = \{\Lambda Q, \Gamma\}$ where $\Gamma$ admits the singular behavior \eqref{eq:asymGamma}. From \eqref{eq:qmbyE2k}, we have $\int\frac{|q_{2L}|^2}{y^4} \lesssim \Es_{2L+2} < +\infty$. This implies that there exists $c_2 \in \Rb$ such that 
$$q_{2L} = c_2 \Lambda Q + r_2.$$
Moreover, there exists $a \in (1/2,1)$ such that
$$|q_{2L}(a)|^2 \lesssim \int_{|y| \leq 1} |q_{2L}|^2\lesssim \Es_{2L+2},$$
which follows
$$|c_2| \lesssim \sqrt{\Es_{2L+2}}, \quad |q_{2L}| \lesssim y^{-\frac{d}{2} + 2}|\ln(y)|\sqrt{\Es_{2L+2}}, \quad y < 1.$$
Since $\As r_2 = r_1$, we then write from the definition \eqref{def:As} of $\As$, 
$$|\py r_2| \lesssim |r_1| + \left|\frac{r_2}{y}\right| \lesssim y^{-\frac{d}{2} +2}|\ln(y)|\sqrt{\Es_{2L+2}}, \quad y < 1.$$
This concludes the proof of \eqref{eq:expandq2k} for $m = 1$.   

We now assume that \eqref{eq:expandq2k} holds for $m \geq 1$ and prove it for $m + 1$. The term $r_{2m}$ is built as follows:
$$r_{2m - 1} = \frac{1}{y^{6}\Lambda Q}\int_0^y r_{2m - 2} \Lambda Q x^{6}dx, \quad r_{2m} = - \Lambda Q\int_a^y \frac{r_{2m -1}}{\Lambda Q} dx, \quad a\in (1/2,1).$$
We now use the induction hypothesis to estimate
\begin{align*}
|r_{2m + 1}| &= \left|\frac{1}{y^{6} \Lambda Q}\int_0^y r_{2m} \Lambda Q x^{6}dx \right| \\
&\quad \lesssim \frac{1}{y^d}\sqrt{\Es_{2L+2}}\int_0^yx^{2m + \frac{d}{2}}|\ln(x)|^m dx\\
&\qquad \lesssim y^{2m - \frac{d}{2}} \sqrt{\Es_{2L+2}}\int_0^y |\ln(x)|^m dx \\
& \qquad \quad  \lesssim y^{2m - \frac{d}{2} + 1}|\ln(y)|^m \sqrt{\Es_{2L+2}}.
\end{align*}
Here we used the following identity 
$$I_m = \int_0^y [\ln(x)]^mdx \lesssim y|\ln(y)|^m, \quad m \geq 1, \;\; y < 1.$$
Indeed, we have $I_1 = \int_0^y \ln (x) dx = y\ln(y) - y \lesssim y|\ln(y)|$ for $y < 1$. Assume the claim for $m \geq 1$, we use an integration by parts to estimate for $m +1$
\begin{align*}
I_{m+1} &= \int_0^y[\ln(x)]^m (x\ln(x) - x)'dx\\
&\quad  = y[\ln(y)]^{m+1} - y[\ln(y)]^m - m(I_m - I_{m-1})\lesssim y|\ln(y)|^{m+1}.
\end{align*}
Using an integration by parts yields $\int_{a}^y \frac{[\ln(x)]^m}{x}dx = \frac{[\ln(y)]^{m+1} - [\ln(a)]^{m+1}}{m+1}$. Hence, we have the estimate
\begin{align*}
|r_{2m + 2}| = \left|\Lambda Q\int_a^y \frac{r_{2m +1}}{\Lambda Q} dx\right| &\lesssim y^{2m - \frac{d}{2} + 2}\sqrt{\Es_{2L+2}}\int_a^y \frac{|\ln(x)|^m}{x}dx\\
&\quad \lesssim y^{2m - \frac{d}{2} + 2} |\ln(y)|^{m+1} \sqrt{\Es_{2L+2}}.
\end{align*}

By construction, we have 
$$\As r_{2m+2} = r_{2m + 1}, \quad \Ls r_{2m+2} = r_{2m}.$$
From the induction hypothesis and the definition \eqref{def:Tk} of $T_k$, we write
$$\Ls q_{2L+2-2(m + 1)} = q_{2L+2-2m} = \sum_{i = 1}^m c_{i,m}T_{m - i} + r_{2m} = \sum_{i = 1}^mc_{i,m}\Ls T_{m + 1 - i} + \Ls r_{2m + 2}.$$
The singularity \eqref{eq:asymGamma} of $\Gamma$ at the origin and the bound $\int_{y \leq 1} \frac{|q_{2 L+1 - 2(m+1)}|^2}{y^4} \lesssim \Es_{2L+2}$ allows us to deduce that
$$q_{2L(m + 1)} = \sum_{i=1}^m c_{i,m}T_{m +1 - i} + c_{2m + 2}\Lambda Q + r_{2m + 2}.$$
From \eqref{eq:qmbyE2k}, we see that there exists $a \in (1/2,1)$ such that 
$$|q_{2L+2-2(m+1)}(a)|^2 \lesssim \int_{y \leq 1}|q_{2L+2-2(m+1)}|^2 \lesssim \Es_{2L+2}.$$
Together with the induction hypothesis $|c_{i, m}| \lesssim \sqrt{\Es_{2L+2}}$ and the pointwise estimate on $r_{2m + 2}$, we get the bound $|c_{2m + 2}| \leq \sqrt{\Es_{2L+2}}$.

A brute force computation using the definitions of $\As$ and $\As^*$ and the asymptotic behavior \eqref{eq:asympV}  ensure that for any function $f$, 
\begin{equation}\label{eq:pyjf}
\py^j f = \sum_{i = 0}^j P_{i,j}f_i, \quad |P_{i,j}| \lesssim \frac{1}{y^{j - i}},
\end{equation}
and we estimate
\begin{align*}
|\py^j r_{2m + 2}|&\lesssim \sum_{i = 0}^j \frac{|r_{2m + 2 - i}|}{y^{j-i}}\\
&\quad \lesssim \sqrt{\Es_{2L+2}}\sum_{i = 0}^j \frac{y^{2m + 2 - i - \frac{d}{2}}|\ln(y)|^{m+1}}{y^{j - i}} \lesssim y^{2m + 2 -\frac{d}{2} - j}|\ln(y)|^{m+1} \sqrt{\Es_{2L+2}}.
\end{align*}
This concludes the proof of \eqref{eq:expandq2k} as well as \eqref{eq:expandqat0}.\\

$(iii)$ The proof of $(iii)$ directly follows from \eqref{eq:expandq2k}.\\

$(iv)$ We have from \eqref{eq:pyjf},
$$|\py^kq| \lesssim \sum_{j = 0}^k \frac{|q_j|}{y^{k - j}},$$
and thus, using \eqref{eq:qmbyE2k} and the pointwise bounds given in part $(iii)$ yields
\begin{align*}
\sum_{i = 0}^{2m}\int \frac{|\py^i q|^2}{1 + y^{4m - 2i}} & \lesssim \Es_{2m} + \sum_{i = 0}^{2m - 1} \int_{y < 1} |\py^i q|^2 + \sum_{i = 0}^{2m - 1}\int_{y > 1}\frac{|\py^i q|^2}{y^{4m - 2i}}\\
&\quad \lesssim \Es_{2m} + \Es_{2L+2}\int_{y< 1}y|\ln y|^L+1 dy  + \sum_{i = 0}^{2m - 1} \sum_{j = 0}^i \int_{y > 1}\frac{|q_j|^2}{y^{4m - 2j}} \lesssim \Es_{2m}, 
\end{align*}
which concludes the proof of \eqref{eq:weipykq}. 

The estimate \eqref{eq:weipyijq} simply follows from \eqref{eq:weipykq}. Indeed, if $i + j = 2m$ with $1 \leq m \leq L+1$, we have 
$$\int \frac{|\py^i q|^2}{1 + y^{2j}} = \int \frac{|\py^iq|^2}{1 + y^{4m - 2i}} \lesssim \Es_{2m}.$$
If $i + j = 2m + 1$ with $1 \leq m \leq L$, we write 
\begin{align*}\int \frac{|\py^i q|^2}{1 + y^{2j}} = \int \frac{|\py^iq|^2}{1 + y^{4m - 2i + 2}} &\lesssim \left(\int \frac{|\py^iq|^2}{1 + y^{4m - 2i}} \right)^\frac 12\left(\int \frac{|\py^iq|^2}{1 + y^{4m - 2i + 4}} \right)^\frac 12\\
&\quad \lesssim \sqrt{\Es_{2m}} \sqrt{\Es_{2(m+1)}}.
\end{align*}
 
$(v)$ Let $i,j \geq 0$ with $1 \leq i + j \leq 2L+2 -1$, then $2 \leq i + j + 1 \leq 2L+2$ and we conclude from \eqref{eq:weipyijq} that for $y \geq 1$,
\begin{align*}
\left|\frac{\py^i q}{y^j} \right|^2 &\lesssim \left| \int_y^{+\infty} \partial_x \left(\frac{(\partial_x^i q)^2}{x^{2j}} \right)dx \right| \lesssim \frac{1}{y^{5}} \left\{\int_y^{+\infty} \frac{|\partial_x^i q|^2}{x^{2j + 2}} + \int_y^{+\infty} \frac{|\partial_x^{i+1} q|^2}{x^{2j}} \right\}\\
&\quad \lesssim \frac 1{y^{5}}\left\{\begin{array}{ll}
\Es_{2m} &\quad \text{for}\quad i+j + 1 = 2m, \;\; 1 \leq m \leq L+1,\\
\sqrt{\Es_{2m}}\sqrt{\Es_{2(m+1)}}&\quad \text{for}\quad  i+j+1 = 2m + 1,\;\; 1 \leq m \leq L.
\end{array} \right.
\end{align*} 
This ends the proof of Lemma \ref{lemm:interbounds}.
 
\end{proof}

\section{Proof of \eqref{est:comtor}.}\label{ap:EstComm}
We give here the proof of \eqref{est:comtor}. Before going to the proof, we need the following Leibniz rule for $\Ls^k$.
\begin{lemma}[Leibniz rule for $\Ls^k$] \label{lemm:LeibnizLk}  Let $\phi$ be a smooth function and $k \in \mathbb{N}$, we have 
\begin{equation}\label{eq:LeibnizLk}
\Ls^{k + 1}(\phi f) = \sum_{m = 0}^{k+1}f_{2m}\phi_{2k+2, 2m} + \sum_{m = 0}^k f_{2m + 1}\phi_{2k + 2, 2m + 1},
\end{equation}
and 
\begin{equation}\label{eq:LeibnizALk}
\As\Ls^{k}(\phi f) = \sum_{m = 0}^{k}f_{2m + 1}\phi_{2k+1, 2m + 1} + \sum_{m = 0}^k f_{2m}\phi_{2k+1, 2m},
\end{equation}
where\\
- for $k = 0$,
\begin{align*}
&\phi_{1,0} = -\py \phi, \quad \phi_{1,1} = \phi,\\
&\phi_{2,0} = -\py^2 \phi - \frac{6 +2V}{y}\py \phi, \quad \phi_{2,1}= 2\py \phi, \quad \phi_{2,2} = \phi,
\end{align*}
- for $k \geq 1$
\begin{align*}
&\phi_{2k + 1, 0} = -\py \phi_{2k,0},\\
&\phi_{2k + 1, 2i} = - \py \phi_{2k, 2i} - \phi_{2k, 2i - 1}, \quad 1 \leq i \leq k,\\
&\phi_{2k+1, 2i + 1}= \phi_{2k, 2i} + \frac{6 + 2V}{y}\phi_{2k, 2i + 1} - \py\phi_{2k, 2i+1}, \quad 0 \leq i \leq k-1,\\
&\phi_{2k + 1, 2k + 1} = \phi_{2k, 2k} = \phi,\\
& \quad\\
&\phi_{2k + 2, 0} = \py \phi_{2k+1,0} + \frac{6 + 2V}{y}\phi_{2k+1, 0},\\
&\phi_{2k + 2, 2i} = \phi_{2k + 1, 2i - 1} + \py \phi_{2k + 1, 2i} + \frac{6 + 2V}{y}\phi_{2k + 1, 2i},\quad 1 \leq i \leq k,\\
&\phi_{2k+2, 2i + 1}= -\phi_{2k + 1,2i} + \py\phi_{2k+1, 2i + 1}, \quad 0 \leq i \leq k,\\
&\phi_{2k + 2, 2k + 2} = \phi_{2k + 1, 2k + 1} = \phi.
\end{align*}
\end{lemma}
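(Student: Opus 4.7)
The plan is a direct induction on $k$, driven by two elementary one-step product rules together with two "reindexing" identities that convert $\As^* f_{2m}$ and $\As f_{2m+1}$—which are \emph{not} themselves members of the sequence $(f_i)$ defined in \eqref{adapt_deriv}—into linear combinations of consecutive $f_i$'s.

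First I would verify the base case $k=0$ directly. Since $\As g = -\py g + \frac{V}{y}g$, one reads off $\As(\phi f) = \phi \As f - (\py\phi)f = \phi_{1,1}f_1 + \phi_{1,0}f_0$. Writing $\As^* g = \py g + \frac{6+V}{y}g$, applying $\As^*$ to $\As(\phi f) = -\py\phi \cdot f + \phi \As f$, and regrouping with $\py f = -\As f + \frac{V}{y}f$, one gets
$$
\Ls(\phi f) = \Bigl[-\py^2\phi - \tfrac{6+2V}{y}\py\phi\Bigr]f + 2\py\phi\cdot \As f + \phi \cdot \Ls f,
$$
which matches $\phi_{2,0}f_0 + \phi_{2,1}f_1 + \phi_{2,2}f_2$.

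Next I would record the two product rules
$$
\As(\phi g) = \phi \As g - \py\phi \cdot g, \qquad \As^*(\phi g) = \phi \As^* g + \py\phi \cdot g,
$$
together with the algebraic identity $\As^* - \As = 2\py + \frac{6}{y}$. Substituting $\py f_{2m} = -f_{2m+1} + \frac{V}{y}f_{2m}$ and $\py f_{2m+1} = f_{2m+2} - \frac{6+V}{y}f_{2m+1}$ into the expressions $\As^* f_{2m} = (\As + 2\py + \frac{6}{y})f_{2m}$ and $\As f_{2m+1} = (\As^* - 2\py - \frac{6}{y})f_{2m+1}$ yields the reindexing formulas
$$
\As^* f_{2m} = -f_{2m+1} + \tfrac{6+2V}{y}f_{2m}, \qquad \As f_{2m+1} = -f_{2m+2} + \tfrac{6+2V}{y}f_{2m+1}.
$$

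For the inductive step, assume \eqref{eq:LeibnizLk} holds at level $k-1$, so $\Ls^k(\phi f) = \sum_m \phi_{2k,2m}f_{2m} + \sum_m \phi_{2k,2m+1}f_{2m+1}$. Applying $\As$ term-by-term via the product rule, then using the reindexing formula for $\As f_{2m+1}$ to express the resulting $\As f_{2m+1}$'s as combinations of $f_{2m+1}$ and $f_{2m+2}$, and finally collecting the coefficient of each $f_j$ separately, reproduces exactly the recursion given for $\phi_{2k+1,2i}$ and $\phi_{2k+1,2i+1}$; this is \eqref{eq:LeibnizALk}. Applying $\As^*$ to that identity by the same mechanism—product rule, reindexing formula for $\As^* f_{2m}$, collection of coefficients—yields the recursion for $\phi_{2k+2,2i}$ and $\phi_{2k+2,2i+1}$ and closes the induction. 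The whole argument is purely algebraic; the main obstacle (such as it is) will be keeping track of the boundary indices $m=0$ and $m=k$, where one of the two contributions to a given coefficient is absent, which is precisely why the recursion singles out the cases $\phi_{2k+1,0} = -\py\phi_{2k,0}$ and $\phi_{2k+1,2k+1} = \phi_{2k,2k} = \phi$ (and analogously for $\phi_{2k+2,0}$ and $\phi_{2k+2,2k+2}$). I would organize the bookkeeping by systematically separating even-indexed from odd-indexed coefficients throughout.
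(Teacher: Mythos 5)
Your proposal is correct, and the argument is essentially the natural one: it closes cleanly. I checked your base case computation, both one-step product rules $\As(\phi g) = \phi\As g - (\py\phi)g$ and $\As^*(\phi g) = \phi\As^* g + (\py\phi)g$, the identity $\As^* - \As = 2\py + \frac{6}{y}$, and the resulting reindexing formulas
$$\As^* f_{2m} = -f_{2m+1} + \tfrac{6+2V}{y}f_{2m},\qquad \As f_{2m+1} = -f_{2m+2} + \tfrac{6+2V}{y}f_{2m+1},$$
and all are correct. Applying $\As$ to the inductive hypothesis and collecting coefficients indeed reproduces exactly the stated recursion for $\phi_{2k+1,\cdot}$ (including the boundary cases at $i=0$ and $i=k$), and applying $\As^*$ to \eqref{eq:LeibnizALk} likewise yields the recursion for $\phi_{2k+2,\cdot}$. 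Note that the paper does not reproduce a proof of this lemma; it defers to Lemma C.1 of \cite{IGN16}, so there is no in-text argument to compare against, but your induction with product rules and reindexing is precisely the expected route for a Leibniz formula of this shape, and it is complete.
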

\begin{proof} 
See Lemma C.1 in \cite{IGN16}.
\end{proof}

\bigskip

Let us now give the proof of \eqref{est:comtor}. By induction and the definition \eqref{def:Llambda}, we have 
$$[\pt, \Ls_\lambda^{L}]v = \sum_{m = 0}^{L-1} \Ls_\lambda^m \left([\pt, \Ls_\lambda] \Ls_\lambda^{L-1 - m}v\right) = \sum_{m = 0}^{L-1} \Ls_\lambda^m \left(\frac{\pt Z_\lambda}{r^2} \Ls_\lambda^{L-1 - m}v\right).$$
Noting that $\frac{\pt Z_\lambda}{r^2} = \frac{b_1 \Lambda Z}{\lambda^4 y^2}$, we make a change of variables to obtain
\begin{align*}
\int \frac{1}{\lambda^2(1 + y^2)}\left|[\pt , \Ls_{\lambda}^{L}]v\right|^2 &= \frac{b_1^2}{\lambda^{4L-1}}\int \frac{1}{1 + y^2} \left|\sum_{m = 0}^{L-1}\Ls^m\left(\frac{\Lambda Z}{y^2} \Ls^{L-1 - m}q \right) \right|^2\\
& \lesssim \frac{b_1^2}{\lambda^{4L-1}} \sum_{m = 0}^{L-1}\int \frac{1}{1 + y^2}\left|\Ls^m\left(\frac{\Lambda Z}{y^2} \Ls^{L-1 - m}q \right) \right|^2.
\end{align*}
For $m = 0$, we use \eqref{eq:estLamZV} and \eqref{eq:Enercontrol} to estimate
$$\int \frac{1}{1 + y^2}\left|\left(\frac{\Lambda Z}{y^2} \Ls^{L-1}q \right) \right|^2 \lesssim \int \frac{|q^2_{2L-2}|}{1 + y^{10}} \lesssim \Es_{2L+2}.$$
For $m = 1, \cdots, L+1 - 2$, we apply \eqref{eq:LeibnizLk} with $\phi = \frac{\Lambda Z}{y^2} = \frac{(6)\Lambda \cos(2Q)}{y^2}$ and note from \eqref{eq:asymQ} that
$$|\phi_{k, i}| \lesssim \frac{1}{1 + y^{4 + 2 + (2k - i)}} \lesssim \frac{1}{1 + y^{4 + (2k - i)}}, \quad k \in \mathbb{N}^*, \;\; 0 \leq i \leq 2k,$$
which yields
\begin{align*}
\int \frac{1}{1 + y^2}\left|\Ls^m\left(\frac{\Lambda Z}{y^2} \Ls^{L-1 - m}q \right) \right|^2 \lesssim \sum_{i = 0}^{2m}\int\frac{q^2_{2L-2 - 2m - i}}{(1 + y^{10 + (4m - 2i)})} \lesssim \Es_{2L+2}.
\end{align*}
Thus, 
$$\int \frac{1}{\lambda^2(1 + y^2)}\left|[\pt , \Ls_{\lambda}^{L}]v\right|^2 \lesssim \frac{b_1^2}{\lambda^{4L-1}} \Es_{2L+2}.$$
Similarly, we use \eqref{eq:LeibnizALk} to get the estimate 
$$\int \left|\As[\pt , \Ls_{\lambda}^{L}]v\right|^2 \lesssim  \frac{b_1^2}{\lambda^{4L-1}}\Es_{2L+2}.$$
This concludes the proof of \eqref{est:comtor}.

\def\cprime{$'$}


\begin{thebibliography}{vdBHK03}

\bibitem[BB11]{BBnon11}
P.~Biernat and P.~Bizo{\'n}.
\newblock Shrinkers, expanders, and the unique continuation beyond generic
  blowup in the heat flow for harmonic maps between spheres.
\newblock {\em Nonlinearity}, 24(8):2211--2228, 2011.

\bibitem[BDS16]{BDSar16}
P.~Biernat, R.~Donninger, and B.~Sch{\"o}rkhuber.
\newblock Stable self-similar blowup in the supercritical heat flow of harmonic
  maps.
\newblock {\em arXiv:1610.09497}, 2016.

\bibitem[Bie15]{BIEnon2015}
P.~Biernat.
\newblock Non-self-similar blow-up in the heat flow for harmonic maps in higher
  dimensions.
\newblock {\em Nonlinearity}, 28(1):167--185, 2015.

\bibitem[BK94]{BKnon94}
J.~Bricmont and A.~Kupiainen.
\newblock Universality in blow-up for nonlinear heat equations.
\newblock {\em Nonlinearity}, 7(2):539--575, 1994.

\bibitem[Bre90]{Breiumj90}
A.~Bressan.
\newblock On the asymptotic shape of blow-up.
\newblock {\em Indiana Univ. Math. J.}, 39(4):947--960, 1990.

\bibitem[Bre92]{Brejde92}
A.~Bressan.
\newblock Stable blow-up patterns.
\newblock {\em J. Differential Equations}, 98(1):57--75, 1992.

\bibitem[BS16]{BSarxiv2016}
P.~Biernat and Y.~Seki.
\newblock Type {II} blow-up mechanism for supercritical harmonic map heat flow.
\newblock {\em arXiv:1601.01831}, 2016.

\bibitem[BW15]{BWimrn15}
P.~Bizo{\'n} and A.~Wasserman.
\newblock Nonexistence of shrinkers for the harmonic map flow in higher
  dimensions.
\newblock {\em Int. Math. Res. Not. IMRN}, (17):7757--7762, 2015.

\bibitem[C.16a]{Car16}
Collot C.
\newblock Non radial type ii blow up for the energy supercritical semilinear
  heat equation.
\newblock {\em arXiv:1604.02856}, 2016.

\bibitem[C.16b]{Car161}
Collot C.
\newblock Type ii blow up manifolds for the energy supercritical wave equation.
\newblock {\em arXiv:1407.4525}, 2016.

\bibitem[CD90]{CDinvm90}
Y.~M. Chen and W.~Y. Ding.
\newblock Blow-up and global existence for heat flows of harmonic maps.
\newblock {\em Invent. Math.}, 99(3):567--578, 1990.

\bibitem[CDY92]{CDYjdg92}
K.-C. Chang, W.~Y. Ding, and R.~Ye.
\newblock Finite-time blow-up of the heat flow of harmonic maps from surfaces.
\newblock {\em J. Differential Geom.}, 36(2):507--515, 1992.

\bibitem[CFP16]{CMRar16}
Collot C., Merle F., and Rapha{\"e}l P.
\newblock Dynamics near the ground state for the energy critical nonlinear heat
  equation in large dimensions.
\newblock {\em arXiv:1604.08323}, 2016.

\bibitem[CG89]{CGcrsa89}
J.-M. Coron and J.-M. Ghidaglia.
\newblock Explosion en temps fini pour le flot des applications harmoniques.
\newblock {\em C. R. Acad. Sci. Paris S\'er. I Math.}, 308(12):339--344, 1989.

\bibitem[CZ13]{CZcpam13}
R.~C{\^o}te and H.~Zaag.
\newblock Construction of a multisoliton blowup solution to the semilinear wave
  equation in one space dimension.
\newblock {\em Comm. Pure Appl. Math.}, 66(10):1541--1581, 2013.

\bibitem[DDW17]{DDW}
J.D. Davilla, M. Del Pino, and J.C. Wei.
Singularity formation for the two-dimensional harmonic map heat flow into $\mathbb{S}^2$.
\newblock{\em arXiv:1702.05801}, 2017.

\bibitem[DKM13]{DKMcjm13}
T.~Duyckaerts, C.~Kenig, and F.~Merle.
\newblock Classification of radial solutions of the focusing, energy-critical
  wave equation.
\newblock {\em Camb. J. Math.}, 1(1):75--144, 2013.

\bibitem[ES64]{ESajm64}
J.~Eells and J.~H. Sampson.
\newblock Harmonic mappings of {R}iemannian manifolds.
\newblock {\em Amer. J. Math.}, 86:109--160, 1964.

\bibitem[Fan99]{Fscsm99}
H.~Fan.
\newblock Existence of the self-similar solutions in the heat flow of harmonic
  maps.
\newblock {\em Sci. China Ser. A}, 42(2):113--132, 1999.

\bibitem[FHV00]{FHVslps00}
S.~Filippas, M.~A. Herrero, and J.~J.~L. Vel{\'a}zquez.
\newblock Fast blow-up mechanisms for sign-changing solutions of a semilinear
  parabolic equation with critical nonlinearity.
\newblock {\em R. Soc. Lond. Proc. Ser. A Math. Phys. Eng. Sci.},
  456(2004):2957--2982, 2000.

\bibitem[GGM16]{GGMar16}
P.~Germain, T.~Ghoul, and H.~Miura.
\newblock On uniqueness for the harmonic map heat flow in supercritical
  dimensions.
\newblock {\em arXiv:1601.06601}, 2016.

\bibitem[GR11]{GRihp08}
P.~Germain and M.~Rupflin.
\newblock Selfsimilar expanders of the harmonic map flow.
\newblock {\em Ann. Inst. H. Poincar\'e Anal. Non Lin\'eaire}, 28(5):743--773,
  2011.
  
\bibitem[GK87]{GKiumj87}
Y.~Giga and R.~V. Kohn.
\newblock Characterizing blowup using similarity variables.
\newblock {\em Indiana Univ. Math. J.}, 36(1):1--40, 1987.


\bibitem[GMS04]{GMSiumj04}
Y.~Giga, S.~Matsui, and S.~Sasayama.
\newblock Blow up rate for semilinear heat equations with subcritical
  nonlinearity.
\newblock {\em Indiana Univ. Math. J.}, 53(2):483--514, 2004.


\bibitem[GM16]{GMarx16}
T.~Ghoul and N.~Masmoudi.
\newblock Stability of infinite time blow up for the {P}atlak {K}eller {S}egel
  system.
\newblock {\em arXiv:1610.00456}, 2016.

\bibitem[Ghoul, Ibrahim, and Nguyen]{IGN16}
T.~Ghoul, S.~Ibrahim, and V.~T. Nguyen.
\newblock On the stability of type {II} blowup for the 1-corotational energy
  supercritical harmonic heat flow.
\newblock \em{arxiv:1611.08877}, 2016{a}.


\bibitem[GNZ16a]{GNZpre16a}
T.~Ghoul, V.~T. Nguyen, and H.~Zaag.
\newblock Blowup solutions for the exponential reaction-diffusion equation
  involving a critical power nonlinear gradient term.
\newblock {\em arXiv:1611.02474}, 2016.

\bibitem[GNZ16b]{GNZpre16c}
T.~Ghoul, V.~T. Nguyen, and H.~Zaag.
\newblock Construction and stability of blowup solutions for a non-variational
  parabolic system.
\newblock {\em arXiv:1610.09883}, 2016.

\bibitem[HR12]{HRapde12}
M.~Hillairet and P.~Rapha{\"e}l.
\newblock Smooth type {II} blow-up solutions to the four-dimensional
  energy-critical wave equation.
\newblock {\em Anal. PDE}, 5(4):777--829, 2012.

\bibitem[HV94]{HVcras94}
Miguel~A. Herrero and Juan J.~L. Vel{\'a}zquez.
\newblock Explosion de solutions d'\'equations paraboliques semilin\'eaires
  supercritiques.
\newblock {\em C. R. Acad. Sci. Paris S\'er. I Math.}, 319(2):141--145, 1994.

\bibitem[Jos81]{Jmm81}
J.~Jost.
\newblock Ein {E}xistenzbeweis f\"ur harmonische {A}bbildungen, die ein
  {D}irichletproblem l\"osen, mittels der {M}ethode des {W}\"armeflusses.
\newblock {\em Manuscripta Math.}, 34(1):17--25, 1981.

\bibitem[KL12]{KLajm12}
H.~Koch and T.~Lamm.
\newblock Geometric flows with rough initial data.
\newblock {\em Asian J. Math.}, 16(2):209--235, 2012.

\bibitem[LW08]{LWwsp08}
F.~Lin and C.~Wang.
\newblock {\em The analysis of harmonic maps and their heat flows}.
\newblock World Scientific Publishing Co. Pte. Ltd., Hackensack, NJ, 2008.

\bibitem[Miz04]{Made04}
N.~Mizoguchi.
\newblock Type-{II} blowup for a semilinear heat equation.
\newblock {\em Adv. Differential Equations}, 9(11-12):1279--1316, 2004.

\bibitem[Miz07]{Mma07}
N.~Mizoguchi.
\newblock Rate of type {II} blowup for a semilinear heat equation.
\newblock {\em Math. Ann.}, 339(4):839--877, 2007.

\bibitem[MM04]{MMcpam04}
H.~Matano and F.~Merle.
\newblock On nonexistence of type {II} blowup for a supercritical nonlinear
  heat equation.
\newblock {\em Comm. Pure Appl. Math.}, 57(11):1494--1541, 2004.

\bibitem[MM09]{MMjfa09}
Hiroshi Matano and Frank Merle.
\newblock Classification of type {I} and type {II} behaviors for a
  supercritical nonlinear heat equation.
\newblock {\em J. Funct. Anal.}, 256(4):992--1064, 2009.

\bibitem[MMR14]{MMRam14}
Y.~Martel, F.~Merle, and P.~Rapha{\"e}l.
\newblock Blow up for the critical generalized {K}orteweg--de {V}ries equation.
  {I}: {D}ynamics near the soliton.
\newblock {\em Acta Math.}, 212(1):59--140, 2014.

\bibitem[MMR15a]{MMRjems15}
Y.~Martel, F.~Merle, and P.~Rapha{\"e}l.
\newblock Blow up for the critical g{K}d{V} equation. {II}: {M}inimal mass
  dynamics.
\newblock {\em J. Eur. Math. Soc. (JEMS)}, 17(8):1855--1925, 2015.

\bibitem[MMR15b]{MMRasp15}
Y.~Martel, F.~Merle, and P.~Rapha{\"e}l.
\newblock Blow up for the critical g{K}d{V} equation {III}: exotic regimes.
\newblock {\em Ann. Sc. Norm. Super. Pisa Cl. Sci. (5)}, 14(2):575--631, 2015.

\bibitem[MR03]{MRgfa03}
F.~Merle and P.~Raphael.
\newblock Sharp upper bound on the blow-up rate for the critical nonlinear
  {S}chr\"odinger equation.
\newblock {\em Geom. Funct. Anal.}, 13(3):591--642, 2003.

\bibitem[MR04]{MRim04}
F.~Merle and P.~Raphael.
\newblock On universality of blow-up profile for {$L^2$} critical nonlinear
  {S}chr\"odinger equation.
\newblock {\em Invent. Math.}, 156(3):565--672, 2004.

\bibitem[MR05a]{MRam05}
F.~Merle and P.~Raphael.
\newblock The blow-up dynamic and upper bound on the blow-up rate for critical
  nonlinear {S}chr\"odinger equation.
\newblock {\em Ann. of Math. (2)}, 161(1):157--222, 2005.

\bibitem[MR05b]{MRcmp05}
F.~Merle and P.~Raphael.
\newblock Profiles and quantization of the blow up mass for critical nonlinear
  {S}chr\"odinger equation.
\newblock {\em Comm. Math. Phys.}, 253(3):675--704, 2005.

\bibitem[MRR11]{MRRmasp11}
F.~Merle, P.~Rapha{\"e}l, and I.~Rodnianski.
\newblock Blow up dynamics for smooth equivariant solutions to the energy
  critical {S}chr\"odinger map.
\newblock {\em C. R. Math. Acad. Sci. Paris}, 349(5-6):279--283, 2011.

\bibitem[MRR13]{MRRim13}
F.~Merle, P.~Rapha{\"e}l, and I.~Rodnianski.
\newblock Blowup dynamics for smooth data equivariant solutions to the critical
  {S}chr\"odinger map problem.
\newblock {\em Invent. Math.}, 193(2):249--365, 2013.

\bibitem[MRR15]{MRRcmj2016}
F.~Merle, P.~Rapha{\"e}l, and I.~Rodnianski.
\newblock Type {II} blow up for the energy supercritical {NLS}.
\newblock {\em Camb. Jour. Math.}, 3(4):439 -- 617, 2015.

\bibitem[MZ97]{MZdm97}
F.~Merle and H.~Zaag.
\newblock Stability of the blow-up profile for equations of the type
  {$u_t=\Delta u+\vert u\vert ^{p-1}u$}.
\newblock {\em Duke Math. J.}, 86(1):143--195, 1997.

\bibitem[MZ08]{MZjfa08}
N.~Masmoudi and H.~Zaag.
\newblock Blow-up profile for the complex {G}inzburg-{L}andau equation.
\newblock {\em J. Funct. Anal.}, 255(7):1613--1666, 2008.

\bibitem[NZ15]{NZcpde15}
N.~Nouaili and H.~Zaag.
\newblock Profile for a simultaneously blowing up solution to a complex valued
  semilinear heat equation.
\newblock {\em Comm. Partial Differential Equations}, 40(7):1197--1217, 2015.

\bibitem[NZ16a]{NZsns16}
V.~T. Nguyen and H.~Zaag.
\newblock Construction of a stable blow-up solution for a class of strongly
  perturbed semilinear heat equations.
\newblock {\em Ann. Scuola Norm. Sup. Pisa Cl. Sci., to appear}, 2016.

\bibitem[NZ16b]{NZens16}
V.~T. Nguyen and H.~Zaag.
\newblock Finite degrees of freedom for the refined blow-up profile for a
  semilinear heat equation.
\newblock {\em Ann. Scient. {\'E}c. Norm. Sup. to appear}, 2016.

\bibitem[RR12]{RRmihes12}
P.~Rapha{\"e}l and I.~Rodnianski.
\newblock Stable blow up dynamics for the critical co-rotational wave maps and
  equivariant {Y}ang-{M}ills problems.
\newblock {\em Publ. Math. Inst. Hautes \'Etudes Sci.}, pages 1--122, 2012.

\bibitem[RS13]{RScpam13}
P.~Rapha{\"e}l and R.~Schweyer.
\newblock Stable blowup dynamics for the 1-corotational energy critical
  harmonic heat flow.
\newblock {\em Comm. Pure Appl. Math.}, 66(3):414--480, 2013.

\bibitem[RS14a]{RSma14}
P.~Rapha{\"e}l and R.~Schweyer.
\newblock On the stability of critical chemotactic aggregation.
\newblock {\em Math. Ann.}, 359(1-2):267--377, 2014.

\bibitem[RS14b]{RSapde2014}
P.~Rapha{\"e}l and R.~Schweyer.
\newblock Quantized slow blow-up dynamics for the corotational energy-critical
  harmonic heat flow.
\newblock {\em Anal. PDE}, 7(8):1713--1805, 2014.

\bibitem[Sch12a]{Schfa12}
R.~Schweyer.
\newblock Type {II} blow-up for the four dimensional energy critical semi
  linear heat equation.
\newblock {\em J. Funct. Anal.}, 263(12):3922--3983, 2012.

\bibitem[Sch12b]{Sjfa12}
R.~Schweyer.
\newblock Type {II} blow-up for the four dimensional energy critical semi
  linear heat equation.
\newblock {\em J. Funct. Anal.}, 263(12):3922--3983, 2012.

\bibitem[Str88]{Sjdg88}
M.~Struwe.
\newblock On the evolution of harmonic maps in higher dimensions.
\newblock {\em J. Differential Geom.}, 28(3):485--502, 1988.

\bibitem[Str96]{Sams96}
M.~Struwe.
\newblock Geometric evolution problems.
\newblock In {\em Nonlinear partial differential equations in differential
  geometry ({P}ark {C}ity, {UT}, 1992)}, volume~2 of {\em IAS/Park City Math.
  Ser.}, pages 257--339. Amer. Math. Soc., Providence, RI, 1996.

\bibitem[vdBHK03]{VHKsiam03}
J.~B. van~den Berg, J.~Hulshof, and J.~R. King.
\newblock Formal asymptotics of bubbling in the harmonic map heat flow.
\newblock {\em SIAM J. Appl. Math.}, 63(5):1682--1717 (electronic), 2003.

\bibitem[Wan11]{Warma11}
C.~Wang.
\newblock Well-posedness for the heat flow of harmonic maps and the liquid
  crystal flow with rough initial data.
\newblock {\em Arch. Ration. Mech. Anal.}, 200(1):1--19, 2011.

\bibitem[Zaa98]{ZAAihn98}
H.~Zaag.
\newblock Blow-up results for vector-valued nonlinear heat equations with no
  gradient structure.
\newblock {\em Ann. Inst. H. Poincar{\'e} Anal. Non Lin{\'e}aire},
  15(5):581--622, 1998.

\end{thebibliography}

\end{document}